\documentclass[11pt]{article}
\usepackage{etex}
\usepackage{cite}
\include{psfig}
\usepackage{tabularx}
\usepackage{longtable}
\usepackage{pictex, latexsym, graphicx,amsmath,amssymb,amsbsy,amsfonts,amsthm,verbatim}
\usepackage[pdfpagemode=UseOutlines,colorlinks=true,pdfnewwindow=true,citecolor=blue,urlcolor=blue,linkcolor=blue]{hyperref}
\usepackage{color,comment}
\usepackage{algorithm,algorithmic}
\usepackage[font=small,format=plain,labelfont=bf,up]{caption}
\usepackage{lpic}
\usepackage{subfigure}
\usepackage{multirow}
\usepackage{lscape}
\usepackage{adjustbox}
\usepackage{graphics}
\usepackage{setspace}
\usepackage{xypic}

\usepackage{algorithm}

\newcommand{\union}{\bigcup} % Jamie; used for indexed unions
\newcommand{\wo}{\setminus}  % Jamie; std abbreviation for me
\newcommand{\set}[1]{\left\{{#1}\right\}}
\newcommand{\setof}[2]{\left\{{#1}\,:\,{#2}\right\}}
\newcommand{\abs}[1]{\left|{#1}\right|}

%% define the proof environment

\newcommand{\mycomment}[1]{}

\def\cB{\mathcal{B}}
\def\cF{\mathcal{F}}
\def\Z{{\mathbb Z}}

\DeclareMathOperator{\Ind}{Ind}

% define theorems
\newtheorem{lemma}{Lemma}
\newtheorem{theorem}[lemma]{Theorem}

\newtheorem{corollary}[lemma]{Corollary}

\newtheorem{conjecture}[lemma]{Conjecture}
\newtheorem{claim}{Claim}[lemma]

\theoremstyle{definition}

\newtheorem{observation}[lemma]{Observation}

\newtheorem{definitiontheorem}{Definition}[lemma]
\newtheorem{observationtheorem}{Observation}[lemma]

\input colordvi
\usepackage[margin=1in]{geometry}
%\usepackage[margin=1.75in]{geometry}

%%%%%%%%%%%%%%%%%%%%%%%%%%%%%%
 
 \newenvironment{cem}
{
    \begin{enumerate}
        \setlength{\topsep}{0pt}
        \setlength{\parskip}{0pt}
        \setlength{\partopsep}{0pt}
        \setlength{\parsep}{0pt}
        \setlength{\itemsep}{0pt}
}
{
    \end{enumerate}
}
\usepackage{fancyhdr}

\newcounter{casenum}
\newcounter{subcasenum}
\numberwithin{subcasenum}{casenum}
\newcounter{subsubcasenum}
\numberwithin{subsubcasenum}{subcasenum}

\setcounter{casenum}{0}
\setcounter{subcasenum}{0}
\setcounter{subsubcasenum}{0}

\renewcommand{\thecasenum}{\arabic{casenum}}
\renewcommand{\thesubcasenum}{\thecasenum.\roman{subcasenum}}

\newcounter{stagenum}

\newenvironment{mycases}
{
  \list{}{%
    \leftmargin0.5cm   % this is the adjusting screw
    \rightmargin0cm%\leftmargin
  }
  \item\relax
	\setcounter{casenum}{0}
}
{
	\endlist
}
\newenvironment{subcases}
{
  \list{}{%
    \leftmargin0.5cm   % this is the adjusting screw
    \rightmargin0cm%\leftmargin
  }
  \item\relax
}
{
	\endlist
}

\newenvironment{casefig}
{
\vspace{0em}
	\begin{figure}[H]
		\centering
}
{
	\end{figure}
\vspace{-1em}
}

\def\casefigratio{1}

\newcommand{\case}[1]{
	\vspace{0.5em}

	\refstepcounter{casenum}
	\noindent\hspace{-0.5cm}\textit{Case \thecasenum: #1}
}

\newcommand{\subcase}[1]{
	\vspace{0.25em}

	\refstepcounter{subcasenum}
	\noindent\hspace{-0.5cm}\textit{Case \thesubcasenum: #1}
}

\def\dalpha{\overline{\alpha}}

\newenvironment{dischargingrule}
{

  \list{}{%
    \leftmargin0.5in   % this is the adjusting screw
    \rightmargin0cm %\leftmargin
    \setlength{\parindent}{-0.25in}
    \setlength{\itemindent}{-0.25in}
  }
  \item\relax
}
{
\endlist

}

\newenvironment{casecomment}
{
  \list{}{%
    \leftmargin-0.5cm   % this is the adjusting screw
    \rightmargin0cm%\leftmargin
  }
  \item\relax
}
{
	\endlist
}

\begin{document}

\title{On the independence ratio of distance graphs}
\author{
James M. Carraher$^1$ \and David Galvin$^2$ \and Stephen G. Hartke$^1$ \and A. J. Radcliffe$^1$ \and Derrick Stolee$^3$}
\date{\today}

\pagestyle{plain}

\maketitle

\begin{abstract}
A distance graph is an undirected graph on the integers where two integers are adjacent if their difference is in a prescribed distance set.
The \emph{independence ratio} of a distance graph $G$ is the maximum density of an independent set in $G$.
%We study the \emph{independence ratio}, the maximum density of independent sets in distance graphs,
Lih, Liu, and Zhu \cite{LLZ99} showed that the independence ratio is equal to the inverse of the fractional chromatic number, thus relating the concept to the well studied question of finding the chromatic number of distance graphs.

%We prove that periodic independent sets achieve the independence ratio in a distance graph and present a framework for discharging arguments to demonstrate upper bounds on the independence ratio.
We prove that the independence ratio of a distance graph is achieved by a periodic set, and we present a framework for discharging arguments to demonstrate upper bounds on the independence ratio.
With these tools, we determine the exact independence ratio for several infinite families of distance sets of size three, determine asymptotic values for others, and present several conjectures.
\end{abstract}
\footnotetext[1]{Department of Mathematics, University of Nebraska--Lincoln, \texttt{\{s-jcarrah1,hartke,aradcliffe1\}@math.unl.edu}.  Hartke was supported in part by National Science Foundation grant DMS-0914815.}
\footnotetext[2]{Department of Mathematics, Notre Dame University, \texttt{dgalvin1@nd.edu}.}
\footnotetext[3]{Department of Mathematics, Department of Computer Science, Iowa State University, \texttt{dstolee@iastate.edu}.}

%\doublespace

%%%%%%%%%%%%%%%%%%%%%%%%%%%%%%%%%%%%%%%%%%%%%%%%%%%%%%%%%%%%%%%%%%%%
%%%%%%%%%%%%%%%%%%%%%%%%%%%%%%%%%%%%%%%%%%%%%%%%%%%%%%%%%%%%%%%%%%%%
%%%%%%%%%%%%%%%%%%%%%%%%%%%%%%%%%%%%%%%%%%%%%%%%%%%%%%%%%%%%%%%%%%%%
%%%%%%%%%%%%%%%%%%%%%%%%%%%%%%%%%%%%%%%%%%%%%%%%%%%%%%%%%%%%%%%%%%%%
%%%%%%%%%%%%%%%%%%%%%%%%%%%%%%%%%%%%%%%%%%%%%%%%%%%%%%%%%%%%%%%%%%%%
%%%%%%%%%%%%%%%%%%%%%%%%%%%%%%%%%%%%%%%%%%%%%%%%%%%%%%%%%%%%%%%%%%%%
%%%%%%%%%%%%%%%%%%%%%%%%%%%%%%%%%%%%%%%%%%%%%%%%%%%%%%%%%%%%%%%%%%%%
%%%%%%%%%%%%%%%%%%%%%%%%%%%%%%%%%%%%%%%%%%%%%%%%%%%%%%%%%%%%%%%%%%%%
\section{Introduction}
% Definitions and Motivation

For a set $S$ of positive integers, the \emph{distance graph} $G(S)$ is the infinite graph with vertex set $\Z$ where two integers $i$ and $j$ are adjacent if and only if $|i-j| \in S$.
%For an integer $n$, the \emph{circulant graph} $G(n,S)$ is the graph with vertex set $\Z_n$ and with an edge spanning two integers $i,j$ if and only if $|i-j| \equiv k \pmod n$, for some $k \in S$.
%The distance graph $G(S)$ is considered to be the limit structure of the sequence of circulant graphs $G(n,S)$.
%Thus, extremal questions over the circulant graphs lead to extremal questions on the distance graph.
For an independent set $A$ in $G(S)$ the density $\delta(A)$ is equal to $\limsup_{N\to\infty} \frac{|A\cap [-N,N]|}{2N+1}$.
The \emph{independence ratio} $\dalpha(S)$ is the supremum of $\delta(A)$ over all independent sets $A$ in $G(S)$.  In this paper we develop several fundamental techniques for determining the independence ratio, which we then apply to several infinite families of distance sets of size three.  These distance sets are the smallest whose independence ratios were previously unknown.

A basic tool for our study of the independence ratio is the following theorem.
%The equality $\dalpha(S) = \limsup_{n\to \infty} \alpha(G(n,S)) / n$ is a consequence of our first result.

\begin{theorem}\label{thm:periodic}
Let $S$ be a finite set of positive integers and let $s= \max S$.
There exists a periodic independent set $A$ in $G(S)$ with period at most $s2^s$ where $\delta(A) = \dalpha(S)$.
\end{theorem}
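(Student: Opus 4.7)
My plan is to reduce the existence of an optimal periodic independent set to a maximum mean weight cycle problem in a finite directed graph, whose vertices are the independent subsets of a single length-$s$ window of $\Z$.

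Let $V$ be the collection of subsets $B \subseteq \{0, 1, \dots, s-1\}$ that induce an independent set in $G(S)$; clearly $|V| \leq 2^s$. Define a directed graph $\mathcal H$ on vertex set $V$ by placing an arc $B \to B'$ precisely when $B \cup (B' + s)$ is independent in $G(S)$, i.e., when $s + j - i \notin S$ for every $i \in B$ and $j \in B'$. Since $\max S = s$, any edge of $G(S)$ has its endpoints in a single window $[ks,(k+1)s)$ or in two consecutive such windows, so $\mathcal H$ is a faithful automaton: independent sets $A \subseteq \Z$ of $G(S)$ correspond bijectively to bi-infinite walks $\dots, B_{-1}, B_0, B_1, \dots$ in $\mathcal H$ via $B_k = (A \cap [ks,(k+1)s)) - ks$. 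Weight each vertex $B$ by $|B|$, and for a directed cycle $C = B_0 \to B_1 \to \cdots \to B_{L-1} \to B_0$ set $\rho(C) = \frac{1}{Ls} \sum_{j=0}^{L-1} |B_j|$.

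I would then prove that $\dalpha(S) = \rho^{*}$, where $\rho^{*}$ is the maximum of $\rho(C)$ over simple cycles $C$ in $\mathcal H$ (and note $\mathcal H$ contains at least the self-loop at $\emptyset$, so $\rho^{*}$ is well-defined). The direction $\dalpha(S) \geq \rho^{*}$ is immediate: take a simple cycle $C^{*}$ attaining $\rho^{*}$ and traverse it bi-infinitely, producing a periodic independent set $A^{*}$ with period $L(C^{*}) \cdot s \leq 2^s \cdot s$ and density exactly $\rho^{*}$; this is the periodic set the theorem asks for. For the reverse inequality I would apply the standard cycle-decomposition argument: any finite walk in a finite directed graph can, by iteratively excising the loop formed at the first repeated vertex, be decomposed into a disjoint collection of simple cycles together with a residual simple path of length less than $|V|$. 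Each excised simple cycle has mean vertex weight at most $s\rho^{*}$, so a walk visiting $2M+1$ vertices accumulates total vertex weight at most $s\rho^{*}(2M+1) + O(s|V|)$. Applying this to the walk encoding an arbitrary independent set $A$ and dividing by $(2M+1)s$ yields $\delta(A) \leq \rho^{*}$, and combining with the previous direction gives $\delta(A^{*}) = \dalpha(S)$.

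The main obstacle is the upper bound step. The cycle decomposition itself is routine, but because $\delta(A)$ is defined as a $\limsup$ over the symmetric intervals $[-N,N]$, one must check that the $O(s|V|)$ additive error is absorbed uniformly as $N \to \infty$ (and that intervals whose length is not a multiple of $s$ contribute only a vanishing correction). This is straightforward since $|V|$ depends only on $s$, but it needs to be written out. Everything else — validating the automaton construction, verifying that the transition rule captures every potential edge of $G(S)$, and establishing the bijection between independent sets and bi-infinite walks — is bookkeeping.
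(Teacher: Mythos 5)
Your proposal is correct and is essentially the paper's proof: your automaton $\mathcal H$ on independent subsets of a length-$s$ window with transitions $B\to B'$ iff $B\cup(B'+s)$ is independent is exactly the paper's ``state graph,'' and your cycle-excision argument for the upper bound is the content of the paper's Lemma~\ref{lma:cycles}. The only structural difference is that the paper factors that cycle-decomposition step out as a standalone lemma about vertex-weighted digraphs (so it can be reused for dominating sets, identifying codes, and colorings), whereas you inline it.
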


In particular we can give upper bounds on $\dalpha(S)$ by studying only periodic independent sets.
Our proof of Theorem~\ref{thm:periodic} uses a lemma (Lemma~\ref{lma:cycles}) about extremal walks in finite digraphs that we also apply to show there exist periodic extremal sets for dominating sets and identifying codes in finitely-generated distance graphs.
This lemma may be also applicable in other situations.

To prove upper bounds on $\dalpha(S)$, we develop a new discharging method.
The resulting \emph{Local Discharging Lemma} is then used extensively to give exact values of $\dalpha(S)$ for several infinite families of distance sets.
We witness several common themes among these proofs, and these themes may be evidence that discharging arguments of this type could be used to determine almost all values of $\dalpha(S)$.

Intense study of distance graphs began when Eggleton, Erd\H{o}s, and Skilton~\cite{EES85,EES90} defined them as a modified version of the Hadwiger-Nelson problem of coloring the graph of the plane with an edge between every pair of points at unit distance.
The chromatic number of distance graphs has since been widely studied~\cite{BS08,CHZ98,CLZ99,CCH97,Collins,DZ97,GZ96,EES85,EES90,EHL12,EHT13,Heuberger,Ilic,IB10,Katznelson, KK98, LL05, LLZ99, Liu08, LS13, LZ99, LZ04, RTV02, Togni13, Zhu02}.

A particularly effective tool for finding lower bounds on the chromatic number is to determine the \emph{fractional chromatic number}, $\chi_f(S) = \chi_f( G(S) )$.
A \emph{fractional coloring} of a graph $G$ is a function $c$ from the independent sets $I$ of $G$ to nonnegative real numbers such that for every vertex 
$v$, the sum $\sum_{I \owns v} c(I) \geq 1$, and the \emph{value} of the coloring is the sum $\sum_{I} c(I)$ taken over all independent sets $I$.
The fractional chromatic number $\chi_f(G)$ is the minimum value of a fractional coloring, and provides a lower bound on the chromatic number.
Lih, Liu and Zhu showed that determining the fractional chromatic number $\chi_f(S)$ of a distance graph $G(S)$ is equivalent to determining its independence ratio.

%A \emph{fractional coloring} is a mapping $c$ from $\Gamma(G)$, the set of independent sets of $G$, to $[0,1]$ such that $\sum_{v \in I \in \Gamma(G)} c(I) \geq 1$ for every vertex $v \in G$.
%The value of a fractional coloring is $\sum_{I \in \Gamma(G)} c(I)$.
%The \emph{fractional chromatic number}, $\chi_f(G)$, is the minimum value of a fractional coloring.

%[Theorem 4.1]
\begin{theorem}[{Lih, Liu, and Zhu \cite{LLZ99}}]\label{thm:fractionalchromatic}
Let $S$ be a finite set of positive integers.
Then $\chi_f(S) = \dalpha(S)^{-1}$.
\end{theorem}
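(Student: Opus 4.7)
The plan is to prove $\chi_f(S) \leq \dalpha(S)^{-1}$ and $\chi_f(S) \geq \dalpha(S)^{-1}$ separately. For the upper bound I would construct an explicit fractional coloring from a periodic optimal independent set, whose existence is guaranteed by Theorem~\ref{thm:periodic}. For the lower bound I would use the standard inequality $\chi_f(H) \geq |V(H)|/\alpha(H)$ applied to long finite windows of $G(S)$, and then identify the resulting limit with $\dalpha(S)$ via a periodic-extension argument.

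For the upper bound, use Theorem~\ref{thm:periodic} to fix a periodic independent set $A$ of period $p$ with $\dalpha(S) = a/p$, where $a = |A \cap \{0, 1, \ldots, p-1\}|$. Each translate $A + j$ for $j \in \{0, \ldots, p-1\}$ is again a periodic independent set in $G(S)$, and each integer $v$ lies in exactly $a$ of them, since $v \in A + j$ iff $(v - j) \bmod p \in A \bmod p$ and $A$ meets any $p$ consecutive integers in exactly $a$ elements. Setting $c(A + j) = 1/a$ for each $j$ and $c(I) = 0$ otherwise therefore defines a valid fractional coloring of value $p/a = \dalpha(S)^{-1}$.

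For the reverse inequality, let $c$ be any valid fractional coloring of $G(S)$ of value $v$, and write $\alpha_N$ for the independence number of the induced subgraph of $G(S)$ on $\{-N, \ldots, N\}$. Summing the coloring inequality over $w \in [-N, N]$ yields
\[
2N + 1 \;\leq\; \sum_I c(I)\,|I \cap [-N, N]| \;\leq\; \alpha_N \cdot v,
\]
so $v \geq (2N+1)/\alpha_N$ for every $N$. It then suffices to verify $\alpha_N/(2N+1) \to \dalpha(S)$: the bound $\liminf \alpha_N/(2N+1) \geq \dalpha(S)$ is immediate from the definition of density, while for the matching upper bound I would take an optimal $B \subseteq [-N, N]$ and repeat it periodically with period $2N + s + 1$, where $s = \max S$. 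The gap of $s+1 > s$ between consecutive copies prevents any new adjacencies, so the extension is independent in $G(S)$ of density $|B|/(2N + s + 1) \leq \dalpha(S)$, giving $\alpha_N/(2N+1) \leq \dalpha(S) \cdot (2N+s+1)/(2N+1) \to \dalpha(S)$.

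The main obstacle is this final limit argument: a fractional coloring of the infinite graph $G(S)$ may in principle involve infinitely many independent sets, so one needs the right finite combinatorial quantity to compare it against. The periodic-extension gadget succeeds exactly because $S$ is finite, making boundary effects negligible as $N \to \infty$ and yielding a clean identification of $\chi_f(S)$ with $\dalpha(S)^{-1}$.
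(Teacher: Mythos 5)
Your proof is correct and follows essentially the same two-part strategy as the paper: construct a fractional coloring from the translates of the periodic optimal independent set supplied by Theorem~\ref{thm:periodic} for $\chi_f(S) \leq \dalpha(S)^{-1}$, and compare $\alpha_N/(2N+1)$ to $\dalpha(S)$ via a periodic-extension gadget for the reverse inequality. Two small remarks. First, your periodic extension uses period $2N+s+1$ so that consecutive copies of $B$ are separated by more than $s$, whereas the paper instead trims off the last $s$ positions of $A_n$ and repeats with period $2n$; your version is a bit cleaner because it avoids the subtraction of $s$ from the count. Second, the parenthetical claim that ``$\liminf \alpha_N/(2N+1) \geq \dalpha(S)$ is immediate from the definition of density'' is an over-reach: the definition only gives the $\limsup$ direction directly, since $\delta(A)$ is itself a $\limsup$; the $\liminf$ version requires the periodicity of Theorem~\ref{thm:periodic}. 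Fortunately this does not affect your argument, since you never actually use that inequality --- you only need the periodic-extension bound $\alpha_N/(2N+1) \leq \dalpha(S)(2N+s+1)/(2N+1)$, which combined with $v \geq (2N+1)/\alpha_N$ and $N \to \infty$ yields $v \geq \dalpha(S)^{-1}$.
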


Thus, the previous results computing the fractional chromatic number of distance graphs~\cite{CHZ98,CLZ99,CCH97,LL05,Zhu02} apply to the independence ratio.
We summarize these results in Section~\ref{sec:previous}.
%Lam and Lin have a result~\cite[Lemma 3.2]{LL05} that is uses Theorem~\ref{thm:fractionalchromatic} to compute $\chi_f(S)$, subject to the existence of a periodic independent set of density $\dalpha(S)$.

For an integer $n$, the \emph{circulant graph} $G(n,S)$ is the graph whose vertices are the integers modulo $n$ where two integers $i$ and $j$ are adjacent if and only if $|i-j| \equiv k \pmod n$, for some $k \in S$.
The distance graph $G(S)$ can be considered to be the limit structure of the sequence of circulant graphs $G(n,S)$.
Thus, extremal questions over the circulant graphs lead to extremal questions on the distance graph.
For instance, the equality $\dalpha(S) = \limsup_{n\to \infty} \alpha(G(n,S)) / n$ is a consequence of Theorem \ref{thm:fractionalchromatic}.

Since the complement of a circulant graph is also a circulant graph, and the independence number of a graph is the clique number of its complement, studying the independence ratio of distance graphs is strongly related to determining the independence number and clique number of circulant graphs.
In particular, simultaneously bounding  the independence number and clique number of circulant graphs has shown lower bounds on Ramsey numbers~\cite{LS08}.
So far, these parameters have been studied for circulant graphs $G(n,S)$ when limited to special classes of sets $S$, whether algebraically defined~\cite{BEHWPaleySquares, BlokhuisPaleySquares, BDRPaley, CohenPaley, GreenCayley, KSUnitaryCayley, Paley} or with $S$ finite and $n$ varying~\cite{BICliqueCirculant, BHFractionalRamsey, HoshinoCirculant}.

A recent development is the discovery that certain circulant graphs $G(n,S)$ are \emph{uniquely $K_r$-saturated}, including three infinite families~\cite{UniqueSaturation}.
The first step in proving this property is showing that the clique number of $G(n,S)$ is equal to $r-1$.
In the three infinite families, the generating set $S$ uses a growing number of elements, but the complement of the graph uses a finite number of elements.
The complement $\overline{G(n,S)}$ is equal to another circulant graph $G(n,S')$ where $S'$ has a finite number of elements.
In this complement, the independence number is of particular interest.
Our discharging method is an adaptation of the discharging method used in~\cite{UniqueSaturation} to determine the independence number in circulant graphs.

% We begin in Section~\ref{sec:preliminary} with some immediate observations.
% In Section~\ref{sec:periodic} we prove Theorem~\ref{thm:periodic} that the independence ratio in a distance graph is achievable by a periodic independent set.
% Section~\ref{sec:previous} contains summaries of previous results on the independence ratio.
% Then in Section~\ref{sec:discharging} we define our discharging process and its connection to the independence ratio, called the Local Discharging Lemma.
% We use the Local Discharging Lemma to prove exact values of $\dalpha(S)$ for several families of sets $S$ in Section~\ref{sec:threegens}.
% Finally, in Section~\ref{sec:computation} we discuss the algorithm we used to compute values of $\dalpha(S)$ that led to our theorems and conjectures.
We start in Section~\ref{sec:periodic} by proving Theorem~\ref{thm:periodic}, that the independence ratio in a distance graph is achievable by a periodic independent set. 
We take the opportunity there to show two quick applications of the proof technique to related density problems for other types of subset of $G(S)$. 
In Section~\ref{sec:previous} we summarize previous results on the independence ratio. 
The next section collects some introductory results concerning $\dalpha(S)$, and then in Section~\ref{sec:discharging} we define our discharging process and its connection to the independence ratio, proving the Local Discharging Lemma. 
We use the Local Discharging Lemma to prove exact values of $\dalpha(S)$ for several families of sets $S$ in Section~\ref{sec:threegens}. 
We determine the independence ratio for a range of graphs with generator sets of size $3$. In particular we determine $\dalpha(S)$ for $S=\set{1,3,2i}$ for $i\ge 2$, $\set{1,5,2i}$ for $i\ge 5$, and conjecture the value of $\dalpha{\set{1,\ell,2i}}$ for all odd $\ell$. We also analyze the case of $S=\set{1,2k,2k+2\ell}$ where $k,\ell\ge 1$. 
This extends work of Zhu \cite{Zhu02}, who considered the fractional chromatic number of $G(S)$ in distance graphs with generator sets of size $3$. 
Finally in Section~\ref{sec:computation} we discuss the algorithm we used to compute values of $\dalpha(S)$ for specific finite sets $S$.
The computed values of $\dalpha(\{1,1+k, 1+k+i\})$ are given as a table in Appendix A, while more values are given in data available online\footnote{See \url{http://www.math.iastate.edu/dstolee/r/distance.htm} for all data files.}.
The statements of several theorems and conjectures were discovered by this data, and we present the computed values along with the stated functions in figures alongside these statements\footnote{For example, see Theorem~\ref{thm:1-4-k} and Figure~\ref{fig:1-4-k}.}.

Our notation is standard.
Throughout the paper we consider $S$ to be a finite, nonempty set of positive integers.
For a positive integer $n$, we write $[n] = \{1,\dots,n\}$, and similarly $[a,b] = \{a, a+1, \dots, b-1, b\}$.
When $d \geq 1$, we let $d \cdot S = \{ d\cdot s : s \in S\}$ and $S + d = \{ s + d : s \in S\}$.

%%%%%%%%%%%%%%%%%%%%%%%%%%%%%%%%%%%%%%%%%%%%%%%%%%%%%%%%%%%%%%%%%%%%
%%%%%%%%%%%%%%%%%%%%%%%%%%%%%%%%%%%%%%%%%%%%%%%%%%%%%%%%%%%%%%%%%%%%
%%%%%%%%%%%%%%%%%%%%%%%%%%%%%%%%%%%%%%%%%%%%%%%%%%%%%%%%%%%%%%%%%%%%
%%%%%%%%%%%%%%%%%%%%%%%%%%%%%%%%%%%%%%%%%%%%%%%%%%%%%%%%%%%%%%%%%%%%
%%%%%%%%%%%%%%%%%%%%%%%%%%%%%%%%%%%%%%%%%%%%%%%%%%%%%%%%%%%%%%%%%%%%
%%%%%%%%%%%%%%%%%%%%%%%%%%%%%%%%%%%%%%%%%%%%%%%%%%%%%%%%%%%%%%%%%%%%
%%%%%%%%%%%%%%%%%%%%%%%%%%%%%%%%%%%%%%%%%%%%%%%%%%%%%%%%%%%%%%%%%%%%
%%%%%%%%%%%%%%%%%%%%%%%%%%%%%%%%%%%%%%%%%%%%%%%%%%%%%%%%%%%%%%%%%%%%
\section{Periodic Sets of Extremal Value}\label{sec:periodic}

In this section, we prove Theorem~\ref{thm:periodic}.
As a consequence, we give an alternative proof of Theorem~\ref{thm:fractionalchromatic}.
We start by proving a lemma that has independent interest. Then, after the proof of Theorem~\ref{thm:periodic}, we use the lemma to show
that the density of other subsets of $G(S)$, such as dominating sets and $r$-identifying codes, are maximized by appropriate periodic examples.

Consider a finite directed graph $G$ where every vertex $v$ is given a weight $w(v)$.
Let $W = (v_i)_{i\in \Z}$ be a doubly infinite walk on $G$.
Then the \emph{upper average weight} $\overline{w}(W)$ of $W$ is defined as $\limsup_{N\to \infty} \frac{\sum_{i=-N}^N w(v_i)}{2N+1}$, and the \emph{lower average weight} $\underline{w}(W)$ of $W$ is defined as $\liminf_{N\to \infty} \frac{\sum_{i=-N}^N w(v_i)}{2N+1}$.
Given a simple cycle $C$ in $G$, define the infinite walk $W_C$ by infinitely repeating $C$.
Observe that $\overline{w}(W_C) = \underline{w}(W_C) = \frac{\sum_{v\in V(C)} w(v)}{|C|}$.

\begin{lemma}\label{lma:cycles}
Let $G$ be a finite, vertex-weighted digraph. % with at least one cycle.
The supremum of upper average weights (or infimum of lower average weights) of infinite walks on $G$ is equal to the maximum upper average weight (or minimum lower average weight, respectively) of some infinite walk $W_C$ where $C$ is a simple cycle.
\end{lemma}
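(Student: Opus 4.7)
The plan is to verify two matching inequalities and focus on the upper-average case; the lower-average version is entirely dual. The easy direction is that each $W_C$ is itself a bi-infinite walk with $\overline{w}(W_C)=w(C)/|C|$, so the supremum of $\overline{w}(W)$ over infinite walks is at least $M:=\max_C w(C)/|C|$, where the max is over the (finite collection of) simple cycles of $G$. Here I write $w(C)$ as shorthand for $\sum_{v\in V(C)}w(v)$.

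For the nontrivial direction, I need to show $\overline{w}(W)\le M$ for every bi-infinite walk $W$. The main combinatorial ingredient is a decomposition of any finite walk $v_0,v_1,\dots,v_L$ into simple cycles plus a residual simple path. I would argue this by iteration: while the current walk still contains a repeated vertex, choose the smallest $j$ for which $v_i=v_j$ for some $i<j$ (so $v_i,v_{i+1},\dots,v_{j-1}$ are distinct); extract the simple cycle $v_i,v_{i+1},\dots,v_j$, and replace the walk by $v_0,\dots,v_i,v_{j+1},\dots,v_L$, which is still a walk in $G$ because $v_j=v_i$. This process terminates with a walk visiting pairwise distinct vertices, and so of length at most $|V(G)|-1$; it yields a multiset of simple cycles $C_1,\dots,C_k$ with $\sum_j|C_j|=L+1-r$, where $r\le|V(G)|$ is the residual path's vertex count.

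To apply this to the bi-infinite walk $W$, I would restrict to the window $v_{-N},\dots,v_N$ and invoke the decomposition there. Each extracted cycle satisfies $w(C_j)\le M|C_j|$ by definition of $M$, so
$$\sum_{i=-N}^{N} w(v_i) \;\le\; M\sum_{j} |C_j| + w(\text{residual}) \;=\; M(2N+1) + \bigl(w(\text{residual}) - Mr\bigr),$$
and the correction term in parentheses has absolute value at most $(w_{\max}+|M|)\,|V(G)|$, a constant independent of $N$ (with $w_{\max}=\max_v|w(v)|$). Dividing by $2N+1$ and taking $\limsup$ gives $\overline{w}(W)\le M$. Since $M$ is itself realized by the cycle walk $W_{C^\star}$ for the optimal cycle $C^\star$, the supremum is attained, as claimed.

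The main obstacle will be making the cycle-extraction procedure syntactically clean (especially verifying that the shortened sequence really is a walk at every step) and handling the sign bookkeeping when $M$ may be negative: the rewriting of $M\sum_j|C_j|$ as $M(2N+1)-Mr$, rather than the naive bound $\sum_j|C_j|\le 2N+1$, is the step that makes the argument work uniformly regardless of the sign of $M$ or of the individual weights.
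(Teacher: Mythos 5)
Your proof is correct and takes the same route as the paper: restrict the bi-infinite walk to a window $[-N,N]$, greedily peel off simple cycles until only a short simple path remains (what the paper calls iterated ``contraction''), bound each cycle's contribution by $M|C_j|$, and let $N\to\infty$. The one place your write-up is genuinely tighter is the error bookkeeping at the end. You replace $M\sum_j|C_j|$ with $M(2N+1)-Mr$ and bound the full correction $w(\text{residual})-Mr$ in absolute value by $(w_{\max}+|M|)\,|V(G)|$ with $w_{\max}=\max_v|w(v)|$, so the estimate is insensitive to the signs of the weights and of $M$. The paper instead rearranges the ratio and bounds the remainder using $n\max_v w(v)$; its intermediate step
\[
\frac{\sum_j |C_j|\,\overline{w}(C_j) + \sum_{v\in W^{(t+1)}} w(v)}{(|W_N| - |W^{(t+1)}|) + |W^{(t+1)}|}
\;\le\;
\frac{\sum_j |C_j|\,\overline{w}(C_j)}{|W_N| - |W^{(t+1)}|} + \frac{\sum_{v\in W^{(t+1)}} w(v)}{|W_N| - |W^{(t+1)}|}
\]
implicitly uses $\sum_{i=-N}^N w(v_i)\ge 0$, and the last bound uses $\max_v w(v)\ge 0$. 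That is harmless for the paper's applications, where weights are densities in $[0,1]$, but the lemma is stated for arbitrary real weights and the paper itself reduces the minimization case by negating weights. Your sign-agnostic accounting is the version of the argument that closes uniformly; otherwise the two proofs are the same.
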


\begin{proof}
We prove that every infinite walk $W$ has $\overline{w}(W)$ bounded above by the maximum upper average weight of a simple cycle.
In the case of minimizing $\underline{w}(W)$, we can simply negate the weights of the vertices and apply the maximization case.

Fix $W = (v_i)_{i\in \Z}$.
Let $n = |V(G)|$ and let $N$ be large.
We will approximate the fraction $\frac{\sum_{i=-N}^N w(v_i)}{2N+1}$ by a convex combination of upper average weights of simple cycles of $G$.
This approximation improves as $N$ grows, so we find the limit definition of $\overline{w}(W)$ is bounded above by the maximum upper average weight of a simple cycle.

Let $X=(x_i)_{i\in [a,b]}$ be any finite walk.  If $x_i,x_{i+1},\dots, x_{i+\ell}$ are distinct vertices where $x_{i+\ell+1} = x_i$, then we say $C =  (x_i, x_{i+1},\dots, x_{i+\ell})$ is a simple cycle within $X$ (note that a simple cycle may have one or two vertices).
Define $X'$ to be the \emph{contraction of $C$ in $X$} by removing the subwalk $(x_i, x_{i+1},\dots, x_{i+\ell})$ from $X$. %Define $W'$ to be the \emph{contraction of $W$ on $C$} by removing the vertices in $C$ from $W$ and adding the edge $v_{i-1}v_{i+\ell+1}$.
%Since $v_i = v_{i+\ell+1}$, this edge is in $G$.

Let $W_N = ( v_i )_{i\in [-N,N]}$.
Starting with $W^{(1)} = W_N$, we iteratively construct a list of walks $W^{(1)}, \dots, W^{(t+1)}$ and a list of simple cycles $C_1,\dots, C_t$ such that $C_i$ is a simple cycle in $W^{(i)}$ and $W^{(i+1)}$ is the contraction of $C_i$ in $W^{(i)}$.
This iterative construction stops when $W^{(t+1)}$ does not contain a simple cycle.
Such a walk does not repeat any vertices, so $|W^{(t+1)}| \leq n \ll N$.
Observe that
\begin{align*}
	\sum_{i=-N}^{N} w(v_i) &= \sum_{j=1}^t \left[\sum_{v_i \in C_j} w(v_i)\right] + \sum_{v_i \in W^{(t+1)}} w(v_i)\\
		&= \sum_{j=1}^t |C_j| \overline{w}(C_j) + \sum_{v_i \in W^{(t+1)}} w(v_i).
\end{align*}
Also note that $\sum_{j=1}^t |C_j| = |W_N|-|W^{(t+1)}|$.
Thus we have
\begin{align*}
	\overline{w}(W_N)=\frac{\sum_{i=-N}^N w(v_i)}{|W_N|} &= \frac{\sum_{j=1}^t |C_j| \overline{w}(C_j) + \sum_{v_i \in W^{(t+1)}} w(v_i)}{(|W_N|-|W^{(t+1)}|) + |W^{(t+1)}|}\\
	&\leq \frac{\sum_{j=1}^t |C_j| \overline{w}(C_j)}{|W_N|-|W^{(t+1)}|}+\frac{\sum_{v_i\in W^{(t+1)}} w(v_i)}{|W_N|-|W^{(t+1)}|} \\
	& \leq \sum_{j=1}^t \frac{|C_j|}{|W_N|-|W^{(t+1)}|} \overline{w}(C_j)+\frac{n \max_{v_i\in V(G)} w(v_i)}{2N+1-n}.
\end{align*}
Since there are a finite number of cycles in $G$, the terms of the sum in the last expression can be grouped by cycles with the same vertices in $G$.
This sum is thus a convex combination of values $\overline{w}(C)$ over cycles $C$ in $G$, and hence is at most $\max\{ \overline{w}(C) : C \text{ a cycle in $G$}\}$.

By taking the limsup as $N\to \infty$ of both sides of the inequality, we obtain

\vspace{1em}
\hfill\qquad\qquad$\overline{w}(W)=\limsup \overline{w}(W_N)\leq \max\{\overline{w}(C): C \text{ a cycle in $G$}\}$.\hfill \qedhere
\end{proof}

We now proceed to use Lemma~\ref{lma:cycles} to prove that there is a periodic independent set that attains the independence ratio.  Our technique is a general approach that shows, for any type of object that is a subset of vertices in distance graphs, the extremal density is attained by periodic objects.

Let $X\subseteq \Z$ be an object satisfying a property $\mathcal{P}$ in a distance graph $G(S)$.  We consider the intersection of $X$ with disjoint consecutive intervals of fixed length $\ell$:
\[\dots, X\cap [-\ell+1,0], \ X\cap[1,\ell], \ X\cap[\ell+1,2\ell], \dots\]
There are only a finite number of possibilities for $X\cap [k \ell+1, (k+1)\ell]$, considered up to translation by multiples of $\ell$.  We call such patterns ``states'', and encode in a ``state graph'' which states can follow a given state.  Since there are a finite number of states, we can apply Lemma~\ref{lma:cycles} to the state graph to obtain an extremal periodic object.

More formally, a \emph{state} is a set $T \subseteq [\ell]$.  An object $X$ satisfying property $\mathcal{P}$ has state $T$ on the interval $[k\ell+1,(k+1)\ell]$, where $k\in\Z$, if $X\cap[k\ell+1,(k+1)\ell]=T+k\ell$.
An \emph{admissible state} is a state $T$ such that there exists an object $X$ with property $\mathcal{P}$ such that $X$ has state $T$ on some interval.
A \emph{transition} occurs between two admissible states $T$ and $T'$ if there exists an object $X$ with property $\mathcal{P}$ and an integer $k$ such that $X$ has state $T$ on interval $[k\ell+1,(k+1)\ell]$ and state $T'$ on interval $[(k+1)\ell+1,(k+2)\ell]$.
Let the \emph{state graph} be the directed graph where the vertices are the admissible states, and the directed edges are the transitions.
The weight of a state is determined by the type of objects, but is usually the density $|T|/\ell$.

Give an object $X$ with property $\mathcal{P}$, the states $T_i=X\cap[i\ell+1,(i+1)\ell]$ give rise to an infinite walk in the state graph by definition.
Every doubly infinite walk $W=(T_i)_{i\in\Z}$ in the state graph gives rise to a subset $Y$ of $\Z$ by $Y=\union_{i\in \Z}(T_i+i\ell)$ for $i\in\Z$.
Suppose that every subset $Y$ created from an infinite walk in this way also has property $\mathcal{P}$.  For the properties that we are interested in, this fact can usually be guaranteed by choosing $\ell$ large enough.
By applying Lemma~\ref{lma:cycles} to the state graph, there is a cycle $C$ whose weight is extremal.  Let $W$ be the walk formed by infinitely repeating $C$, and let $Y$ be the corresponding object with property $\mathcal{P}$.  Note that $Y$ is periodic with period $\ell$ times the length of $C$.  Since there are at most $2^\ell$ states, the period of $Y$ is at most $\ell 2^{\ell}$.

%A type is \emph{admissible} if $T$ is an independent set in $G(S)$.
%Recall that $S$ is finite and $s = \max S$, so there are at most $2^{s}$ admissible types.
%An ordered pair of admissible types $(T,T')$ is \emph{compatible} if $T \union (T'+s)$ is independent.
%Let the \emph{state graph} be the directed graph where the admissible types are vertices, and the compatible pairs are the edges.

\begin{proof}[Proof of Theorem~\ref{thm:periodic}.]
We consider independent sets in $G(S)$.  Let $s=\max S$, and choose interval lengths of $\ell=s$.  The states are subsets of $[s]$, the admissible states are subsets of $[s]$ that are independent in $G(S)$, and the weight of a state $T$ is $|T|/s$.  State $T$ can transition to state $T'$ if $T\cup(T'+s)$ is an independent set.
Given any independent set $X$ in $G(S)$, the states $T_i=X\cap[is+1,(i+1)s]$ give rise to an infinite walk $W$ in the state graph since the intersection of $X$ with two consecutive intervals remains independent.
Note that the density $\delta(X)$ is equal to $\overline{w}(W)$.

Next we show that every subset $Y=\union_{i\in \Z}(T_i+is)$ created from an infinite walk on the state graph is an independent set in $G(S)$.
If not, there are $j_1, j_2\in Y$ such that $|j_1-j_2|\in S$.  Since $\ell=s$ both $j_1$ and $j_2$ either belong to the same $T_i+is$ or two consecutive states, $T_i+is$ and $T_{i+1}+(i+1)s$.
Both $j_1$ and $j_2$ can not belong to the same $T_i+is$, since $T_i$ is admissible and hence independent in $G(S)$.
If $j_1\in T_i+is$ and $j_2\in T_{i+1}+(i+1)s$, then $(T_i+is)\union (T_{i+1}+(i+1)s)$ is not independent, which contradicts that there is a transition between those states.

%Observe that a set $X$ in $G(S)$ is independent if and only if every interval is of an admissible type and all pairs of consecutive intervals form a compatible pair of types.
%Thus, independent sets in $G(S)$ correspond to infinite walks in the state graph.
%Given an independent set $X$ and its corresponding walk $W$, the density of $X$ is equal to the average weight of the walk $W$.
By Lemma~\ref{lma:cycles}, the weight of an infinite walk in the state graph is maximized by a simple cycle $C$.
The independent set $Y$ created from the infinite walk repeating $C$ has density $\dalpha(S)$ and is periodic with a period of length $s |C|$, which is at most $s2^s$.
\end{proof}

To demonstrate the versatility of Lemma~\ref{lma:cycles}, we also prove that other problems on distance graphs admit periodic extremal sets.

A set of vertices $D$ is \emph{dominating} if every vertex in the graph is either in $D$ or adjacent to a vertex in $D$.

\begin{theorem}
Let $S$ be a finite set of positive integers and set $s = \max S$.
The minimum density of a dominating set in $G(S)$ is achieved by a periodic set with period at most $(2s)2^{2s}$.
\end{theorem}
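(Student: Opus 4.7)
The plan is to apply the state-graph framework developed just before the theorem, with interval length $\ell = 2s$, exactly mirroring the proof of Theorem~\ref{thm:periodic} but with suitably chosen transitions so that the property ``$D$ is dominating'' can be certified pairwise.

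First I would set $\ell = 2s$ and take the state space to be all subsets $T \subseteq [\ell]$, weighted by $|T|/\ell$. The crucial observation is that for any vertex $v \in I_i = [i\ell+1, (i+1)\ell]$ at position $p \in [\ell]$, the neighbors of $v$ in $G(S)$ lie in $[v-s,v+s]\setminus\{v\}$, and since $\ell = 2s$ this interval is contained in $I_{i-1}\cup I_i$ when $p\le s$ and in $I_i\cup I_{i+1}$ when $p > s$. So domination of each vertex depends only on two consecutive intervals, which is precisely what lets us encode the property via pairwise transitions.

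Next I would define a transition $T \to T'$ in the state graph to mean that every vertex of $[s+1, \ell]$ (the right half of the first interval) and every vertex of $[\ell+1, \ell+s]$ (the left half of the second interval) is either contained in $T \cup (T'+\ell)$ or has some neighbor under $G(S)$ inside $T \cup (T'+\ell)$. An admissible state is one that appears in some infinite walk; the state graph has at most $2^\ell = 2^{2s}$ vertices. The key bidirectional check is then routine: if $D$ is any dominating set and $T_i = (D\cap I_i) - i\ell$, then the observation above forces the transition $T_i\to T_{i+1}$ to hold, so $D$ yields a valid infinite walk $W$ whose average weight equals the density of $D$; conversely, given any infinite walk $(T_i)_{i\in\Z}$ in the state graph, the resulting set $Y = \bigcup_{i\in\Z}(T_i + i\ell)$ has every vertex of $I_i$ dominated --- the left half of $I_i$ by the transition $T_{i-1}\to T_i$ and the right half by $T_i \to T_{i+1}$.

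Finally, I would apply Lemma~\ref{lma:cycles} in its minimization form: the infimum of the lower average weights of infinite walks equals the minimum lower average weight $\sum_{v\in V(C)}w(v)/|C|$ over simple cycles $C$ in the state graph. Repeating such an extremal cycle gives a periodic dominating set $Y$ of period $\ell |C|$, and since $|C|$ is at most the number of admissible states (at most $2^{2s}$), we obtain the claimed bound $\ell \cdot 2^\ell = (2s)2^{2s}$ on the period. The only step that requires genuine care is the setup of the transitions --- specifically, verifying that the pairwise condition exactly captures global domination --- and this is handled by the neighborhood containment observation that motivated the choice $\ell = 2s$ in the first place.
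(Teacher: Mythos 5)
Your proposal is correct and follows essentially the same approach as the paper: you choose interval length $\ell = 2s$, define transitions by requiring that the middle $2s$ positions of a pair of consecutive intervals (i.e., $[s+1,3s]$) be dominated within those two intervals, verify the two-way correspondence between dominating sets and walks in the state graph, and then apply the minimization form of Lemma~\ref{lma:cycles} to extract a periodic extremal set of period at most $(2s)2^{2s}$. The only cosmetic difference is that the paper declares all states admissible while you restrict to those appearing in some walk; this does not affect the argument or the bound.
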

\begin{proof}
We consider dominating sets in $G(S)$.  Let $s=\max S$, and choose interval lengths of $\ell=2s$.  The states are subsets of $[2s]$, and all states are admissible.
The weight of a state $T$ is $|T|/(2s)$.  State $T$ can transition to state $T'$ if every vertex in $[s+1,3s]$ either is in $T\cup(T'+2s)$ or has a neighbor in $T\cup (T'+2s)$.
Given any dominating set $X$ in $G(S)$, the states $T_i=X\cap[i(2s)+1,(i+1)2s]$ give rise to an infinite walk $W$ in the state graph since every integer in the interval $[i(2s)+s+1,i(2s)+3s]$ is either in, or has a neighbor in, the set $\left(T_i+i(2s)\right)\cup \left(T_{i+1}+(i+1)(2s)\right)$.
Note that the density $\delta(X)$ is equal to $\overline{w}(W)$.

Next we show that every subset $Y=\union_{i\in \Z}(T_i+i(2s))$ created from an infinite walk on the state graph is a dominating set in $G(S)$.
If not, then there exists $j\notin Y$ that has no neighbor in $Y$.
The integer $j$ is in some interval $[i(2s)+1,(i+1)2s]$.
If $j\in [i(2s)+1,(i)2s+s]$, then since there is a transition from $T_{i-1}$ to $T_i$, $j$ must have a neighbor in $Y\cap [(i-1)(2s)+1,(i+1)(2s)]$.
If $j\in [i(2s)+s+1,(i+1)2s]$, then since there is a transition from $T_{i}$ to $T_{i+1}$, $j$ must have a neighbor in $Y\cap [(i)(2s)+1,(i+2)(2s)]$.

By Lemma~\ref{lma:cycles}, the weight of an infinite walk in the state graph is minimized by a simple cycle $C$.
The dominating set $Y$ created from the infinite walk repeating $C$ has minimum periodic with a period of length $2s |C|$, which is at most $(2s)2^{2s}$.
\end{proof}

%Redefine a \emph{type} to be a set $T \subseteq [2s]$.
%%For this problem, let all types be admissible.
%%(We use this definition since every vertex in $[2s]$ has at least one neighbor outside the interval.)
%%An ordered pair $(T,T')$ of types is \emph{compatible} if every vertex in $[s+1,2s]$ is either in $T$ or has a neighbor in $T \cup (T' + 2s)$.
%%Let the \emph{type graph} be the directed graph given by a vertex for every type and a directed edge for every compatible pair of types.
%%There is a correspondence between dominating sets in $G(S)$ and infinite walks in the state graph.
%%Let the weight of a type $T$ be $|T|/2s$.
%%By Lemma~\ref{lma:cycles}, the minimum average weight of an infinite walk in the type graph is given by a simple cycle $C$.
%%The dominating set given by $C$ is a minimum-density dominating set and its period is at most $2s|C| \leq (2s)2^{2s}$.

Define the \emph{ball $B_r(u)$ of radius $r$ centered at $u$} to be the set of all vertices in $G(S)$ that are distance at most $r$ from $u$.
A set $A$ of vertices is an \emph{$r$-identifying code} if for every pair of distinct vertices $u$ and $v$ in $G(S)$, the sets $A \cap B_r(u)$ and $A\cap B_r(v)$ are nonempty and distinct.

\begin{theorem}
Let $S$ be a finite set of positive integers and set $s = \max S$.
The minimum density of a $1$-identifying code in $G(S)$ is achieved by a periodic set with period at most $(6s)2^{6s}$.
\end{theorem}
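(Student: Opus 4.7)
The plan is to apply the general state-graph framework from Section~\ref{sec:periodic}, adapted to $1$-identifying codes with a carefully chosen interval length $\ell = 6s$. Since $B_1(v) \subseteq [v-s, v+s]$, the two conditions defining a $1$-identifying code $A$---namely $A \cap B_1(v) \neq \emptyset$ for each $v$, and $A \cap B_1(u) \neq A \cap B_1(v)$ for each distinct pair $u, v$---are determined by $A \cap [v-3s, v+3s]$ whenever $|u-v| \leq 2s$, and for $|u-v| > 2s$ the balls $B_1(u), B_1(v)$ are disjoint so distinctness follows automatically from nonemptiness. Choosing $\ell = 6s$ makes two consecutive length-$\ell$ intervals span a combined window of length $12s$, which contains $[v-3s, v+3s]$ exactly when $v$ lies in the ``middle range'' $[k\ell + 3s + 1, (k+1)\ell + 3s]$ of the transition $T_k \to T_{k+1}$; these middle ranges each have length $\ell$ and tile $\Z$ as $k$ varies.

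With this choice I would take all subsets $T \subseteq [\ell]$ as states, with weight $|T|/\ell$, and declare a transition $T \to T'$ valid precisely when the ``local code'' $T \cup (T' + \ell) \subseteq [1, 12s]$ satisfies, for every $v \in [3s+1, 9s]$, both (i) $(T \cup (T' + \ell)) \cap B_1(v) \neq \emptyset$ and (ii) $(T \cup (T' + \ell)) \cap B_1(u) \neq (T \cup (T' + \ell)) \cap B_1(v)$ for every $u \in [v-2s, v+2s] \setminus \{v\}$. Two routine verifications then complete the setup: any $1$-identifying code $A$ induces a valid doubly infinite walk via $T_i = (A \cap [i\ell+1, (i+1)\ell]) - i\ell$, since all balls relevant to the transition $T_i \to T_{i+1}$ lie inside the combined window; and conversely any walk $(T_i)_{i \in \Z}$ yields a $1$-identifying code $Y = \bigcup_{i \in \Z} (T_i + i\ell)$, because every $v \in \Z$ falls into the middle range of exactly one transition, whose conditions (i) and (ii) supply the required properties for $v$ together with every $u$ within $2s$ of $v$, while $|u-v| > 2s$ is handled by disjointness of balls.

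Finally, $\delta(Y_C) = \overline{w}(W_C)$ for any periodic code $Y_C$ arising from a cycle $C$, and $\overline{w}(W_A) \leq \delta(A)$ for any identifying code $A$; combined with $\underline{w}(W) \leq \overline{w}(W)$ for any walk and the minimization form of Lemma~\ref{lma:cycles}, this shows that the infimum of $\delta$ over all $1$-identifying codes equals $\min_C \overline{w}(W_C)$ over simple cycles $C$, and is attained by the periodic code $Y_C$ with period $\ell \cdot |C| \leq \ell \cdot 2^\ell = (6s) 2^{6s}$ (since the state graph has at most $2^\ell$ vertices). The main obstacle is the combinatorial bookkeeping to check that $\ell = 6s$ is exactly large enough: the middle ranges must tile $\Z$, every vertex's ball nonemptiness must be witnessed by exactly one transition, and every near-pair's ball distinctness must likewise be captured locally---everything else is a direct analog of the dominating-set proof just given.
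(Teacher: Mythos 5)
Your proposal is correct and follows essentially the same approach as the paper: you build the same state graph on subsets of $[\ell]$ with $\ell = 6s$, use the same middle-range windowing argument to localize the $1$-identifying property, and invoke the minimization form of Lemma~\ref{lma:cycles} to extract an extremal periodic code of period at most $(6s)2^{6s}$. One small improvement: your transition condition explicitly requires both nonemptiness of $B_1(v)\cap Y$ and pairwise distinctness of the sets, whereas the paper's stated transition condition and its verification only address distinctness and silently omit the nonemptiness requirement.
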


\begin{proof}
Let $s=\max S$, and choose intervals of length $\ell=6s$.  The states are subsets of $[6s]$, and all states are admissible. The weight of a state $T$ is $T/(6s)$.  State $T$ can transition to state $T'$ if in $T\cup (T'+6s)$ every distinct vertices $u\in [3s+1,9s]$ and $v\in [s+1,11s]$ have the property $N[u]\cap (T\cap T'+6s)\neq N[v]\cap (T\cap T'+6s)$.

Next we show that every subset $Y=\union_{i\in \Z}(T_i+i(6s))$ created from an infinite walk on the state graph is a 1-identifying code in $G(S)$.
If not, then there exist distinct $u,v\in \Z$ where $N[u]\cap Y=N[v]\cap Y$.
The integer $u$ is in some interval $[i6s+3s+1,(i+1)6s+3s]$ for some $i$.
Since $N[u]\cap N[v]\neq \emptyset$, then $u$ and $v$ must be within distance $2s$, and hence $v\in [i6s+s+1,(i+1)6s+5s]$.
Thus, $u$ and $v$ are both in $(T_i+i6s)\cup (T_{i+1}+(i+1)6s)$, where there is a transition from $T_i$ to $T_{i+1}$.

By Lemma~\ref{lma:cycles}, the weight of an infinite walk in the state graph is minimized by a simple cycle $C$.
The 1-identifying code $Y$ created from the infinite walk repeating $C$ has minimum density and is periodic with a period of length $6s |C|$, which is at most $(6s)2^{6s}$.
\end{proof}

%Redefine a \emph{type} to be a set $T \subseteq [6s]$.
%%%A type $T$ is \emph{admissible} if every vertex $u \in [s+1,5s]$ has at least one adjacent vertex in $T$.
%%%An ordered pair $(T,T')$ of types is \emph{compatible} if $T \cap [s+1,5s] = (T'+s)\cap [s+1,5s]$ and every pair of distinct vertices $u, v$ with $u \in [3s+1,4s]$ and $v \in [s+1,6s]$ have distinct sets $N[u] \cap (T \cup T'+s)$ and $N[v] \cap (T\cup T'+s)$.
%%%Let the \emph{type graph} be the directed graph given by a vertex for every admissible type and a directed edge for every compatible pair of types.
%%%Observe that there is a correspondence between 1-identifying codes in $G(S)$ and infinite walks in the type graph.
%%%Let the weight of a type $T$ be $|T\cap [s]| / s$.
%%%By Lemma~\ref{lma:cycles}, the minimum average weight of an infinite walk in the type graph is given by a simple cycle $C$.
%%%The dominating set given by $C$ is a minimum-density  1-identifying code and its period is at most $s|C| \leq s2^{6s}$.

Observe that an $r$-identifying code in a graph $G$ corresponds to a 1-identifying code in $G^r$, where $G^r$ is the graph with the same vertex set as $G$ but two vertices $u,v$ are adjacent in $G^r$ if they have distance at most $r$ in $G$.
Further, $G(S)^r$ is a distance graph with distance set $S' =\union_{t=1}^r \left\{\sum_{i=1}^t a_i : |a_i| \in S\right\}$, and $\max S' = r\max S$.
Thus, we have the following corollary.

\begin{corollary}
Let $S$ be a finite set of positive integers and set $s = \max S$.
The minimum density of an $r$-identifying code in $G(S)$ is achieved by a periodic set with period at most $(6sr)2^{6sr}$.
\end{corollary}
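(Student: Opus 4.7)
The plan is to reduce this corollary to the previous theorem (minimum density of a $1$-identifying code) by using the observation immediately preceding the corollary statement, namely that an $r$-identifying code in a graph $G$ is exactly a $1$-identifying code in the graph power $G^r$, and that $G(S)^r$ is itself a distance graph $G(S')$ with generating set
\[
S' = \bigcup_{t=1}^{r} \left\{ \sum_{i=1}^{t} a_i : |a_i| \in S \right\}
\]
and maximum element $\max S' = r \max S = rs$.

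First I would verify carefully that $A \subseteq \Z$ is an $r$-identifying code in $G(S)$ if and only if $A$ is a $1$-identifying code in $G(S') = G(S)^r$; this is immediate from the definitions, since the closed ball of radius $r$ around $u$ in $G(S)$ coincides with the closed neighborhood of $u$ in $G(S)^r$. Moreover, the density $\delta(A)$ is computed identically in both graphs, since it depends only on $A \subseteq \Z$, not on the graph structure. Hence the minimum density of an $r$-identifying code in $G(S)$ equals the minimum density of a $1$-identifying code in $G(S')$.

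Next I would apply the previous theorem (about $1$-identifying codes) to the distance graph $G(S')$. Writing $s' = \max S' = rs$, that theorem guarantees the existence of a periodic $1$-identifying code in $G(S')$, of minimum density, with period at most $(6s')2^{6s'} = (6sr)2^{6sr}$. Translating back via the correspondence above, this set is also a minimum density $r$-identifying code in $G(S)$ with the same period bound, completing the proof.

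The only potential obstacle is purely notational: one must be precise about what $S'$ looks like (differences along walks of length up to $r$, allowing cancellation via signs $|a_i| \in S$) and why $\max S' = rs$ rather than something larger — but the triangle inequality on $\Z$ gives $|\sum a_i| \le \sum |a_i| \le rs$, while taking $a_i = s$ for all $i$ shows the bound is tight. With that in hand, the corollary follows in a single line from the previous theorem.
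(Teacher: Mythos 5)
Your proposal is correct and is essentially identical to the paper's own argument: the paper proves this corollary precisely by the observation preceding it, namely that an $r$-identifying code in $G(S)$ is a $1$-identifying code in the graph power $G(S)^r = G(S')$ with $\max S' = r\max S$, and then invoking the preceding theorem on $1$-identifying codes. Your extra care about the triangle inequality and the tightness of $\max S' = rs$ is a nice check, but it is the same route the paper takes.
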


For completeness, we demonstrate a similar proof for chromatic number, using an unweighted statement analogous to Lemma~\ref{lma:cycles}.
%The \emph{chromatic number} $\chi(G)$ is the minimum positive number $k$ such that there exists a $k$-coloring of the vertices of $G$ where no two adjacent vertices have the same color.
This result was previously shown by Eggleton, Erd\H{os}, and Skilton~\cite{EES90}.

\begin{theorem}
Let $S$ be a finite set of positive integers and set $s = \max S$.
For $k = \chi(G(S))$, there exists a periodic proper $k$-coloring $c$ with minimum period at most $sk^s$.
\end{theorem}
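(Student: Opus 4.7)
The plan is to imitate the state-graph framework developed for Theorem~\ref{thm:periodic}, but adapted to proper $k$-colorings, together with an unweighted version of Lemma~\ref{lma:cycles}: any finite digraph that admits a doubly infinite walk contains a directed simple cycle (this follows because some vertex must be visited infinitely often, and then one of the finitely many walks returning to it must revisit an earlier vertex, yielding a simple cycle).

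First I would fix interval length $\ell = s$ and define a \emph{state} to be a function $T : [s] \to [k]$ whose induced coloring on the subgraph of $G(S)$ spanned by $[s]$ is proper. A state $T$ is \emph{admissible} if it occurs as the restriction (translated back to $[s]$) of some proper $k$-coloring of $G(S)$, and a \emph{transition} from $T$ to $T'$ exists iff the combined map on $[2s]$ obtained by placing $T$ on $[s]$ and $T'$ on $[s+1,2s]$ is a proper $k$-coloring of $G(S)[1,2s]$. The state graph has at most $k^s$ vertices. Since $\chi(G(S)) = k$, a proper $k$-coloring $c$ exists, and the states $T_i = c|_{[is+1,(i+1)s]}$ (shifted to $[s]$) form a doubly infinite walk in the state graph, so the unweighted version of Lemma~\ref{lma:cycles} produces a simple directed cycle $C$ among admissible states with consistent transitions.

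Next I would let $c'$ be the coloring of $\Z$ obtained by infinitely repeating $C$, i.e., $c'(i s + r) = T_{i \bmod |C|}(r)$ for $r \in [s]$. The period is $s|C| \le s k^s$, as required. It remains to verify that $c'$ is a proper $k$-coloring of $G(S)$. For any edge $\{u,v\}$ in $G(S)$ we have $|u - v| \in S$ with $|u-v| \le s$, so $u$ and $v$ lie either in one block $[is+1,(i+1)s]$ or in two consecutive blocks. Within a single block, properness follows because $T_i$ is itself an admissible (hence proper) state, and across two consecutive blocks, properness is built into the definition of transition. Thus $c'$ is a periodic proper $k$-coloring with period at most $sk^s$.

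The only real obstacle is checking that the chosen block length $\ell = s$ is sufficient so that \emph{every} potentially monochromatic edge of $G(S)$ is witnessed either inside one state or across a single transition; this is immediate from $\max S = s$, which is why the interval length is chosen to equal $s$ rather than $2s$ as in the dominating-set and identifying-code arguments.
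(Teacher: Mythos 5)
Your proof is correct and follows essentially the same route as the paper: define states as proper colorings of a length-$s$ interval, build the state graph with transitions checked on $[2s]$, observe that an existing proper $k$-coloring yields a doubly infinite walk, extract a simple cycle, and repeat it. The only cosmetic difference is that you spell out the verification that the repeated cycle yields a proper coloring, which the paper leaves implicit by analogy with the independent-set case.
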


\begin{proof}
A state is a coloring $c : [s] \to \{1,\dots,k\}$.
A state $c$ is admissible if the partial coloring induced on $G(S)$ is proper.
State $c$ can transition to state $c'$ if the partial coloring
$$c''(i) = \begin{cases} c(i) & \text{if $i \in [s]$}\\ c'(i-s) & \text{if $i \in [s+1,2s]$}\end{cases}$$
is proper in $G(S)$.
The state graph has admissible states as vertices and transitions as edges.
As in the case of independent sets, there is a correspondence between proper $k$-colorings of $G(S)$ and infinite walks in the state graph.
Since there is a proper $k$-coloring of $G(S)$, there is some infinite walk in the state graph, and hence the state graph contains at least one cycle.
Infinitely repeating this cycle corresponds to a periodic proper $k$-coloring with period at most $sk^s$.
\end{proof}

To finish this section, we present an alternate proof of one inequality of Theorem~\ref{thm:fractionalchromatic} using an extremal periodic independent set.
The original proof by Lih, Liu, and Zhu~\cite{LLZ99} first showed equality between $\chi_f( G(n,S) )$ and $\alpha( G(n,S) )/n$ and then took the limit.
%%%Using our periodic set, we can demonstrate a simple fractional coloring that achieves the extremal value, showing $\chi_f( S) \leq \dalpha(S)^{-1}$.

\vspace{0.5em}
\noindent\textbf{Theorem~\ref{thm:fractionalchromatic}} ({Lih, Liu, and Zhu \cite{LLZ99}})\textbf{.}
Let $S$ be a finite set of positive integers.
Then \[\chi_f(S) = \dalpha(S)^{-1}.\]

\begin{proof}
First we show $\chi_f(S)\leq \dalpha(S)^{-1}$.
Let $\Ind(S)$ be the family of independent sets in $G(S)$.
By Theorem~\ref{thm:periodic}, there exists a periodic independent set $A$ with period $p$ and $\delta(A) = \dalpha(S)$.
Form the fractional chromatic coloring $c$ by assigning $c(A+i) = \frac{1}{p\delta(A)}$ for all $i \in \{0,\dots,p-1\}$ and $c(I) = 0$ for all other independent sets $I$.
Since $A$ is periodic and contains $p\delta(A)$ elements within any interval of length $[p]$, a vertex $x$ appears in exactly $p\delta(A)$ sets $A+i$.
Thus, $\sum_{\Ind(S): v\in I} c(I) = 1$, so $c$ is a fractional coloring.
The value of this fractional coloring is $p\cdot \frac{1}{p\delta(A)} = \dalpha(S)^{-1}$.

Next we show $\chi_f(S)\geq \dalpha(S)^{-1}$, which is the analogue for infinite graphs of the well known fact that $\chi_f(G)\geq \frac{1}{\alpha(G)}$ for a finite graph $G$.  Let $G(S)[-n,n]$ be the finite subgraph of $G(S)$ induced by the vertices $[-n,n]$.  We know
\[\dfrac{2n+1}{\alpha(G(S)[-n,n])}\leq  \chi_f(G(S)[-n,n])\leq \chi_f(S)\]
for all $n\in \Z$.
We show $\dalpha(S)\geq \limsup_{n\to\infty} \frac{\alpha(G(S)[-n,n])}{2n+1}$, which with the above inequality implies $\chi_f(S)\geq \dalpha(S)^{-1}$.

Let $L:=\limsup_{n\to\infty}\frac{\alpha(G(S)[-n,n])}{2n+1}$, and let $A_n$ be a maximum independent set in $G(S)[-n,n]$.
For $n\geq s$, $X_n:=\union_{z\in \Z}(A_n\cap [-n,n-s]+2nz)$ is an infinite independent set in $G(S)$ of density at least $\frac{\alpha(G(S)[-n,n])-s}{2n}$, which approaches $L$ as $n\to\infty$.
Thus there exists a sequence $(X_n)_{n\geq s}$ of independent sets in $G(S)$ with densities that approach $L$, and so $\dalpha(S)\geq L$.
%For every independent set $A$ in $G(S)$, $A\cap [-n,n]$ is independent in $G(S)[-n,n]$.
%Thus, $\frac{\alpha(G(S)[-n,n])}{2n+1}\geq \frac{|A\cap [-n,n]|}{2n+1}$ for all independent sets $A$ of $G(S)$.
%Taking the limsup as $n\to\infty$ proves $\dalpha(S)\leq L$.
\end{proof}

%%%%%%%%%%%%%%%%%%%%%%%%%%%%%%%%%%%%%%%%%%%%%%
%%%%%%%%%%%%%%%%%%%%%%%%%%%%%%%%%%%%%%%%%%%%%%
%%%%%%%%%%%%%%%%%%%%%%%%%%%%%%%%%%%%%%%%%%%%%%
%%%%%%%%%%%%%%%%%%%%%%%%%%%%%%%%%%%%%%%%%%%%%%
\section{Previous Results}
\label{sec:previous}

We outline here the known results concerning $\dalpha(S)$. The results were all phrased in terms of $\chi_f(S)$, but by 
Theorem~\ref{thm:fractionalchromatic} we know the following versions are equivalent.

\begin{theorem}[Gao and Zhu~\cite{GZ96}]
Let $k$ and $k'$ be positive integers such that $k \leq k'$.
\begin{enumerate}
\setlength{\itemsep}{0em}
\item $\dalpha([1,k']) = \dalpha([k']) = \frac{1}{k'+1}$.
\item If $k' \geq (5/4)k$, then $\dalpha([k,k']) = \frac{k}{k+k'}$.
\end{enumerate}
\vspace{-1.5em}  \hfill $\square$
\end{theorem}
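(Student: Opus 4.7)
The plan is to handle the two parts separately, using explicit periodic constructions for the lower bounds and a sliding-window argument for the upper bound of Part 2.

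Since $[1,k']=[k']=\{1,\dots,k'\}$ in the paper's notation, both equalities in Part 1 reduce to the single claim $\dalpha([k'])=1/(k'+1)$. The set $A=(k'+1)\Z$ has density $1/(k'+1)$, and its pairwise differences are nonzero multiples of $k'+1$, none lying in $[k']$; this gives the lower bound. Conversely, consecutive elements of any independent set must differ by at least $k'+1$, so the density is at most $1/(k'+1)$.

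For Part 2, the lower bound is witnessed by the periodic set
\[
A=\bigcup_{j\in\Z}\bigl[j(k+k'),\ j(k+k')+k-1\bigr]
\]
of density $k/(k+k')$: within-block differences are at most $k-1$ while between-block differences are at least $k'+1$, so both avoid $[k,k']$. For the matching upper bound, the first step is the following window claim: for any independent set $A$ in $G([k,k'])$ and any $n\in\Z$, $|A\cap[n,n+k+k'-1]|\le k$. Given the claim, a standard averaging over integer translates of the window (each element of $A$ lies in exactly $k+k'$ such windows) yields $\delta(A)\le k/(k+k')$.

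The main obstacle is the window claim. To prove it, set $B=A\cap[n,n+k+k'-1]$; for $a<b$ in $B$ we have $b-a\in[1,k-1]\cup[k'+1,k+k'-1]$, and in either subrange the cyclic distance $\min(b-a,\,(k+k')-(b-a))$ lies in $[1,k-1]$. Reducing $B$ modulo $k+k'$ therefore gives an independent set in the circulant $C(k+k',[k,k'])$, and it suffices to show $\alpha(C(k+k',[k,k']))\le k$. In this circulant two distinct vertices are adjacent exactly when their cyclic distance lies in $[k,k']$, so its complement (on the same vertex set, discarding loops) is the graph whose edges connect pairs at cyclic distance in $[1,k-1]$, namely the $(k-1)$-st power $C_{k+k'}^{\,k-1}$ of the cycle. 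Independent sets in $C(k+k',[k,k'])$ are exactly the cliques of $C_{k+k'}^{\,k-1}$, and since $k+k'\ge 2k$ any such clique consists of vertices lying on an arc of length at most $k-1$; such a clique has size at most $k$, with equality attained at $\{0,1,\dots,k-1\}$.
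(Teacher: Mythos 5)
The paper states this theorem as a citation of Gao and Zhu and gives no proof, so the only thing to evaluate is the internal correctness of your argument. Part 1 and the lower bound of Part 2 are fine. But the upper bound in Part 2 has a genuine gap in the final step, and it is the crux of the whole argument.

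The problem is the claim that ``since $k+k'\ge 2k$ any such clique consists of vertices lying on an arc of length at most $k-1$.'' This is false. Take $k=5$, $k'=7$, so $n=k+k'=12$ and $r=k-1=4$. The set $\{0,4,8\}\subseteq\Z_{12}$ has all pairwise cyclic distances equal to $\min(4,8)=4\le r$, so it is a clique in $C_{12}^{\,4}$, yet the shortest arc containing it has length $12-4=8$, not $4$. More generally, $\{0,\,k-1,\,2k-2\}\subseteq\Z_{k+k'}$ is a clique of $C_{k+k'}^{\,k-1}$ that fails to lie on an arc of length $k-1$ whenever $k+k'\le 3(k-1)$, i.e.\ $k'\le 2k-3$, and this range overlaps the theorem's hypothesis $k'\ge(5/4)k$ once $k\ge 5$. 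Your argument would go through if you had $k+k'>3(k-1)$, because then for $0\in Q$ and $-s,\,t\in Q$ the extremal elements, the case $s+t\ge n-r$ is impossible and one is forced into $s+t\le r$, giving an arc; but $k+k'\ge 2k$ does not suffice for this dichotomy, and you never invoke the $(5/4)$ hypothesis anywhere.

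That omission is itself a warning sign: as written, your argument (if it were valid) would establish the result for \emph{all} $k\le k'$, which is strictly stronger than what Gao and Zhu claim. The reduction to bounding $\omega(C_{k+k'}^{\,k-1})$ is a reasonable strategy, and the desired conclusion $\omega(C_{k+k'}^{\,k-1})\le k$ may well still be correct under the stated hypothesis, but you need a genuine argument that handles non-arc cliques. For instance, one can fix $v\in Q$, observe $Q\subseteq\{v-r,\dots,v+r\}$, split $Q$ about $v$, and carefully account for the interaction between the two halves when $n\le 3r$; alternatively, one can appeal directly to the star-extremality machinery in the Gao--Zhu paper, which is presumably where the $5/4$ threshold actually earns its keep.
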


\begin{theorem}[Chang, Liu, and Zhu~\cite{CLZ99}; Liu and Zhu~\cite{LZ99}]
For positive integers $m, k, s$ such that $sk \leq m$, let $D_{m,k,s} = [m] \setminus k[s]$.
\begin{enumerate}
\setlength{\itemsep}{0em}
\item If $2k > m$, then $\dalpha(D_{m,k,1}) = \frac{1}{k}$.
\item If $2k \leq m$, then $\dalpha(D_{m,k,1}) = \frac{2}{m+k+1}$.
\item If $m \geq (s+1)k$, then $\dalpha(D_{m,k,s}) = \frac{s+1}{m+sk+1}$.
% OBS: If $(s+1)k < m < (s+2)k$, this set behaves a lot like {1,...,k-1, (s+1)k}. It gets more complicated when m > (s+2)k.
\end{enumerate}
\vspace{-1.5em}  \hfill $\square$
\end{theorem}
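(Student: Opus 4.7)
In all three parts I would establish the claimed value by exhibiting a periodic independent set of the required density for the lower bound and proving a matching \emph{local} upper bound -- namely, that every interval of a prescribed length contains only a bounded number of elements from any independent set -- from which the density bound follows by averaging.

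For the lower bounds, in Part 1 take $A = k\Z$: since $2k > m$, the only positive multiple of $k$ in $[m]$ is $k$ itself, which is removed from $D_{m,k,1}$, so $A$ is independent with density $1/k$. Parts 2 and 3 are handled simultaneously (Part 2 being the $s=1$ case of Part 3) by the periodic set
\[
A \,=\, \bigcup_{j\in\Z}\{jp,\ jp+k,\ jp+2k,\ \ldots,\ jp+sk\},\qquad p \,=\, m+sk+1.
\]
Intra-period differences lie in $\{k, 2k, \ldots, sk\}$, disjoint from $D_{m,k,s}$, and cross-period differences have absolute value at least $p - sk = m+1 > m$. Hence $A$ is independent with density $(s+1)/(m+sk+1)$.

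For the upper bound of Part 1 with $k\geq 2$, note that $\{1,\ldots,k-1\}\subseteq D_{m,k,1}$ forces any two integers within distance $k-1$ to be adjacent, so every interval of $k$ consecutive integers meets an independent set in at most one point; the $k=1$ case is trivial since then $m=1$ and $D_{m,k,1}=\emptyset$. For Part 3 I would establish that every interval of $m+sk+1$ consecutive integers contains at most $s+1$ elements of any independent set. Suppose toward contradiction that $a_0 < a_1 < \cdots < a_{s+1}$ are $s+2$ elements of an independent set with $a_{s+1} - a_0 \leq m+sk$. Each consecutive gap $d_i = a_i - a_{i-1}$ is either \emph{short} ($d_i\in\{k, 2k, \ldots, sk\}$) or \emph{long} ($d_i > m$). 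A single long gap already forces $a_{s+1}-a_0 > m + sk$, since the remaining $s$ gaps contribute at least $sk$, a contradiction; so every gap is short, and in particular all $a_i$ lie in one residue class modulo $k$.

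\textbf{The main obstacle} is now a purely combinatorial claim. Set $b_i = (a_i - a_0)/k$, $M = \lfloor m/k\rfloor \geq s+1$, and $B = \{b_0, b_1, \ldots, b_{s+1}\}$; then $B$ is a set of $s+2$ distinct integers in $[0, M+s]$ with all pairwise differences avoiding $[s+1, M]$. Since $b_i - b_0 = b_i$ must avoid $[s+1,M]$, we have $B \subseteq L \cup H$ where $L = [0,s]$ and $H = [M+1, M+s]$; writing $B_L = B \cap L$ and $B_H = B \cap H$, we have $|B_L| + |B_H| = s+2$. For each $y \in B_H$ the interval $[y-M,\ y-s-1]$ is forbidden to $B_L$ (otherwise a bad difference appears). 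My plan is to conclude via the injection $\phi\colon B_H \to [1, s]$ defined by $\phi(y) = y - M$: because $M \geq s+1$, the image $\phi(y)$ itself lies in $[y-M, y-s-1]$, so $\phi(B_H) \subseteq L$ is disjoint from $B_L$, forcing $|B_L| \leq (s+1) - |B_H|$. This contradicts $|B_L| + |B_H| = s+2$ and completes the proof.
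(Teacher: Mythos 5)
The paper states this result without proof, citing Chang--Liu--Zhu and Liu--Zhu, so there is no proof of record in the paper to compare against. Your argument must therefore be judged on its own, and it checks out.

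Your lower-bound constructions are correct: $k\Z$ for Part 1 (the only positive multiple of $k$ in $[m]$ is $k$, which is removed), and the arithmetic-progression-per-period construction for Parts 2--3, whose intra-period differences land exactly in $k[s]$ and whose cross-period differences are $\geq p - sk = m+1 > m$. For the upper bound, the reduction is sound: a gap between consecutive elements of an independent set is either in $\{k,\dots,sk\}$ or exceeds $m$; one long gap among $s+1$ already forces the span past $m+sk$; and if all gaps are short you correctly land in a single residue class mod $k$ and pass to $B=\{b_0,\dots,b_{s+1}\}\subseteq[0,M+s]$ with pairwise differences avoiding $[s+1,M]$. The step I scrutinized most closely is the injection $\phi(y)=y-M$: since $M\geq s+1$, indeed $\phi(y)=y-M\leq y-s-1$, so $\phi(y)$ sits in the interval $[y-M,\,y-s-1]$ of values forbidden to $B_L$; thus $\phi(B_H)\subseteq L\setminus B_L$, giving $|B_L|+|B_H|\leq|L|=s+1$, contradicting $|B|=s+2$. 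That closes the argument, and the standard tiling/averaging step converts the interval bound into a density bound. One stylistic note: the machinery the paper itself develops for such upper bounds is the Local Discharging Lemma (Lemma~\ref{lem:localdischarging}); your approach is a direct pigeonhole on a single interval, which is both shorter and more elementary than setting up blocks, frames, and charge rules, at the cost of being a one-off argument rather than an instance of a reusable framework.
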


\begin{theorem}[Lam and Lin~\cite{LL05}]
For positive integers $m \geq k' \geq k\geq 1$, let $D_{m,[k,k']} = [m] \setminus [k,k']$.
\begin{enumerate}
\setlength{\itemsep}{0em}
\item If $m < 2k$, then $\dalpha(D_{m,[k,k+i]}) = \frac{1}{k}$.
\item If $2k\leq m < 2k+2i$, and $1 \leq i \leq k-1$, then $\dalpha(D_{m,[k,k+i]}) = \frac{2}{m+1}$.
\item If $m \geq 2k+2i$ and $1 \leq i \leq k-1$, then $\dalpha(D_{m,[k,k+i]}) = \frac{2}{m+k+1}$.
\item If $m < (s+1)k$, then $\dalpha(D_{m,[k,sk+i]}) = \frac{1}{k}$.
\item If $1 \leq i \leq k-1$ and $(s+1)k \leq m < (s+1)k+i$, then $\dalpha(D_{m,[k,sk+i]}) = \frac{s+1}{m+1}$.  In particular, if $k > 1$ then $$\dalpha([k,m]) = \begin{cases} \frac{1}{k} & \text{if $m \not\equiv 0\pmod k$,}\\ \frac{s+1}{k+1} & \text{if $m = (s+1)k$.}\end{cases}$$
\end{enumerate}
\vspace{-1.5em}  \hfill $\square$
\end{theorem}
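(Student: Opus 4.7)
The plan is to prove each of the five parts by establishing matching lower and upper bounds on $\dalpha(D_{m,[k,k']})$, with lower bounds coming from explicit periodic independent sets and upper bounds from counting how many elements of an independent set can lie in a single window. Throughout, recall that the allowed nonzero differences for $D_{m,[k,k']}$ are exactly $[k,k'] \cup [m+1,\infty)$.

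For the lower bounds I would exhibit the following periodic independent sets. In parts (1) and (4), where $m<(s+1)k$, the set $A = k\Z$ is independent: every positive multiple $jk$ with $j \le s$ lies in $[k, sk+i]$, while larger multiples exceed $m$. This gives density $1/k$. In part (5) I would take $A = \{0, k, 2k, \dots, sk\} + (m+1)\Z$, of density $(s+1)/(m+1)$; the in-period differences $jk$ all lie in $[k, sk+i]$, and the cross-period differences $(m+1)-jk$ lie in $[k, sk+i]\cup[m+1,\infty)$ by the hypothesis $(s+1)k \le m < (s+1)k+i$. In part (3) the set $A = \{0,k\} + (m+k+1)\Z$ has density $2/(m+k+1)$, with in-period difference $k$ allowed and cross-period difference $m+1$ allowed. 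Finally in part (2) I would use $A = \{0, g\} + (m+1)\Z$, where $g$ is any integer in $[\max(k, m+1-k-i), \min(k+i, m+1-k)]$; the hypothesis $2k \le m \le 2k+2i-1$ is exactly what makes this interval nonempty.

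For the upper bounds I would apply a window-counting argument to an arbitrary independent set $A$. In parts (1) and (4), any window of $k$ consecutive integers meets $A$ in at most one point, since two points there would differ by a value in $[1,k-1]$, which is forbidden, giving $\delta(A)\le 1/k$. In parts (2) and (5) I consider a window of size $m+1$: the only allowed differences at most $m$ lie in $[k, k']$, so if $a_1 < \dots < a_t$ are elements of $A$ in this window then $a_t - a_1 \ge (t-1)k$ (consecutive differences are at least $k$) and $a_t - a_1 \le k'$, hence $t-1 \le k'/k$. With $k' = k+i$ and $i<k$ this forces $t \le 2$ for part (2), and with $k' = sk+i$ it forces $t \le s+1$ for part (5). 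In part (3) I consider a window of size $m+k+1$ and rule out three elements $a<b<c$ by cases on their consecutive differences: any pair with difference in $[m+1, m+k]$ leaves no room within the window for a third element that is at distance at least $k$ from both, while two consecutive differences in $[k, k+i]$ force $c - a \in [2k, 2k+2i]\subseteq[k+i+1, m]$ using $m\ge 2k+2i$, a forbidden value. Thus at most two elements fit, and $\delta(A)\le 2/(m+k+1)$.

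The main obstacle I expect is the boundary bookkeeping in parts (2) and (3). Part (2) demands that the shift parameter $g$ defining the periodic construction lie in a nonempty interval precisely on the hypothesized range of $m$, and part (3) requires a careful case analysis for the three-element argument because the allowed differences form two disjoint intervals $[k,k+i]$ and $[m+1, m+k]$ and one must check each combination. By Theorem~\ref{thm:periodic} it would suffice to restrict attention to periodic independent sets throughout, although the window arguments above do not actually need this reduction.
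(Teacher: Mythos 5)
This theorem appears in the paper only as a background result quoted from Lam and Lin, with no proof given in the paper itself (note the terminal $\square$); there is no ``paper proof'' to compare against. Judged on its own, your proof is correct for all five numbered parts. The explicit periodic sets do give the claimed densities: in parts (1) and (4) the set $k\Z$ has its small multiples $jk$ (for $j\le s$) land in $[k,sk+i]$ and its larger multiples exceed $m$; in part (5) the cross-period difference $(m+1)-jk$ for $1\le j\le s$ lies in $[(s+1-j)k+1,\ (s+1-j)k+i]\subseteq[k+1,sk+i]$ exactly because $(s+1)k\le m<(s+1)k+i$; in part (2) the interval for the shift $g$ is nonempty precisely when $2k-1\le m\le 2k+2i-1$, which your hypothesis guarantees; and in part (3) the pattern $\{0,k\}+(m+k+1)\Z$ has only the allowed differences $k$ and values $\ge m+1$. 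Your window-counting upper bounds are also sound: the consecutive-gap lower bound $(t-1)k$ together with $a_t-a_1\le k'$ (valid because a difference at most $m$ must lie in $[k,k']$) gives $t\le s+1$ in part (5) and $t\le 2$ in part (2); and the three-element case analysis in part (3) correctly exploits that two small gaps force a forbidden total $c-a\in[2k,2k+2i]\subseteq[k+i+1,m]$, while one large gap in $[m+1,m+k]$ pushes the third element out of the window. This is a clean, self-contained argument that does not need the paper's periodicity reduction or discharging machinery. The one thing you do not address is the ``In particular'' clause at the end of part (5), which asserts a formula for $\dalpha([k,m])$; that clause does not follow from the construction you give and would need a separate argument (it is also stated in a somewhat puzzling form in the paper as transcribed, as it appears to conflict with the Gao--Zhu value $\dalpha([k,k'])=k/(k+k')$ quoted just above it, so you may wish to consult Lam and Lin's original statement before trying to derive it).
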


\begin{theorem}[Liu, Zhu~\cite{LZ04}]\label{thm:lz04}
Let $0< a< b$, $m \geq 2$, and $\gcd(a,b) = 1$.
\begin{enumerate}
\setlength{\itemsep}{0em}
\item $\dalpha(\{a, 2a, \dots, (m-1)a, b) = \begin{cases} \frac{k}{km+1} & \text{if $b = km$ for some $k$}\\ \frac{1}{m} & \text{otherwise}\end{cases}$.
\item $\dalpha(\{a, b, a+b\}) = \begin{cases}\frac{1}{3} & \text{if $b-a=3k$}\\ \frac{a+k}{3a+3k+1} & \text{if $b -a = 3k+1$}\\ \frac{a+2k+1}{3a+6k+4} & \text{if $b-a = 3k+2$}\end{cases}$.
\item if $a \not\equiv b \pmod 2$, then $\dalpha(\{a, b, b-a, a+b\}) = \frac{1}{4}$.
\item $\dalpha(\{1,2m,2m+1,2m+2\}) = \frac{m}{4m+1}$.
\end{enumerate}
\end{theorem}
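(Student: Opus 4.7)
The plan is to prove each of the four parts by matching lower and upper bounds on $\dalpha(S)$. For each lower bound I would exhibit an explicit periodic independent set whose density equals the claimed value; since $\delta(A) \leq \dalpha(S)$ for any independent set $A$, this suffices. For each upper bound I would use the framework developed in this paper: Theorem~\ref{thm:periodic} reduces the problem to periodic independent sets of period at most $s 2^s$, and then a combinatorial or discharging argument bounds the density of any such set.

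Part (1) concerns $S = \{a, 2a, \dots, (m-1)a, b\}$. The forbidden distances $a, 2a, \dots, (m-1)a$ together imply that any two elements of an independent set lying in the same residue class modulo $a$ are at distance at least $ma$; averaging across residue classes gives $\dalpha(S) \leq 1/m$. When $b$ fails to be a multiple of $m$, the set $m\Z$ is independent and achieves this bound, using $\gcd(a,b)=1$ to rule out $b \in m\Z$. When $b = km$, the extra forbidden distance prevents $m\Z$ from being independent, and the density drops to $k/(km+1)$; here I would tile $\Z$ with blocks of length $km+1$ containing $k$ carefully spaced elements, exploiting $\gcd(a,b)=1$ to avoid all in-block and cross-block distances in $S$, and prove tightness by a residue-class-based counting argument.

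For Part (2), the set $\{a, b, a+b\}$ is a ``triangle'' distance set: each element blocks three others at distances $a$, $b$, and $a+b$, and any two of these blocked elements are themselves at a forbidden distance. The claimed density depends on $b - a \pmod 3$. In each of the three cases I would build a periodic construction whose period depends linearly on $a$ and $k = \lfloor (b-a)/3\rfloor$, and match it with a discharging upper bound via the Local Discharging Lemma of Section~\ref{sec:discharging}: assign each vertex initial charge equal to its contribution to the density, then redistribute charges according to rules derived from the local triangle structure. Parts (3) and (4) follow the same template. For Part (3) the parity hypothesis $a \not\equiv b \pmod 2$ ensures $2 \notin S$, so the density-$\tfrac14$ set $2\Z \setminus 4\Z$ (or a similar period-$4$ pattern) is independent; tightness follows from the observation that at least one of any four consecutive integers must be excluded. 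For Part (4) I would construct a period-$(4m+1)$ pattern with $m$ elements placed to avoid all four forbidden distances, and prove tightness by a discharging argument tailored to the distance set $\{1, 2m, 2m+1, 2m+2\}$.

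The main obstacle across all four parts will be the upper bound. The periodic constructions are typically discoverable by inspection once one guesses the correct period, but proving tightness requires a counting or discharging argument whose rules precisely cancel against the structure of the extremal set, leaving no slack. I expect the Local Discharging Lemma to provide the right framework for Parts (2)--(4); Part (1) will likely yield to a direct averaging argument over residue classes modulo $a$. The most delicate case will be Part~(2) with $b - a \equiv 2 \pmod 3$, where the denominator $3a + 6k + 4$ forces a rather long periodic pattern and correspondingly intricate discharging rules, and Part~(4), where the irregular distance set $\{1, 2m, 2m+1, 2m+2\}$ produces overlapping local obstructions that must be treated together rather than one distance at a time.
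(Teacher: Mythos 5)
The paper does not prove this theorem. It appears in Section~\ref{sec:previous}, ``Previous Results,'' where it is stated with the attribution \emph{Liu, Zhu}~\cite{LZ04} and no argument is supplied; that section merely catalogs known values of $\dalpha(S)$ that the rest of the paper builds on. So there is no ``paper's own proof'' to compare your proposal against.

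That said, a few details of your sketch would not survive closer inspection. In part~(1), the hypothesis $\gcd(a,b)=1$ has no bearing on whether $b$ is a multiple of $m$; the two cases in the statement are distinguished directly by $m \mid b$ versus $m \nmid b$, not by coprimality, so ``using $\gcd(a,b)=1$ to rule out $b \in m\Z$'' is a non sequitur. In part~(3), the proposed period-$4$ set $2\Z \setminus 4\Z = 4\Z+2$ fails whenever the even member of $\{a,b\}$ is divisible by~$4$: for instance $a=1$, $b=4$ gives $S = \{1,3,4,5\}$, and the differences within $4\Z + 2$ include~$4 \in S$. A set such as $\{0,2\} + 8\Z$ does achieve density $\tfrac14$ for that $S$, but the construction must depend on $a$ and $b$ more delicately than a fixed period-$4$ pattern. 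These are fixable gaps, but they indicate the lower-bound constructions here are not simply ``discoverable by inspection.'' Your overall template (explicit periodic construction for the lower bound, discharging for the upper bound) is indeed the methodology this paper develops for its \emph{new} results in Sections~\ref{sec:discharging}--\ref{sec:threegens}; for the theorem you were assigned, however, the authors defer entirely to the original proof in~\cite{LZ04}.
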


Chang, Huang, and Zhu~\cite{CHZ98} and Collins~\cite{Collins} determined the exact values of $\chi_f(S)$ for all sets $S$ of size two.

\begin{theorem}[Chang, Huang, and Zhu~\cite{CHZ98}; Collins~\cite{Collins}]\label{thm:CHZ98C}
Let $S = \{a,b\}$ with $1 \leq a < b$ and $\gcd(a,b) = 1$.
\begin{enumerate}
\setlength{\itemsep}{0em}
\item If $a$ and $b$ are both odd, then  $\dalpha(S) = \frac{1}{2}$.
\item If at least one of $a$ and $b$ is even, $\dalpha(S) = \frac{a+b-1}{2a+2b}$.  In particular, $\dalpha(\{1,2k\})= \frac{k}{2k+1}$.
\end{enumerate}
\vspace{-1.5em}  \hfill $\square$
\end{theorem}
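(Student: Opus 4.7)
The plan is to reduce the problem to a calculation on a single cycle by exploiting the assumption $\gcd(a,b) = 1$. First I would establish the key structural observation: for any $k \in \Z$, the subgraph of $G(\{a,b\})$ induced on the window $[k+1, k+a+b]$ is isomorphic to the cycle $C_{a+b}$. Indeed, this window contains exactly $b$ edges at distance $a$ together with $a$ edges at distance $b$, for $a+b$ edges on $a+b$ vertices, and the bijection $i \mapsto i \bmod (a+b)$ carries them onto the $\pm a$ edges of $\Z/(a+b)$; since $\gcd(a, a+b) = \gcd(a,b) = 1$ and $b \equiv -a \pmod{a+b}$, those edges form a single cycle of length $a+b$.

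For the lower bound I would exhibit an explicit periodic independent set matching the claimed density. In case (1), with $a$ and $b$ both odd, every element of $S$ is odd, so $2\Z$ is independent in $G(S)$ and has density $\tfrac{1}{2}$. In case (2), one of $a, b$ is even and $a+b$ is odd; I would fix a maximum independent set $I \sbe \{0,1,\dots,a+b-1\}$ of $C_{a+b}$, which has size $(a+b-1)/2$, and let $A = \union_{k\in \Z}(I + k(a+b))$. This $A$ is periodic with period $a+b$ and density $\tfrac{a+b-1}{2(a+b)}$; it is independent in $G(\{a,b\})$ because any difference $\pm a$ or $\pm b$ in $\Z$ reduces mod $a+b$ to one of the forbidden residues $\pm a \equiv \mp b$, contradicting independence of $I$ in $C_{a+b}$.

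For the matching upper bound, let $A$ be any independent set in $G(\{a,b\})$. The structural observation gives $|A \cap [k+1, k+a+b]| \leq \alpha(C_{a+b}) = \lfloor (a+b)/2 \rfloor$ for every $k$. Covering $[-N, N]$ by $\lceil (2N+1)/(a+b)\rceil$ consecutive windows, dividing by $2N+1$, and passing to the limsup as $N \to \infty$ then yields $\delta(A) \leq \lfloor (a+b)/2\rfloor/(a+b)$. This is $\tfrac{1}{2}$ when $a+b$ is even (case 1) and $\tfrac{a+b-1}{2(a+b)}$ when $a+b$ is odd (case 2), matching the lower bound in each case. The ``in particular'' claim is the case $a=1$, $b=2k$ of the formula.

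The only step with any substance is the structural identification of the window subgraph with $C_{a+b}$; once that is established, both bounds follow from the independence number of a cycle and a straightforward averaging argument. One could alternatively invoke Theorem~\ref{thm:periodic} to reduce the upper bound to checking periodic sets of bounded period, but the direct windowing argument is short enough that this extra machinery is unnecessary.
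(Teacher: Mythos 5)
The paper does not actually prove Theorem~\ref{thm:CHZ98C}: it is one of the results quoted from the literature (Chang--Huang--Zhu and Collins) in Section~\ref{sec:previous}, stated with a terminal square and no argument, so there is no in-paper proof to compare against. Your proof, however, is correct and self-contained. The central structural claim --- that for every $k$ the subgraph of $G(\{a,b\})$ induced on the window $[k+1,k+a+b]$ is isomorphic to $C_{a+b}$ --- checks out: the window has exactly $b$ edges at distance $a$ and $a$ edges at distance $b$, so $a+b$ edges on $a+b$ vertices; the map $i\mapsto i\bmod(a+b)$ is a vertex bijection sending every window edge to a $\pm a$ edge of the circulant on $\Z/(a+b)\Z$ (using $b\equiv -a$); and since $\gcd(a,a+b)=\gcd(a,b)=1$ that circulant is a single $(a+b)$-cycle, so the edge counts force the map to be an isomorphism. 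The lower bound (lifting a maximum independent set of $C_{a+b}$ periodically, or taking $2\Z$ in Case 1) and the tiling upper bound via $\alpha(C_{a+b})=\lfloor(a+b)/2\rfloor$ are both sound, and $\gcd(a,b)=1$ does guarantee $a+b$ odd in Case 2 as you use. As you note, one could instead route the upper bound through Theorem~\ref{thm:periodic} and the Local Discharging Lemma --- that is the machinery the paper develops for the harder three-generator families --- but for two generators the direct window argument is shorter and equally rigorous.
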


Zhu~\cite{Zhu02} investigated the case where $|S| = 3$ and determined bounds on $\chi_f(S)$ and the circular chromatic number for $G(S)$.  
These bounds are sufficient to determine $\chi(S)$ exactly.

\begin{theorem}[Zhu \cite{Zhu02}]\label{thm:3gensfractional}
Let $S = \{a,b,c\}$ where $1 \leq a < b < c$.
\begin{enumerate}
\setlength{\itemsep}{0em}
\item If $a$, $b$, and $c$ are odd, then $\dalpha(S) = \frac{1}{2}$.
\item If $S = \{1,2,3k\}$ where $k \geq 1$, then $\dalpha(S) = \frac{k}{3k+1}$.
\item If $b = a+3k$ and $c = 2a+3k$ for $k \geq 1$, then $\dalpha(S) = \frac{1}{3}$.
\item If $b = a + 3k + 1$ and $c = 2a+3k+1$ for $k \geq 1$, then $\frac{a+k}{3(a+k)+1}\leq \dalpha(S) \leq \frac{a+2k}{3(a+2k)+1}$.
\item If $b = a+3k+2$ and $c = 2a+3k+2$ for $k \geq 1$, then $\frac{a+2k+1}{3(a+2k+2)+1} \leq \dalpha(S) \leq \frac{a+2k+2}{3(a+2k+2)+1}$.
\item If $a, b, c,$ are not all odd, $c \neq a+b$, and $(a,b,c) \neq (1,2,3k)$ for any $k \geq 1$, then $\frac{1}{3} \leq \dalpha(S) < \frac{1}{2}$.
\item If $a,b,c,$ are not all odd, $c \neq 2b$, $b \neq 2a$, $c \neq 2a$ and $c \neq a+b$, then $\frac{3}{8} \leq \dalpha(S) < \frac{1}{2}$ with a finite number of exceptional triples $(a,b,c)$.
\end{enumerate}
\vspace{-1.5em}  \hfill $\square$
\end{theorem}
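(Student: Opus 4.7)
The plan is to handle each of the seven parts through a combination of explicit periodic constructions for lower bounds on $\dalpha(S)$ and matching upper bound arguments using Theorem~\ref{thm:fractionalchromatic} to translate freely between $\dalpha(S)$ and $\chi_f(S)$.

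Part 1 is immediate: if $a,b,c$ are all odd then $G(S)$ is bipartite under the parity partition of $\Z$, so $\chi_f(S)=2$ and hence $\dalpha(S)=1/2$. For part 2, with $S=\{1,2,3k\}$, the distances $1$ and $2$ already force any independent set to contain at most one element in every three consecutive integers, giving $\dalpha(S)\leq 1/3$; the additional distance $3k$ tightens this to $k/(3k+1)$ by a windowing argument (at most $k$ chosen vertices in any window of length $3k+1$), matched by the explicit periodic set $\{0,3,6,\dots,3(k-1)\}$ of period $3k+1$. For part 3, where $b-a = 3k$ and $c = a+b$, note that any three vertices of the form $i, i+a, i+a+3k$ form a triangle in $G(S)$, so $\chi(S)\ge 3$ gives $\chi_f(S)\ge 3$; the matching construction comes from taking every third residue class modulo an appropriate period.

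For parts 4 and 5, the forbidden difference $c=a+b$ still forces triangles, so $\dalpha(S)\leq 1/3$, but the specific residues of $b-a$ modulo $3$ let us refine the construction. I would build a periodic independent set $A$ of period $p$ proportional to $a+k$ (resp.\ $a+2k+2$) and count chosen elements directly to obtain the claimed lower bound; for the upper bound one exhibits a fractional coloring whose value matches. The asymmetry between the lower and upper bounds in parts 4 and 5 suggests that one cannot expect a single elegant construction, but that the constructions and colorings are instead parameterized by an extra parameter reflecting $b-a \pmod 3$.

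The main obstacle is parts 6 and 7, which require uniform lower bounds $1/3$ and $3/8$ across infinite generic families, with only finitely many exceptional triples in part 7. For part 6 I would leverage the constructions of parts 3--5 to cover the three congruence classes of $b-a \pmod 3$, and handle the remaining sets by exhibiting periodic independent sets of density at least $1/3$ case-by-case on the parities of $a,b,c$. Part 7 is the hardest: giving density $\ge 3/8$ requires a carefully staggered construction (for instance, choosing three of every eight consecutive integers with an offset pattern tuned to avoid the three forbidden distances). The delicate aspect will be specifying precisely which triples $(a,b,c)$ fall into the finite exceptional set and verifying that outside this set a universal $3/8$-density construction always goes through; this is the step most likely to require lengthy case analysis.
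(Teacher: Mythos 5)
First, note that this theorem is not proved in the paper: it is stated as a cited result of Zhu~\cite{Zhu02} (the closing $\square$ signals an omitted proof), so there is no in-paper argument to compare against. Evaluating your sketch on its own merits: parts 1--3 are on the right track. The parity bipartition of $\Z$ for part 1, the windowing bound paired with the explicit period-$(3k+1)$ set for part 2, and the triangle $\{i, i+a, i+a+b\}$ forcing $\chi_f(S)\geq 3$ for part 3 are all correct lines of reasoning. (For part 3 you do implicitly need $\gcd(a,b,c)=1$; otherwise $3\Z$ need not be independent — compare $S=\{3,6,9\}$, where $\dalpha=\tfrac14$ — and indeed the same implicit hypothesis is needed for part 6 to avoid counterexamples like $\{2,6,10\}$.)

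There are, however, genuine gaps beyond mere incompleteness. (i) You propose that for the upper bounds in parts 4--5 "one exhibits a fractional coloring whose value matches." This has the direction reversed: a fractional coloring gives an \emph{upper} bound on $\chi_f(S)$, hence a \emph{lower} bound on $\dalpha(S)$. To bound $\dalpha(S)$ from above you need a lower bound on $\chi_f(S)$, which requires a fractional clique, a counting obstruction, or a discharging-type argument — not a fractional coloring. (ii) The triangle only yields $\dalpha(S)\leq\tfrac13$, which is strictly weaker than the claimed upper bounds $\tfrac{a+2k}{3(a+2k)+1}$ and $\tfrac{a+2k+2}{3(a+2k+2)+1}$, so the upper bounds in parts 4--5 remain unproved. (iii) Your plan for part 6 — to "leverage the constructions of parts 3--5" — cannot work as stated, since parts 3--5 all assume $c=a+b$, a case explicitly excluded from part 6. (iv) You never address the strict inequality $\dalpha(S)<\tfrac12$ in parts 6 and 7. (v) Parts 6 and 7 are the substantive content of Zhu's theorem, and your sketch (a "staggered construction" of three in every eight, with an unspecified finite exceptional set) is, as you acknowledge, a description of what needs to be done rather than a proof; identifying the exceptional triples and verifying the universal $\tfrac38$ lower bound is exactly the hard part.
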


While Theorem~\ref{thm:3gensfractional} determines the exact value of $\dalpha(S)$ for several classes of sets of size three, it leaves many triples undetermined.

%%%%%%%%%%%%%%%%%%%%%%%%%%%%%%%%%%%%%%%%%%%%%%%%%%%%%%%%%%%%%%%%%%%%
%%%%%%%%%%%%%%%%%%%%%%%%%%%%%%%%%%%%%%%%%%%%%%%%%%%%%%%%%%%%%%%%%%%%
%%%%%%%%%%%%%%%%%%%%%%%%%%%%%%%%%%%%%%%%%%%%%%%%%%%%%%%%%%%%%%%%%%%%
%%%%%%%%%%%%%%%%%%%%%%%%%%%%%%%%%%%%%%%%%%%%%%%%%%%%%%%%%%%%%%%%%%%%
%%%%%%%%%%%%%%%%%%%%%%%%%%%%%%%%%%%%%%%%%%%%%%%%%%%%%%%%%%%%%%%%%%%%
%%%%%%%%%%%%%%%s%%%%%%%%%%%%%%%%%%%%%%%%%%%%%%%%%%%%%%%%%%%%%%%%%%%%%
%%%%%%%%%%%%%%%%%%%%%%%%%%%%%%%%%%%%%%%%%%%%%%%%%%%%%%%%%%%%%%%%%%%%
%%%%%%%%%%%%%%%%%%%%%%%%%%%%%%%%%%%%%%%%%%%%%%%%%%%%%%%%%%%%%%%%%%%%

\section{Relations Between Generating Sets}\label{sec:preliminary}

% We always consider $S$ to be a finite, nonempty set of positive integers.
% For a positive integer $n$, let $[n] = \{1,\dots,n\}$.
% For $1 \leq a \leq b$, let $[a,b] = \{a, a+1, \dots, b-1, b\}$.
% When $d \geq 1$, let $d \cdot S = \{ d\cdot s : s \in S\}$ and $S + d = \{ s + d : s \in S\}$.

The following observations are very easy to prove.

\begin{observation}
If $S$ contains only odd numbers, then $\dalpha(S) = \frac{1}{2}$.
\end{observation}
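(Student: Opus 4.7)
The plan is to show both inequalities $\dalpha(S) \geq \tfrac{1}{2}$ and $\dalpha(S) \leq \tfrac{1}{2}$ directly, exploiting the fact that all elements of $S$ are odd.

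For the lower bound, I would take the set $A = 2\Z$ of even integers. Any two elements of $A$ differ by an even number, so no such difference lies in $S$; hence $A$ is independent in $G(S)$. Clearly $\delta(A) = \tfrac{1}{2}$, so $\dalpha(S) \geq \tfrac{1}{2}$.

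For the upper bound, I would fix any $a \in S$ and exhibit a partition of $\Z$ into pairs, each of which is an edge of $G(S)$. Specifically, consider the family of pairs
\[ P_{k,j} = \set{2ka + j,\ 2ka + j + a}, \qquad k \in \Z,\ j \in [0,a-1]. \]
A quick case check on the residue of $m$ modulo $2a$ shows that every integer lies in exactly one such pair: writing $m = 2ka + r$ with $r \in [0, 2a-1]$, if $r < a$ then $m$ is the smaller element of $P_{k, r}$, and otherwise $m$ is the larger element of $P_{k, r-a}$. Since each pair $P_{k,j}$ is an edge in $G(S)$ (its two endpoints differ by $a \in S$), any independent set contains at most one element of each pair. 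Intersecting with $[-N,N]$ and taking the limsup gives $\delta(A) \leq \tfrac{1}{2}$ for every independent set $A$, hence $\dalpha(S) \leq \tfrac{1}{2}$.

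There is no real obstacle here; the only minor point requiring care is verifying that the pairs $P_{k,j}$ partition $\Z$, which boils down to the residue argument above. Combining the two bounds gives $\dalpha(S) = \tfrac{1}{2}$.
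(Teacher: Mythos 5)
Your proof is correct; the paper states this as an observation with no proof supplied (prefacing the list with ``the following observations are very easy to prove''), so there is no paper argument to compare against. The lower-bound step ($A=2\Z$ is independent because all differences are even and $S$ contains only odd numbers) is standard. For the upper bound, your perfect-matching argument via the pairs $P_{k,j}$ is clean and self-contained; to make the final limsup step airtight, note that for each $k$ the $a$ pairs $P_{k,0},\dots,P_{k,a-1}$ exactly partition the interval $[2ka,\,2(k+1)a-1]$ of length $2a$, so every independent set meets each such interval in at most $a$ points, giving density at most $\tfrac{1}{2}$. An equally short alternative using tools from the same section of the paper: pick any $a\in S$; by Observation~\ref{obs:subset}, $\dalpha(S)\le\dalpha(\{a\})$, and by Lemma~\ref{prop:multiple}, $\dalpha(\{a\})=\dalpha(a\cdot\{1\})=\dalpha(\{1\})=\tfrac12$. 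Both routes are fine; yours has the advantage of being fully elementary and independent of those lemmas.
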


\begin{observation}\label{obs:subset}
	If $S \subseteq T$, then $\dalpha(S) \geq \dalpha(T)$.
\end{observation}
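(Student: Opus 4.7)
The plan is a one-line containment argument on the edge sets of the two distance graphs. Since $S \subseteq T$, any pair $i,j \in \Z$ with $|i-j| \in S$ also satisfies $|i-j| \in T$, so every edge of $G(S)$ is an edge of $G(T)$. In other words, $G(S)$ is a spanning subgraph of $G(T)$ on the common vertex set $\Z$.

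It follows immediately that every independent set $A$ in $G(T)$ is also independent in $G(S)$: any forbidden difference for $G(S)$ lies in $S \subseteq T$ and is therefore already forbidden for $A$ in $G(T)$. First I would make this observation, then note that the density $\delta(A) = \limsup_{N \to \infty} |A \cap [-N,N]|/(2N+1)$ is defined intrinsically on the vertex set $\Z$ and does not depend on which distance graph we view $A$ as sitting inside.

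Finally, taking the supremum, we obtain
\[
\dalpha(T) = \sup\setof{\delta(A)}{A \text{ independent in } G(T)} \leq \sup\setof{\delta(A)}{A \text{ independent in } G(S)} = \dalpha(S),
\]
since the supremum on the right is taken over a larger family. There is no real obstacle here; the statement is essentially a tautology once one unpacks the definitions of $G(S)$, independence, and $\dalpha$.
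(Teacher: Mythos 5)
Your argument is correct and is the natural one: $G(S)$ is a spanning subgraph of $G(T)$, so every $G(T)$-independent set is $G(S)$-independent, density is intrinsic to the subset of $\Z$, and passing to the supremum over the larger family gives the inequality. The paper states this as an observation without proof (``very easy to prove''), and your reasoning is exactly what is intended.
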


\begin{lemma}\label{lem:consecutive}
$\dalpha(\{1,2,\dots, \ell\})=\frac{1}{\ell+1}$
\end{lemma}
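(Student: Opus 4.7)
The plan is to prove the two inequalities $\dalpha(\{1,2,\dots,\ell\}) \ge \tfrac{1}{\ell+1}$ and $\dalpha(\{1,2,\dots,\ell\}) \le \tfrac{1}{\ell+1}$ separately; both are straightforward and no sophisticated tool is needed.

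For the lower bound, I exhibit an explicit periodic independent set. Take $A = (\ell+1)\Z = \setof{k(\ell+1)}{k\in\Z}$. Any two distinct elements of $A$ differ by a nonzero multiple of $\ell+1$, hence by at least $\ell+1$, so their difference does not lie in $\set{1,2,\dots,\ell}$; thus $A$ is independent in $G(\set{1,\dots,\ell})$. Since $A$ has exactly one element in every $\ell+1$ consecutive integers, a direct count gives $\delta(A) = \tfrac{1}{\ell+1}$, so $\dalpha(\set{1,\dots,\ell}) \ge \tfrac{1}{\ell+1}$.

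For the upper bound, the key observation is that any block of $\ell+1$ consecutive integers contains at most one element of any independent set, because any two distinct integers in such a block differ by at most $\ell$ and therefore are adjacent in $G(\set{1,\dots,\ell})$. Given any independent set $A$ and any positive integer $N$, partition $[-N,N]$ into consecutive blocks of length $\ell+1$ (with possibly one shorter leftover block at the end). This yields at most $\lceil (2N+1)/(\ell+1)\rceil$ blocks, each contributing at most one element of $A$, so
\[
\abs{A\cap[-N,N]} \le \left\lceil \frac{2N+1}{\ell+1}\right\rceil.
\]
Dividing by $2N+1$ and taking the $\limsup$ as $N\to\infty$ gives $\delta(A) \le \tfrac{1}{\ell+1}$. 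Since $A$ was arbitrary, $\dalpha(\set{1,\dots,\ell}) \le \tfrac{1}{\ell+1}$.

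There is no real obstacle here; the only minor bookkeeping is the ceiling in the upper bound, which disappears in the limit. The whole argument can be written in just a few lines.
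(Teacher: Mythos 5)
Your proof is correct and follows essentially the same approach as the paper: the lower bound via the periodic set $(\ell+1)\Z$, and the upper bound by observing that any $\ell+1$ consecutive integers contain at most one element of an independent set. The paper states the counting step more tersely, but your version with the explicit block partition and the ceiling is the same idea made a bit more careful.
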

\begin{proof}
The set $X=(\ell+1)\cdot \Z$ is independent in $G(\{1,2,\dots, \ell\})$ with density $\frac{1}{\ell+1}$.  Let $A$ be any independent set of $G(S)$.  For each element $a\in A$ the integers $a+1,\dots, a+\ell$ cannot be in $A$.  Therefore $\delta(A)\leq \frac{1}{\ell+1}$ for every independent set $A$ of $G(S)$.
\end{proof}

Next we prove several lemmas that are useful in determining the independence density.

\begin{lemma}\label{lem:multi}
The density of an independent set $A$ of $G(S)$ equals $\limsup_{n\to\infty} \frac{|A\cap [-nd,nd]|}{2nd+1}$ for any fixed positive integer $d$.
\end{lemma}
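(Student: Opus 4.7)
The plan is to show the two inequalities separately. Writing $f(N) := |A \cap [-N,N]|/(2N+1)$, the definition of density is $\delta(A)=\limsup_{N\to\infty} f(N)$, and we want this to equal $L := \limsup_{n\to\infty} f(nd)$. One direction, $\delta(A) \geq L$, is immediate because $(f(nd))_{n\geq 1}$ is a subsequence of $(f(N))_{N\geq 1}$, and the limsup of a subsequence is at most the limsup of the full sequence.

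The substantive direction is $\delta(A) \leq L$. The idea is a simple sandwich: any large $N$ lies in an interval of length $d$ between two consecutive multiples of $d$, so we can bound $f(N)$ by $f(nd)$ for a nearby $n$, up to a factor that tends to $1$. Concretely I would set $n=\lceil N/d\rceil$, so that $nd - d < N \leq nd$, and then estimate
\[
  |A\cap[-N,N]| \;\leq\; |A\cap[-nd,nd]|, \qquad 2N+1 \;\geq\; 2nd-2d+1.
\]
Combining these gives
\[
  f(N) \;\leq\; \frac{|A\cap[-nd,nd]|}{2nd-2d+1} \;=\; \frac{2nd+1}{2nd-2d+1}\cdot f(nd).
\]
As $N\to\infty$ we have $n\to\infty$ and the prefactor $(2nd+1)/(2nd-2d+1)$ tends to $1$, so taking $\limsup_{N\to\infty}$ on both sides yields $\delta(A)\leq L$.

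There is no real obstacle here: the only mild subtlety is checking that the prefactor really can be absorbed into the limsup, which follows because it converges to $1$ and $f(nd)$ is bounded (every $f$-value lies in $[0,1]$), so multiplying by a sequence tending to $1$ does not change the limsup. Together the two inequalities give $\delta(A)=L$, as claimed.
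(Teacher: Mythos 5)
Your proof is correct and follows essentially the same sandwich argument as the paper: both bound $|A\cap[-N,N]|/(2N+1)$ by a nearby term $|A\cap[-nd,nd]|/(2nd+1)$ multiplied by a prefactor tending to $1$, then pass to the limsup. The only cosmetic difference is that you dispatch the easy inequality $\delta(A)\geq L$ by noting it is a subsequence limsup, whereas the paper handles both directions symmetrically via the two-sided sandwich $m=nd+\ell$ with $\ell\in[0,d-1]$.
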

\begin{proof}
Recall $\delta(A)=\limsup_{m\to\infty}\frac{|A\cap [-m,m]|}{2m+1}$, where we can write $m=nd+\ell$ for $\ell \in [0,d-1]$.
Taking the limsup as $m\to\infty$ (which implies $n\to\infty$) of the following bounds
\begin{align*}
\left(\frac{2nd+1}{2m+1}\right)\frac{|A\cap [-nd,nd]|}{2nd+1}\leq \frac{|A\cap [-m,m]|}{2m+1}\leq \left(\frac{2(n+1)d+1}{2m+1}\right)\frac{|A\cap [-(n+1)d,(n+1)d]|}{2(n+1)d+1}
\end{align*}
gives $\delta(A)=\limsup_{n\to\infty}\frac{|A\cap [-nd,nd]|}{2nd+1}$.
\end{proof}

\begin{lemma}\label{lem:periodic}
Let $A$ be a periodic independent set in $G(S)$ with period $p$, and set $q=|A\cap [0,p-1]|$.  Then $\delta(A)=q/p$.
\end{lemma}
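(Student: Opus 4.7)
The plan is straightforward: reduce the density computation to a subsequence indexed by multiples of $p$ via Lemma~\ref{lem:multi}, and then use periodicity to count exactly.

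First I would apply Lemma~\ref{lem:multi} with $d = p$, which gives
\[
\delta(A) = \limsup_{n\to\infty} \frac{|A \cap [-np, np]|}{2np+1}.
\]
Next, I would exploit the hypothesis that $A$ is periodic with period $p$: since $A + p = A$, for every integer $k$ we have $|A \cap [kp, (k+1)p-1]| = |A \cap [0, p-1]| = q$. Partitioning the interval $[-np, np-1]$ into the $2n$ consecutive length-$p$ blocks $[-np, -np+p-1], [-np+p, -np+2p-1], \ldots, [np-p, np-1]$ yields $|A \cap [-np, np-1]| = 2nq$ exactly. Including the single endpoint $np$, we get $|A \cap [-np, np]| \in \{2nq, 2nq+1\}$.

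Dividing by $2np+1$, both $\frac{2nq}{2np+1}$ and $\frac{2nq+1}{2np+1}$ tend to $q/p$ as $n \to \infty$, so
\[
\limsup_{n\to\infty} \frac{|A \cap [-np, np]|}{2np+1} = \frac{q}{p},
\]
and the result follows. There is no real obstacle here; the only subtle point is ensuring that Lemma~\ref{lem:multi} is invoked so the intervals line up neatly with the period, after which the count is exact up to an additive $O(1)$ error that is absorbed in the limit.
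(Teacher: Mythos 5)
Your proof is correct and follows essentially the same route as the paper's: both invoke Lemma~\ref{lem:multi} with $d=p$, use periodicity to pin $|A\cap[-np,np]|$ between $2nq$ and $2nq+1$, and pass to the limit.
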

\begin{proof}
Note that for every integer $z\in \Z$, $|A\cap [z,z+p-1]|=q$.  We have the following bounds
\[\frac{2nq}{2np+1}\leq \frac{|A\cap[-np,np]|}{2np+1}\leq \frac{2nq+1}{2np+1}.\]
Taking the limsup as $n\to\infty$, we obtain $\frac{q}{p}\leq \delta(A)\leq \frac{q}{p}$ by Lemma~\ref{lem:multi}.
\end{proof}

\begin{lemma}\label{prop:multiple}
For $d \geq 1$, $\dalpha(S) = \dalpha(d\cdot S)$.
\end{lemma}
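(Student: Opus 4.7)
The plan is to prove the equality via two matching inequalities, both of which exploit the fact that multiplying the distance set by $d$ merely rescales the graph.

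For the lower bound $\dalpha(d \cdot S) \geq \dalpha(S)$, I would take a periodic independent set $A \subseteq \Z$ in $G(S)$ with period $p$, with $q := |A \cap [0,p-1]|$ elements per period, and $\delta(A) = \dalpha(S)$ (provided by Theorem~\ref{thm:periodic} and Lemma~\ref{lem:periodic}). Then I would define the ``blown-up'' set $A' = \{da + r : a \in A,\ 0 \le r \le d-1\}$, which is periodic with period $pd$ and contains $qd$ elements in each period. The verification that $A'$ is independent in $G(d \cdot S)$ reduces to the observation that every element of $d \cdot S$ is divisible by $d$: if $da_1 + r_1$ and $da_2 + r_2$ differ by $ds$ with $s \in S$, then $r_1 \equiv r_2 \pmod d$ forces $r_1 = r_2$, whence $|a_1 - a_2| = s \in S$, contradicting the independence of $A$. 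Lemma~\ref{lem:periodic} then gives $\delta(A') = qd/(pd) = q/p = \dalpha(S)$, so $\dalpha(d \cdot S) \ge \dalpha(S)$.

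For the reverse inequality $\dalpha(d \cdot S) \leq \dalpha(S)$, I would exploit the fact that $G(d \cdot S)$ decomposes as a disjoint union of $d$ subgraphs, one on each residue class $d\Z + r$ for $r \in \{0, \ldots, d-1\}$, since every edge of $G(d \cdot S)$ has length divisible by $d$. The affine map $f_r(da+r) = a$ is a graph isomorphism from the subgraph induced on $d\Z + r$ to $G(S)$. Given any independent set $A'$ in $G(d \cdot S)$, the preimages $B_r := f_r(A' \cap (d\Z+r))$ are independent in $G(S)$, so each satisfies $\delta(B_r) \leq \dalpha(S)$. By Lemma~\ref{lem:multi}, $\delta(A') = \limsup_{n \to \infty} \frac{|A' \cap [-nd, nd]|}{2nd+1}$, and the numerator equals $\sum_{r=0}^{d-1} |B_r \cap I_r^{(n)}|$ for intervals $I_r^{(n)} \subseteq [-n, n]$ each of length $2n$ or $2n+1$. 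Using $\limsup(\sum_r x_r) \le \sum_r \limsup x_r$ and pulling out the factor of $d$ from the denominator then yields $\delta(A') \le \tfrac{1}{d} \sum_r \delta(B_r) \leq \dalpha(S)$.

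The main technical nuisance is keeping the $\limsup$ accounting clean across the residue-class decomposition; Lemma~\ref{lem:multi} defuses this difficulty by letting us work over intervals $[-nd, nd]$ whose length is a multiple of $2d$, so that the contributions from each residue class land in intervals of essentially the right length. On the lower-bound side, committing to a periodic optimizer from Theorem~\ref{thm:periodic} lets Lemma~\ref{lem:periodic} compute $\delta(A')$ exactly, sidestepping any density estimates altogether.
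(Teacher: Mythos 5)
Your proof is correct, and both directions use the same core decomposition as the paper: map a residue class $d\Z + r$ of $G(d\cdot S)$ isomorphically onto $G(S)$ (upper bound), and blow up an independent set in $G(S)$ by taking all residues modulo $d$ (lower bound). The upper-bound direction is essentially identical to the paper's, down to the use of Lemma~\ref{lem:multi} and the subadditivity of $\limsup$ over the $d$ residue classes. The one genuine difference is in the lower bound: you invoke Theorem~\ref{thm:periodic} to obtain a periodic optimizer $A$ with period $p$ and $q$ elements per period, then observe that the blown-up set $A'$ is periodic with period $pd$ and $qd$ elements per period, so Lemma~\ref{lem:periodic} computes $\delta(A') = q/p$ exactly. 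The paper instead works with an arbitrary (not explicitly periodic) maximum-density independent set and goes through $\limsup$ estimates over growing intervals to show $\delta(A')\ge\delta(X)$. Your shortcut is cleaner and buys you an exact computation rather than an inequality chain, at the cost of invoking the heavier Theorem~\ref{thm:periodic} (which the paper's lower-bound argument technically does not need, though it is already available and used elsewhere, so there is no circularity concern). Both are valid; yours is arguably tidier.
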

\begin{proof}
%First we show that $\delta(A)=\limsup_{n\to\infty} \frac{|A\cap [-nd,nd]|}{2nd+1}$, where $A$ is an independent set of $G(d\cdot S)$.
Let $A$ be an independent set in $G(d\cdot S)$ with maximum density.
Define $A_\ell:=A\cap (d\cdot Z+\ell)$, where $\ell\in [0,d-1]$, and note that the disjoint union of the $A_\ell$ sets is $A$.
Since $A_\ell$ is independent in $G(d\cdot S)$, the set $X_\ell:=(A_\ell-\ell)/d$ is independent in $G(S)$.

If $dz+\ell\in A_\ell\cap [-nd,nd]$, where $z\in \Z$, then $z\in [-n,n]$.
Hence, $|A_\ell\cap [-nd,nd]|\leq |X_\ell\cap [-n,n]|$
and we have the following
\begin{align*}
\dfrac{|A\cap [-nd,nd]|}{2nd+1}
&=\dfrac{\sum_{\ell=0}^{d-1}|A_\ell\cap[-nd,nd]|}{2nd+1}\\
&\leq \dfrac{\sum_{\ell=0}^{d-1}|X_\ell\cap[-n,n]|}{2nd+1}\frac{(2nd+1)+(d-1)}{2nd+d}\\
&=\left(1+\frac{d-1}{2nd+1}\right)\frac{1}{d}\sum_{\ell=0}^{d-1}\left(\frac{|X_\ell\cap[-n,n]|}{2n+1}\right).
\end{align*}
Taking the limsup as $n\to\infty$, we have $\dalpha(d\cdot S)=\delta(A)\leq \frac{1}{d}\sum_{\ell=0}^{d-1}\delta(X_\ell)\leq \dalpha(S)$.

%Let $A$ be a maximum density independent set in $G(d\cdot S)$.
%Since $X_\ell$ is an independent set in $G(S)$, we have $\delta(X_\ell)\leq \dalpha(S)$.
%Thus, $\dalpha(d\cdot S)=\delta(A)=\frac{1}{d}\sum_{\ell=0}^{d-1}\delta(X_\ell)\leq \dalpha(S)$.

Next we show that $\dalpha(d\cdot S)\geq \dalpha(S)$.
Let $X$ be an independent set in $G(S)$ with maximum density.
We claim that the set $A=\union_{\ell=0}^{d-1}(d\cdot X+\ell)$ is independent in $G(d\cdot S)$.
If not, then there exists integers $i,j$ such that $i-j=ds$ for some $s\in S$ and $i=dk_1+\ell_1$ and $j=dk_2+\ell_2$, where $k_1,k_2\in X$ and $\ell_1,\ell_2\in [0,d-1]$.
Therefore, $d(k_1-k_2)+(\ell_1-\ell_2)=ds$ which implies $\ell_1=\ell_2$ and $k_1-k_2=s$, which contradicts the assumption that $X$ is independent.
%By construction of $A$, we have $X=X_\ell$ for all $\ell\in [0,d-1]$.
%Thus, $\dalpha(d\cdot S)\geq \delta(A)=\frac{1}{d}\sum_{\ell=0}^{d-1}\delta(X_\ell)=\delta(X)=\dalpha(S)$.

We have
$$|A\cap [-nd,nd+d-1]|=d|d\cdot X\cap [-nd,nd]|.$$
Thus we have the following bounds:
\begin{align*}
\frac{|A\cap [-nd,nd]|}{2nd+1}
&\geq \dfrac{|A\cap [-nd, nd+d-1]|}{2dn+1}
-\frac{d-1}{2nd+1}\\
&\geq \dfrac{d|d\cdot X\cap[-nd,nd]|}{2dn+d}-\frac{d-1}{2nd+1}\\
&= \dfrac{|X\cap [-n,n]|}{2n+1}-\frac{d-1}{2nd+1}.
\end{align*}
Taking the limsup as $n\to\infty$ of both sides, we have $\dalpha(d\cdot S)\geq \delta(A)\geq\delta(X)=\dalpha(S)$.
\end{proof}

By Lemma~\ref{prop:multiple}, if the greatest common divisor of $S$ is not 1, we can factor out the greatest common divisor.
The following corollary is similar to, but not implied by Theorem~\ref{thm:lz04}~\cite{LZ04}.

\begin{corollary}\label{cor:oneandmultiple}
Let $k \geq 2$ and $\ell\geq 2$.
Then $\dalpha(\{1, k, 2k, \dots, \ell k\}) = \frac{1}{\ell+1}$.
\end{corollary}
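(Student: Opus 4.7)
The plan is to prove the two inequalities separately. The upper bound $\dalpha(S)\le 1/(\ell+1)$ is immediate from tools already in the paper: since $k\cdot[\ell]\sbe S$, Observation~\ref{obs:subset} gives $\dalpha(S)\le\dalpha(k\cdot[\ell])$; Lemma~\ref{prop:multiple} gives $\dalpha(k\cdot[\ell])=\dalpha([\ell])$; and Lemma~\ref{lem:consecutive} gives $\dalpha([\ell])=1/(\ell+1)$.

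For the matching lower bound I would exhibit an explicit periodic independent set $A$ of density $1/(\ell+1)$. The naive guess $A=(\ell+1)\Z$ works when $\gcd(\ell+1,k)=1$ but can fail otherwise (for instance, with $k=2,\ell=3$ it contains the forbidden difference $4\in S$), so a slightly finer construction is needed. I would split $\Z$ into residue classes modulo $k$, place an arithmetic progression with common difference $p:=k(\ell+1)$ in each class, and choose the phases to avoid introducing differences of $1$ between classes. Concretely, set
\[
A \;=\; \bigcup_{j=0}^{k-1}\bigl(p\Z + j + c_j k\bigr),\qquad c_j = j\bmod 2.
\]
Then $A$ is periodic of period $p$ with exactly $k$ elements per period, so by Lemma~\ref{lem:periodic} its density is $k/p = 1/(\ell+1)$.

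It remains to verify independence, i.e.\ that $|x-y|\notin S$ for all $x,y\in A$. Two elements in the same residue class modulo $k$ differ by a nonzero multiple of $p=k(\ell+1)$, which exceeds $\ell k$ and is not $1$, so it is not in $S$. Two elements lying in classes $j\neq j'$ differ by an integer noncongruent to $0$ modulo $k$, so the only element of $S$ it could equal is $1$. For $|j-j'|=1$, equating $|x-y|=1$ would force $(\ell+1)(m-m')=\pm(c_j-c_{j'})=\pm1$ for some integers $m,m'$, which is impossible since $\ell+1\ge 3$.

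The one subtle point, which I expect to be the main obstacle, is the wrap-around pair $\{j,j'\}=\{0,k-1\}$, where the index gap of $k-1$ could in principle combine with the phase shifts to yield an overall difference of $1$. A short case analysis on the parity of $k$ reduces this to requiring $(\ell+1)(m-m')\in\{-2,-1,1,2\}$ with $m\ne m'$, which is again impossible because $\ell+1\ge 3$. Once this last case is dispatched, independence of $A$ is established, and combined with the upper bound we obtain $\dalpha(S)=1/(\ell+1)$.
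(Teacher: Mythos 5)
Your proof is correct and closely parallels the paper's, differing only in the specific lower-bound construction. The upper bound chain --- Observation~\ref{obs:subset} to reduce to $k\cdot[\ell]$, Lemma~\ref{prop:multiple} to rescale, Lemma~\ref{lem:consecutive} to evaluate --- is exactly the paper's. For the lower bound, both you and the paper build a periodic set of period $p=k(\ell+1)$ containing exactly one element per residue class modulo $k$, so that same-class differences are nonzero multiples of $p>\ell k$ and cross-class differences are not multiples of $k$, leaving only the difference $1$ to rule out. The paper achieves this with offsets $2j$ ($j=0,\dots,k-1$) when $k$ is odd (using that $2$ is invertible mod $k$), and a two-part offset list ($2j$ for $j<k/2$, $2j+1$ for $j\ge k/2$) when $k$ is even, so it carries a parity case split; you use the single formula $j+(j\bmod 2)k$ that works uniformly for all $k$, trading the paper's very short ``phase spread $\le 2k-1<p-1$'' argument for a slightly longer analysis of which class pairs could produce a unit difference. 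Both constructions are valid. One small imprecision worth flagging: for adjacent classes $|j-j'|=1$, your displayed equation $(\ell+1)(m-m')=\pm(c_j-c_{j'})=\pm1$ covers only the sub-case where $x-y$ and $j-j'$ share a sign; when they differ in sign one instead gets $k\bigl[(\ell+1)(m-m')+(c_j-c_{j'})\bigr]=\pm2$, which forces $k=2$ and then either $(\ell+1)(m-m')=\pm2$ (impossible since $\ell+1\ge 3$) or $m=m'$ with $|x-y|=3$. The conclusion is unaffected --- and your wrap-around analysis already anticipates the $\pm 2$ possibility --- but the adjacent-class sentence should be phrased to include that sub-case.
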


\begin{proof}
By Lemma~\ref{lem:consecutive} and Lemma~\ref{prop:multiple} we know that 
\[
    \dalpha(\{1,k,2k,\dots, \ell k\})\leq \dalpha(\{k,2k,\dots, \ell k\})
        =\dalpha(\{1,2,\dots, \ell\})=\frac{1}{\ell+1}.
\]
%, which gives an upper bound on the independence ratio.

Let $X = (\ell+1)\cdot \Z$, and for $0\leq j\leq k-1$ let $X_j$ denote $k\cdot X+2j$.
% and observe that $X$ is an independent set in $G(\{1,2,\dots,\ell\})$.
If $k$ is odd, let $X' = \union_{j=0}^{k-1} X_j$.
Note that $X_{j_1}$ and $X_{j_2}$ are disjoint for $j_1\neq j_2$.
Let $a=k(\ell+1)x_1+2j_1$ and $b=k(\ell+1)x_2+2j_2$ be distinct elements of $X'$.
If $|a-b|\equiv 0 \pmod k$, then $2|j_1-j_2|\equiv 0 \pmod k$.
Since $k$ is odd, 2 is invertible modulo $k$, and so $j_1=j_2$.
%In $X'$, every residue class modulo $k$ is represented as a copy of $X$.
Since $a$ and $b$ are both in $X_j$ for some $j$, $|a-b|\geq k(\ell+1)$.
Thus $|a-b|\notin \{k,2k,\dots, \ell k\}$.
Moreover, since each element from $X$ is translated by at most $2(k-1)$ positions, and $2(k-1) < k(\ell+1)-1$ we have that no two elements of $X'$ have distance 1.
Therefore, $X'$ is independent and has density $\frac{1}{\ell+1}$.

If $k$ is even, let $h =  k/2$.
Let $X' = \union_{j=0}^{h-1} X_j \cup \union_{j=h}^{k-1} (X_j+1)$.
Note that $X_j$ contains only even numbers for $0\leq j\leq h-1$ and $X_j+1$ contains only odd numbers for $h\leq j\leq k-1$.
So $X_{j_1}$ and $X_{j_2}$ are disjoint for $j_1\neq j_2$.
Let $a=k(\ell+1)x_1+2j_1+e_1$ and $b=k(\ell+1)x_2+2j_2+e_2$ be distinct elements of $X'$, where $e_1,e_2\in \{0,1\}$.
If $|a-b|\equiv 0 \pmod k$, then $e_1=e_2$ and $2|j_1-j_2|\equiv 0 \pmod k$.
Thus either $j_1,j_2<h$ and so $j_1=j_2$, or $h\leq j_1, j_2\leq k-1$ and so $j_1=j_2$.
Thus $a$ and $b$ are both in $X_j$ for some $j$, and hence $|a-b|\geq k(\ell+1)$.
Thus $|a-b|\notin \{k,2k,\dots, \ell k\}$.
Moreover, since each element from $X$ is translated by at most $2(k-1)+1$ positions, and $2(k-1)+1 < k(\ell+1)-1$ we have that no two elements of $X'$ have distance 1.
Therefore, $X'$ is independent and has density $\frac{1}{\ell+1}$.
\end{proof}

%%%%%%%%%%%%%%%%%%%%%%%%%%%%%%%%%%%%%%%%%%%%%%%%%%%%%%%%%%%%%%%%%%%%
%%%%%%%%%%%%%%%%%%%%%%%%%%%%%%%%%%%%%%%%%%%%%%%%%%%%%%%%%%%%%%%%%%%%
%%%%%%%%%%%%%%%%%%%%%%%%%%%%%%%%%%%%%%%%%%%%%%%%%%%%%%%%%%%%%%%%%%%%
%%%%%%%%%%%%%%%%%%%%%%%%%%%%%%%%%%%%%%%%%%%%%%%%%%%%%%%%%%%%%%%%%%%%
%%%%%%%%%%%%%%%%%%%%%%%%%%%%%%%%%%%%%%%%%%%%%%%%%%%%%%%%%%%%%%%%%%%%
%%%%%%%%%%%%%%%s%%%%%%%%%%%%%%%%%%%%%%%%%%%%%%%%%%%%%%%%%%%%%%%%%%%%%
%%%%%%%%%%%%%%%%%%%%%%%%%%%%%%%%%%%%%%%%%%%%%%%%%%%%%%%%%%%%%%%%%%%%
%%%%%%%%%%%%%%%%%%%%%%%%%%%%%%%%%%%%%%%%%%%%%%%%%%%%%%%%%%%%%%%%%%%%

\section{The Local Discharging Lemma}
\label{sec:discharging}

In this section, we define our process of using discharging to show upper bounds on $\dalpha(S)$.
We begin by defining objects, called blocks and frames, that are crucial to our process.

Fix a distance set $S$ and an independent set $X \subseteq \Z$.
Index the elements of $X$ by $\Z$, so $X = \{ \dots, x_{-2}, x_{-1}, x_0, x_1, x_2, \dots \}$.
The $i$th \emph{block} $B_i$ is the set $B_i = \{ x_i, x_i + 1, \dots, x_{i+1}-1\}$.
Hence, each block contains exactly one element of $X$, and the blocks partition $\Z$. % since $\union_{i\in \Z} B_i = \Z$.
Observe that $|B_i|=x_{i+1}-x_i$, and since $X$ is independent, $|B_i| \notin S$.

Let $t$ be a positive integer.
A \emph{frame of length $t$} is a set of $t$ consecutive blocks.
For $j \in \Z$, let $F_j = \{ B_j, \dots, B_{j+t-1}\}$ be the $j$th frame of length $t$.
For a set $F$ of consecutive blocks, let $\sigma(F) = \sum_{B \in F} |B|$.
Observe that $\sigma(F)$ is the distance from the first element of $X$ in $F$ to the first element of $X$ following $F$.
Since $X$ is an independent set, $\sigma(F)\notin S$.
Throughout the rest of the paper we refer to blocks of length $i$ as $i$-blocks and frames of length $t$ as $t$-frames. 
%This immediately gives the following observation.
%\begin{observation}
%Let $F$ be a frame for an independent set $X$.  Then $\sigma(F) \notin S$.
%\end{observation}

We can describe the structure of a frame $F$ by listing the sizes of its blocks in order.
We denote such a list of sizes using \emph{block notation}, which is defined recursively.
First, any list of integers $b_1\ b_2\ \cdots\ b_k$ corresponds to $k$ consecutive blocks with sizes $b_1,\dots,b_k$.
For block structure $\pi$, the block structure $\pi^e$ corresponds to $e$ consecutive sets of blocks matching block structure $\pi$.
Finally, for two block structures $\pi_1$ and $\pi_2$, the block structure $\pi_1\, \pi_2$ corresponds to consecutive sets of blocks first matching block structure $\pi_1$ then matching the block structure $\pi_2$.
For example, the block structure $(2\ 3)^5\ 7\ (3\ 4)^2$ corresponds to 15 consecutive blocks, first with ten blocks alternating between 2- and 3-blocks, then a 7-block, then four blocks alternating between 3- and 4-blocks.
Every block structure also defines an infinite, periodic set given by repeating the block structure infinitely in both directions.

% talk about the discharging method and how it is used.
We are now prepared to discuss our discharging method.
Generally, discharging is a technique that interfaces between local structure and global averages.
We use our knowledge of local structure (the distance set) to demonstrate an upper bound on the global average (the density of an independent set).

Let $\cB$ be the set of blocks and $\cF$ be the set of frames.
Fix a \textit{charge function} $\mu : \cB \to \Z$, which is any assignment of integers to the blocks of $X$.\footnote{Note that we could relax the definition of a charge function to be fractional as in other contexts, such as coloring of planar graphs.  However, we only use integral charges in our proofs as the parameters $a$, $b$, and $t$ of the Local Discharging Lemma allow us to avoid fractional values.}
%(Note that in other contexts, such as coloring of planar graphs, charges can be fractional.  However, we only use integral charges.)
A \textit{discharging rule} is a function $d : \cB \times \cB \to \Z$ such that $d(B_i, B_j) = -d(B_j, B_i)$.
We say that a discharging rule $d$ is \emph{$S$-local} if there is a number $m = m(S)$ depending only on $S$, called the \emph{locality} of $d$, such that if $|x_i - x_j| > m$, then $d(B_i,B_j) = 0$, if $|x_i-x_j| \leq m$, then $d(B_i,B_j)$ depends only on the block structure of the blocks $B_{i-m},\dots, B_{i+m}$.
The discharging rule $d$ defines a new charge function $\mu^* : \cB \to \Z$ given by
\[
	\mu^*(B_i) = \mu(B_i) + \sum_{B_j \in \cB} d(B_j, B_i).
\]
That is, positive values of $d(B_j, B_i)$ are considered to be charge sent from $B_j$ to $B_i$ and negative values of $d(B_j, B_i)$ are considered to be charge received by $B_j$ from $B_i$.
In the second stage, we discharge on frames.
Define $\nu^*(F_j) = \sum_{i=j}^{j+t-1} \mu^*(B_i)$.
A \emph{second-stage discharging rule} is a function $d' : \cF \times \cF \to \Z$ such that $d(F_i, F_j) = -d(F_j, F_i)$.
We similarly define $d'$ to be \emph{$S$-local} there exists a \emph{locality} $m'= m'(S)$ such  that if $|i - j| > m$, then $d'(F_i,F_j) = 0$, and if $|i-j| \leq m$, then $d'(F_i,F_j)$ depends only on the block structure of the frames $F_{i-m'},\dots, F_{i+m'}$.
Finally, perform the second-stage discharging rule by defining the charge function $\nu'$ on the frames as
\[
	\nu'(F_i) = \nu^*(F_i) + \sum_{F_j \in \cF} d'(F_j, F_i).
\]

	\begin{figure}[t]
		\centering
		$\xymatrix{
			\text{\textbf{Stage 1:} \textit{Blocks}} & \mu(B_j) \ar[rr]^{\text{discharge}} &\ & \mu^*(B_j) \ar[d]^{\text{defines}} && \\
			\text{\textbf{Stage 2:} \textit{Frames}} & && \nu^*(F_j) \ar[rr]^{\text{discharge}} &\ & \nu'(F_j)
		}$
		\caption{\label{fig:dischargingmethod}The two-stage discharging method.}
	\end{figure}
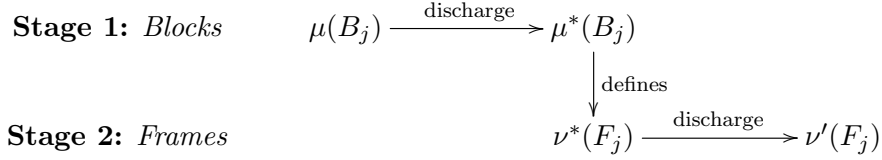

See Figure~\ref{fig:dischargingmethod} for a visualization of this discharging process.
The following lemma allows us to relate discharging functions and densities of independent sets.

\begin{lemma}[Local Discharging Lemma]\label{lem:localdischarging}
	Let $S$ be a finite, nonempty set of positive integers.
    Fix integers $a, b, t \geq 1$ and $c \geq 0$.
Let $X$ be a periodic independent set of $G(S)$.
%    Fix a generating set $S$, a periodic independent set $X$ of $G(S)$ with period $p$, and integers $a, b, t \geq 1$ and $c \geq 0$.
	Initialize the charge function $\mu$ to be $\mu(B_i) = a|B_i| - b$.
	Let there be an $S$-local discharging rule $d : \cB \times \cB \to\Z$ that defines $\mu^*(B_i)$.
	On the family $\cF$ of all $t$-frames, define $\nu^*(F_j) = \sum_{B_i \in F_j} \mu^*(B_j)$, where $F_j$ is a $t$-frame, and let $d'$ be an $S$-local discharging rule $d' : \cF \times \cF \to \Z$ that defines $\nu'(F_j)$.
	If $\nu'(F_j) \geq c$ for all $j$, then $\delta(X) \leq \frac{at}{bt+c}$.
\end{lemma}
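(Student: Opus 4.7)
My plan is to exploit the periodicity of $X$ directly. Let $p$ be the period of $X$ and set $q = |X \cap [0,p-1]|$; by Lemma~\ref{lem:periodic} we have $\delta(X) = q/p$, so it suffices to prove $q/p \leq at/(bt+c)$. A single period of $X$ contains exactly $q$ blocks whose sizes sum to $p$, so $\sum_{i=0}^{q-1}\mu(B_i) = ap-bq$. Because $d$ is $S$-local and the block structure of $X$ is periodic with period $q$, the function $\mu^*$ is also periodic with period $q$; the analogous claim transfers to $\nu^*$ and $\nu'$, since the frame structure and the rule $d'$ inherit the same periodicity and locality.

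The crux of the argument is the conservation identity
\[ \sum_{i=0}^{q-1}\mu^*(B_i) = \sum_{i=0}^{q-1}\mu(B_i) = ap - bq. \]
To prove this I would show that $T := \sum_{i=0}^{q-1}\sum_{j\in\Z} d(B_j,B_i) = 0$. Writing $j = j' + kq$ with $j'\in\{0,\dots,q-1\}$ and $k\in\Z$, and using the translation identity $d(B_{j'+kq},B_i) = d(B_{j'},B_{i-kq})$ (which comes from periodicity of the block structure together with the fact that $d$ depends only on a bounded local window), the pair $(i,k)$ with $i\in\{0,\dots,q-1\}$ and $k\in\Z$ sweeps the variable $n := i-kq$ bijectively across $\Z$. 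This rearranges the sum as $T = \sum_{j'=0}^{q-1}\sum_{n\in\Z} d(B_{j'},B_n)$. Applying antisymmetry $d(B_{j'},B_n) = -d(B_n,B_{j'})$ and then relabeling $j'\leftrightarrow i$, $n\leftrightarrow j$ gives exactly $-T$, so $T = -T$ and hence $T = 0$.

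With the conservation identity in hand, I would sum $\nu^*(F_j)$ over $j=0,\dots,q-1$. Each block $B_k$ lies in exactly $t$ frames $F_{k-t+1},\dots,F_k$, and a short accounting at the two ends of the period using periodicity of $\mu^*$ yields $\sum_{j=0}^{q-1}\nu^*(F_j) = t\sum_{i=0}^{q-1}\mu^*(B_i) = t(ap-bq)$. Applying the same antisymmetry-plus-periodicity argument to the second-stage rule $d'$ then gives $\sum_{j=0}^{q-1}\nu'(F_j) = t(ap-bq)$. The hypothesis $\nu'(F_j) \geq c$ for every $j$ produces $qc \leq t(ap-bq)$, which rearranges to $q(bt+c) \leq apt$, i.e. $\delta(X) = q/p \leq at/(bt+c)$.

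The main obstacle will be the conservation identity $T = 0$: morally it is just antisymmetric cancellation, but the re-indexing must be done carefully because the sums $\sum_j d(B_j,B_i)$ are only finite thanks to $S$-locality, and the periodic parameterization $j = j' + kq$ has to sweep $\Z$ bijectively without double-counting. Once that is in place, the rest (counting how often each block falls in a $t$-frame over one period, and repeating the same conservation argument one level up for $d'$) is bookkeeping.
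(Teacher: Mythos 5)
Your proposal is correct and follows essentially the same line as the paper's first proof: both exploit periodicity to reduce to a single period, where antisymmetry of $d$ and $d'$ makes the total transferred charge vanish, yielding $\sum\mu^* = \sum\mu$, $\sum\nu' = \sum\nu^* = t\sum\mu^*$, and hence the bound. The paper makes this rigorous by passing to a circulant graph $G(q,S)$ with $q$ a sufficiently large multiple of the period, while you carry out the equivalent conservation argument directly in $\Z$ via a reindexing $j = j' + kq$; these are presentational variants of the same idea.
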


We present two proofs of the Local Discharging Lemma, and both require the set $X$ to be periodic.
In the first proof, we reduce to a finite circulant graph where the discharging rules are equivalent to the periodic set.
In the second proof, we use the limit definition to observe that the local nature causes only a finite amount of perturbation about the boundary during the density calculation.

\begin{proof}[Proof 1.]
Let $p$ be a period of the independent set $X$.
Let $D$ be the maximum locality of the two discharging rules $d$ and $d'$.
Set $q = 2Dpt$ and observe that $q$ is a period for $X$.
Thus, $X_q = X \cap [q]$ is an independent set in the circulant graph $G(q, S)$.
Let $r = |X_q|$ and let $x_1 < \cdots <x_r$ be the elements of $X_q$.
Observe that for all $i,i' \in \Z$, we have $x_i \equiv x_{i'} \pmod q$ if and only if $i \equiv i' \pmod r$.
Thus when $i \equiv i'\pmod r$ the block structure surrounding $B_i$ is equivalent to the block structure surrounding $B_{i'}$.
Further, when $i \equiv i' \pmod r$ and $j \equiv j' \pmod r$, the discharging rules $d$ and $d'$ satisfy $d(B_i, B_j) = d(B_{i'}, B_{j'})$ and $d'(F_i, F_j) = d'(F_{i'}, F_{j'})$.
Therefore, when $i \equiv i'\pmod r$, the charge functions all satisfy 
\[
\mu(B_i) = \mu(B_{i'}), \quad \mu^*(B_i) = \mu^*(B_{i'}), \quad \nu^*(F_i) = \nu^*(F_{i'}), \quad\text{and}\quad \nu'(F_i) = \nu'(F_{i'}).
\]
Observe also that
\[
t(aq-br) = t\sum_{j=1}^r \mu(B_j) = t\sum_{j=1}^r \mu^*(B_j) = \sum_{j=1}^r \nu^*(F_j) = \sum_{j=1}^r \nu'(F_j) \geq cr.
\]
From this, we have the inequality $taq \geq (tb+c)r$ and hence $\frac{ta}{tb+c} \geq \frac{r}{q} = \frac{|X_q|}{q} = \delta(X)$.
\end{proof}

\newcommand{\m}{N}
\begin{proof}[Proof 2.]
We assume that $X$ is a maximal independent set, which implies that the maximum length of a block is at most $2 \max S$.
By the locality of the first stage discharging rule $d$, there are a finite number of combinations of $2m+1$ consecutive blocks and thus a finite number of values to $d(B_i,B_j)$.
Thus, there exists a number $v$ such that $|d(B_i,B_j)| \leq v$ for all pairs of blocks $B_i, B_j$.

Since both discharging rules $d$ and $d'$ are $S$-local, the absolute differences
	\[\left|\sum_{i=1-\m}^\m \mu(B_i) - \sum_{i=1-\m}^\m \mu^*(B_i)\right|\text{ and }\left|\sum_{i=1-\m}^\m \nu^*(F_i) - \sum_{i=1-\m}^\m \nu'(F_i)\right|\] are bounded by a constant $C_1$, where $N$ is sufficiently large.
Also, since $t$ is a fixed constant, the absolute difference
\[\left|\sum_{i=1-\m}^\m \nu^*(F_i) - \sum_{i=1-\m}^\m t\mu^*(B_i)\right|\] is bounded by a constant $C_2$.
Let $C=\max\{C_1,C_2\}$.
%Let $m$ be sufficiently large that the following approximations hold within $\varepsilon > 0$:
\begin{align*}
	c \leq \frac{\sum_{i=1-\m}^\m \nu'(F_i)}{2\m}
		\leq \frac{\sum_{i=1-\m}^\m \nu^*(F_i)}{2\m}+\frac{C}{2\m}
		&\leq \frac{t\sum_{i=1-\m}^\m \mu^*(B_i)}{2\m}+\frac{2C}{2\m}\\
		&\leq \frac{t\sum_{i=1-\m}^\m \mu(B_i)}{2\m}+\frac{3C}{2\m}\\
		&= \frac{ta\sum_{i=1-\m}^\m|B_i| - 2\m tb}{2\m}+\frac{3C}{2\m}.
\end{align*}

Recall that $X$ is a periodic independent set with maximum density.
Let $p$ be the period of $X$.
Let $q=|X\cap [0,p-1]|$ and let $\m=kq+r$, where $r\in [0,q-1]$.
Note that $\sum_{i=a}^{a+q-1}|B_i|=p$ for any integer $a\in \Z$.
%By shifting if necessary, we can assume $x_0=0\in X$.
Therefore, we have the inequalities
\begin{align*}
	\frac{c + tb}{ta} \leq \frac{\sum_{i=1-\m}^\m|B_i|}{2\m}+\frac{3C}{2\m ta}
    &\leq \frac{\sum_{i=-kq}^{kq-1} |B_i|}{2kq}+\frac{2q}{2\m}+\frac{3C}{2\m ta}\\
    &= \frac{2kp}{2kq}+\frac{2q}{2\m}+\frac{3C}{2\m ta}\\
    &= \delta(X)^{-1}+\frac{2q}{2\m}+\frac{3C}{2\m ta}.
\end{align*}
Taking the limit as $\m\to\infty$, we have $\frac{c+tb}{ta}\leq \delta(X)^{-1}$.
\end{proof}

%Description of proof structure.
We use the Local Discharging as part of our method for determining independence ratios.
Suppose that we want to determine the independence ratio for some family of generator sets parameterized by $k$.
We compute values for $\dalpha(S)$ for some explicit values of $k$ using the computational techniques outlined in 
Section~\ref{sec:computation}. This leads us to a conjectured value, $\tau$ say, for $\dalpha(S)$. (Obviously $\tau$ 
will usually also depend on $k$, though we don't make it explicit in our notation here.)
To prove that $\dalpha(S)=\tau$ for each set $S$ in the family, we use the approach outlined below.
\begin{enumerate}
\item Construct a periodic independent set with density $\tau$.  This proves that $\dalpha(S)\geq \tau$.
\item Determine parameters $a$, $b$, $c$, and $t$ such that $\frac{at}{bt+c}=\tau$.
\item Let $X$ be an independent set in $G(S)$ with maximum density.
By Theorem~\ref{thm:periodic} we may assume that $X$ is periodic.
\item Construct Stage~1 discharging rules on such that $\mu^*(B)\geq 0$ for all blocks $B$.  Having all blocks have nonnegative $\mu^*$ charge makes creation of Stage~2 discharging rules easier.
\item Construct Stage~2 discharging rules such that $\nu'(F)\geq c$ for every $t$-frame $F$.
\item Deduce from the Local Discharging Lemma that $\delta(X)\leq \frac{at}{bt+c}$, i.e., that $\dalpha(S)\leq \tau$.
\end{enumerate}

The following theorem is a short example using the Local Discharging Lemma.
It provides an alternative proof of (a generalization of) part 2 of Theorem \ref{thm:3gensfractional}.

%
%\begin{theorem}[$S = \{1, 2, k\}$]
%	Let $k \geq 3$. Then,
%	\[
%		\dalpha(\{1,2,k\}) = \begin{cases}
%			\frac{1}{3} & k \not\equiv 0 \pmod 3\\
%			\frac{k}{3k+3} & k \equiv 0 \pmod 3
%		\end{cases}.
%	\]
%	Therefore, $\lim_{k \to \infty} \dalpha(\{1,2,k\}) = \frac{1}{3}$.
%\end{theorem}
%
%\begin{proof}
%	Since $\{1,2\}$ is a subset of the generators, every block has size at least 3.
%	So, $\dalpha(\{1,2,k\}) \leq \frac{1}{3}$.
%	If $k \not\equiv 0 \pmod 3$, then the periodic set of all 3-blocks is independent and equality holds.
%
%	Otherwise, let $k = 3t$ and set $a = 3$, $b = 9$, and $c = 3$.
%	Thus, $\frac{at}{bt+c} = \frac{k}{3k+3}$.
%	All blocks have non-negative charge, and any block of size at least four has charge at least 3.
%	Since no frame has $\sigma(F) = 3t$, every frame contains a block of size at least four, so the charge is at least $3$.
%\end{proof}
%

\begin{theorem}\label{thm:intervalandk}
	Let $\ell \geq 2$ and $k > \ell$. Then
	\[
		\dalpha(\{1,\dots,\ell-1, k\}) = \begin{cases}
			\frac{1}{\ell} & k \not\equiv 0 \pmod \ell\\
			\frac{k}{\ell(k+1)} & k \equiv 0 \pmod \ell
			\end{cases}
	\]
\end{theorem}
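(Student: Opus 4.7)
The plan is to separate into the two cases based on whether $\ell \mid k$, and in each case establish both a matching lower and upper bound. In both cases the upper bound uses the observation that every block $B_i$ of an independent set $X$ in $G(S)$ has $|B_i| \notin S$; in particular, since $\{1,\dots,\ell-1\} \subseteq S$, every block has size at least $\ell$.

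For the lower bound when $k \not\equiv 0 \pmod \ell$, I would just exhibit $X = \ell \cdot \Z$, whose differences are all multiples of $\ell$ and hence miss $\{1,\dots,\ell-1\}$ and also miss $k$, giving density $1/\ell$. For the lower bound when $k = \ell m$, I would exhibit the periodic set with block structure $\ell^{m-1}(\ell+1)$, having period $k+1$ and containing $m$ elements per period, hence density $m/(k+1) = k/(\ell(k+1))$. The verification is straightforward: every gap is $\ell$ or $\ell+1$ so differences $1,\dots,\ell-1$ do not occur, and in every window of $m$ consecutive blocks exactly one block is an $(\ell{+}1)$-block (since the pattern has period $m$), so the sum of any $m$ consecutive gaps is $m\ell + 1 = k+1 \ne k$; any partial sum of fewer than $m$ gaps is at most $(m-1)(\ell+1) < m\ell$ for $m \le \ell$ but this reasoning needs a little more care, so I would just argue directly that since the $(\ell{+}1)$-blocks occur at regular distance $k+1$, no partial sum of consecutive gaps equals $k$.

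For the upper bound when $k \not\equiv 0 \pmod \ell$, the bound $\dalpha(S) \le 1/\ell$ follows immediately from $|B_i| \ge \ell$, without any discharging. For the upper bound when $k = \ell m$, I would apply the Local Discharging Lemma with parameters $a = 1$, $b = \ell$, $c = 1$, $t = m$ and with trivial discharging rules ($d \equiv 0$, $d' \equiv 0$). By Theorem~\ref{thm:periodic} I may assume the extremal independent set $X$ is periodic. The initial charge is $\mu(B_i) = |B_i| - \ell \ge 0$, so $\mu^* = \mu$ and $\nu^*(F) = \sigma(F) - m\ell = \sigma(F) - k$ for any $m$-frame $F$. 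Since each block has size $\ge \ell$, $\sigma(F) \ge k$; since $X$ is independent and $\sigma(F)$ equals the difference of two elements of $X$, we have $\sigma(F) \ne k$, so $\sigma(F) \ge k+1$ and $\nu'(F) = \nu^*(F) \ge 1 = c$. The Local Discharging Lemma then gives $\delta(X) \le \frac{at}{bt+c} = \frac{m}{\ell m + 1} = \frac{k}{\ell(k+1)}$, completing the proof.

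The main obstacle is essentially nonexistent here: the key structural fact -- that an $m$-frame cannot have total size exactly $k$ because $k \in S$ -- is exactly what the Local Discharging Lemma is designed to exploit, and the bookkeeping lines up so cleanly that no nontrivial discharging is needed. The only thing to be careful about is the straightforward verification that the periodic construction in the divisibility case is genuinely independent, which amounts to checking no consecutive run of gaps sums to $k$.
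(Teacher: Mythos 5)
Your proposal is correct and follows essentially the same route as the paper: the same lower-bound constructions ($\ell\cdot\Z$ in the non-divisible case, the $\ell^{m-1}(\ell+1)$ pattern in the divisible case), and the same application of the Local Discharging Lemma with $a=1$, $b=\ell$, $t=k/\ell$, $c=1$ and no discharging. The paper phrases the frame bound as "every frame must contain a block of size $\ge \ell+1$," whereas you compute $\nu^*(F)=\sigma(F)-k\ge 1$ directly, but these are the same observation.
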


\begin{proof}
	Since $\{1,\dots,\ell-1\}$ is a subset of the generators, every block has size at least $\ell$.
	So, $\dalpha(\{1,\dots,\ell-1,k\}) \leq \frac{1}{\ell}$.
	If $k \not\equiv 0 \pmod \ell$, then the periodic set of all $\ell$-blocks is independent and equality holds.

	Otherwise, $k = \ell t$ for some integer $t$.  The periodic set (given in block notation) $\ell^{t-1}\ (\ell+1)$ is independent with density $\frac{k/\ell}{\ell t+1}=\frac{k}{\ell(k+1)}$
    To prove the upper bound we use Lemma~\ref{lem:localdischarging}, where $a = 1$, $b = \ell$, $t=k/\ell$, and $c = 1$.
	Thus, $\frac{at}{bt+c} = \frac{k/\ell}{k+1} = \frac{k}{\ell (k+1)}$.

    There are no discharging rules in this case.
    So $\mu(B_i)=\mu^*(B_i)$, and $\nu^*(F_j)=\sum_{B_i\in F_j}\mu(B_i)=\nu'(F_j)$.
    Since every block is at least an $\ell$-block, and receives initial charge $a|B_i|-b$, all blocks have non-negative charge.
    Any block of size at least $\ell+1$ has charge at least $1$.
    Since no frame has $\sigma(F) = \ell t = k$, every frame contains a block of size at least $\ell+1$.
    Hence each frame has at least $c=1$ unit of charge.
\end{proof}

%%%%%%%%%%%%%%%%%%%%%%%%%%%%%%%%%%%%%%%%%%%%%%%%%%%%%%%%%%%%%%%%%%%%
%%%%%%%%%%%%%%%%%%%%%%%%%%%%%%%%%%%%%%%%%%%%%%%%%%%%%%%%%%%%%%%%%%%%
%%%%%%%%%%%%%%%%%%%%%%%%%%%%%%%%%%%%%%%%%%%%%%%%%%%%%%%%%%%%%%%%%%%%
%%%%%%%%%%%%%%%%%%%%%%%%%%%%%%%%%%%%%%%%%%%%%%%%%%%%%%%%%%%%%%%%%%%%
%%%%%%%%%%%%%%%%%%%%%%%%%%%%%%%%%%%%%%%%%%%%%%%%%%%%%%%%%%%%%%%%%%%%
%%%%%%%%%%%%%%%%%%%%%%%%%%%%%%%%%%%%%%%%%%%%%%%%%%%%%%%%%%%%%%%%%%%%
%%%%%%%%%%%%%%%%%%%%%%%%%%%%%%%%%%%%%%%%%%%%%%%%%%%%%%%%%%%%%%%%%%%%
%%%%%%%%%%%%%%%%%%%%%%%%%%%%%%%%%%%%%%%%%%%%%%%%%%%%%%%%%%%%%%%%%%%%
\section{Discharging Arguments}
\label{sec:threegens}
 In this section we use the Local Discharging Lemma to prove exact values of $\dalpha(S)$ for several families of sets $S$ of size $3$ where $1 \in S$.
Recall that if $k, \ell$ are both odd integers, then $\dalpha(\{1,k,\ell\}) = \frac{1}{2}$.

We begin by determining the asymptotic behavior of $\dalpha( \{1, k, k+i\})$ and $\dalpha(\{1, i, k\})$ for constants $i$ and growing $k$.
We then determine the exact values for these infinite families when $i$ is a small constant.
Finally, we list some conjectures for values of the next few values of $i$.

\begin{casefig}
			\scalebox{\casefigratio}{\begin{lpic}[]{"figures/FigureKey-1-K-Kp3"(,20mm)}
			   \lbl[b]{23,18;\small Frame}
			   \lbl[b]{41,18;\small Element}
			   \lbl[b]{56,18;\small Block}
			   \lbl[br]{90,16.5; \footnotesize Possible Element}
			   \lbl[bl]{95,16.5;\footnotesize Forbidden Element}
			   \lbl[tl]{112,5;\small $\Z_n$}
			   \lbl[tr]{138,8;\footnotesize Increasing}
			   \lbl[tl]{2,8;\footnotesize Decreasing}
			   \lbl[r]{48.5,2;\footnotesize $S-\{1\}$}
			\end{lpic}}
	\caption{\label{fig:blockframekey}The figure above gives a schematic of later figures we use to describe our discharging methods.}
\end{casefig}

\subsection{Asymptotic Results}

Consider sets $S = S(k)$ determined by $S(k) = \{1, f(k), g(k)\}$.
We determine the limit of $\dalpha(S(k))$ for certain functions $f(k)$ and $g(k)$.

\begin{theorem}
	For $i \geq 1$,	$\lim_{k\to\infty} \dalpha(\{1,2i+1, 2k\}) = \frac{1}{2}$.
\end{theorem}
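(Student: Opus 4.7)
My plan is to prove this by matching a trivial upper bound with an explicit periodic lower bound. The upper bound $\dalpha(\{1,2i+1,2k\}) \leq \tfrac12$ is automatic: since $1 \in S$, no independent set can contain two consecutive integers, equivalently every block has size at least $2$. So the whole task is to construct, for each large $k$, a periodic independent set whose density approaches $\tfrac12$.

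The pattern I propose is the one with block structure $2^{k-1}\,(2i+3)$: that is, $k-1$ consecutive $2$-blocks followed by a single block of size $2i+3$. The period is $p := 2(k-1)+(2i+3) = 2k+2i+1$ and there are $k$ elements per period, so by Lemma~\ref{lem:periodic} the density equals $\frac{k}{2k+2i+1}$, which tends to $\tfrac12$ as $k\to\infty$. The guiding heuristic is to use as many $2$-blocks as possible, together with exactly one ``bridge'' block whose size is odd, so that the period is odd and the even value $2k$ cannot collide with any cross-period distance, and large enough that the smallest odd cross-period distance already exceeds $2i+1$.

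The key remaining step is to verify independence, i.e., that none of $1$, $2i+1$, $2k$ appears as a difference of two elements. Within one period the elements sit at $0,2,\dots,2k-2$, so same-period differences are even and at most $2k-2 < 2k$. Adjacent-period differences have the form $p+(s'-s)$ with $s'-s$ even in $[-(2k-2),2k-2]$, giving odd values in $\{2i+3, 2i+5, \dots, 4k+2i-1\}$, which misses $2i+1$ from below and misses $2k$ by parity. For jumps across $m\ge 2$ periods the distances are at least $2p-(2k-2)=2k+4i+4>2k$, and the odd distances coming from jumps across $3$ periods are at least $3p-(2k-2)=4k+6i+5>2i+1$, so no forbidden distance can arise. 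Distance $1$ is excluded because the minimum block size is $2$. The only real obstacle in the proof is guessing the right block pattern; once $2^{k-1}(2i+3)$ is in hand, the required checks are short arithmetic and no discharging argument is needed.

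Combining the lower bound $\frac{k}{2k+2i+1}$ with the upper bound $\tfrac12$ and letting $k\to\infty$ yields the claimed limit.
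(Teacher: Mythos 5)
Your proof is correct and takes essentially the same approach as the paper: both rely on the trivial upper bound $\tfrac12$ (since $1\in S$) and the explicit periodic set with block structure $2^{k-1}\,(2i+3)$ of density $\frac{k}{2k+2i+1}\to\tfrac12$. The only difference is that you spell out the (correct) independence verification, which the paper leaves implicit.
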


\begin{proof}
Since $\dalpha(\{1,2i+1, 2k\}) \leq \dalpha(\{1\}) = \frac{1}{2}$, we have the upper bound immediately.

Observe that the periodic set with block structure $2^{k-1}\ (2i+3)$ is independent in $G(\{1,2i+1,2k\})$ and has density $\frac{k}{2k+2i+1}$, which tends to $\frac{1}{2}$ as $k$ grows.
\end{proof}

\begin{theorem}
	For $i \geq 1$,	$\lim_{k\to\infty} \dalpha(\{1,2i, k\}) = \frac{i}{2i+1}$.
\end{theorem}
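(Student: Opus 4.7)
The upper bound $\dalpha(\{1,2i,k\}) \le \frac{i}{2i+1}$ follows immediately from Observation~\ref{obs:subset} combined with Theorem~\ref{thm:CHZ98C}, since $\{1,2i\} \subseteq \{1,2i,k\}$ yields $\dalpha(\{1,2i,k\}) \le \dalpha(\{1,2i\}) = \frac{i}{2i+1}$. Thus the remaining task is the matching lower bound, and the plan is to construct, for each sufficiently large $k$, an explicit periodic independent set of $G(\{1,2i,k\})$ whose density tends to $\frac{i}{2i+1}$ as $k \to \infty$.

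Set $M = 2i+1$. My starting point is the canonical extremal set $X_0 = \{jM + 2a : j \in \Z,\ 0 \le a \le i-1\}$ for $G(\{1,2i\})$, which has block structure $(2^{i-1}\, 3)$, period $M$, and density $\frac{i}{M}$. A direct computation shows that the differences of $X_0$ reduce modulo $M$ to $\{0,2,3,4,\ldots,2i-1\}$, so $X_0$ itself avoids distance $k$ precisely when $k \equiv 1$ or $2i \pmod M$. For general $k$, I would use instead the periodic set $Y$ with block structure $(2^{i-1}\, 3)^{N-1}\, 2^{i-1}\, m$, whose period is $P = NM + m - 3$ and which contains $Ni$ elements per period. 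I take $N = \lfloor (k+2)/M \rfloor$, so that the maximum within-period distance $NM - 3$ is strictly less than $k$, and I take $m \in [M, 3M]$ satisfying $m \equiv k+2 \pmod M$ or $m \equiv k+4 \pmod M$; such an $m$ exists since $[M, 3M]$ has length $2M$ and contains two representatives from each of these residue classes.

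The main technical obstacle is verifying that $Y$ is independent in $G(\{1,2i,k\})$. Distance $1$ is excluded because every block has size at least $2$. Distance $2i$ is ruled out because the within-period differences of $Y$ are differences of $X_0$, whose residues mod $M$ omit $2i$, and because the minimum cross-boundary distance equals $m \ge M > 2i$. For distance $k$: within a period the maximum distance is $NM - 3 < k$; across a single boundary the distances take the form $P + \delta$ with $\delta$ a difference of $X_0$, and modulo $M$ one has $k - P \equiv k - m + 3 \in \{1, 2i\} \pmod M$ by the choice of $m$, a residue class not realized by any difference of $X_0$; and across two or more boundaries the minimum distance $NM + 2m - 3$ strictly exceeds $k$ whenever $m \ge M$. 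Finally, by Lemma~\ref{lem:periodic} the density of $Y$ equals $\frac{Ni}{NM + m - 3}$, which tends to $\frac{i}{M} = \frac{i}{2i+1}$ as $k \to \infty$ since $N \sim k/M$ while $m$ stays bounded by $3M$, completing the lower bound.
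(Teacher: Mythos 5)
Your proposal is correct and follows essentially the same strategy as the paper: the upper bound via Observation~\ref{obs:subset} and Theorem~\ref{thm:CHZ98C}, and a lower bound obtained from a periodic set built by repeating the near-extremal pattern $2^{i-1}\,3$ for $\{1,2i\}$ and periodically inserting one longer block to avoid distance $k$. The paper's construction, $(2^{i-1}\,3)^{q-1}\,(2i+2+r)$ with $k=(2i+1)q+r$, has period exactly $k+1$ and is simply asserted to be independent; your construction uses a slightly different count of repeated $2^{i-1}\,3$ blocks, an extra $2^{i-1}$, and a final block $m$ chosen in $[M,3M]$ by a residue condition, and you supply the explicit verification (within-period, one-boundary via the residue $k-P \pmod M$, and multi-boundary via the length estimate $NM+2m-3>k$) that the paper omits. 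Both give density $\frac{i}{2i+1}+O(1/k)$, so the argument goes through; the only trade-off is that your parametrization takes a bit more bookkeeping while the paper's $(2^{i-1}\,3)^{q-1}\,(2i+2+r)$ with period $k+1$ is more compact.
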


\begin{proof}
By Theorem~\ref{thm:CHZ98C} and Observation~\ref{obs:subset} we know $\dalpha(\{1,2i, k\}) \leq \dalpha(\{1,2i\}) = \frac{i}{2i+1}$.
%For the upper bound, let $a = 1$, $b = 2$, $t = i$ and $c = 1$.
%Since $1$ is a generator, all blocks have size at least two.
%Since $2i$ is a generator, all frames have at least one block of size at least three.
%Thus, with no Stage 1 or Stage 2 discharging, we have $\dalpha(\{1,2i,2i+k\}) \leq \frac{i}{2i+1}$.

Let $k = (2i+1)q + r$, where $1\leq r< 2i+1$ and $q\geq 1$.
Observe that the periodic set with block structure $(2^{i-1}\ 3)^{q-1}\ (2i+2+r)$ is independent in $G(\{1,2i, k\})$, with density $\frac{iq-i+1}{(2i+1)q+r+1}$, which tends to $\frac{i}{2i+1}$ as $k$ grows ($q$ grows).
\end{proof}

\begin{theorem}
	For $i\geq 0$, we have
	$\lim_{k\to\infty} \dalpha(\{1,k, k+2i+1\}) = \frac{i+1}{2i+3}$. % OR: k+2i-1 with i/(2i+1)
\end{theorem}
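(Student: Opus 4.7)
The plan is to establish matching asymptotic bounds via a periodic construction for the lower bound and a counting-and-discharging argument for the upper bound.

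\textbf{Lower bound.} For each large $k$ I will exhibit a periodic independent set of density at least $(i+1)/(2i+3) - O(1/k)$. When $k \equiv 1 \pmod{2i+3}$, the periodic set with block structure $(2^i\ 3)^*$ already works: its achievable consecutive block sums lie in $\{(2i+3)q + s : q \geq 0,\ s \in \{0,2,3,\ldots,2i+1\}\}$, which avoids every integer congruent to $\pm 1 \pmod{2i+3}$. Since $k \equiv 1$ and $k+2i+1 \equiv -1 \pmod{2i+3}$, neither $k$ nor $k+2i+1$ appears as a difference, giving density exactly $(i+1)/(2i+3)$. For the other residues of $k \bmod (2i+3)$, I will modify the pattern by replacing a constant number of $(2^i\ 3)$ groups with altered groups --- for example a single $(2^{i+1}\ 3)$ group or a single buffer block larger than $k+2i+1$ --- chosen so that the new sums introduced avoid both $k$ and $k+2i+1$. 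Taking $\Theta(k)$ basic groups against $O(1)$ modified groups yields density $(i+1)/(2i+3) - O(1/k)$; the finite case analysis on residues is checked directly.

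\textbf{Upper bound.} By Theorem~\ref{thm:periodic} we may assume $X$ is periodic. The three translates $X$, $X+k$, and $X+k+2i+1$ lie in $\Z$ and by independence satisfy $X\cap(X+k)=X\cap(X+k+2i+1)=\emptyset$. Writing $p := \delta(\{y : y,\,y+2i+1\in X\})$, inclusion-exclusion yields
\[
\delta\bigl(X\cup(X+k)\cup(X+k+2i+1)\bigr) = 3\delta(X) - p \leq 1,
\]
so $\delta(X) \leq (1+p)/3$. It therefore suffices to prove $p \leq i/(2i+3) + o(1)$ as $k \to \infty$. The bound $p \leq i/(2i+3)$ is tight for the $(2^i\ 3)^*$ pattern, where a direct count gives $p = i/(2i+3)$ and $3\delta - p = 1$. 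To bound $p$ for an arbitrary independent $X$, the key observation is that each distance-$(2i+1)$ pair $\{y,y+2i+1\}$ in $X$ produces three distinct forbidden positions $y+k$, $y+k+2i+1$, $y+k+4i+2$ inside the window $[y+k, y+k+4i+2]$ of length $4i+3$, which together with the distance-$1$ constraint forces $|X \cap [y+k,y+k+4i+2]| \leq 2i$. Averaging this inequality over all pair-starts $y$ and comparing with the bulk density of $X$ in an interval of length $4i+3$ yields the required estimate on $p$.

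\textbf{Main obstacle.} The delicate step is the tight upper bound on $p$. For small $k$ the density $p$ can exceed $i/(2i+3)$, which is why $\dalpha$ can also exceed $(i+1)/(2i+3)$ for small $k$ (as the computed data illustrate), and the strict inequality $\dalpha = (i+1)/(2i+3)$ need not hold. What rescues the asymptotic statement is that the forbidden shadow of a pair sits at distance $k$ from the pair itself, so as $k\to\infty$ the shadow becomes asymptotically uncorrelated with the local structure of $X$ near $y$. The chief technical work is to make this averaging rigorous with an $o(1)$ error rather than $O(1)$, so that in the limit $p \leq i/(2i+3)$ and hence $\dalpha(\{1,k,k+2i+1\}) \to (i+1)/(2i+3)$.
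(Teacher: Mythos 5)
Your lower bound is essentially sound (and for $k\equiv 1\pmod{2i+3}$ it matches the paper's idea; the paper's cleaner choice is the single block structure $(2^i\,3)^{q-1}(2i+3+r)$ where $k+2i+2=(2i+3)q+r$, which avoids a case split on residues). The upper bound, however, contains a genuine gap.

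Your upper bound rests on two inequalities: (A) the inclusion--exclusion bound $\delta(X)\le (1+p)/3$ where $p$ is the density of pairs $\{y,y+2i+1\}\subseteq X$, and (B) the window-averaging bound. Inequality (A) is correct. The window count in (B) is also correct: each pair-start $y$ forbids $y+k$, $y+k+2i+1$, $y+k+4i+2$ and hence forces $|X\cap[y+k,y+k+4i+2]|\le 2i$, versus the generic cap $2i+2$ from the distance-$1$ constraint. But when you actually average this over a period $P$ of $X$, the inequality you obtain is
\[
(4i+3)\,\delta(X)\;\le\;2ip+(2i+2)(1-p)\;=\;(2i+2)-2p,
\]
i.e.\ $p\le\bigl((2i+2)-(4i+3)\delta(X)\bigr)/2$. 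This is a genuine inequality, but it does \emph{not} yield $p\le i/(2i+3)+o(1)$: substituting the target value $\delta(X)=\frac{i+1}{2i+3}$ gives only $p\le\frac{3(i+1)}{2(2i+3)}$, which exceeds $\frac{i}{2i+3}$ for every $i\ge 0$. Combining (A) and (B) in full generality gives the bound $\delta(X)\le \frac{2i+4}{4i+9}$, which is strictly larger than $\frac{i+1}{2i+3}$ for every $i\ge 1$ (e.g.\ $6/13$ versus $2/5$ at $i=1$). Note also that the unconditional claim $p\le i/(2i+3)+o(1)$ is false: the periodic set with block structure $(2^{i-1}\,3)$ has $p=\frac{i}{2i+1}>\frac{i}{2i+3}$ yet is perfectly reasonable; what saves the theorem is that such sets have correspondingly \emph{smaller} density, a correlation your two global inequalities do not capture tightly.

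The paper takes a different, local route that does capture this correlation. It sets up the Local Discharging Lemma with $a=1$, $b=2$, $t=i+1$, $c=1$, so that a zero-charge $(i+1)$-frame must consist of $i+1$ consecutive $2$-blocks. The key structural observation is then that such a run of $2$-blocks forces the \emph{four consecutive} integers $x_{j+i}+k,\ x_j+(k+2i+1),\ x_{j+i+1}+k,\ x_{j+1}+(k+2i+1)$ to all be excluded from $X$, so they land inside a single block of size at least $5$ somewhere near position $x_j+k$. Each such big block then donates charge back to the $2$-block frames that point to it, and one verifies the donor never goes negative because a block accumulating $m$ pointers has size at least $2m+3$. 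This tracks not just the existence of one forbidden window per pair (as in your scheme) but the way consecutive pairs \emph{share} and \emph{extend} a common big block, which is exactly the correlation your global averaging loses. If you want to repair your approach, you would have to replace the single window inequality $|X\cap[y+k,y+k+4i+2]|\le 2i$ with a family of inequalities indexed by the length of the run of consecutive pair-starts containing $y$, and then do a more careful weighted average --- at which point you have essentially re-derived the discharging argument.
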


\begin{proof}
Let $k + 2i + 2 = (2i+3)q + r$, where $0 \leq r < 2i+3$.
The lower bound is given by the periodic set with block structure
$(2^i\ 3)^{q-1} (2i+3+r)$.
The density is given by $\frac{(i+1)(q-1)+1}{(2i+3)q+r} \geq \frac{(i+1)(q-1)}{(2i+3)(q+1)}=\frac{i+1}{2i+3}\frac{q-1}{q+1}$.
Note that as $k$ goes to infinity the lower bound density approaches the value $\frac{i+1}{2i+3}$.

For the upper bound, let $X$ be a periodic independent set in $G(S)$ with maximum density, and let $a=1$, $b=2$, $t=i+1$, and $c=1$.
Every block $B_j$ has nonnegative charge $\mu(B_j)$, so no Stage 1 discharging is required.
For a $t$-frame $F_j$, if $\nu^*(F_j)  =0$, then $F_j$ consists entirely of $2$-blocks and $\sigma(F_j) = 2i+2$.
Then, the elements $x_j,x_{j+1},\dots,x_{j+i},x_{j+i+1}$ have consecutive pair distances of 2, and hence the generators $k$ and $k+2i+1$ have
\[
	x_j + k+2i+1 = (x_{j+i}+k)+1,
	\quad
	x_{j+1} + k +2i+1 = (x_{j+i+1}+k)+1
\]
Thus, the consecutive elements $x_{j+i}+k, x_j+(k+2i+1), x_{j+i+1}+k, x_{j+1}+(k+2i+1)$ are not in $X$ and so are contained in a single block $B_{j'}$.
The block $B_{j'}$ has size at least five, and hence $\mu^*(B_{j'}) \geq 3$.
Let $\varphi_2$ be the function from frames $F_j$ with $\sigma(F_j) = 2i+2$ to the block $B_{j'}$ that contains $x_j + k+2i+1$.
Observe that if $\varphi_2^{-1}(B_{j'}) \neq \varnothing$, then $|B_{j'}| \geq 2|\varphi_2^{-1}(B_{j'})|+3$.
Thus, our Stage 2 discharging rule is as follows:
\begin{dischargingrule}
Stage 2: Every frame $F_j$ with $\sigma(F_j) = 2i+2$ pulls 1 unit of charge from the frame $F_{j'}$ where $B_{j'} = \varphi_2(F_j)$.
\end{dischargingrule}

If a frame $F_j$ has $\sigma(F_j) = 2i+2$, then $\nu'(F_j) = 1$.
Otherwise, $\sigma(F_j) > 2i+2$ and $F_j$ contains at least one block of size at least three, so $\nu^*(F_j) \geq 1$.
If $F_j$ loses charge in Stage 2, then $\varphi_2^{-1}(B_j) \neq \varnothing$ and $|B_j| \geq 2|\varphi_2^{-1}(B_j)|+3$, and so $\mu^*(B_j) \geq 2|\varphi_2^{-1}(B_j)|+1$.
Since $F_j$ loses at most one unit of charge for each frame in $\varphi_2^{-1}(B_j)$, $F_j$ retains at least $|\varphi_2^{-1}(B_j)|+1$ units of charge, giving $\nu'(F_j) \geq 1$.
By the Local Discharging Lemma, $\dalpha(\{1,k,k+2i+1\}) \leq \frac{at}{bt+c} = \frac{i+1}{2i+3}$.
\end{proof}

%%%%%%%%%%%%%%%%%%%%%%%%%%%%%%%%%%%%%%%%%%%%%%%%%%%%%%%%%%%%%%%%%%%%
%%%%%%%%%%%%%%%%%%%%%%%%%%%%%%%%%%%%%%%%%%%%%%%%%%%%%%%%%%%%%%%%%%%%
\subsection{$S = \{ 1, b, 2i\}$}

Theorem~\ref{thm:CHZ98C} states that for an odd number $b$, $\dalpha(\{1,b\}) = \frac{1}{2}$ and the maximum independent set consists entirely of 2-blocks.
For $i \geq 1$, $\dalpha(\{1,2i\}) = \frac{i}{2i+1}$ and the maximum independent set has block structure $2^{i-1}\ 3$.
We consider the union of these generators.

\begin{conjecture}
Fix $\ell \geq 3$ where $\ell$ is odd and let $2i \geq 3\ell$.
Then $\dalpha(\{1, \ell, 2i\}) = \frac{i}{2i+\ell}$.
\end{conjecture}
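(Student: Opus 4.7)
The lower bound $\dalpha(\{1, \ell, 2i\}) \geq \frac{i}{2i+\ell}$ is witnessed by the periodic independent set with block structure $2^{i-1}(\ell+2)$, whose density equals $\frac{i}{2i+\ell}$. Independence follows because the block sizes $2$ and $\ell+2$ avoid $\{1, \ell, 2i\}$ (using $\ell$ odd and $2i \geq 3\ell > \ell+2$), and because any partial sum of consecutive block sizes is either even with value at most $2(i-1) < 2i$ or odd with value at least $\ell+2 > \ell$; hence no forbidden distance ever arises.

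For the matching upper bound I would apply the Local Discharging Lemma with $a=1$, $b=2$, $t=i$, and $c=\ell$, so that $\frac{at}{bt+c} = \frac{i}{2i+\ell}$. Let $X$ be a maximum-density periodic independent set, guaranteed by Theorem~\ref{thm:periodic}. Since every block has size at least $2$, the initial charges $\mu(B) = |B|-2$ are already nonnegative, and no Stage~1 discharging is required. For an $i$-frame $F_j$ we have $\nu^*(F_j) = \sigma(F_j) - 2i$, which is strictly positive because $\sigma(F_j) \neq 2i \in S$. Call $F_j$ \emph{bad} if $\nu^*(F_j) < \ell$, equivalently $\sigma(F_j) = 2i + r$ for some $r \in [1, \ell-1]$. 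Since each non-$2$-block of $X$ has excess $|B|-2 \geq 1$, a bad frame must contain at least $i-(\ell-1)$ blocks of size exactly $2$.

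The crux is a Stage~2 rule that moves the missing $\ell - r$ units of charge into each bad frame from a well-chosen neighbor. The rule I would propose is: each bad frame $F_j$ pulls $\ell - r$ units from the non-overlapping successor $F_{j+i}$. The key structural claim required is that $\sigma(F_{j+i}) \geq 2i + 2\ell - r$ whenever $F_j$ is bad with $\sigma(F_j) = 2i + r$, so that after the pull $F_j$ reaches charge $\ell$ while $F_{j+i}$ retains at least $\ell$. The intuition is a parity argument: the elements of a bad $F_j$ lie nearly all at positions of a single parity (since most of its blocks have size $2$), while the generator $\ell$ is odd, so the forbidden shifts $x_k + \ell$ with $x_k \in F_j$ systematically populate positions of the opposite parity inside $[x_{j+i}, x_{j+2i})$. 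Together with the distance-$2i$ forbidden position $x_{j+i}+2i$, this crowding forces $F_{j+i}$ to be noticeably larger than $2i$; the hypothesis $2i \geq 3\ell$ gives $F_{j+i}$ enough room for the crowding to yield the stronger bound on $\sigma(F_{j+i})$.

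The main obstacle is turning this parity heuristic into the rigorous structural inequality $\sigma(F_{j+i}) \geq 2i + 2\ell - r$, particularly when several bad frames are consecutive so that their pulls collide at overlapping successors. A robust route is to prove the window statement ``$\sum_{k=j}^{j+2i-1}(|B_k| - 2) \geq 2\ell$ whenever $F_j$ is bad'' by a case analysis on the at most $\ell-1$ non-$2$-blocks in $F_j$ and on where the induced forbidden shifts land in the successor window. If simultaneous pulls overload a single $F_{j+i}$, the rule can be replaced by a symmetric bidirectional variant splitting the pull between $F_{j+i}$ and $F_{j-i}$; the window inequality then amortizes across clusters of bad frames. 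Once this is in place, $\nu'(F_j) \geq \ell$ for every $i$-frame, and the Local Discharging Lemma delivers $\dalpha(\{1, \ell, 2i\}) \leq \frac{i}{2i+\ell}$, matching the lower bound.
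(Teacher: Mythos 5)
First, a factual point: the statement you are proving is stated in the paper as a \emph{conjecture}, not a theorem. The paper proves only the cases $\ell=3$ and $\ell=5$ in Section~6.2 (and remarks that $\ell=7$ can be done similarly but is tedious). In those proved cases the paper uses what it calls a \emph{charging} argument: with $a=1$, $b=2$, $t=i$, $c=\ell$ it shows by direct case analysis that $\nu^*(F)\ge\ell$ for \emph{every} $i$-frame $F$, with no Stage~2 discharging at all. In other words, the paper establishes that bad frames (in your terminology) simply do not occur. Your proposal instead posits that bad frames may occur and tries to repair them via a Stage~2 rule, which is a genuinely different route.

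The problem is that your route is not actually carried out. The entire upper bound hinges on the structural inequality $\sigma(F_{j+i})\ge 2i+2\ell-r$ whenever $F_j$ is bad with $\sigma(F_j)=2i+r$, and you neither prove this nor give an argument that could plausibly be tightened into a proof. The ``parity heuristic'' --- that the shifts $x_k+\ell$ of the mostly-even-spaced elements of $F_j$ crowd the successor window --- does not by itself produce a lower bound on $\sigma(F_{j+i})$: the forbidden positions $x_k+\ell$ and $x_k+2i$ need not all land inside $F_{j+i}$, and even when they do, ``forbidden positions exist'' is several steps short of a quantitative bound on the width of the window containing $i$ elements of $X$. You explicitly flag this as the main obstacle and suggest a ``symmetric bidirectional variant'' as a fallback, but that variant is also not worked out; as written this is a sketch with its central lemma left open, which is the same state the conjecture was already in. Note also that if the stronger, unconditional form of your window statement ($\sigma\ge 4i+2\ell$ for every $2i$-frame) were what you could prove, you could simply take $t=2i$, $c=2\ell$ and perform no discharging at all, so the Stage~2 machinery would be a superfluous wrapper around the one statement that actually needs proof.

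One smaller issue in the lower bound: the classification ``every partial sum is either even and $\le 2(i-1)$ or odd and $\ge\ell+2$'' is not exhaustive. A sum of consecutive blocks that includes an \emph{even} number (at least two) of the $(\ell+2)$-blocks is even but much larger than $2(i-1)$; one must observe that two $(\ell+2)$-blocks in the period are separated by $i-1$ copies of $2$, so any such sum is at least $2(\ell+2)+2(i-1)=2i+2\ell+2>2i$. This is easily repaired, but as written the independence check has a hole.
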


This conjecture is sharp, since the periodic set with block structure $2^{i-1}\ (\ell+2)$ matches this density and is independent in $G(\{1,\ell,2i\})$.
The bound $2i\geq 3\ell$ may not be sharp in all cases, but it is required for this set to be extremal, since there are independent sets of higher density even for the case $\ell = 5$ when $i$ is small.

We prove the first few cases of this conjecture.
We do not perform any discharging, so our technique is really a \emph{charging} method.
Essentially the proofs boil down to determining that $\sigma(F) \geq 2i+\ell$ for all frames of length $i$, but it is helpful to use the discharging perspective to instead show that $\nu^*(F) \geq \ell$.

\begin{theorem}
Let $i \geq 2$.
Then $\dalpha(\{1,3,2i\}) = \frac{i}{2i+3}$.
\end{theorem}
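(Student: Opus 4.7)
The plan is to match the conjectured value by exhibiting an explicit periodic independent set and then proving a matching upper bound via the Local Discharging Lemma with no discharging whatsoever (i.e., pure charging).

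For the lower bound, I would verify that the periodic set with block structure $2^{i-1}\,5$ is independent in $G(\{1,3,2i\})$. Block sizes $2$ and $5$ are not in $\{1,3,2i\}$ (note $2i\geq 4$ and $2i\neq 5$ since $i$ is an integer). A frame sum is either $2k$ with $1\leq k\leq i-1$, or $5+2k$ for some $k\geq 0$, or at least $2i+8$ (any frame containing two $5$-blocks). None of these equals $1$, $3$, or $2i$: the equation $5+2k=2i$ has no integer solution, and $2k=2i$ would require $i$ consecutive $2$-blocks, which the pattern does not contain. Counting $i$ elements per period of length $2i+3$ gives density $\frac{i}{2i+3}$.

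For the upper bound, invoke Theorem~\ref{thm:periodic} to assume that an extremal $X$ is periodic, and apply the Local Discharging Lemma with parameters $a=1$, $b=2$, $t=i$, and $c=3$, so that $\frac{at}{bt+c}=\frac{i}{2i+3}$. With no discharging at either stage, $\nu'(F)=\nu^*(F)=\sigma(F)-2i$, so it suffices to prove $\sigma(F)\geq 2i+3$ for every $i$-frame $F$. Since block sizes lie in $\{2\}\cup\{4,5,6,\dots\}\setminus\{2i\}$, immediately $\sigma(F)\geq 2i$; independence gives $\sigma(F)\neq 2i$. For $\sigma(F)=2i+1$, the sum is odd, so $F$ must contain an odd number of odd-sized blocks; but every odd-sized block has size at least $5$, and $5+2(i-1)=2i+3>2i+1$ already exceeds the target with a single odd block, so this case is impossible.

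The main (only nontrivial) case is $\sigma(F)=2i+2$, which forces $F$ to consist of one $4$-block $B_{j+k}$ (with $0\leq k\leq i-1$) and $(i-1)$ $2$-blocks, since two blocks of size $\geq 4$ already give $\sigma(F)\geq 2i+4$. Writing $x_{j+\ell}=x_j+2\ell$ for $\ell\leq k$ and $x_{j+\ell}=x_j+2\ell+2$ for $\ell>k$, I would split on the position of the $4$-block: if $k\leq i-2$, then $x_{j+i-1}-x_j=2i\in S$, contradicting independence within $F$; if $k=i-1$ (the $4$-block is last), then $x_{j+i}-x_{j+1}=(2i+2)-2=2i\in S$, contradicting independence with the element just past $F$. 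Either way we obtain a contradiction, so $\sigma(F)\geq 2i+3$ always, and the Local Discharging Lemma yields $\delta(X)\leq\frac{i}{2i+3}$. The main obstacle is this last distance calculation, but it is purely a one-line check once the position of the $4$-block is chosen, so the proof remains short.
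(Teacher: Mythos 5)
Your proof is correct and follows essentially the same route as the paper's: exhibit the periodic set with block structure $2^{i-1}\,5$ for the lower bound, then invoke the Local Discharging Lemma with $a=1$, $b=2$, $t=i$, $c=3$ and no actual discharging, reducing to the observation that $\sigma(F)\geq 2i+3$ for every $i$-frame. Your case $\sigma(F)=2i+2$ (one $4$-block, $i-1$ $2$-blocks) is handled by explicit coordinate computations locating two elements of $X$ at distance $2i$; the paper instead says "remove the first or last $2$-block to leave $i-1$ blocks spanning $2i$ elements," which is the same contradiction in slightly different language. No gaps.
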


\begin{proof}
The lower bound is achieved by the periodic set with block structure $2^{i-1}\ 5$.

Let $X$ be a periodic independent set in $G(S)$ with maximum density.
Observe that since $3 \in S$ there are no 3-blocks in $X$.
Let $a = 1$, $b = 2$, $t = i$, and $c = 3$ and perform no discharging.
Consider a frame $F$ of length $t$.
Since $\sigma(F) \neq 2i$, not all blocks in $F$ are 2-blocks.
Thus, there is a block in $F$ of size at least 4.
If there is a block of size at least 5 in $F$, then $\nu^*(F) \geq 3$.
If there are two 4-blocks in $F$, then $\nu^*(F) \geq 4$.
Thus, if $\nu^*(F) < 3$ there must be $i-1$ 2-blocks in $F$ and exactly one $4$-block.
In this case, either the first block in $F$ or the last block in $F$ is a 2-block.
Removing this 2-block results in a set of $i-1$ consecutive blocks spanning $2i$ elements, a contradiction.

Thus $\nu'(F)\geq 3$ for all frames, and by the Local Discharging Lemma $\dalpha(\{1,3,2i\})\leq \frac{i}{2i+3}$.
\end{proof}

\begin{theorem}
Let $i \geq 5$.
Then $\dalpha(\{1,5,2i\}) = \frac{i}{2i+5}$.
\end{theorem}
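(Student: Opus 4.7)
The plan is to adapt the strategy used for $\dalpha(\{1,3,2i\})$: prove the lower bound via an explicit periodic independent set, then apply the Local Discharging Lemma with $a=1$, $b=2$, $t=i$, $c=5$ (so that $\frac{at}{bt+c}=\frac{i}{2i+5}$) and no charge redistribution at either stage, which reduces everything to showing $\sigma(F)\geq 2i+5$ for every $i$-frame $F$ of a maximum-density periodic independent set $X$. The lower bound is witnessed by the periodic set with block structure $2^{i-1}\ 7$, which is independent in $G(\{1,5,2i\})$ and has density $\frac{i}{2i+5}$.

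For the upper bound, the key structural facts I will extract from independence of $X$ are: (i) every block has size in $\{2,3,4\}\cup\{6,7,\ldots\}$ (since $1,5\in S$), and (ii) for any contiguous set of blocks $F'\subseteq F$ the sum $\sigma(F')$ lies outside $S=\{1,5,2i\}$. Applied to two adjacent blocks, (ii) forbids any $(2,3)$ or $(3,2)$ adjacency because $2+3=5$; applied to longer subframes, it forbids any subsum equal to $2i$.

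I then assume for contradiction that $\nu^*(F)=\sigma(F)-2i\in\{1,2,3,4\}$ and enumerate the possible block multisets of $F$ using (i): excess $1$ is one $3$-block; excess $2$ is two $3$s or one $4$; excess $3$ is three $3$s or one $3$ + one $4$; excess $4$ is four $3$s, two $3$s + one $4$, two $4$s, or one $6$. Every configuration containing at least one $3$-block will be ruled out by adjacency: with $i\geq 5$ there are too few non-$2$-blocks for every $3$-block to have two non-$2$ neighbors, forcing a $(2,3)$ or $(3,2)$ adjacency inside $F$. The three remaining configurations (one $4$-block, two $4$-blocks, one $6$-block) will be ruled out by exhibiting an explicit subframe whose block sum equals $2i$: for a single $4$-block, any length-$(i-1)$ subframe containing it sums to $4+2(i-2)=2i$; for a single $6$-block, a length-$(i-2)$ subframe containing it sums to $6+2(i-3)=2i$; for two $4$-blocks at positions $p<q$, I first use the two length-$(i-1)$ subframes $[1,i-1]$ and $[2,i]$ to rule out $p=1$ and $q=i$, and then the length-$(i-2)$ subframe $[2,i-1]$ contains both $4$-blocks and sums to $2i$.

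I expect the main obstacle to be the two-$4$-block subcase, which demands the two-step elimination above, together with the bookkeeping in the $3$-block adjacency analysis, where one must carefully handle the possibility that a $3$-block sits at the boundary of $F$ and has its outside-of-$F$ neighbor unconstrained; the hypothesis $i\geq 5$ is precisely what forces every such configuration to nonetheless contain an interior $(2,3)$ or $(3,2)$ adjacency.
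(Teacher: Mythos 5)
Your proposal follows the paper's proof almost exactly (same parameters $a=1$, $b=2$, $t=i$, $c=5$, same lower-bound set $2^{i-1}\ 7$, same no-discharging ``charging'' strategy), and your treatment of the configurations with no $3$-block is correct and complete. However, there is a genuine gap in the $3$-block analysis: the claim that \emph{every} configuration containing a $3$-block forces an interior $(2,3)$ or $(3,2)$ adjacency is false. Consider the multiset $\{3,4\}$ (excess $3$) realized as $F = 3\ 4\ 2^{i-2}$: the only $3$-block sits at the boundary of $F$ with the $4$-block as its unique neighbor inside $F$ and an unconstrained neighbor outside $F$, so the consecutive block pairs inside $F$ are $(3,4),(4,2),(2,2),\dots,(2,2)$ and no $(2,3)$/$(3,2)$ appears regardless of how large $i$ is. The same happens for $\{3,3,4\}$ (excess $4$) realized as $F = 3\ 3\ 4\ 2^{i-3}$. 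These configurations must instead be killed by the subsum-equal-to-$2i$ argument you already use for $\{4\}$, $\{4,4\}$, $\{6\}$: for $3\ 4\ 2^{i-2}$ the $(i-1)$-block subframe $4\ 2^{i-2}$ covers $4+2(i-2)=2i$ elements, and for $3\ 3\ 4\ 2^{i-3}$ (using $i\geq 5$ so $i-3\geq 2$) the $(i-2)$-block subframe $3\ 3\ 4\ 2^{i-5}$ covers $2i$ elements. This is precisely what the paper does: after reducing to block structure $3^q\ 4\ 2^{i-q-1}$ with $q\in\{1,2\}$, it exhibits a subframe of total length $2i$. So your adjacency observation correctly disposes of $\{3\}$, $\{3,3\}$, $\{3,3,3\}$, $\{3,3,3,3\}$ and forces any surviving $3$-block configuration to have the form $3^q\ 4\ 2^{i-q-1}$ up to reversal, but you must then finish with the $2i$-subsum argument rather than adjacency.
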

\begin{proof}
The lower bound is achieved by the periodic set with block structure $2^{i-1}\ 7$.

Let $X$ be a perioidc independent set in $G(S)$ with maximum density.
Observe that since $5 \in S$ there are no 5-blocks in $X$.
Also, if there is a 3-block $B$ in $X$, then the blocks preceding and following $B$ are not 2-blocks.

Let $a = 1$, $b = 2$, $t = i$, and $c = 5$ and perform no discharging.
Consider a frame $F$ of length $t$.
Since $\sigma(F) \neq 2i$, not all blocks in $F$ are 2-blocks.
Thus, there is a block in $F$ of size at least 3.
If there is a block of size at least 7 in $F$, then $\nu^*(F) \geq 5$.
We now assume there are no blocks of size at least 7 in $F$.

Suppose there is no 3-block in $F$.
If there are two 6-blocks in $F$ or three 4-blocks in $F$, then $\nu^*(F) \geq 6$.
If there is a 4-block and a 6-block in $F$, then $\nu^*(F) \geq 5$.
If there are exactly two 4-blocks and $i-2$ 2-blocks in $F$, then $\nu^*(F) = 4$ and $\sigma(F) = 2i+4$.
However, if either the first or the last block $B$ of $F$ is a 4-block, then $\sigma(F-B) = 2i$, a contradiction.
Thus the first and last blocks of $F$ are 2-blocks, but removing these two blocks leaves a set of $i-2$ consecutive blocks covering $2i$ elements, a contradiction.

Therefore, there is a 3-block in $F$.
If there are at least $5$ 3-blocks in $F$, then $\nu^*(F) \geq 5$.
If there is at least one 3-block and one 6-block in $F$, then $\nu^*(F) \geq 5$.
If there are at least one 3-block and two 4-blocks in $F$, then $\nu^*(F) \geq 5$.

Hence if $\nu^*(F) < 5$, then $F$ consists of 2-blocks, at most four 3-blocks, and at most one 4-block.
Note that a 3-block and 2-block can not be consecutive, so a 4-block must be between the 3-blocks and the 2-blocks.
%For any 3-block $B$ and any 2-block $B'$, there must exist a 4-block.
Thus, there exists exactly one 4-block in $F$ that separates the 2-blocks from the 3-blocks.
The cases are symmetric whether the 2-blocks or 3-blocks come before the 4-block, so we assume that $F$ has block structure $3^q\ 4\ 2^{i-q-1}$, where $1 \leq q \leq i-2$.
If $q\geq 3$, then $\nu^*(F) \geq 5$.
If $q = 1$, then the $i-1$ blocks starting at the 4-block cover exactly $2i$ elements, a contradiction.
If $q = 2$, then there are $i-3$ 2-blocks and $\sigma(F) = 2i+4$.
Since $i-3 \geq 2$, removing the last two 2-blocks from $F$ results in $i-2$ consecutive blocks covering exactly $2i$ elements, a contradiction.

Therefore, we have $\nu^*(F) \geq 5$ for all frames $F$.
\end{proof}

It is not difficult to also prove that $\dalpha(\{1,7,2i\}) = \frac{i}{2i+7}$ using similar techniques to the proofs above.
However, the case analysis becomes long and tedious, and so we do not include the proof.

\subsection{$S = \{1, 2k, 2k+2\ell\}$}

Let $S$ be a set of three generators including $1$ where the difference between the other two generators is even.
Since $\dalpha(S) = \frac{1}{2}$ when $S$ contains no even numbers, we assume the generators other than $1$ are even.
For a fixed even difference between these generators, we form the following conjecture.

\begin{conjecture}\label{thm:oddpluseven}
	Let $k \geq 1$ and $\ell \geq 1$.
	Then, $\dalpha(\{1,2k,2k+2\ell\}) = \frac{2k}{4k+2\ell}$.
\end{conjecture}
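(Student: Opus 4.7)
The plan is to establish matching lower and upper bounds. For the lower bound I would exhibit explicit periodic independent sets of density $\frac{k}{2k+\ell}$, handling the cases of $\ell$ odd and $\ell$ even separately. When $\ell$ is odd, take the periodic set with block structure $2^{k-1}\,(\ell+2)$, which has period $2k+\ell$ and $k$ elements per period. When $\ell$ is even, take $A = \{0,2,\ldots,2k-2\} \cup \{2k+\ell+1,\, 2k+\ell+3,\, \ldots,\, 4k+\ell-1\}$ with period $4k+2\ell$; this has $2k$ elements per period, again giving density $\frac{k}{2k+\ell}$. Independence in both cases reduces to a parity argument: the differences within $X$ decompose as an intra-period even shift of absolute value at most $2k-2$ plus a multiple of the period (and in the even case also an odd half-period shift), and a short check shows that none of $\pm 1$, $\pm 2k$, $\pm(2k+2\ell)$ can be so expressed.

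For the upper bound I would invoke the Local Discharging Lemma with $a=1$, $b=2$, $t=k$, and $c=\ell$, so that $\frac{at}{bt+c} = \frac{k}{2k+\ell}$. Since $1\in S$, every block satisfies $|B_i|\geq 2$, so the initial charge $\mu(B_i) = |B_i|-2$ is nonnegative and no first-stage discharging is needed. For each $k$-frame $F_j$, $\nu^{*}(F_j) = \sigma(F_j) - 2k$, and because $\sigma(F_j)\notin\{2k,\,2k+2\ell\}$ we have $\nu^{*}(F_j) \in \{1,\ldots,2\ell-1\}\cup\{2\ell+1,\,2\ell+2,\,\ldots\}$. Frames with $\sigma(F_j) \geq 2k+2\ell+1$ are donors (surplus at least $\ell+1$ over the threshold), while frames with $\sigma(F_j)\in\{2k+1,\ldots,2k+\ell-1\}$ are deficient (charge at most $\ell-1$); the job of the second stage is to route surplus to cover every deficit.

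The hard part is designing the second-stage rule so that $\nu'(F_j)\geq\ell$ for every $j$. Because successive $k$-frames overlap in $k-1$ blocks, deficient frames cluster into long runs corresponding to stretches of mostly $2$-blocks, and such a stretch cannot extend too far without some cumulative block sum landing on the forbidden value $2k+2\ell$. The plan is to show that every maximal run of deficient frames is flanked by a block of size at least $\ell+2$, which produces an adjacent donor frame whose surplus pays for the run's total deficit; the discharging rule would then route charge from that donor to each deficient frame within the run. Balancing the accounting against both forbidden values $2k$ and $2k+2\ell$ simultaneously, verifying that donor frames can be associated injectively to deficits even when runs are short or adjacent to unusually large blocks, and confirming $S$-locality are the places where I expect the argument to be delicate.
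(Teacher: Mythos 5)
Two points. First, the statement is a \emph{conjecture} in the paper, not a theorem; the authors prove the matching upper bound only for $\ell\in\{1,2,3\}$ (and state the lower bound as a lemma under $\ell\leq k$) and leave the rest open. Second, as literally stated the conjecture is false, so your upper-bound plan cannot close. Take $k=1$, $\ell=2$, so $S=\{1,2,6\}$ and the conjectured value is $\tfrac{2}{8}=\tfrac14$; but $\dalpha(\{1,2,6\})=\tfrac27$ by Theorem~\ref{thm:3gensfractional}(2) (equivalently Theorem~\ref{thm:intervalandk}), witnessed by the periodic set with block structure $3\ 4$. With your parameters $a=1$, $b=2$, $t=k=1$, $c=\ell=2$, the average $\nu^*$-charge per $1$-frame in that extremal set is $\tfrac72-2=\tfrac32<c$, so \emph{no} discharging rule can make every frame finish with charge at least $c$; concretely, the deficient $3$-block (with $\nu^*=1$) is flanked only by $4$-blocks whose charge is exactly $c$ with nothing to spare, so your key structural claim (every run of deficient frames is flanked by a big block with surplus) simply fails. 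A hypothesis such as $\ell\leq k$ is indispensable, and even with it the cases $\ell\geq 4$ remain open in the paper.

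Methodologically you also diverge from the paper's proved cases. Your lower-bound constructions are fine (they achieve the same density as the paper's single pattern $2^{k-1}\ 3\ 2^{k-1}\ (2\ell+1)$, and the parity argument you sketch does close), so $\dalpha\geq\tfrac{2k}{4k+2\ell}$ is not in doubt. But for the upper bound the paper takes $t=2k$ and $c=2\ell$, does \emph{no} second-stage discharging, splits each $2k$-frame into halves $H$ and $H'$ of $k$ blocks each, and exhaustively rules out all low-charge possibilities by a case analysis on the multiset of block sizes at least $3$ in each half against the two forbidden sums $2k$ and $2k+2\ell$. Your choice of $t=k$, $c=\ell$ and a second-stage rule routing donor surplus across runs of deficient frames is a genuinely different strategy, but the rule itself, the run/flanking structure it depends on, $S$-locality, and injectivity of the donor assignment are all left unverified — the very steps you flag as delicate. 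As it stands this is a plan, not a proof, and the counterexample above shows it cannot be salvaged without an extra hypothesis on $\ell$.
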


The following lemma shows that this conjecture is sharp for all possible values of $k \geq 2$ and $\ell \geq 1$.

\begin{lemma}
Let $1\leq \ell\leq k$.  Then $\dalpha(\{1,2k,2k+2\ell\}) \geq \frac{2k}{4k+2\ell}$.
\end{lemma}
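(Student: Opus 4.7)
The plan is to exhibit, for each pair $(k,\ell)$ with $1\le\ell\le k$, an explicit periodic independent set $A\subseteq\Z$ in $G(\{1,2k,2k+2\ell\})$ whose density equals $\frac{k}{2k+\ell}=\frac{2k}{4k+2\ell}$. By Lemma~\ref{lem:periodic} it suffices to write down one period of $A$ and its length, confirm the count-to-period ratio, and verify independence by showing that no pairwise difference of elements of $A$ lies in $S=\{1,2k,2k+2\ell\}$; equivalently, that the pairwise differences reduced modulo the period avoid $S\cup(-S)$. The construction splits by the parity of $\ell$.

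For odd $\ell$, take $A$ to be the $(2k+\ell)$-periodic set with block structure $2^{k-1}\,(\ell+2)$, i.e.\ the periodic extension of $\{0,2,4,\dots,2(k-1)\}$; this has density $k/(2k+\ell)$. Setting $p:=2k+\ell$, reducing $S$ and $-S$ modulo $p$ yields the forbidden set $\{1,\ell,2k,2k+\ell-1\}$. The pairwise differences modulo $p$ split into the even subset $\{2,4,\dots,2k-2\}$ (forward differences within one period) and the odd subset $\{\ell+2,\ell+4,\dots,2k+\ell-2\}$ (wrap-around negatives, which are odd because $\ell$ is odd). A direct check shows that the odd forbidden values $1$ and $\ell$ lie strictly below the odd difference window, and the even forbidden values $2k$ and $2k+\ell-1$ lie strictly above the even difference window.

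For even $\ell$, the simple construction breaks: the element $0$ and the element $\ell$ from the next period would be at integer distance $p+\ell=2k+2\ell\in S$. To remove this coincidence, I double the period to $2p=4k+2\ell$ and introduce a unit shift between the two copies. Take $A$ to be the $(4k+2\ell)$-periodic extension of
\[
	\{0,2,4,\dots,2(k-1)\}\ \cup\ \{2k+\ell+1,\,2k+\ell+3,\,\dots,\,4k+\ell-1\},
\]
i.e.\ block structure $2^{k-1}\,(\ell+3)\,2^{k-1}\,(\ell+1)$, with $2k$ elements in a period of length $4k+2\ell$ and the desired density. Modulo $2p$ the forbidden set becomes $\{1,2k,2k+2\ell,4k+2\ell-1\}$; the within-copy differences are the even set $\{\pm2,\pm4,\dots,\pm(2k-2)\}$, and the between-copy differences are the odd set $\{\ell+1,\ell+3,\dots,4k+\ell-1\}$ (symmetric under negation mod~$2p$). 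The verification is that the odd forbidden residues $1$ and $4k+2\ell-1$ lie strictly outside $[\ell+1,4k+\ell-1]$, and the even forbidden residues $2k$ and $2k+2\ell$ lie strictly in the gap between the two even difference intervals $[2,2k-2]$ and $[2k+2\ell+2,4k+2\ell-2]$.

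The genuine content is recognising the need for the ``$+1$ shift'' between the two copies in the even-$\ell$ case: this shift converts every inter-copy difference from an even residue to an odd one, automatically separating them from the even forbidden values $2k$ and $2k+2\ell$. The hypothesis $\ell\le k$ is used precisely to keep the even difference window $[2,2k-2]$ below $2k$ and the odd between-copy window $[\ell+1,4k+\ell-1]$ below $4k+2\ell-1$, making the clean residue separation possible. Once the right construction is identified, the rest is routine modular arithmetic.
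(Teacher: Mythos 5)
Your proof is correct, but your constructions differ from the paper's. The paper gives a single block structure $2^{k-1}\ 3\ 2^{k-1}\ (2\ell+1)$ of period $4k+2\ell$ for all $k\geq 2$, and handles $k=\ell=1$ separately by citing Corollary~\ref{cor:oneandmultiple}. You split by the parity of $\ell$: for odd $\ell$ your block structure $2^{k-1}\ (\ell+2)$ achieves the same density with period $2k+\ell$, half the paper's; for even $\ell$ you use $2^{k-1}\ (\ell+3)\ 2^{k-1}\ (\ell+1)$, which has the same period $4k+2\ell$ as the paper's set but allocates the two large block sizes as $\ell+3$ and $\ell+1$ rather than $3$ and $2\ell+1$. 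Both verifications rest on the same key parity observation — arranging for the cross-group differences to be odd so that they automatically miss the even generators $2k$ and $2k+2\ell$ — though you cast it as a modular residue computation while the paper argues directly with integer distances. A pleasant side-effect of your split is that $k=\ell=1$ is absorbed into the odd-$\ell$ construction (which reduces to $3\Z$), removing the paper's special case. One minor nitpick: your stated reason for needing $\ell\le k$ (keeping the even window below $2k$ and the odd window below $4k+2\ell-1$) does not actually invoke $\ell\le k$ — those inequalities hold for all $\ell\ge 1$ — but this is a misattribution of where the hypothesis matters rather than a flaw in the argument, since the lemma only claims the range $1\le\ell\le k$.
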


\begin{proof}
When $k=\ell=1$, $\dalpha(\{1,2,4\})=\frac{2}{4+2}=\frac{1}{3}$ by Corollary~\ref{cor:oneandmultiple}.

For $k\geq 2$, consider the periodic set $X$ with block structure $2^{k-1}\ 3\ 2^{k-1}\ (2\ell+1)$.
Clearly there are no two elements in $X$ distance 1 apart.
Let $X_1$ denote the elements in $X$ described by the first $k$ blocks in the description, i.e. arising from $2^{k-1}\ 3$, and let $X_2$ denote $X \wo X_2$, the elements in $X$ arising from $2^{k-1}\ (2\ell+1)$.
Notice that all the integers in $X_1$ have the same parity, and all the integers in $X_2$ have the opposite parity to those in $X_1$.
Thus the distance between an element of $X_1$ and and element of $X_2$ is odd.

Thus if the set $X$ is not independent, then there must be two elements from $X_1$, or two from $X_2$ that are distance $2k$ or $2k+2\ell$ apart.
No pair of elements in $2^{k-1}\ 3$ are distance $2k$ or $2k+2\ell$ apart, and $2^{k-1}(2\ell+1)$ has length $2k+2\ell+1$.
No pair of elements in $2^{k-1}\ (2\ell+1)$ are distance $2k$ or $2k+2\ell$ apart, and $(2\ell+1)\ 2^{k-1}\ 3$ has length $2k+2\ell+2$.
\end{proof}

We prove a matching upper bound for small values of $\ell$.
Our proofs use the charging method, as no discharging rules are employed.

\begin{theorem}
	For $1 \leq \ell \leq 3$ and $k \geq \ell$, we have $\dalpha(\{1, 2k, 2k+2\ell\}) = \frac{2k}{4k+2\ell}$.
\end{theorem}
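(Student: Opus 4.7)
The matching lower bound was established in the preceding lemma, so I focus on the upper bound $\dalpha(\{1,2k,2k+2\ell\})\le\frac{k}{2k+\ell}$. My plan is to apply the Local Discharging Lemma (Lemma~\ref{lem:localdischarging}) with $a=1$, $b=2$, $t=k$, and $c=\ell$, for which $\frac{at}{bt+c}=\frac{k}{2k+\ell}=\frac{2k}{4k+2\ell}$. Let $X$ be a periodic maximum-density independent set, guaranteed by Theorem~\ref{thm:periodic}. Since $1\in S$ precludes $1$-blocks, the initial charge $\mu(B)=|B|-2$ is already non-negative, no stage-$1$ discharging is needed, and $\nu^*(F)=\sigma(F)-2k$ for every $k$-frame $F$. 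The remaining task is to design a stage-$2$ rule ensuring $\nu'(F)\ge\ell$, which I would handle separately for each $\ell\in\{1,2,3\}$.

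For $\ell=1$ no discharging is necessary: because $\{2k,2k+2\}\subseteq S$ forbids $\sigma(F)\in\{2k,2k+2\}$ and $\sigma(F)\ge 2k$ always holds, we get $\sigma(F)\ge 2k+1$ and hence $\nu^*(F)\ge 1$. For $\ell=2$, the deficit frames are precisely those with $\sigma(F)=2k+1$, which must consist of one $3$-block $B_{j+m}$ and $k-1$ $2$-blocks. My stage-$2$ rule is that each such $F_j$ pulls one unit of charge from $F_{j+k}$. The central subclaim is $\sigma(F_{j+k})\ge 2k+3$, which I would prove by exploiting $x_{j+m},\,x_{j+m}+3\in X$ together with $\{2k,2k+4\}\subseteq S$ to force the four integers $x_{j+m}+2k$, $x_{j+m}+2k+3$, $x_{j+m}+2k+4$, $x_{j+m}+2k+7$ out of $X$; a short case analysis on how these fall among the blocks of $F_{j+k}$ (using additionally that block sizes and sums of consecutive blocks avoid $S$) then supplies the three units of slack needed. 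Distinct deficit frames have distinct targets, so no excess frame is pulled twice, and $\nu'(F)\ge 2$ everywhere. The analogous plan for $\ell=3$ treats deficit frames with $\sigma(F)\in\{2k+1,2k+2\}$ whose block profiles are one $3$-block, one $4$-block, or two $3$-blocks (plus $2$-blocks); the forbidden translates now come from $\{2k,2k+6\}\subseteq S$ and yield four gaps spaced $3$ apart in the image window, producing the surplus required in $F_{j+k}$ (and occasionally in a neighbour).

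The main obstacle is the block-structure bookkeeping underlying these subclaims: one must track forbidden block sizes, forbidden consecutive block sums, and forbidden translations of $X$ by elements of $S$ simultaneously, and the two-$3$-block configuration under $\ell=3$ forces extra care because two separate images must be accounted for. In every subcase, however, the extremal independent set with block structure $2^{k-1}\,3\,2^{k-1}\,(2\ell+1)$ predicts exactly the amount of surplus to expect, so the case analysis, though detailed, is tight and the Local Discharging Lemma then delivers the claimed bound.
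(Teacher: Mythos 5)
Your plan takes a genuinely different route from the paper. The paper works with $2k$-frames, sets $c=2\ell$, performs \emph{no} discharging at all, and instead splits each $2k$-frame into two halves of $k$ blocks, uses the observation that $k$ consecutive blocks cannot all be $2$-blocks (they would span $2k\in S$ elements), and then runs an exhaustive case analysis on the multiset of large-block sizes in the two halves. You instead use $k$-frames with $c=\ell$ and a second-stage rule that pulls charge from $F_{j+k}$, exploiting translation by $2k$ and $2k+2\ell$ in the style of the paper's asymptotic theorem for $\{1,k,k+2i+1\}$. Both routes are legitimate, and the $\ell=1$ subcase you dispatch is correct and essentially what the paper observes.

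The gap is that your central subclaim for $\ell=2$ (that any deficit frame $F_j$ with $\sigma(F_j)=2k+1$ has $\sigma(F_{j+k})\ge 2k+3$) is asserted but not proved, and it is not a ``short case analysis.'' The four forbidden translates $x_{j+m}+2k,\,x_{j+m}+2k+3,\,x_{j+m}+2k+4,\,x_{j+m}+2k+7$ alone do \emph{not} eliminate all the bad configurations: e.g.\ when the $3$-block sits at the end of $F_j$ ($m=k-1$) and the putative image frame is $2^{k-1}\,4$, all four forbidden integers fall inside the $4$-block or past the frame boundary and produce no direct contradiction; you only rule it out by noticing that the $k-1$ consecutive blocks $2^{k-2}\,4$ span exactly $2k$ elements. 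Similarly, a putative image $2^{k-2}\,3\,3$ is only killed by the length-$(k-1)$ consecutive sum $2^{k-3}\,3\,3 = 2k$. So the bookkeeping must range over consecutive block sums of several different lengths, not just the $\varphi_2$-type forbidden positions, and the position $m$ of the $3$-block in $F_j$ genuinely matters. The $\ell=3$ case is sketchier still: you would need to handle deficit frames with $\sigma\in\{2k+1,2k+2\}$, including the two-$3$-block profile, establish $\sigma(F_{j+k})\ge 2k+5$ (resp.\ $\ge 2k+4$), and the parenthetical ``(and occasionally in a neighbour)'' concedes the rule as stated may not even be the right one. My spot-checks suggest your subclaim for $\ell=2$ is actually true, so the approach is likely salvageable, but as written the proposal leaves the key combinatorial work — the part the paper spends two pages on in its own framing — entirely undone.
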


\begin{proof}
Let $X$ be a maximal independent set in $G(\{1,k, k+2\ell\})$, that we may assume to be periodic.
We set $a = 1$, $b = 2$, $t = 2k$, and $c = 2\ell$.
Hence $\frac{at}{bt+c} = \frac{2k}{4k+2\ell}$.

Since $1$ is a generator, all blocks have size at least two.
Every $(2+i)$-block receives charge $i \geq 0$.
We do not perform any Stage 1 discharging or Stage 2 discharging.

Observe that if a set of $k$ consecutive blocks are all 2-blocks, then these blocks span $2k$ elements, a contradiction.
% Thus we have the following observation.
% 
% \begin{observation}\label{obs:pluseveneachhalf}
% Every set of $k$ consecutive blocks contains at least one block of size at least 3.
% \end{observation}
Thus in particular every set of $k$ consecutive blocks contains at least one block of size at least $3$. 

Our approach will be to focus on a frame $F_j$ of length $2k$. If we can show that $\nu^*(F_j) \geq 2\ell$ then we will 
done by the Local Discharging lemma. We consider two halves of $F_j$; we let $H$ be the first $k$ blocks of $F_j$ and $H'$ 
be the last $k$ blocks. By the remark in the previous paragraph both $H$ and $H'$ contain at least one block of length at least 
$3$, and so $\nu^*(F_j)\ge 2$. This actually finishes the proof for $\ell=1$. We will analyze the various possibilities for the multisets 
\[
    \cB = \setof{\abs{B}}{\text{$B$ is a block in $H$ with $\abs{B}\ge 3$}} \text{ and } 
        \cB' = \setof{\abs{B}}{\text{$B$ is a block in $H'$ with $\abs{B}\ge 3$}} .
\]
By symmetry we really only care about the multiset $\{\cB,\cB'\}$. We list the possibilities below, indexed by $\nu^*(F_j)$,
the total number of blocks.
\begin{center}
    \begin{tabular}{rl}
        $\nu^*(F_j)$ & $\set{\cB,\cB'}$ \\
        \hline
        $2$ & $\set{\set{3},\set{3}}$ \\
        $3$ & $\set{\set{3},\set{4}}, \set{\set{3},\set{3,3}}$ \\
        $4$ & $\set{\set{3},\set{5}}, \set{\set{3},\set{3,4}}, \set{\set{3},\set{3,3,3}}, \set{\set{3,3},\set{3,3}}, \set{\set{4},\set{4}}, \set{\set{4},\set{3,3}} $\\
        $5$ & $\set{\set{3},\set{6}}, \set{\set{3},\set{3,5}}, \set{\set{3},\set{4,4}}, \set{\set{3},\set{3,3,4}}, \set{\set{3},\set{3,3,3,3}}, \set{\set{4},\set{5}}$ \\
            & \quad $\set{\set{4},\set{3,4}}, \set{\set{4},\set{3,3,3}}, \set{\set{3,3},\set{3,4}}, \set{\set{3,3},\set{3,3,3}}$ 
    \end{tabular}
\end{center}

We will study these various possibilities and eliminate all those with $\nu^*(F_j)<2\ell$. Some are easy to eliminate, for instance any containing $\set{4}$; in that case the $k$ blocks in that half of $F_j$ cover exactly $2k+2$ elements, but either the first or last of these is a $2$-block whose removal leaves $k-1$ blocks covering $2k$ elements, a contradiction. A similar argument shows that $\set{6}$ is not an option for $\cB$ or $\cB'$.
% Let $F_j$ be a frame of length $2k$ and let $H$ be the first $k$ blocks and $H'$ the last $k$ blocks.
% By Observation~\ref{obs:pluseveneachhalf}, both $H$ and $H'$ contain a block of positive charge.
% Therefore, $\nu^*(F_j) \geq 2$ for all frames.
% This finishes the proof for $\ell = 1$ by application of the Local Discharging Lemma.
% \vspace{0.5em}
% Consider  $\ell = 2$.
% We show that $\nu^*(F_j) \geq 4$.
Let's turn then to considering $\ell=2$. We need to show $\nu^*(F_j) \geq 4$.

\begin{mycases}
% \case{$\nu^*(F_j) = 2$.}
% By Observation~\ref{obs:pluseveneachhalf}, both $H$ and $H'$ contain a block of positive charge.
% Thus,  let $B_i$ and $B_{i'}$ be two of these blocks of positive charge and observe that both are 3-blocks and the rest of the blocks in $F_j$ are 2-blocks; we can assume that $i < j+k \leq i'$.
% By Observation~\ref{obs:pluseveneachhalf}, there are at most $k-1$ blocks between $B_i$ and $B_{i'}$, all of which are 2-blocks.
% Thus, the $k+1$ blocks starting at $B_i$ contain $B_{i'}$ and $k-1$ other 2-blocks, so these blocks span $2k+4$ elements, a contradiction.
\case{$\set{\cB,\cB'}=\set{\set{3},\set{3}}$.}
There can be at most $k-1$ $2$-blocks between the $3$-blocks $B_i$ and $B_{i'}$ in $H$ and $H'$ respectively. 
Thus the $k+1$ blocks starting at $B_i$ contain $B_{i'}$ and $k-1$ other 2-blocks, so these blocks span $2k+4$ elements, a contradiction.

% \case{$\nu^*(F_j) = 3$.}
% By Observation~\ref{obs:pluseveneachhalf}, both $H$ and $H'$ contain a block of positive charge.
% If there are exactly two such blocks, then let $B_i$ and $B_{i'}$ be these blocks of positive charge and observe that the rest of the blocks in $F_j$ are 2-blocks.
% Observe that $\{|B_i| , |B_{i'}|\} = \{3, 4\}$.
% If $|B_i| = 4$, then the $k$ blocks in that half of $F_j$ cover exactly $2k+2$ elements, but either the first or last of these is a 2-block whose removal leaves $k-1$ blocks covering $2k$ elements, a contradiction.
\case{$\set{\cB,\cB'}=\set{\set{3},\set{4}}$.}
This is ruled out since it contains $\set{4}$.

\case{$\set{\cB,\cB'}=\set{\set{3},\set{3,3}}$.}
% Therefore, there are exactly three 3-blocks in $F_j$, two in one half and one in the other.
By symmetry, we assume that $H$ contains two 3-blocks and $H'$ contains exactly one.
Now $H$ covers $2k+2$ elements, so the first block $B'$ of $H'$ is not a 2-block or else $H \cup \{B'\}$ covers $2k+4$ elements, a contradiction.
Thus, $B$ is the 3-block in $H'$.
However this now means that the set of $k+1$ blocks starting at the second 3-block in $H$ contains $B$ and covers $2k+4$ elements, a contradiction.
\end{mycases}

Thus, when $\ell = 2$ all frames have at least four units of charge and so we are done by the Local Discharging Lemma.
% \vspace{0.5em}
% Consider  $\ell = 3$.
% We show that $\nu^*(F_j) \geq 6$.
There only now remains the case $\ell=3$, where we need to show that $\nu^*(F_j)\ge 6$. We need to eliminate any cases for $\set{\cB,\cB'}$ with $\nu^*(F_j)\le 5$, and we have already dealt with all cases where $\set{\cB,\cB'}$ contains $\set{4}$ or $\set{6}$. 

\begin{mycases}
% \case{$\nu^*(F_j) = 2$.}
% By Observation~\ref{obs:pluseveneachhalf}, both $H$ and $H'$ contain a block of positive charge.
% Thus,  let $B_i$ and $B_{i'}$ be two of these blocks of positive charge and observe that both are 3-blocks and the rest of the blocks in $F_j$ are 2-blocks; we can assume that $i < j+k \leq i'$.
% By Observation~\ref{obs:pluseveneachhalf}, there are at most $k-1$ blocks between $B_i$ and $B_{i'}$, all of which are 2-blocks.
% Thus, the set of $k+2$ blocks starting at $B_i$ (if $i=j$) or $B_{i-1}$ (if $i > j$) contains $B_i$, $B_{i'}$, and $k$ other 2-blocks, so these blocks span $2k+6$ elements, a contradiction.
\case{$\set{\cB,\cB'}=\set{\set{3},\set{3}}$.}
There can be at most $k-1$ $2$-blocks between the $3$-blocks $B_i$ and $B_{i'}$ in $H$ and $H'$ respectively. 
Thus, the set of $k+2$ blocks starting at $B_i$ (if $i=j$) or $B_{i-1}$ (if $i > j$) contains $B_i$, $B_{i'}$, and $k$ other 2-blocks, so these blocks span $2k+6$ elements, a contradiction.

% \case{$\nu^*(F_j) = 3$.}
% By Observation~\ref{obs:pluseveneachhalf}, both $H$ and $H'$ contain a block of positive charge.
% If there are exactly two such blocks, then let $B_i$ and $B_{i'}$ be  these blocks of positive charge and observe that the rest of the blocks in $F_j$ are 2-blocks.
% Observe that $\{|B_i| , |B_{i'}|\} = \{3, 4\}$.
% If $|B_i| = 4$, then the $k$ blocks in that half of $F_j$ cover exactly $2k+2$ elements, but either the first or last of these is a 2-block whose removal leaves $k-1$ blocks covering $2k$ elements, a contradiction.
\case{$\set{\cB,\cB'}=\set{\set{3},\set{3,3}}$.}
% Therefore, there are exactly three 3-blocks in $F_j$, two in one half and one in the other.
By symmetry, we assume that $H$ contains two 3-blocks and $H'$ contains exactly one.
Now $H$ covers $2k+2$ elements, so the first two blocks $C, C'$ of $H'$ are not both 2-blocks or else $H \cup \{C, C'\}$ covers $2k+6$ elements, a contradiction.
% Thus, one of $B$ or $B'$ is the 3-block in $H'$.
Thus $B'$ is one of the first two blocks in $H'$.
If the second 3-block in $H$ is not the last block in $H$, then the set of $k+2$ blocks starting at the second 3-block in $H$ contains $C$ and $C'$ and covers $2k+6$ elements, a contradiction.
Suppose the last block in $H$ is a 3-block, but the second to last block in $H$ is a 2-block.
Then starting at the 2-block to the end of the frame has length $2k+6$, a contradiction.
Thus the last two blocks of $H$ are 3-blocks.
Since $k\geq \ell\geq 3$, the first block of $H$ is a 2-block.  Omitting that block from $H$ gives $k-3$ consecutive 2-blocks and two consecutive 3-blocks which has length $2k$, a contradiction.

% \case{$\nu^*(F_j) = 4$.}
% By Observation~\ref{obs:pluseveneachhalf}, both $H$ and $H'$ contain a block of positive charge.
% Recall by the previous case that neither $H$ nor $H'$ can consist entirely of 2-blocks and exactly one 4-block.
% Thus, each of $H$ and $H'$ contains at least one odd block: either a 3-block or a 5-block.
\case{$\set{\cB,\cB'}=\set{\set{3},\set{5}}$.}
There are at most $k-1$ 2-blocks between the 5-block and the 3-block.
Thus, there is a set of $k + 1$ consecutive blocks that contains the 5-block, the 3-block, and $k-1$ 2-blocks.
These blocks cover $2k+6$ elements, a contradiction.
% By symmetry, if $H'$ contains a 5-block, then we also find a contradiction.

\case{$\set{\cB,\cB'}=\set{\set{3,3},\set{3,3}}$.}
% If $H$ contains exactly two 3-blocks, then $H'$ contains exactly two 3-blocks.
Observe that $\sigma(H) = \sigma(H') = 2k + 4$.
Thus, the last block of $H$ and the first block of $H'$ are not 2-blocks.
Let $x$ be the number of 2-blocks between the 3-blocks in $H$ and $y$ the number of 2-blocks between the 3-blocks in $H'$.
%If $x + y \geq k$, then there is a set of $k+2$ consecutive blocks containing the last 3-block in $H$, the first $3$-block in $H'$, and $k$ 2-blocks; such blocks cover $2k+6$ elements, a contradiction.
%Thus, $x + y < k$.
If $x+ y \leq k-3$, then there is a set of $k+1$ consecutive blocks containing all four 3-blocks in $F_j$ and $k-3$ 2-blocks covering $2k+6$ elements, a contradiction.
Thus, $k-2\leq x+y$.
However, there is now a set of $k-1$ consecutive blocks containing the last 3-block in $H$, the first 3-block in $H'$, and $k-3$ 2-blocks, covering $2k$ elements, a contradiction.

\case{$\set{\cB,\cB'}=\set{\set{3},\set{3,3,3}}$.}
% If $H$ contains exactly three 3-blocks, then $H'$ contains exactly one 3-block.
Consider the set of $k+1$ blocks starting at the last 3-block $B$ in $H$.
Let $y$ be the number of 2-blocks between $B$ and the 3-block $B'$ in $H'$; $y \leq k-1$ by our observation at the beginning of the proof. 
If $y \leq k-3$, then the set of $k-1$ consecutive blocks starting at $B$ contains $B'$ and $k-3$ 2-blocks, covering $2k$ elements, a contradiction.
Thus $k-2 \leq y \leq k-1$.
Then the set of $k-1$ consecutive blocks starting at the 3-block in $H$ before $B$ contains $B$, does not contain $B'$, and contains $k-3$ 2-blocks, covering $2k$ elements, a contradiction.

\case{$\nu^*(F_j) = 5$.}
% By Observation~\ref{obs:pluseveneachhalf}, both $H$ and $H'$ contain a block of positive charge.
% There are several options for the combinations of block sizes (other than 2-blocks) in $H$ and $H'$ for $F_j$ to achieve 5 units of charge.
By symmetry, we will assume that the blocks of $H$ contribute at least as much charge as the blocks in $H'$.
% Recall that neither $H$ nor $H'$ can have only one 4-block or only on
Recall that $\set{\cB,\cB'}$ does not contain either $\set{4}$ or $\set{6}$.
% A similar argument can be used to show that neither $H$ nor $H'$ can have only one 6-block.
% Consider the possible cases for $F_j$ to have 5 units of charge.

\begin{subcases}
% \subcase{$H$ has a 5-block and a 3-block and $H'$ has one 3-block.}
\subcase{$\set{\cB,\cB'}=\set{\set{5,3},\set{3}}$.}
Observe that $\sigma(H) = 2k+4$, so the first block of $H'$ must be the 3-block.
Also, let $u$ be the number of 2-blocks at the beginning of $H$ and let $v$ be the number of 2-blocks at the end of $H$.
If $u + v \geq 2$, then removing two 2-blocks from the start or end of $H$ results in a consecutive set of $k-2$ blocks covering $2k$ elements, a contradiction.
Thus, the 5-block and 3-block in $H$ are among the first two and last two blocks of $H$.
Therefore there are at least $k-3$ 2-blocks between the 5-block and the 3-block.%these blocks.
If the 5-block is first, consider the set of $k-1$ blocks starting after the 5-block, which contains both 3-blocks and $k-3$ 2-blocks, covering $2k$ elements, a contradiction.
If the 3-block is first, consider the set of $k-2$ blocks starting two blocks after the 3-block, which contains the 5-block from $H$, the 3-block from $H'$, and $k-4$ 2-blocks, covering $2k$ elements, a contradiction.

% \subcase{$H$ has a 5-block and $H'$ has two 3-blocks.}
\subcase{$\set{\cB,\cB'}=\set{\set{5},\set{3,3}}$.}
Observe that $\sigma(H') = 2k+2$ and $\sigma(H) = 2k+3$.
If the first block in $H'$ is a 2-block, then the $k-1$ blocks after the first 2-block cover $2k$ elements, a contradiction.
Thus the first block in $H'$ is a 3-block.
However, the blocks of $H$ with the first block in $H'$ now covers $2k+6$ elements, a contradiction.
%These facts imply that the 5-block is one of the last two blocks of $H$ and $H'$ does not begin with a 3-block, respectively.
%By Observation~\ref{obs:pluseveneachhalf}, there are at most $k-1$ 2-blocks between the 5-block in $H$ and the first 3-block in $H'$.
%However, this implies that the set of $k+1$ consecutive blocks ending with the first 3-block in $H'$ contains the 5-block and $(k-1)$ 2-blocks, covering $2k+6$ elements, a contradiction.

%\subcase{$H$ has a 5-block and a 3-block and $H'$ has one 3-block.}
%Observe that $\sigma(H) = 2k+4$, so the first block of $H'$ is a 3-block.
%If the 5-block in $H$ is to the left of the 3-block, then either consider the $k-2$ blocks starting at the 3-block in $H$ (which will cover $2k$ elements) or the $k+2$ blocks starting at the 3-block (which will cover $2k+6$ elements); at least one of these is possible, since we cannot simultaneously have the two 3-blocks too far away for the first situation and have the 5-block too close to the 3-block in $H'$ to have the second situation.
%If the 5-block in $H$ is to the right of the 3-block, then consider $k+1$ blocks starting at the 5-block, which cover $2k+6$ elements.

% \subcase{$H$ has two 4-blocks and $H'$ has one 3-block.}
\subcase{$\set{\cB,\cB'}=\set{\set{4,4},\set{3}}$.}
Observe that $\sigma(H)=2k+4$.  If $H$ starts or ends with a 4-block, then the rest of the blocks have length $2k$.  So the first and last blocks of $H$ must be 2-blocks.  Removing the 2-blocks from the start and end of $H$ has length $2k$.

% \subcase{$H$ has four 3-blocks and $H'$ has one 3-block.}
\subcase{$\set{\cB,\cB'}=\set{\set{3,3,3,3},\set{3}}$.}
Observe that $\sigma(H) = 2k+4$, so the first block of $H'$ is the 3-block.
Let $u$ be the number of 2-blocks at the beginning of $H$ and $v$ be the number of 2-blocks at the end of $H$.
If $u+ v\geq 2$, then removing two 2-blocks in total from the ends of $H$ leaves a set of $k-2$ blocks covering $2k$ elements, a contradiction.
%Thus, two 3-blocks of $H$ have at least $k-3$ blocks between them, and one of them is among the last two blocks in $H$.
If the last two blocks of $H$ consist of one 2-block and one 3-block then the last $k+2$ blocks in $F_j$ span $2k+6$ elements, a contradiction.
Thus, the last two blocks of $H$ consist of two 3-blocks.
Finally, the set of $k+1$ blocks starting at the \emph{second} 3-block in $H$ contains four 3-blocks and hence spans $2k+6$ elements, a contradiction.

% \subcase{$H$ has a 4-block and two 3-blocks and $H'$ has one 3-block.}
\subcase{$\set{\cB,\cB'}=\set{\set{3,3,4},\set{3}}$.}
Observe that $\sigma(H) = 2k+4$, so the first block of $H'$ is a 3-block.
If the first or last block of $H$ is a 4-block, then removing this 4-block results in a set of $k-1$ blocks spanning $2k$ elements, a contradiction.
Let $u$ be the number of 2-blocks at the beginning of $H$ and $v$ be the number of 2-blocks at the end of $H$.
If $u+ v\geq 2$, then removing two 2-blocks in total from the ends of $H$ leaves a set of $k-2$ blocks covering $2k$ elements, a contradiction.
Thus, there exists a non-2-block among the first two blocks of $H$.
If the first non-2-block is a 3-block, then the $k+1$ blocks following this 3-block consist of a 4-block, two 3-blocks, and $(k-2)$ 2-blocks, and hence span $2k+6$ elements, a contradiction.
Otherwise, $H$ begins with a 2-block and then a 4-block and hence ends with a 3-block.
However, the set of $k-1$ blocks starting with this final 3-block consists of two 3-blocks and $(k-3)$ 2-blocks, covering $2k$ elements, a contradiction.

% \subcase{$H$ has a 4-block and one 3-block and $H'$ has two 3-blocks.}
\subcase{$\set{\cB,\cB'}=\set{\set{3,4},\set{3,3}}$.}
Observe that $\sigma(H) = 2k+3$, so the first block of $H'$ is not a 3-block and must be a 2-block.
However, $\sigma(H') = 2k+2$ so removing this first 2-block leads to a set of $k-1$ blocks spanning $2k$ elements, a contradiction.

% \subcase{$H$ has three 3-blocks and $H'$ has two 3-blocks.}
\subcase{$\set{\cB,\cB'}=\set{\set{3,3,3},\set{3,3}}$.}
Observe that $\sigma(H') = 2k+2$, so the first block of $H'$ must be a 3-block.
However, since $\sigma(H) = 2k+3$, the blocks of $H$ and the first 3-block in $H'$ span $2k+6$ elements, a contradiction.
\end{subcases}
\end{mycases}

In all cases, we determined that $\nu^*(F) \geq 6$, and hence $\delta(X) \leq \frac{2k}{4k+6}$ for $\ell=3$.
\end{proof}

%We expect that the charging method with the parameters above will work for all cases of $1 \leq \ell \leq k$, but the details as currently followed will become increasingly complicated.
We expect that the charging method with the parameters above will work for all cases of $1 \leq \ell \leq k$, but the details become increasingly complicated.
The general idea is that the odd blocks cannot be too small and too far away, nor can they be too small and too close.
Some balance of odd blocks being large and the right distance away is required, which is what we have in our extremal periodic set.

%%%%%%%%%%%%%%%%%%%%%%%%%%%%%%%%%%%%%%%%%%%%%%%%%%%%%%%%%%%%%%%%%%%%
%%%%%%%%%%%%%%%%%%%%%%%%%%%%%%%%%%%%%%%%%%%%%%%%%%%%%%%%%%%%%%%%%%%%
\subsection{$S = \{ 1, 2\ell, k\}$}

Recall that for $k > 2$, Theorem~\ref{thm:intervalandk} implies
\[
	\dalpha(\{1,2,k\}) = \begin{cases}
		\frac{k}{3k+3} & \text{if } k \equiv 0\pmod 3\\
		\frac{1}{3} & \text{if } k \equiv 1\pmod 3\\
		\frac{1}{3} & \text{if } k \equiv 2\pmod 3.
	\end{cases}
\]

The set $\{1, 2i\}$ has $\dalpha(\{1,2i\}) = \frac{1}{2i+1}$ achieved by the independent set with block structure $2^{i-1}\ 3$.
Thus, the density $\dalpha(\{1,2i,k\})$ depends on the residue class of $k$ modulo $2i+1$, depending on how this block structure interacts with the distance $k$.
For example, see Theorem~\ref{thm:1-4-k} and its accompanying Figure~\ref{fig:1-4-k}.
This figure, and others like it, contains the computed values of $\dalpha(S)$ in black and the rational functions describing $\dalpha(S)$ for these residue classes in red\footnote{For conjectures, these rational functions are conjectured extensions of the computed values.}.

\begin{center}
\begin{minipage}{3in}{\begin{theorem}\label{thm:1-4-k}
For $k > 4$,
\[\displaystyle
\dalpha(\{1,4,k\}) = \begin{cases}
 \frac{2k}{5k+5} & \text{if } k \equiv 0 \pmod 5\\
 \frac{2}{5} & \text{if } k \equiv 1 \pmod 5\\
 \frac{2k+1}{5k+5} & \text{if } k \equiv 2 \pmod 5\\
 \frac{2k-1}{5k+5} & \text{if } k \equiv 3\pmod 5\\
 \frac{2}{5} & \text{if } k\equiv 4 \pmod 5.
\end{cases}\]
\end{theorem}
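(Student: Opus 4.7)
The plan is to split the proof into five cases according to $k \bmod 5$. A uniform first observation: since $\{1,4\}\subseteq S$, Observation~\ref{obs:subset} combined with Theorem~\ref{thm:CHZ98C} (with $(a,b)=(1,4)$) yields $\dalpha(S)\le \dalpha(\{1,4\}) = \tfrac{2}{5}$. Also, because $1,4\in S$, the admissible block sizes lie in $\{2,3,5,6,\ldots\}$, and two consecutive $2$-blocks are forbidden (since $2+2=4\in S$), so every $2$-block is flanked by blocks of size at least $3$.

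For the \emph{lower bounds} I would exhibit explicit periodic independent sets. When $k\equiv 1,4\pmod 5$, the alternating pattern with block structure $2\ 3$ places elements in positions $\equiv 0,2\pmod 5$, so pairwise differences lie in $\{0,2,3\}\pmod 5$ and miss $k$; the density is $\tfrac{2}{5}$. When $k=5m$, the perturbed pattern $(2\ 3)^{m-1}\ 3\ 3$ has period $5m+1$ and $2m$ elements, giving density $\tfrac{2m}{5m+1}=\tfrac{2k}{5k+5}$. When $k=5m+2$, the pattern $(2\ 3)^m\ 3$ has period $5m+3$ and $2m+1$ elements. When $k=5m+3$, the pattern $(2\ 3)^{m-1}\ 3\ 3\ 3$ has period $5m+4$ and $2m+1$ elements. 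Independence in each case is a short modular verification that no two elements in a single period differ by $k$.

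For the \emph{upper bounds}, the cases $k\equiv 1,4\pmod 5$ follow from $\dalpha(S)\le \tfrac{2}{5}$. For the remaining three cases I would invoke the Local Discharging Lemma with $a=2$ and $b=5$, so the initial charge $\mu(B)=2|B|-5$ assigns $-1$ to every $2$-block, $+1$ to every $3$-block, and at least $+5$ to every block of size $\ge 5$. Choose $(t,c) = (m,1)$ when $k=5m$, $(t,c)=(2m+1,1)$ when $k=5m+2$, and $(t,c)=(2m+1,3)$ when $k=5m+3$; a direct computation confirms $\tfrac{at}{bt+c}$ matches the target density. The frame length $t$ is chosen precisely so that a $t$-frame of pure $2$-$3$ alternation would span exactly (or just next to) the forbidden value $k$, forcing any $t$-frame either to contain a block of size $\ge 5$ or to carry a surplus of $3$-blocks over $2$-blocks. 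A Stage~1 rule in which each $2$-block pulls $+1$ from the block immediately to its right (which is necessarily of size $\ge 3$, since two $2$-blocks cannot be adjacent) yields $\mu^{\ast}(B)\ge 0$ everywhere, with equality exactly along $2$-$3$-alternating stretches.

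The substantive work is the Stage~2 case analysis: showing via an $S$-local rule that $\nu'(F)\ge c$ holds for every $t$-frame. Deficient frames are precisely those close to pure $2$-$3$ alternation, hence forced to have span close to the forbidden distance $k$; they must be paired with a nearby surplus frame (one with extra $3$-blocks or a large block), and the distance-$k$ constraint is what couples deficit and surplus at bounded offset. I expect this to be the main obstacle: it requires enumerating the possible block structures of $t$-frames whose span sits just below, at, or just above $k$, and routing compensating charge from a surplus frame within a locality depending only on $k$. This parallels the residue-based case analysis used in the proof of $\dalpha(\{1,2k,2k+2\ell\})$ above, and the three distinct $(t,c)$ choices reflect the three different arithmetic behaviors of $k \bmod 5$.
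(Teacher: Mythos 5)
Your lower-bound constructions, the choice $a=2,b=5$, and the Stage~1 rule (each $2$-block pulls $+1$ from the block to its right) all coincide exactly with the paper's. The values $(t,c)=(2m+1,1)$ for $k\equiv 2$ and $(2m+1,3)$ for $k\equiv 3$ also match the paper. But there are two real problems.

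First, for $k=5m$ you choose $(t,c)=(m,1)$, whereas the paper uses $(t,c)=(2m,2)$. These give the same ratio $\tfrac{2m}{5m+1}$, but your choice contradicts the very heuristic you announce, that ``$t$ is chosen so that a $t$-frame of pure $2$-$3$ alternation spans $k$.'' An $m$-frame of alternating $2$- and $3$-blocks spans roughly $\tfrac{5m}{2}=\tfrac{k}{2}$, nowhere near $k$, so no distance-$k$ constraint fires and such a frame genuinely has $\nu^*(F)=0<c$. To rescue $(m,1)$ you would need a Stage~2 rule whose locality is on the order of $m$ frames (and you would first need to argue, as the paper does implicitly, that an alternating $2$-$3$ stretch cannot exceed about $2m$ blocks without creating a subinterval of length $k$); this is a substantially harder and quite different argument from what you describe. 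With the paper's $(2m,2)$, a $2m$-frame of alternation has $\sigma(F)=5m=k$, the contradiction appears immediately, and no Stage~2 rule is needed at all.

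Second, you treat the Stage~2 case analysis as ``the substantive work'' for all three nontrivial residues and leave it entirely unverified. In fact the paper needs no Stage~2 discharging for $k\equiv 0$ and $k\equiv 2$ — with the right $t$ one shows $\nu^*(F)\ge c$ directly by a short enumeration — and uses Stage~2 only for $k\equiv 3$, where the specific rule is that a frame with $\sigma(F_j)=5i+2$ pulls one unit each from $F_{j+1},F_{j+2},F_{j+3}$. Your description of pairing deficient frames with surplus frames at bounded offset is the right intuition, but it is far from a proof: one must enumerate the possible block structures of low-charge frames, show each forces $\sigma$ to be $k$ or $k+3$ (a contradiction) or else a nearby large block, and verify that the pull does not overdraw the donor. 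That verification is the bulk of the paper's argument and is missing here.
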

}
\end{minipage}
\qquad
\begin{minipage}{3in}
\begin{figure}[H]\centering
\begin{lpic}[]{"figures/plot-1-4-k"(3in,)}
\lbl[r]{0,73;\footnotesize$\dalpha$}
\lbl[]{5,126;\scriptsize$\frac{2}{5}$}
%\lbl[]{5,52.2;\scriptsize$\frac{35}{100}$}
\lbl[]{5,27.3;\scriptsize$\frac{1}{3}$}

\lbl[t]{95,0;\footnotesize $k$}
\lbl[t]{29,5;\scriptsize $10$}
\lbl[t]{48,5;\scriptsize $20$}
\lbl[t]{66,5;\scriptsize $30$}
\lbl[t]{85,5;\scriptsize $40$}
\lbl[t]{104,5;\scriptsize $50$}
\lbl[t]{122.5,5;\scriptsize $60$}
\lbl[t]{141,5;\scriptsize $70$}
\lbl[t]{159.5,5;\scriptsize $80$}
\lbl[t]{178,5;\scriptsize $90$}
\end{lpic}
\caption{\label{fig:1-4-k}Computed values of $\dalpha(\{1,4,k\})$.}
\end{figure}
\end{minipage}
\end{center}

\begin{proof}
Lower bounds are given by the periodic sets in the following table.
\begin{table}[h]
\centering\renewcommand\arraystretch{1.5}
\begin{tabular}[h]{r@{$=$}l|r@{$=$}l|l}
\multicolumn{2}{c|}{Residue class} &
\multicolumn{2}{c|}{Density} &
\multicolumn{1}{c}{Extremal Set}\\ \hline
	\quad$k$&$5i$ &\quad $\frac{2k}{5k+5}$&$\frac{2i}{5i+1}$ & $(2\ 3)^{i-1}\ 3^2$\\ \hline
	$k$&$5i+1$ & \multicolumn{2}{c|}{$\frac{2}{5}$} & $2\ 3$ \\ \hline
	$k$&$5i+2$ & $\frac{2k+1}{5k+5}$&$\frac{2i+1}{5i+3}$ & $(2\ 3)^i\ 3$ \\ \hline
	$k$&$5i+3$ & $\frac{2k-1}{5k+5}$&$\frac{2i+1}{5i+4}$ & $(2\ 3)^{i-1}\ 3^3$ \\ \hline
	$k$&$5i+4$ & \multicolumn{2}{c|}{$\frac{2}{5}$} & $2\ 3$ \\ \hline
\end{tabular}
\caption{Parameterizations and  lower bounds for $S = \{1,4,k\}$.}
\end{table}

Let $S = \{1, 4, k\}$, and let $X$ be an independent set in the distance graph generated by $S$.
Since $1 \in S$, all blocks have size at least two.
Since $4 \in S$, there are no 4-blocks and there are no consecutive pairs of 2-blocks.

Let $a = 2$ and $b = 5$ for all cases in this proof.
The initial charge $\mu(B)$ is positive except if $B$ is a 2-block, where $\mu(B) = -1$.
Perform the following first-stage discharging rule:

\begin{dischargingrule}
(S1) Every 2-block $B_j$ pulls one unit of charge from $B_{j+1}$.
\begin{casefig}
			\scalebox{\casefigratio}{\begin{lpic}[]{"figures/Obs-4pK-RuleS1"(,25mm)}
			\lbl[b]{14,18.5;\footnotesize +1}
			\end{lpic}}
		\caption{Rule (S1).}
\end{casefig}
\end{dischargingrule}

Since $\mu(B_j) = -1$ for a 2-block $B_j$, $\mu(B_j) \geq 1$ for all other blocks, and every block of size at least three loses at most one unit of charge by (S1), the rule (S1) results in all blocks having nonnegative charge.
Recall that there are no 4-blocks, so any block of size larger than three has at least 4 units of charge remaining after (S1).

If we set $t = 1$ and $c = 0$, we have an upper bound of $\frac{at}{bt+c} = \frac{2}{5}$.
This matches the lower bound for $k \equiv 1 \pmod 5$ and $k \equiv 4\pmod 5$.
We now consider the residue class of $k$ modulo $5$.

\begin{mycases}
\case{$k = 5i+2$.}
Let $t = 2i+1$ and $c = 1$.
Thus $\frac{at}{bt+c} = \frac{4i+2}{10i+6} = \frac{2i+1}{5i+3}$.
Since every frame of length $t$ contains no consecutive pairs of 2-blocks, $F$ must contain at least $i$ blocks of size at least three.
If $\nu^*(F) < 1$, then all of these blocks have size exactly three.

Suppose $\nu^*(F) = 0$.
Then there are at least $i$ 3-blocks in $F$, each of which is preceded by a 2-block.
If there are exactly $i$ 3-blocks in $F$, then $F$ has block structure $(2\ 3)^i\ 2$ and $\sigma(F) = 5i+2 = k \in S$, a contradiction.
Otherwise, there are exactly $i+1$ 3-blocks in $F$, and $F$ has block structure $3\ (2\ 3)^i$.
Since $\nu^*(F) =0$, the first 3-block in $F$ is preceded by a 2-block.
Thus, the frame $F'$ starting at this 2-block has $\sigma(F') = 5i+2 = k \in S$, a contradiction.

Therefore, all frames $F$ have $\nu^*(F) \geq 1$, and no second-stage discharging is required.

\case{$k = 5i$.}
Let $t = 2i$ and $c = 2$.
Thus $\frac{at}{bt+c} = \frac{4i}{10i+2} = \frac{2i}{5i+1}$.
Since every frame of length $t$ contains no consecutive pairs of 2-blocks, $F$ must contain at least $i$ blocks of size at least three.
If $\nu^*(F) < 2$, then all of these blocks have size exactly three.

Suppose $\nu^*(F) = 0$.
Then there are exactly $i$ 3-blocks in $F$, each of which is preceded and followed by a 2-block.
However, this implies $\sigma(F) = 5i= k \in S$, a contradiction.

Suppose $\nu^*(F) = 1$.
Then there is exactly one 3-block $B_j$ in $F$ that is preceded by a block $B_{j-1}$ with size at least three.
Every other 3-block in $F$ is preceded by a 2-block (that may or may not be in $F$).
If $B_{j-1}$ is in the frame $F$, then it is preceded by a 2-block.
Observe that $F$ has block structure $3^e\ (2\ 3)^q\ 3\ (2\ 3)^{i-1-q}\ 2^{1-e}$, where $e \in \{0,1\}$.
If $e = 0$, then $\sigma(F) = 5i = k \in S$, a contradiction.
If $e = 1$, then the first 3-block is preceded by a 2-block, so the frame $F'$ starting at that 2-block has $\sigma(F') = 5i = k \in S$, a contradiction.

Thus, all frames have at least two units of charge, and no second-stage discharging is required.

\case{$k = 5i+3$.}
Let $t = 2i+1$ and $c=3$.
Thus $\frac{at}{bt+c} = \frac{4i+2}{10i+8} = \frac{2i+1}{5i+4}$.
We use the following second-stage discharging rule.

\begin{casecomment}
\begin{dischargingrule}
%(S2) If $F_j$ is a frame with $\sigma(F_j-B_j) = 5i$, then $F_j$ pulls $(5-|B_{j+t-1}|)$ units of charge from $F_{j+t}$.
(S2) If $\sigma(F_j) = 5i + 2$, then $F_j$ pulls 1 unit of charge from each of $F_{j+1}$, $F_{j+2}$, and $F_{j+3}$.
\end{dischargingrule}
%%% THIS CASEFIG IS OBSOLETE
%\begin{casefig}
%			\scalebox{\casefigratio}{\begin{lpic}[]{"figures/Obs-4pK-RuleS2"(,25mm)}
%			\lbl[b]{43,23;\footnotesize +3} % /+2
%			\lbl[]{6,12.75;\scriptsize $B_j$}
%%			\lbl[]{46.5,12.75;\scriptsize $2$}
%			\lbl[b]{6,18.5;\scriptsize $F_j$}
%			\lbl[b]{51.5,18.5;\scriptsize $F_{j+t}$}
%			\lbl[t]{24.5,3;\footnotesize $5i$ elements over $2i$ blocks}
%			\lbl[t]{78,3;\footnotesize $2i$ blocks}
%			\end{lpic}}
%		\caption{Rule (S2).}
%\end{casefig}
\end{casecomment}

Observe that if the $2i+1$ blocks of $F_j$ cover $5i+2$ elements, then $F_j$ has block structure $2\ (3\ 2)^i$.
Thus the block $B_{j+t}$ is not a 2-, 3-, or 4-block and hence has size at least five.
Since $k > 4$, $i \geq 1$ and $t \geq 3$.
Thus the block $B_{j+t}$ is contained in each of $F_{j+1}$, $F_{j+2}$, and $F_{j+3}$.

We claim that if $\nu^*(F_j) < 3$, then $\sigma(F_j) = 5i+2$ and the rule (S2) applies to pull 3 units of charge to $F_j$.
Consider a frame $F_j$ and its charge value $\nu^*(F_j)$ before the rule (S2) is applied.

\begin{subcases}
\subcase{$\nu^*(F_j) = 0$.}
In this case, every 3-block in $F_j$ is preceded by a 2-block.
Thus, $F_j$ has block structure $3^e\ (2\ 3)^i\ 2^{1-e}$ for some $e \in \{0,1\}$.
If $e = 1$, then $\sigma(F_j) = 5i+3 = k \in S$, a contradiction.
Thus, $\sigma(F_j) = 5i+2$.
Therefore the rule (S2) applies and $F_j$ receives 3 units of charge.
Since $F_j$ does not contain a block of size at least 5, (S2) does not pull any charge from $F_j$.
Hence $\nu'(F_j) \geq 3$.

\subcase{$\nu^*(F_j) = 1$.}
In this case, exactly one 3-block in $F_j$ is not preceded by a 2-block; such a 3-block is either the first block of $F_j$ or there are two consecutive 3-blocks in $F$.
Thus, $F_j$ has block structure $3^e\ (2\ 3)^q\ (3\ 2)^{i-q}\ 3^{1-e}$, for some $q \in \{0,\dots,i\}$ and $e\in \{0,1\}$.
However, this implies that $\sigma(F_j) = 5i+3 = k \in S$, a contradiction.

\subcase{$\nu^*(F_j) = 2$.}
In this case, exactly two 3-blocks in $F_j$ are not preceded by 2-blocks.
At least one of these blocks is not the first block $B_j$ in $F_j$, and hence is preceded by a 3-block in $F_j$.

If the first block $B_j$ of $F_j$ is a 2-block, then $F_j$ has block structure \[(2\ 3)^q\ 3\ (2\ 3)^{p}\ (3\ 2)^{i-(p+q)},\] where $p, q \in \{0,\dots,i\}$ with $p+q \leq i$.
However, in this case $\sigma(F_j) = 5i+3 = k \in S$, a contradiction.

If the first block $B_j$ of $F_j$ is a 3-block with $\mu^*(B_j)=0$,  then $F_j$ has block structure \[3\ (2\ 3)^q\ 3\ (2\ 3)^{p}\ (3\ 2)^{i-(p+q+1)}\ 3,\] where $p, q \in \{0,\dots,i-1\}$ with $p+q \leq i-1$.
Since $\mu^*(B_j) = 0$, the block $B_{j-1}$ preceding the frame is a 2-block.
However, the frame $F_{j-1}$ then has $\sigma(F_{j-1}) = 5i+3 = k \in S$, a contradiction.

If the first block of $F_j$ is not preceded by a 2-block, then $F_j$ has block structure \[3\ (2\ 3)^q\ 3\ (2\ 3)^{i-(q+1)}\ 2,\]
where $q \in \{0,\dots,i-1\}$.
However, $\sigma(F_j) = 5i+3 = k \in S$, a contradiction.

\subcase{$\nu^*(F_j) \geq 3$.}
In this case, we have enough charge before using the second-stage discharging rule (S2).
Thus, we must only verify that if (S2) pulls a units of charge from $F_j$,  then $\nu^*(F_j) \geq 4$.
However, we observed earlier that $F_j$ contains a block of size at least 5, and such a block has $\mu^*$-charge at least 4.
Therefore, $\nu'(F_j) \geq 3$.
\hfill \qedhere
\end{subcases}
\end{mycases}
\end{proof}

The other cases for $S = \{1, 2\ell, k\}$ appear to have $\dalpha(S)$ given by a set of rational functions depending on the residue of $k$ modulo $2\ell+1$.

\begin{center}
\begin{minipage}{3in}{
\begin{conjecture}
For $k > 6$ with\\ $k \notin \{ 7, 10, 12, 17 \}$,
\[\dalpha(\{1,6,k\}) = \begin{cases}
\frac{3k}{7k+7} & \text{if } k \equiv 0 \pmod{7}\\
\frac{3}{7} & \text{if } k \equiv 1 \pmod{7}\\
\frac{3k+1}{7k+7} & \text{if } k \equiv 2 \pmod{7}\\
\frac{3k-2}{7k+7} & \text{if } k \equiv 3 \pmod{7}\\
\frac{3k+2}{7k+7} & \text{if } k \equiv 4 \pmod{7}\\
\frac{3k-1}{7k+7} & \text{if } k \equiv 5 \pmod{7}\\
\frac{3}{7} & \text{if } k \equiv 6 \pmod{7}.
\end{cases}\]
\end{conjecture}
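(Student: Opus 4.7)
The plan is to adapt the two-stage discharging strategy used in Theorem~\ref{thm:1-4-k} for the analogous $\{1,4,k\}$ family, now with the scaling parameters $a = 3$ and $b = 7$ throughout, so that every block receives initial charge $\mu(B) = 3|B| - 7$. This matches the baseline value $\dalpha(\{1,6\}) = 3/7$, and each of the seven residue classes of $k \bmod 7$ uses its own choice of frame length $t$ and target charge $c$, selected so that $3t/(7t + c)$ equals the conjectured value of $\dalpha(\{1, 6, k\})$.

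For the lower bounds, the plan is to exhibit for each residue class $r = k \bmod 7$ an explicit periodic independent set of the form $(2\,3)^{\alpha}\,(2\,2\,3)^{\beta}$. Matching the element count and period length to the numerator and denominator of the target density gives a $2\times 2$ linear system whose solution is $(\alpha, \beta) = (3, i-2)$, $(0, 1)$, $(2, i-1)$, $(5, i-3)$, $(1, i)$, $(4, i-2)$, $(0, 1)$ for $r = 0, 1, 2, 3, 4, 5, 6$ respectively, where $k = 7i + r$. The nonnegativity constraints $\beta \geq 0$ reproduce exactly the excluded values $\{7, 10, 12, 17\}$. Independence in $G(\{1, 6, k\})$ then reduces to two local checks: that no sum of consecutive blocks equals $6$, and that $k$ modulo the period lies outside the set of pairwise differences of the element positions.

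For the upper bound, the Stage 1 rule I would use is that each $2$-block pulls one unit of charge from the nearest non-$2$-block to its right. Since $2+2+2 = 6 \in S$, no three consecutive $2$-blocks occur, so the target block lies at most two positions away and the rule is well-defined and $S$-local. After Stage 1 every $2$-block has charge $0$, and any block $B$ of size $\ell \geq 3$ loses at most two units, so $\mu^*(B) \geq 3\ell - 9 \geq 0$; moreover $4$-blocks cannot be adjacent to any $2$-block (as $2 + 4 = 6$) and retain their full charge of $5$. For the residues $r \in \{1, 6\}$ we take $t = 1$ and $c = 0$, so that Lemma~\ref{lem:localdischarging} immediately gives $\dalpha(S) \leq 3/7$. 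For the five remaining residue classes we additionally design Stage 2 rules: the goal is to show that any $t$-frame $F$ with $\nu^*(F) < c$ must consist almost entirely of $2$- and $3$-blocks in such a rigid configuration that either $\sigma(F) = k$ (contradicting independence of $X$) or the existence of such a frame forces a nearby frame to contain a block of size at least $5$ whose surplus charge can be transferred, exactly as in the $k \equiv 3 \pmod 5$ subcase of the proof of Theorem~\ref{thm:1-4-k}.

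The hard part will be the combinatorial blow-up in the Stage 2 frame analysis. Compared to the $\{1,4,k\}$ setting the block menu is richer (two consecutive $2$-blocks are allowed, and $5$-blocks can appear adjacent to $2$-blocks) and the list of forbidden local patterns from $6 \in S$ is longer (no $3\,3$, no $2\,4$ or $4\,2$, no $2\,2\,2$, together with additional triples of consecutive blocks that sum to $6$). Consequently the number of $t$-frame structures with $\nu^*(F) < c$ grows quickly, and the Stage 2 rules will likely need to be tuned separately for each of the five nontrivial residue classes, each demanding its own detailed case analysis in the style of Theorem~\ref{thm:1-4-k}. I expect the method to succeed throughout, and that the excluded values $k \in \{7, 10, 12, 17\}$ will emerge naturally as those where the extremal $(2\,3)^{\alpha}\,(2\,2\,3)^{\beta}$ construction degenerates and a separate (often direct) computation is required.
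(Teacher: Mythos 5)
The statement you are addressing is marked in the paper as a \emph{conjecture}, not a theorem — the paper supplies no proof of it. The authors remark after presenting Table~\ref{tab:1-6-k} that they ``have proofs that these are extremal for all residue classes except the class $k = 7i+3$'' and omit them because they follow the $\{1,4,k\}$ blueprint of Theorem~\ref{thm:1-4-k}. So there is no completed proof in the paper to compare against; what you have sketched is a plan in the same spirit as what the authors allude to.

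Within that framework, the parts of your proposal that are checkable do check out, and one part is actually an improvement. Your Stage~1 rule ``each $2$-block pulls one unit from the nearest non-$2$-block to its right'' is the right adaptation for $\{1,6,k\}$: since $2+2+2=6 \in S$ there are at most two consecutive $2$-blocks, the target is within distance two, every $2$-block ends with charge $0$, every $3$-block ends with charge $\ge 0$, and (since $2+4 = 6$) $4$-blocks are never adjacent to $2$-blocks and retain their full charge of $5$. Your lower-bound family $(2\,3)^{\alpha}(2\,2\,3)^{\beta}$ matches the conjectured densities exactly, the constraint $\beta \ge 0$ reproduces $\{7,10,12,17\}$, and independence is easy to verify: all blocks are $\ge 2$, no $3\,3$ or $2\,2\,2$ appears in any of your seven cyclic patterns, so no consecutive-block sum equals $6$; and the period $p$ satisfies $k < p \le k+4$ with $p - k \in \{1,\dots,4\}$ excluded as a block size, so no consecutive-block sum equals $k$ either. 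Notably, for $k = 7i+3$ your set $(2\,3)^5(2\,2\,3)^{i-3}$ has period $7i+4$ with $3i+1$ blocks, giving the claimed $\frac{3i+1}{7i+4}$, whereas the paper's Table~\ref{tab:1-6-k} entry $(2\ 2\ 3)^{i-1}\ 2\ 3\ 4\ 3$ has period $7i+5$ and thus the wrong density; your construction appears to correct an error in that row.

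The genuine gap is the Stage~2 analysis, which you defer, and it is not a small gap. The paper's authors explicitly state they were unable to complete the argument for $k \equiv 3 \pmod 7$, yet your proposal does not flag this case or offer any reason to believe your version of the method will succeed where theirs did not. Declaring ``I expect the method to succeed throughout'' is not evidence, and since the problematic class is precisely the reason the statement remains a conjecture, a convincing proposal would need to either engage with what goes wrong in the $r=3$ frame analysis or at minimum acknowledge that this is the open case. As written, the proposal restates the known strategy, supplies good lower bounds (including the correction noted above), but leaves the essential upper-bound work — the part that distinguishes a proof from a conjecture — undone.
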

}
\end{minipage}
\qquad
\begin{minipage}{3in}
\begin{figure}[H]
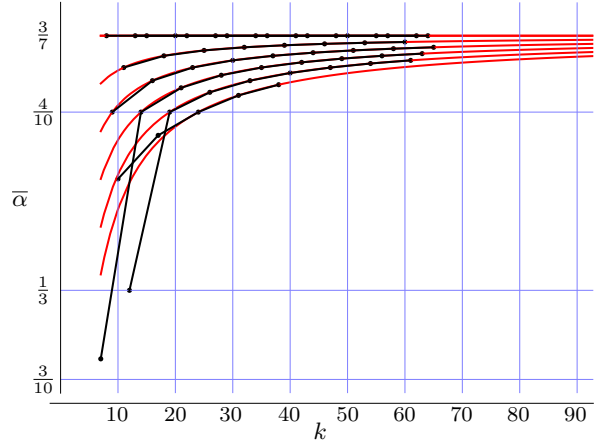
\centering
\begin{lpic}[]{"figures/plot-1-6-k"(3in,)}
\lbl[r]{0,73;\footnotesize$\dalpha$}
\lbl[]{5,126;\scriptsize$\frac{3}{7}$}
\lbl[]{5,101;\scriptsize$\frac{4}{10}$}
%\lbl[]{5,57.5;\scriptsize$\frac{35}{100}$}
\lbl[]{5,43;\scriptsize$\frac{1}{3}$}
\lbl[]{5,14;\scriptsize$\frac{3}{10}$}

\lbl[t]{95,0;\footnotesize $k$}
\lbl[t]{29,5;\scriptsize $10$}
\lbl[t]{48,5;\scriptsize $20$}
\lbl[t]{66,5;\scriptsize $30$}
\lbl[t]{85,5;\scriptsize $40$}
\lbl[t]{104,5;\scriptsize $50$}
\lbl[t]{122.5,5;\scriptsize $60$}
\lbl[t]{141,5;\scriptsize $70$}
\lbl[t]{159.5,5;\scriptsize $80$}
\lbl[t]{178,5;\scriptsize $90$}
\end{lpic}
\caption{\label{fig:1-6-k}Computed values of $\dalpha(\{1,6,k\})$.}
\end{figure}
\end{minipage}
\end{center}

See Table~\ref{tab:1-6-k} for the conjectured extremal independent sets for $S = \{1,6,k\}$ and $k  \geq 21$.
We have proofs that these are extremal for all residue classes except the class $k = 7i+3$.
We omit the proofs, as they use very similar techniques to the proof of Theorem~\ref{thm:1-4-k}.

\begin{table}[htp]\centering
\scalebox{0.75}{
\renewcommand\arraystretch{1.5}
\begin{tabular}[h]{r@{$=$}l|r@{$=$}l|l}
\multicolumn{2}{c|}{Residue class} &
\multicolumn{2}{c|}{Density} &
\multicolumn{1}{c}{Extremal Set}\\ \hline
	\quad$k$&$7i$ &\quad $\frac{3k}{7k+7}$&$\frac{3i}{7i+1}$ & $(2\ 2\ 3)^{i-2}\ (2\ 3)^3$ \\ \hline
	$k$&$7i+1$ & \multicolumn{2}{c|}{$\frac{3}{7}$} & $2\ 2\ 3$ \\ \hline
	$k$&$7i+2$ & $\frac{3k+1}{7k+7}$&$\frac{3i+1}{7i+3}$ & $(2\ 2\ 3)^{i-1}\ (2\ 3)^2$ \\ \hline
	$k$&$7i+3$ & $\frac{3k-2}{7k+7}$&$\frac{3i+1}{7i+4}$ & $(2\ 2\ 3)^{i-1}\ 2\ 3\ 4\ 3$\\ \hline
	$k$&$7i+4$ & $\frac{3k+2}{7k+7}$&$\frac{3i+2}{7i+5}$ & $(2\ 2\ 3)^{i}\ 2\ 3$\\ \hline
	$k$&$7i+5$ & $\frac{3k-1}{7k+7}$&$\frac{3i+2}{7i+6}$ & $(2\ 2\ 3)^{i-2}\ (2\ 3)^4$ \\ \hline
	$k$&$7i+6$ & \multicolumn{2}{c|}{$\frac{3}{7}$} & $2\ 2\ 3$ \\ \hline
\end{tabular}
}
\caption{\label{tab:1-6-k}Parameterizations and  lower bounds for $S = \{1,6,k\}$.}
\end{table}

\begin{center}
\begin{minipage}{3in}{
\begin{conjecture}
For $k > 8$ with\\ $k \notin \{9, 10, 14, 16, 18, 23, 25, 32  \}$,
\[
\dalpha(\{1,8,k\}) = \begin{cases}
\frac{4k}{9k+9} & \text{if } k \equiv 0 \pmod{9}\\
\frac{4}{9} & \text{if } k \equiv 1 \pmod{9}\\
\frac{4k+1}{9k+9} & \text{if } k \equiv 2 \pmod{9}\\
\frac{4k+24}{9k+72} & \text{if } k \equiv 3 \pmod{9}\\
\frac{4k+2}{9k+9} & \text{if } k \equiv 4 \pmod{9}\\
\frac{4k+1}{9k+16} & \text{if } k \equiv 5 \pmod{9}\\
\frac{4k+3}{9k+9} & \text{if } k \equiv 6 \pmod{9}\\
\frac{4k-1}{9k+9} & \text{if } k \equiv 7 \pmod{9}\\
\frac{4}{9} & \text{if } k \equiv 8 \pmod{9}.
\end{cases}\]
\end{conjecture}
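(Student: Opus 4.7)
The plan is to mirror the proof of Theorem~\ref{thm:1-4-k}, handling each of the nine residue classes of $k$ modulo $9$ separately. In every case the lower bound is supplied by a periodic block structure built from the base pattern $2\ 2\ 2\ 3$ (which achieves $\dalpha(\{1,8\})=4/9$) via small perturbations; one verifies independence by checking that no window of consecutive blocks in the candidate structure sums to $k$ (thereby respecting the generator $k\in S$) nor to $8$. The residues $k\equiv 1$ and $k\equiv 8\pmod 9$ admit the base pattern itself, since every partial sum of consecutive blocks in the infinite periodic word $(2\ 2\ 2\ 3)^\infty$ lies in $\{0,2,4,6,7,9\}\pmod 9$ and so misses both $1$ and $8$.

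For the upper bound I would apply the Local Discharging Lemma with base parameters $a=4$, $b=9$, so that $\mu(B)=4|B|-9$ assigns $-1$ to every $2$-block and at least $3$ to every other block (there are no $8$-blocks since $8\in S$). A Stage~1 rule analogous to rule (S1) in Theorem~\ref{thm:1-4-k} sends one unit of charge from a designated nearby non-$2$-block to each $2$-block; the target must be chosen with some care, since $\{1,8,k\}$ admits runs of up to three consecutive $2$-blocks (as $2{+}2=4$ and $2{+}2{+}2=6$ are not in $S$, but $2{+}2{+}2{+}2=8\in S$), so the natural receiver is not always the immediate neighbor. The frame parameters $t$ and $c$ are then chosen per class so that $\frac{4t}{9t+c}$ equals the claimed density: for example $t=i,\ c=1$ when $k=9i$; $t=4i+1,\ c=3$ when $k=9i+2$; $t=i+1,\ c=2$ when $k=9i+3$ (after the simplification $\tfrac{4k+24}{9k+72}=\tfrac{4(i+1)}{9(i+1)+2}$); $t=2i+1,\ c=1$ when $k=9i+4$; and analogous values for $k=9i+5,6,7$.

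The core of each case is to show that every $t$-frame $F$ satisfies $\nu^{*}(F)\geq c$. Following the pattern of Theorem~\ref{thm:1-4-k}, one assumes $\nu^{*}(F)<c$ and uses the rigidity this forces on the block structure of $F$---essentially, most blocks in $F$ are $2$- or $3$-blocks arranged in one of a small number of patterns---to deduce that some window of consecutive blocks, either inside $F$ or inside a shifted frame $F_{j\pm r}$, has total length exactly $k$, contradicting $k\in S$. For the classes $k\equiv 3$ and $k\equiv 5\pmod 9$ the density denominator exceeds the ``expected'' value $9k+9$, so first-stage discharging alone will not suffice and a Stage~2 rule is required: charge is pulled from a nearby frame $F_{j'}$ containing an anomalously large block (forced to appear by the same rigidity argument) into each critical low-charge frame, exactly as the (S2) rule functions in Case~3 of Theorem~\ref{thm:1-4-k}.

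The hardest part will be $k\equiv 5\pmod 9$. The denominator $9k+16$ indicates that the extremal periodic structure is not a single-anomaly perturbation of $2\ 2\ 2\ 3$, and the exceptional list $k\in\{14,23,32\}$---which is precisely $k=9i+5$ for $i\leq 3$---suggests the structure only stabilizes for $i\geq 4$. Identifying the correct Stage~2 rule, likely involving charge transfers that span several frames, and then verifying that it lifts every low-charge frame up to $c$ units requires the most intricate enumeration, and is where the conjecture's small-$k$ exceptions are most naturally explained. By analogy with the observation in Theorem~\ref{thm:1-4-k} that $\sigma(F_j)=5i+2$ forces a block of size at least $5$ three frames later, the key task is to locate and prove a comparable ``forced large block'' statement for $\sigma(F_j)$ values just below $k=9i+5$; given such a lemma, the rest of the case analysis will be routine but lengthy.
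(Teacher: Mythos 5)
This statement is a \emph{conjecture} in the paper, not a theorem: the authors present it with supporting computational data (Figure~\ref{fig:1-8-k}) but give no proof, so there is no argument in the paper to compare yours against. What you have written is likewise a plan rather than a proof, and it should be read in that light.

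Your outline is consistent with the paper's methodology: the parameter choices $(a,b,t,c)$ you list for $k\equiv 0,2,3,4\pmod 9$ all correctly reproduce the conjectured densities (e.g.\ for $k=9i+3$, $\tfrac{4k+24}{9k+72}=\tfrac{4(i+1)}{9(i+1)+2}$, matching $t=i+1$, $c=2$), and the lower bound via perturbations of the base word $2\,2\,2\,3$ is the right idea. One small slip: the set of consecutive-block partial sums of $(2\,2\,2\,3)^\infty$ modulo $9$ is $\{0,2,3,4,5,6,7\}$, not $\{0,2,4,6,7,9\}$; the values $3$ and $5$ occur (start at the $3$-block, or end just before it). The conclusion you need---that $1$ and $8$ are avoided---still holds, but the stated intermediate claim is wrong.

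More substantively, the two residue classes you single out as hard, $k\equiv 3$ and especially $k\equiv 5\pmod 9$, are precisely where the argument is not merely lengthy but genuinely undetermined: for $k=9i+5$ the density $\tfrac{4k+1}{9k+16}=\tfrac{36i+21}{81i+61}$ does not reduce to any $\tfrac{4t}{9t+c}$ with small integer parameters, so the choice of $t$, $c$, the extremal block word, and the requisite Stage~2 rule are all open. Your observation that the exceptional list $\{14,23,32\}$ is exactly $k=9i+5$ for $i\le 3$ is a nice structural hint, but ``given such a lemma, the rest of the case analysis will be routine'' is a hope, not an argument. As a proof attempt this therefore has a real gap for $k\equiv 5\pmod 9$ (and a smaller, probably closable one for $k\equiv 3$); as a research plan it is sensible and aligned with how the authors handled $\{1,4,k\}$ and tabulated extremal sets for $\{1,6,k\}$.
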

}
\end{minipage}
\qquad
\begin{minipage}{3in}
\begin{figure}[H]\centering
\begin{lpic}[]{"figures/plot-1-8-k"(3in,)}
\lbl[r]{0,73;\footnotesize$\dalpha$}
\lbl[]{5,126;\scriptsize$\frac{4}{9}$}
\lbl[]{5,93.5;\scriptsize$\frac{4}{10}$}
%\lbl[]{5,56.5;\scriptsize$\frac{35}{100}$}
\lbl[]{5,44;\scriptsize$\frac{1}{3}$}
\lbl[]{5,19;\scriptsize$\frac{3}{10}$}

\lbl[t]{95,0;\footnotesize $k$}
\lbl[t]{29,5;\scriptsize $10$}
\lbl[t]{48,5;\scriptsize $20$}
\lbl[t]{66,5;\scriptsize $30$}
\lbl[t]{85,5;\scriptsize $40$}
\lbl[t]{104,5;\scriptsize $50$}
\lbl[t]{122.5,5;\scriptsize $60$}
\lbl[t]{141,5;\scriptsize $70$}
\lbl[t]{159.5,5;\scriptsize $80$}
\lbl[t]{178,5;\scriptsize $90$}
\end{lpic}
\caption{\label{fig:1-8-k}Computed values of $\dalpha(\{1,8,k\})$.}
\end{figure}
\end{minipage}
\end{center}

%%%%%%%%%%%%%%%%%%%%%%%%%%%%%%%%%%%%%%%%%%%%%%%%%%%%%%%%%%%%%%%%%%%%
%%%%%%%%%%%%%%%%%%%%%%%%%%%%%%%%%%%%%%%%%%%%%%%%%%%%%%%%%%%%%%%%%%%%
\subsection{$S = \{1, k, k + 2j+1\}$}

\begin{center}
\begin{minipage}{3in}{
\begin{theorem}[$S = \{1, k, k+1\}$]
	Let $k \geq 2$. Then,
\[	\dalpha(\{1,k,k+1\}) = \begin{cases}
			\frac{2k}{6k+3} & k \equiv 0 \pmod 3\\
			\frac{1}{3} & k \equiv 1 \pmod 3\\
			\frac{k+1}{3k+6} & k \equiv 2 \pmod 3.
		\end{cases}\]
%Therefore, $\lim_{k \to \infty} \dalpha(\{1,k,k+1\}) = \frac{1}{3}$.
\end{theorem}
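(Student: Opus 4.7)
The plan is to handle the three residue classes of $k \bmod 3$ separately, since the extremal periodic constructions and the upper-bound techniques differ substantially. For the lower bound I exhibit an explicit periodic independent set in each case: $X = 3\Z$ when $k \equiv 1 \pmod 3$, giving density $\tfrac{1}{3}$; the block pattern $2\ 3^{i-1}\ 5\ 3^{i-1}$ when $k = 3i$, giving density $\tfrac{2i}{6i+1} = \tfrac{2k}{6k+3}$; and the pattern $3^i\ 4$ when $k = 3i+2$, giving density $\tfrac{i+1}{3i+4} = \tfrac{k+1}{3k+6}$. Independence is verified directly in each case by checking that no consecutive block-sum equals $k$ or $k+1$.

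For the upper bound when $k \equiv 1 \pmod 3$, a chromatic argument bypasses discharging entirely. Since $S \subseteq \{1,2\} \pmod 3$, the residue map $v \mapsto v \bmod 3$ is a proper $3$-coloring of $G(S)$, so $\chi(G(S)) \leq 3$; conversely $\{0, 1, k+1\}$ is a triangle in $G(S)$ with pairwise differences $1, k, k+1 \in S$, so $\omega(G(S)) \geq 3$. The chain $\omega \leq \chi_f \leq \chi$ then forces $\chi_f(G(S)) = 3$, and Theorem~\ref{thm:fractionalchromatic} yields $\dalpha(\{1,k,k+1\}) = \tfrac{1}{3}$. For the two remaining cases I apply the Local Discharging Lemma with $a = 1$, $b = 3$, $c = 1$, using $t = 2i$ when $k = 3i$ and $t = i+1$ when $k = 3i+2$; in both cases $\tfrac{at}{bt+c}$ equals the claimed density. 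With initial block charge $\mu(B) = |B| - 3$, a $2$-block carries deficit $-1$ while a block of size $\geq 4$ carries surplus. The key structural fact driving the analysis is that every consecutive block-sum avoids $\{k, k+1\}$, creating a jump in admissible subframe sums: an $i$-subframe has $\sigma \in [2i, k-1] \cup [k+2, \infty)$ when $k = 3i$, and an $(i+1)$-subframe has $\sigma \in [2(i+1), k-1] \cup [k+2, \infty)$ when $k = 3i+2$.

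For $k = 3i$ I expect that no charge-moving rule is needed at all: partitioning a $2i$-frame $F$ into two $i$-subframes $F', F''$ and combining the jump with the $(i+1)$- and longer subframe constraints should directly yield $\sigma(F) \geq 6i+1$, as verified for $k = 3, 6, 9, 12$ by ruling out all ``both small'' and ``mixed'' configurations of $(\sigma(F'), \sigma(F''))$. For $k = 3i+2$ genuine Stage~1 and Stage~2 rules are required, since the admissible $(i+1)$-frame $(2^{i+1})$ has $\sigma = 2i+2 < 3i+4$; the rule must pull its deficit from the block of size $\geq i+2$ that is forced to appear adjacent to any long run of $2$-blocks by the $(i+2)$-subframe constraint. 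The main obstacle will be this block-structure case analysis: exhaustively ruling out the ``both small'' and ``mixed'' $2i$-frame configurations for general $i$ in the $k \equiv 0 \pmod 3$ case, and specifying local discharging rules in the $k \equiv 2 \pmod 3$ case so that each deficit frame is correctly paired with a neighboring source block and no source is overdrawn when multiple deficits appear in proximity.
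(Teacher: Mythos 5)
Your plan has the right scaffolding, and parts of it are genuinely nice — in particular, your treatment of the $k \equiv 1 \pmod 3$ case is a clean improvement over the paper: the residue-mod-$3$ coloring is proper because $1, k, k+1 \in \{1,2\} \pmod 3$, and $\{0,1,k+1\}$ is a triangle, so $\omega = \chi = 3$ forces $\chi_f = 3$ and $\dalpha = 1/3$ with no discharging at all, whereas the paper runs a Stage-1 rule with $t = 1$. You also quietly correct an error in the paper's Table~\ref{tab:kplus1vals}: the extremal block pattern for $k = 3i+2$ is $3^i\ 4$ (density $\tfrac{i+1}{3i+4} = \tfrac{k+1}{3k+6}$), not $3^{i-1}\ 4$ as printed, which would only give $\tfrac{i}{3i+1}$.

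However, for the remaining two residue classes the proposal does not constitute a proof, and you acknowledge this yourself with ``I expect,'' ``should directly yield,'' and ``the main obstacle will be this block-structure case analysis.'' For $k = 3i$ you propose $t = 2i$, $c = 1$, and no discharging rule, which requires the pointwise claim $\sigma(F) \geq 6i+1$ for every $2i$-frame $F$. This is plausible (and I could not find a counterexample), but it is not established; your sub-case split into ``both small,'' ``both large,'' and ``mixed'' for $(\sigma(F'), \sigma(F''))$ leaves the ``both small'' and ``mixed'' cases open for general $i$, and these are exactly where the work lies. The paper avoids this entirely by taking $t = i$ with the Stage-1 rule pulling charge $3$ from $\varphi_2(B)$ for each $2$-block $B$: after discharging, the only blocks of charge $0$ are $3$-blocks, so a zero-charge $i$-frame would have $\sigma = 3i = k$, an immediate contradiction. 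That is substantially shorter than what your $t = 2i$ choice demands. For $k = 3i+2$ you correctly recognize that both Stage-1 and Stage-2 rules are needed, but you do not specify them or verify that sources are not overdrawn; the paper's argument here (rule (S1a) sending charge $2$ along $\varphi_2$, plus a Stage-2 rule keyed to pairs of $2$-blocks separated only by $3$-blocks, with an injectivity claim about frames containing such pairs versus frames containing their $\varphi_2$-images) is precisely the nontrivial content you have deferred. Until those two case analyses are carried out, the upper bounds for $k \equiv 0$ and $k \equiv 2 \pmod 3$ are not proved.
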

}
\end{minipage}
\qquad
\begin{minipage}{3in}
\begin{figure}[H]\centering
\begin{lpic}[]{"figures/plot-1-k-kp1"(3in,)}
\lbl[r]{0,73;\footnotesize$\dalpha$}
\lbl[]{5,126;\scriptsize$\frac{1}{3}$}
\lbl[]{5,85.25;\scriptsize$\frac{3}{10}$}
\lbl[]{5,24.2;\scriptsize$\frac{1}{4}$}

\lbl[t]{95,0;\footnotesize $k$}
\lbl[t]{29,5;\scriptsize $10$}
\lbl[t]{48,5;\scriptsize $20$}
\lbl[t]{66,5;\scriptsize $30$}
\lbl[t]{85,5;\scriptsize $40$}
\lbl[t]{104,5;\scriptsize $50$}
\lbl[t]{122.5,5;\scriptsize $60$}
\lbl[t]{141,5;\scriptsize $70$}
\lbl[t]{159.5,5;\scriptsize $80$}
\lbl[t]{178,5;\scriptsize $90$}
\end{lpic}
\caption{\label{fig:1-k-kp1}Computed values of $\dalpha(\{1,k,k+1\})$.}
\end{figure}
\end{minipage}
\end{center}

\begin{proof}
To show our lower bounds, observe the following periodic sets are independent in the respective distance graphs:

\begin{table}[H]
\centering
\begin{tabular}[h]{c|c|c}
	Case & Parameterization & Periodic Set \\
	\hline&&\\[-2ex]
	$k \equiv 0 \pmod 3$ & $k = 3i$   & $2\ 3^{i-1}\ 5\ 3^{i-1}$	  \\[1ex]
	$k \equiv 1 \pmod 3$ & $k = 3i+1$ & $3$   \\[1ex]
	$k \equiv 2 \pmod 3$ & $k = 3i+2$ & $3^{i-1}\ 4$
\end{tabular}
\caption{\label{tab:kplus1vals}Extremal periodic sets for $S = \{1, k, k+1\}$.}
\end{table}

For the upper bounds, let $X$ be an independent set in $G(\{1,k,k+1\})$.
Throughout the proof let $a=2$ and $b=6$, so each block $B_i$ is assigned charge $\mu(B_i) = 2|B_i| - 6$.
We begin with some observations that hold for all $k$.

\begin{observation}\label{obs:twoblocks}
	Let $B_i$ be a 2-block and $\varphi_2(B_i)$ be the block containing $x_i + k$.
	Then $|\varphi_2(B_i)| \geq 5$.
	Also, if a block $B_j$ has $\varphi_2^{-1}(B_j) \neq \varnothing$, then $|B_j| \geq 2|\varphi_2^{-1}(B_j)| + 3$.
\end{observation}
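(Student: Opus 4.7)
The plan is to exploit that $\{1, k, k+1\} \subseteq S$ to locate four consecutive forbidden integers near $x_i + k$, and then to track how these forbidden intervals accumulate when many 2-blocks share the same image under $\varphi_2$.

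For the first claim, since $B_i$ is a 2-block we have $x_{i+1} = x_i + 2$. Using $k, k+1 \in S$ on $x_i \in X$ rules out $x_i + k$ and $x_i + k + 1$ from $X$; applying the same distances to $x_{i+1} \in X$ rules out $x_i + k + 2 = x_{i+1} + k$ and $x_i + k + 3 = x_{i+1} + (k+1)$. Thus $\{x_i + k,\, x_i + k + 1,\, x_i + k + 2,\, x_i + k + 3\}$ is a run of four consecutive integers missing from $X$, and since blocks are maximal runs of $\Z$ between consecutive elements of $X$, these four integers all lie in the single block $B_j = \varphi_2(B_i)$. Writing $B_j = \{x_j, x_j + 1, \ldots, x_{j+1} - 1\}$, the fact $x_j \in X$ gives $x_j \leq x_i + k - 1$, while $x_i + k + 3 \in B_j$ gives $x_{j+1} \geq x_i + k + 4$. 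Hence $|B_j| = x_{j+1} - x_j \geq 5$.

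For the second claim, write $\varphi_2^{-1}(B_j) = \{B_{i_1}, \ldots, B_{i_r}\}$ with $i_1 < \cdots < i_r$ and each $B_{i_s}$ a 2-block, and set $y_s = x_{i_s} + k$. Each $B_{i_s}$ having length $2$ gives $x_{i_{s+1}} - x_{i_s} \geq 2$, so $y_{s+1} - y_s \geq 2$, and in particular $y_r - y_1 \geq 2(r-1)$. By the first part, each interval $\{y_s, y_s+1, y_s+2, y_s+3\}$ lies inside $B_j$, so $y_1 \in B_j$ and $y_r + 3 \in B_j$. Exactly as above, this gives $x_j \leq y_1 - 1$ and $x_{j+1} \geq y_r + 4$, so
\[
  |B_j| \;=\; x_{j+1} - x_j \;\geq\; (y_r + 4) - (y_1 - 1) \;=\; y_r - y_1 + 5 \;\geq\; 2r + 3,
\]
as required.

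I do not foresee a real obstacle; the argument is bookkeeping powered by the three consecutive generator distances $1, k, k+1$, which is precisely what forces a length-four forbidden interval at distance $k$ from any 2-block. The only subtle point is noting that consecutive 2-blocks mapping to $B_j$ can sit as densely as $y_{s+1} = y_s + 2$, and this is exactly the density recorded by the tight linear coefficient $2r$ in the bound.
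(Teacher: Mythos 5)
Your proof is correct. The paper states this as an unproven \emph{Observation}, illustrated only by a figure; the argument you give — using $1\in S$ to get $x_{i+1}=x_i+2$, then applying $k,k+1\in S$ to each of $x_i,x_{i+1}$ to force the four consecutive non-elements $x_i+k,\dots,x_i+k+3$ into a single block, and then, for the second part, stacking these length-4 forbidden runs with the spacing $y_{s+1}-y_s\ge 2$ — is exactly the implicit reasoning behind the paper's claim.
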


\begin{casefig}
			\scalebox{\casefigratio}{\begin{lpic}[]{"figs-unique/Observation2Blocks"(,22.5mm)}
			   \lbl[t]{88,20;\small$\varphi_2$}
			   \lbl[t]{48,20;\small$\psi_2$}
				\lbl[]{68.25,10.25;\footnotesize$x_{j}$}
				\lbl[]{77.75,10.25;\tiny$x_{j+1}$}
				\lbl[b]{70,15;$B_{j}$}
				\lbl[b]{15,15;$\psi_2(B_j)$}
				\lbl[b]{123,15;$\varphi_2(B_j)$}
			\end{lpic}}
	\caption{Observation \ref{obs:twoblocks} and a 2-block $B_j$.}
\end{casefig}

\begin{casefig}
			\scalebox{\casefigratio}{\begin{lpic}[]{"figs-unique/Observation5Blocks"(,22.5mm)}
			   \lbl[t]{90,19.5;\small$\psi_2$}
			   \lbl[t]{42,19.5;\small$\varphi_2$}
				\lbl[]{59,9.75;\footnotesize$x_{k}$}
				\lbl[t]{70,12;$B_{k}$}
				\lbl[t]{15,3;\small$\varphi_2^{-1}(B_k)$}
				\lbl[b]{124,17;\small$\psi_2^{-1}(B_k)$}
				\lbl[t]{42.5,4;\footnotesize\parbox{50mm}{\centering$\leq 3t - 2(|\varphi_2^{-1}(B_k)|+1)$\\ elements}}
				\lbl[t]{104,4; \footnotesize\parbox{50mm}{\centering$\leq 3t - 2(|\psi_2^{-1}(B_k)|+1)$\\ elements}}
			\end{lpic}}
	\caption{\label{fig:obsbigblock}Observation \ref{obs:twoblocks} and a block $B_k$.}
\end{casefig}

Note that $(3+i)$-blocks (where $-1 \leq i$) have charge $2i$.
Also, if a block $B_i$ has $\varphi_2^{-1}(B_i) \neq \varnothing$, then $\mu(B_i) \geq 4|\varphi_2^{-1}(B_i)|$.

The remainder of the proof is broken into cases based on the residue of $k$ modulo $3$.

\begin{mycases}

\case{$k \equiv 1 \pmod 3$.}
%The lower bound is given by the independent set of all 3-blocks (block structure $3^*$).
Let $t = 1$, and $c = 0$.
Thus $\frac{at}{bt+c} = \frac{1}{3}$.
We apply the following discharging rule in Stage 1.

\begin{dischargingrule}
(S1a) Every 2-block $B_i$ pulls charge 2 from $\varphi_2(B_i)$.
\end{dischargingrule}

After the rule (S1a), all 2-blocks have charge zero.
If a block $B_i$ had charge reduced, then $\varphi_2^{-1}(B_i) \neq \varnothing$ and $\mu^*(B_i) \geq 2|\varphi_2^{-1}(B_i)| > 0$.
With no Stage 2 discharging, we have every frame has nonnegative charge, so $\nu'(F_j) \geq c$ always and hence $\delta(X) \leq \frac{1}{3}$.

\case{$k \equiv 0 \pmod 3$.}
Set $t = \frac{k}{3}$ and $c = 1$, thus $\frac{at}{bt+c} = \frac{2k}{6k+3}$.
%The lower bound comes from the periodic set with block structure $2\ 3^{t-1}\ 5\ 3^{t-1}$.

For Stage 1, we use the rule (S1b) below.

\begin{casecomment}
\begin{dischargingrule}
(S1b) Every 2-block $B_i$ pulls charge 3 from $\varphi_2(B_i)$.
\end{dischargingrule}
\end{casecomment}

After Stage 1, every 2-block has charge at least 1, 3-blocks have charge zero, and 4-blocks have charge equal to 1.
If a block $B_i$ had charge pulled in Stage 1, then $\varphi_2^{-1}(B_i) \neq \varnothing$ and $\mu^*(B_i) \geq |\varphi_2^{-1}(B_i)| \geq 1$.

We do not perform any discharging in Stage 2.
Thus, if a frame $F_j$ has zero charge $\nu^*(F_j) = 0$, then all blocks in $F_j$ are 3-blocks.
This implies $\sigma(F_j) = 3t = k$, but this is a contradiction.
Therefore, $\nu^*(F_j) \geq 1$ for all frames and $\delta(X) \leq \frac{2k}{6k+3}$.

\case{$k \equiv 2 \pmod 3$\footnote{This proof is adapted almost directly from~\cite{UniqueSaturation}. We include it for completeness.}.}
Let $t = \frac{k+1}{3}$ and $c = 2$, so $\frac{at}{bt+c} = \frac{k+1}{3k+6}$.
%The lower bound comes from the periodic set with block structure $3^{t-1} 4$.

For Stage 1, we use the rule (S1a).
We must also use Stage 2 discharging, since frames can have charge zero if they contain only 2- and 3-blocks.
However, since $\sigma(F_j) \neq 3t = k+1$ and $\sigma(F_j) \neq 3t-1 = k$, every frame with $\nu^*(F_j) = 0$ contains at least two 2-blocks that are separated only by 3-blocks.

\begin{claim}\label{claim:consec2blocks}
For two 2-blocks $B_i$ and $B_{i'}$ separated by only 3-blocks, there are strictly fewer frames containing both $B_i$ and $B_{i'}$ then the number of frames containing both $\varphi_2(B_i)$ and $\varphi_2(B_{i'})$.
\end{claim}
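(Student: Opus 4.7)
The plan is to bound the index gap $j^* - i^*$ between $B_{i^*} := \varphi_2(B_i)$ and $B_{j^*} := \varphi_2(B_{i'})$ by explicit forbidden-position bookkeeping, and then translate this into the required comparison of frame counts. First I set up notation: let $m \ge 0$ be the number of $3$-blocks between $B_i$ and $B_{i'}$, so $i' = i + m + 1$, $x_{i+\ell} = x_i + 2 + 3(\ell - 1)$ for $1 \le \ell \le m + 1$, and in particular $x_{i'} = x_i + 3m + 2$. The number of $t$-frames containing both $B_i$ and $B_{i'}$ is then exactly $\max(0, t - m - 1)$.

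Next I would catalogue the positions in the interval $I = [x_i + k,\ x_i + k + 3m + 3]$ that cannot lie in $X$. Since $k, k+1 \in S$, each of the $m + 2$ elements $x_{i + \ell}$ (for $\ell = 0, 1, \dots, m + 1$) forbids both $x_{i + \ell} + k$ and $x_{i + \ell} + k + 1$. A direct tabulation shows that these $2(m + 2)$ forbidden positions are pairwise distinct and all lie in $I$, so exactly $m$ positions in $I$ are free to possibly lie in $X$, namely
\[
    x_i + k + 4,\ x_i + k + 7,\ \dots,\ x_i + k + 3m + 1.
\]
Because $x_i + k, x_i + k + 1, x_i + k + 2, x_i + k + 3$ are all forbidden, the containment of $x_i + k$ in $B_{i^*}$ forces $x_{i^*} \le x_i + k - 1$ and $x_{i^* + 1} \ge x_i + k + 4$. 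Similarly $x_{j^*} \le x_{i'} + k - 1 = x_i + k + 3m + 1$. Hence the $X$-elements $x_{i^* + 1}, \dots, x_{j^*}$ all lie among the $m$ possibly-$X$ positions above, which forces $j^* - i^* \le m$.

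Combining these ingredients, the number of $t$-frames containing both $\varphi_2(B_i)$ and $\varphi_2(B_{i'})$ is at least $\max(0, t - m)$, which strictly exceeds $\max(0, t - m - 1)$ whenever $m < t$. In the Stage~2 discharging context where this Claim is invoked, $B_i$ and $B_{i'}$ both lie in a common frame $F_j$ with $\nu^*(F_j) = 0$, so $m + 1 \le t - 1$ and the strict inequality is secured. The main technical care is in the second paragraph: one must confirm both that the $2(m+2)$ forbidden positions fit inside $I$ without collision and that the residual possibly-$X$ positions form the stated arithmetic progression of length $m$; once this bookkeeping is done, the bound $j^* - i^* \le m$ and hence the claim follow at once.
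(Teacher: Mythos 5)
Your proof is correct and follows essentially the same strategy as the paper's: both arguments exploit the fact that the arithmetic-progression structure of $x_i, x_{i+1}, \dots, x_{i'}$ (differences $2,3,3,\dots,3$) forbids the positions $x_{i+\ell}+k$ and $x_{i+\ell}+k+1$, forcing $\varphi_2(B_i)$ to absorb $x_{i+1}+k$ and the intermediate blocks to be large, so that $\varphi_2(B_i)$ and $\varphi_2(B_{i'})$ are at most $m$ blocks apart while $B_i$ and $B_{i'}$ are exactly $m+1$ apart. Your version makes the bookkeeping more explicit — tabulating the $2m+4$ forbidden offsets and the $m$ residual free offsets $4,7,\dots,3m+1$ in $I$, and then counting $x_{i^*+1},\dots,x_{j^*}$ among them — whereas the paper instead remarks that the blocks between $\varphi_2(B_i)$ and $\varphi_2(B_{i'})$ all have size at least three and that $x_{i+1}+k$ already lies in $\varphi_2(B_i)$; but these are two phrasings of the same observation, and you also correctly note the edge cases ($\varphi_2(B_i)=\varphi_2(B_{i'})$ and the need for $m\le t-2$ to make the inequality strict) that the paper handles implicitly by the context in which the claim is applied.
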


\begin{proof}[Proof.]
We assume $i < i'$.
If $\varphi_2(B_i) = \varphi_2(B_{i'})$, then there are $t$ frames containing $\varphi_2(B_i)$ and at most $t-1$ containing both $B_i$ and $B_{i'}$.
Hence we assume $\varphi_2(B_i) \neq \varphi_2(B_{i'})$ which implies that there is at least one 3-block between $B_i$ and $B_{i'}$ (so blocks $B_{i+1},\dots,B_{i'-1}$ are 3-blocks).
Observe $x_{i+1} + k$ is contained in $\varphi_2(B_i)$.
Further, the generators $k$ and $k+1$ ensure that the elements $x_{i+j} + k$ and $x_{i+j} + k+1$ are not contained in $X$.
Thus, between $\varphi_2(B_i)$ and $\varphi_2(B_{i'})$, every block has size at least three.
This implies that there are strictly fewer than $i'-i-1$ blocks between $\varphi_2(B_i)$ and $\varphi_2(B_{i'})$, so there are strictly more frames containing both $\varphi_2(B_i)$ and $\varphi_2(B_{i'})$ than the number of frames containing both $B_i$ and $B_{i'}$.
\end{proof}

Let $\mathcal{F}^{(1)}_{i,i'}$ be the set of frames containing both $B_i$ and $B_{i'}$ and $\mathcal{F}^{(2)}_{i,i'}$ be the set of frames containing both $\varphi_2(B_i)$ and $\varphi_2(B_{i'})$.
There exists an injection $f_{i,i}$ from $\mathcal{F}^{(1)}_{i,i'}$ to $\mathcal{F}^{(2)}_{i,i'}$.
We use the rule (S2) for Stage 2 discharging.

\begin{dischargingrule}
(S2) Every frame $F_j$ containing two 2-blocks $B_i$, $B_{i'}$ that are separated by only 3-blocks pulls charge 2 from $f_{i,i'}(F_j)$.
\end{dischargingrule}

We claim $\nu'(F_j)\geq 2$ for all frames $F_j$.
If a frame did not have charge pulled by rule (S2), then either it contained a 4-block (and has enough charge) or it received charge by rule (S2).

If a frame $F$ had charge pulled by rule (S2), then $F$ is in the image of some map $f_{i,i'}$ where $B_i$ and $B_{i'}$ are 2-blocks separated by only 3-blocks.
Let $i_1<\dots<i_\ell$ and $i_1'<\dots<i_\ell'$ be indices with $i_j < i_j'$ such that $F$ had charge 2 pulled by rule (S2) using the map $f_{i_j,i_j'}$.
Thus, $F$ contains each $\varphi_2(B_{i_j})$ and $\varphi_2(B_{i_j'})$.
Note that $i_j' \leq i_{j+1}$, so there are at least $\ell+1$ distinct 2-blocks in the list $B_{i_1}, B_{i_1'}, \dots, B_{i_\ell}, B_{i_\ell'}$.
So, $|\union_{B \in F} \varphi_2^{-1}(B)|\geq \ell+1$.
Finally, $\nu^*(F) \geq \sum_{B\in F} \mu^*(B) \geq \sum_{B \in F} 2|\varphi_2^{-1}(B)| \geq 2|\union_{B \in F} \varphi_2^{-1}(B)| \geq 2\ell + 2$.
Thus, $\nu'(F) = \nu^*(F) - 2\ell \geq 2$, which completes our proof that $\delta(X) \leq \frac{k+1}{3k+6}$.
\end{mycases}
\end{proof}

%\clearpage
\begin{center}
\begin{minipage}{3in}{
\begin{theorem}
	Let $k \geq 3$. Then,
\[	\dalpha(\{1,k,k+3\}) = \begin{cases}
			\frac{2k+5}{5k+20} & k \equiv 0 \pmod 5\\
			\frac{2}{5} & k \equiv 1 \pmod 5\\
			\frac{2k+6}{5k+20} & k \equiv 2 \pmod 5\\
			\frac{4k+3}{10k+15} & k \equiv 3 \pmod 5\\
			\frac{2k+7}{5k+20} & k \equiv 4 \pmod 5.
		\end{cases}\]
\end{theorem}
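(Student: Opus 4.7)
Following the pattern established in the previous theorems of this section, the proof splits into a lower bound by explicit periodic construction and an upper bound via the Local Discharging Lemma (Lemma~\ref{lem:localdischarging}). The cases $k\equiv 1,4\pmod 5$ are essentially free: the alternating set with block notation $2\ 3$ is independent in $G(\{1,k,k+3\})$ exactly when $k$ and $k+3$ avoid the residues $\{0, 2, 3\}\pmod 5$ of nonzero differences arising from this pattern, which holds precisely for $k\equiv 1,4\pmod 5$, and the matching upper bound $\dalpha(S)\le \frac{2}{5}$ already follows from the asymptotic theorem of Section~\ref{sec:threegens} applied with $i=1$. For the remaining residues I would exhibit periodic sets as follows: $(2\ 3)^{j-1}\ 3^3$ for $k=5j$ (period $5j+4$, density $\frac{2j+1}{5j+4}$); a set with $2j+2$ elements in period $5j+6$ built from 2- and 3-blocks for $k=5j+2$ (e.g.\ along the lines of $(2\ 3\ 3)^j$ modified by inserting an extra $3$-block when $j\ge 2$); and a set with $4j+3$ elements in period $10j+9$ comprising $2j$ 2-blocks and $2j+3$ 3-blocks for $k=5j+3$ (e.g.\ two adjacent ``clusters'' of the $k\equiv 0$ construction glued together). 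Independence of each candidate reduces to checking that no integer combination of the block sizes equals $k$ or $k+3$, which is a finite modular computation for each template.

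\textbf{Parameters for the upper bound.} I would apply Lemma~\ref{lem:localdischarging} with the following choices, each designed so that $\frac{at}{bt+c}$ matches the target density:
\begin{align*}
 k=5j:\quad &a=2,\ b=5,\ t=2j+1,\ c=3,\ \text{giving}\ \tfrac{at}{bt+c}=\tfrac{2j+1}{5j+4};\\
 k=5j+2:\quad &a=2,\ b=5,\ t=j+1,\ c=1,\ \text{giving}\ \tfrac{at}{bt+c}=\tfrac{2j+2}{5j+6};\\
 k=5j+3:\quad &a=2,\ b=5,\ t=4j+3,\ c=3,\ \text{giving}\ \tfrac{at}{bt+c}=\tfrac{4j+3}{10j+9}.
\end{align*}
With initial charge $\mu(B)=2|B|-5$, a 2-block has charge $-1$, a 3-block has charge $1$, and every block of size at least $4$ has charge at least $3$. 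By Theorem~\ref{thm:periodic} we may assume $X$ is periodic, and we pass to Stage 1 discharging designed to fix all negative block charges.

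\textbf{Discharging strategy and main obstacle.} For Stage 1, I would imitate the proof of Theorem for $S=\{1,k,k+1\}$: define $\varphi_2(B_i)$ to be the block containing $x_i+k$ whenever $B_i$ is a 2-block, and let each 2-block pull $2$ units of charge from $\varphi_2(B_i)$. The key observation is that if $B_i$ is a 2-block then the four integers $x_i+k,\ x_{i+1}+k=x_i+k+2,\ x_i+k+3,\ x_{i+1}+k+3=x_i+k+5$ are all forbidden from $X$, which forces $\varphi_2(B_i)$ to be large enough to absorb the charge, and more generally $|B_j|\ge 2|\varphi_2^{-1}(B_j)|+3$. After Stage 1 all block charges are nonnegative, and a frame $F$ can have $\nu^*(F)<c$ only if it consists almost entirely of 2- and 3-blocks. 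Stage 2 then addresses these low-charge frames: using the constraints $\sigma(F)\ne k$ and $\sigma(F)\ne k+3$ combined with the exact values of $a,b,t$, one shows that a ``deficient'' frame of length $t$ forces the presence of a block of size at least $5$ within a bounded number of frames to the right (or left), and then one pulls the required $c$ units of charge from that block via an $S$-local rule, as in the proof of Theorem~\ref{thm:1-4-k}. The main obstacle is the case $k\equiv 3\pmod 5$, where the frame length $t=4j+3$ grows with $j$ and the two-parameter family of ``bad'' frame block-structures (those with $\sigma(F)\in\{k,k+3\}$) is considerably richer; here I expect to need a more careful injection from deficient frames into blocks of excess charge, analogous to the argument using Claim~\ref{claim:consec2blocks} in the $k\equiv 2\pmod 3$ case of Theorem for $\{1,k,k+1\}$, to ensure that no block is overdrawn.
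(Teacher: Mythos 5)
Your proposal contains several genuine gaps, the most serious of which is the treatment of $k\equiv 4\pmod 5$. You assert that the set with block structure $2\ 3$ is independent precisely for $k\equiv 1,4\pmod 5$, but this is false for $k\equiv 4$: then $k+3\equiv 2\pmod 5$ and, since the differences of the set $\{5m,5m+2:m\in\Z\}$ hit every residue $0,2,3\pmod 5$, the distance $k+3$ occurs. Consistently, the theorem itself asserts $\dalpha(\{1,k,k+3\})=\frac{2k+7}{5k+20}<\frac{2}{5}$ for $k\equiv 4\pmod 5$, so your claim that this case is ``essentially free'' contradicts the statement you are trying to prove; the paper handles it with the periodic set $(2\ 3)^{i+1}\ 3$ and its own discharging parameters ($t=4i+6$, $c=2$), neither of which appear in your outline.

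The Stage~1 discharging rule you propose also fails for $\{1,k,k+3\}$. You claim the four integers $x_i+k,\ x_i+k+2,\ x_i+k+3,\ x_i+k+5$ being forbidden forces $\varphi_2(B_i)$ to be ``large enough to absorb'' two units, and more generally that $|B_j|\ge 2|\varphi_2^{-1}(B_j)|+3$. But these four forbidden positions are \emph{not} consecutive: $x_i+k+1$ and $x_i+k+4$ may both lie in $X$, in which case $\varphi_2(B_i)$ is a $3$-block with initial charge $\mu=1$. Pulling $2$ units from it drives its charge negative, breaking the hypothesis of the Local Discharging Lemma. This is exactly the structural difference from $\{1,k,k+1\}$, where the four positions are consecutive. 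The correct bound here is $|B_j|\ge 2|\varphi_2^{-1}(B_j)|+1$, and the paper accordingly uses the weaker rule that each $2$-block pulls only $1$ unit, combined with a more delicate ``heavy $3$-block'' analysis in Stage~2.

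Finally, your parameter choice $t=j+1$, $c=1$ for $k\equiv 2\pmod 5$ hits the target ratio numerically, but it appears too small to run the argument: a $(j+1)$-frame built of $2$- and $3$-blocks has $\sigma(F)\in[2j+2,\,3j+3]$, which for $j\ge 1$ never equals $k=5j+2$ or $k+3=5j+5$, so the ``$\sigma(F)\in S$ gives a contradiction'' mechanism that underlies these proofs cannot fire. The paper instead uses $t=2i+2$, $c=2$ precisely so that the almost-all-$3$ and alternating $(2\ 3)^{i+1}$ configurations produce lengths in $S$. Similarly, your lower-bound constructions for $k\equiv 2,3\pmod 5$ are left as vague gestures (``modified by inserting an extra $3$-block'', ``two adjacent clusters glued together''), and notably for $k\equiv 3\pmod 5$ the paper's extremal set $(2\ 3)^i\ 2\ (2\ 3)^i\ 2\ 5$ contains a $5$-block, whereas you propose a set of only $2$- and $3$-blocks, which would have to be verified from scratch.
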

}
\end{minipage}
\qquad
\begin{minipage}{3in}
\begin{figure}[H]\centering
\begin{lpic}[]{"figures/plot-1-k-kp3"(3in,)}
\lbl[r]{0,73;\footnotesize$\dalpha$}
\lbl[]{5,126;\scriptsize$\frac{2}{5}$}
%\lbl[]{5,56.5;\scriptsize$\frac{35}{100}$}
\lbl[]{5,27;\scriptsize$\frac{1}{3}$}

\lbl[t]{95,0;\footnotesize $k$}
\lbl[t]{29,5;\scriptsize $10$}
\lbl[t]{48,5;\scriptsize $20$}
\lbl[t]{66,5;\scriptsize $30$}
\lbl[t]{85,5;\scriptsize $40$}
\lbl[t]{104,5;\scriptsize $50$}
\lbl[t]{122.5,5;\scriptsize $60$}
\lbl[t]{141,5;\scriptsize $70$}
\lbl[t]{159.5,5;\scriptsize $80$}
\lbl[t]{178,5;\scriptsize $90$}
\end{lpic}
\caption{\label{fig:1-k-kp3}Computed values of $\dalpha(\{1,k,k+3\})$.}
\end{figure}
\end{minipage}
\end{center}
\begin{proof}
To show our lower bounds, observe the following periodic sets are independent in the respective distance graphs:

\begin{table}[H]
\centering
\begin{tabular}[h]{c|r@{\,$=$\,}l|r@{\,$=$\,}l|c}
     Case  &   \multicolumn{2}{c|}{Parameterization}   &   \multicolumn{2}{c|}{Parameterized Density}    & Periodic Set \\
     \hline&\multicolumn{2}{c|}{}&\multicolumn{2}{c|}{}&\\[-2ex]
     $k \equiv 0 \pmod 5$  &  \qquad $k$ & $5i$    & $\frac{2k+5}{5k+20}$&$\frac{2i+1}{5i+4}$ &  $(2\ 3)^{i-1}\ 3^4$  \\[1ex]
     $k \equiv 1 \pmod 5$  &   $k$&$5i+1$  & $\frac{2}{5}$&$\frac{2}{5}$&   $2\ 3$   \\[1ex]
     $k \equiv 2 \pmod 5$  &   $k$&$5i+2$  & $\frac{2k+6}{5k+20}$&$\frac{2i+2}{5i+6}$&   $(2\ 3)^i\ 3^2$ \\[1ex]
     $k \equiv 3 \pmod 5$  &   $k$&$5i+3$ &  \qquad$\frac{4k+3}{10k+15}$&$\frac{4i+3}{10i+9}$ &   $(2\ 3)^i\ 2\ (2\ 3)^i\ 2\ 5$ \\[1ex]
     $k \equiv 4 \pmod 5$  &   $k$&$5i+4$ &  $\frac{2k+7}{5k+20}$&$\frac{2i+3}{5i+8}$&   $(2\ 3)^{i+1}\ 3$
\end{tabular}
\caption{\label{tab:kplus3vals}Extremal periodic sets for $S = \{1, k, k+3\}$.}
\end{table}

Let $X$ be an independent set in $G(\{1,k,k+3\})$, we will show the density of $X$ is bounded above by the prescribed values.
Always, let $a = 2$ and $b = 5$.
Note that there are no 1-blocks, every 2-block has charge $-1$, and every larger block has positive charge.

\begin{definitiontheorem}
For a 2-block $B_i$, the elements $x_i + k + 3$ and $(x_i + 2)+k$ are both forbidden to be in $X$.
Thus, there is a block $\varphi_2(B_i)$ that contains both $x_i+k+2$ and $x_i+k+3$.
There is also a block $\psi_2(B_i)$ that contains both $x_i-(k + 2)$ and $x_i-(k + 1)$.
Hence $\varphi_2(B_i)$ and $\psi_2(B_i)$ have size at least three and can not have size four.
For a block $B$, $\varphi^{-1}_2(B)$ is the set of 2-blocks $B_i$ such that $\varphi(B_i)=B$.
\end{definitiontheorem}

\begin{casefig}
			\scalebox{\casefigratio}{\begin{lpic}[]{"figures/Obs-Kp3-Phi2"(,20mm)}
			\lbl[b]{64.3,9;\scriptsize $x_j$}
			\lbl[b]{45,17.5;\footnotesize $\psi_2$}
			\lbl[b]{88,17.5;\footnotesize $\varphi_2$}
			\end{lpic}}
\end{casefig}

\begin{observationtheorem}\label{obs:kplus3psi2}\label{obs:kplus3varphi2}
     If $B_j,B_{j+1}\dots,B_{i+\ell-1}$ are $\ell$ consecutive 2-blocks, then $\varphi_2(B_j) = \varphi_2(B_{j+s})$ and $\psi_2(B_j) = \psi_2(B_{j+s})$ for all $s \in \{1,\dots,\ell-1\}$ and $|\varphi_2(B_j)| \geq 2\ell+1$.
\end{observationtheorem}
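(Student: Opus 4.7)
The plan is to show directly that the consecutive 2-blocks produce a long run of forbidden positions, which then must all live in a single block.

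Because $B_j, B_{j+1}, \dots, B_{j+\ell-1}$ are 2-blocks, we have $x_{j+s} = x_j + 2s$ for each $s \in \{0, 1, \dots, \ell\}$. I will apply the preceding definition of $\varphi_2$ simultaneously to every 2-block in the run. For each $s$ with $0 \leq s \leq \ell - 1$, the integer $x_{j+s} + k + 3$ is forbidden by the generator $k+3$ from $x_{j+s}$, while $x_{j+s} + k + 2 = x_{j+s+1} + k$ is forbidden by the generator $k$ from $x_{j+s+1}$. Taking the union of these two forbidden positions as $s$ ranges over $\{0, 1, \dots, \ell-1\}$ produces precisely the block of $2\ell$ consecutive integers
\[
x_j + k + 2,\ x_j + k + 3,\ \dots,\ x_j + k + 2\ell + 1,
\]
none of which lies in $X$.

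Since this is a run of $2\ell$ consecutive integers missing from $X$, no $X$-element separates them, so they all belong to a single block $B_r$. For each $s$ the pair $(x_{j+s} + k + 2,\ x_{j+s} + k + 3)$ sits inside this run, hence $\varphi_2(B_{j+s}) = B_r = \varphi_2(B_j)$ for all $s \in \{1, \dots, \ell - 1\}$, which gives the first equality. For the size bound, the starting $X$-element $x_r$ of $B_r$ satisfies $x_r \leq x_j + k + 2$, but equality is impossible because $x_j + k + 2$ itself is forbidden; thus $x_r \leq x_j + k + 1$. On the other side, $x_{r+1} - 1 \geq x_j + k + 2\ell + 1$, so $x_{r+1} \geq x_j + k + 2\ell + 2$. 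Combining, $|B_r| = x_{r+1} - x_r \geq 2\ell + 1$.

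The argument for $\psi_2$ is completely symmetric under reflection $n \mapsto -n$: the pairs $(x_{j+s} - k - 1,\ x_{j+s} - k)$ (with $x_{j+s} - k - 1 = x_{j+s+1} - (k+3)$) yield a run of $2\ell$ consecutive forbidden integers to the left of $x_j$, and the same reasoning forces them into one block $\psi_2(B_j)$ of size at least $2\ell + 1$. I expect no real obstacle in this proof; the only subtle point is the ``$+1$'' sharpening from $2\ell$ to $2\ell + 1$, which comes from noticing that the left endpoint $x_r$ of the big block cannot equal the first forbidden position $x_j + k + 2$, forcing $x_r \leq x_j + k + 1$ and thereby picking up the extra unit of length.
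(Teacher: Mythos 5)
Your proof is correct, and since the paper presents this as an Observation with no written proof, you have simply supplied the natural argument that the authors left implicit: the $\ell$ consecutive 2-blocks generate the interlocking forbidden pairs $\set{x_{j+s}+k+2, x_{j+s}+k+3}$, whose union is a run of $2\ell$ consecutive non-elements of $X$ living in a single block, and the extra unit in $\abs{\varphi_2(B_j)}\ge 2\ell+1$ comes, exactly as you say, from noting that the first element of that block cannot coincide with the forbidden position $x_j+k+2$. One small aside: the paper's preceding Definition has an off-by-one typo, writing that $\psi_2(B_i)$ contains $x_i-(k+2)$ and $x_i-(k+1)$ when the actual forbidden adjacent pair (from $x_i-k$ and $x_{i+1}-(k+3)$) is $x_i-k-1$ and $x_i-k$; your reflected argument implicitly uses the corrected pair, which is the right thing to do.
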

\begin{casefig}
			\scalebox{\casefigratio}{\begin{lpic}[]{"figures/Obs-Kp3-Consec2"(,20mm)}
			\lbl[b]{4.8,9;\footnotesize $x_j$}
			\lbl[b]{34,17.5;\footnotesize $\varphi_2$}
			\end{lpic}}
\end{casefig}

For all cases below, we use the following Stage 1 discharging rule.

\begin{dischargingrule}
(S1) Every 2-block $B_i$ pulls charge 1 from $\varphi_2(B_i)$.
\end{dischargingrule}

Observe that after applying the Stage 1 rule (S1), all blocks have nonnegative $\mu^*$-charge, and hence all frames have nonnegative $\nu^*$-charge.
Further observe the following:
\begin{cem}
\item A 3-block $B_j$ has $\mu^*$-charge 0 if and only if $B_j = \varphi_2(B)$ for some 2-block $B$.
\item 4-blocks have $\mu^*$-charge 3.
\item Every block $B$ has size at least $2|\varphi^{-1}_2(B)|+1$.
\item Every block $B$ with $|B| \geq 3$ has $\mu^*$-charge $2|B|-5-|\varphi^{-1}_2(B)|\geq 3|\varphi^{-1}_2(B)|-3$
\end{cem}
Thus, if a frame $F$ has charge strictly less than three, then the frame $F$ can only contain 2- and 3-blocks.
We say that a 3-block is \emph{heavy} if it has $\mu^*$-charge 1.
Thus, if $\nu^*(F) < 3$, then $\nu^*(F)$ is equal to the number of heavy 3-blocks in $F$.

%\begin{definitiontheorem}
%If $B_j$ and $B_{j+1}$ are consecutive 3-blocks, then there is a block $\psi_3(B_j)$ containing $x_i-k+1$.
%\end{definitiontheorem}
%
%\begin{observationtheorem}\label{obs:kplus3psi3}
%     Further, either $B_{j'} = \psi_3(B_j)$ has size at least four, or both $B_{j'}$ and $B_{j'+1}$ are 3-blocks.
%     In either case, $x_{j} = x_{j'} + k - 1$.
%     Finally, if $B_{j'}$ is a 4-block, then $B_{j+2}$ is not a 2-block.
%\end{observationtheorem}

\begin{observationtheorem}\label{obs:2con3blocks}
For every pair of consecutive 3-blocks $B_j$, $B_{j+1}$, at least one of $B_j$ or $B_{j+1}$ is heavy.
\end{observationtheorem}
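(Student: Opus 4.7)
The plan is to argue by contradiction: suppose both 3-blocks $B_j$ and $B_{j+1}$ fail to be heavy. By the characterization of $\mu^*$-charge on 3-blocks stated just before the observation, each must then be the $\varphi_2$-image of some 2-block, so there exist 2-blocks $B_i$ and $B_{i'}$ (with $i \neq i'$) such that $\varphi_2(B_i) = B_j$ and $\varphi_2(B_{i'}) = B_{j+1}$.

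The next step is to pin down the relative position of $B_i$ and $B_{i'}$ using the definition of $\varphi_2$. The consecutive integers $x_i + k+2$ and $x_i + k+3$ both lie in $B_j = \{x_j, x_j+1, x_j+2\}$, while $x_{i'} + k+2$ and $x_{i'} + k+3$ both lie in $B_{j+1} = \{x_j+3, x_j+4, x_j+5\}$. Enumerating the two possibilities for which pair of consecutive elements of a 3-block is hit in each case, I obtain $x_{i'} - x_i \in \{2, 3, 4\}$. Since $B_i$ is a 2-block, $x_{i+1} = x_i + 2$, and then $x_i + 3 = x_{i+1} + 1$ cannot lie in $X$; this rules out $x_{i'} - x_i = 3$ and forces $i' \in \{i+1, i+2\}$.

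Finally, I would invoke Observation~\ref{obs:kplus3varphi2}. If $x_{i'} - x_i = 2$, then $B_i$ and $B_{i+1} = B_{i'}$ are two consecutive 2-blocks, so their common $\varphi_2$-image has size at least $5$. If $x_{i'} - x_i = 4$, then $|B_i| + |B_{i+1}| = 4$ forces $|B_{i+1}| = 2$, so $B_i, B_{i+1}, B_{i+2} = B_{i'}$ are three consecutive 2-blocks with common $\varphi_2$-image of size at least $7$. Either conclusion contradicts $|\varphi_2(B_i)| = |B_j| = 3$, completing the proof.

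The main obstacle is the positional bookkeeping: cleanly enumerating the possible offsets of $x_i + k+2$ inside $B_j$ and of $x_{i'}+k+2$ inside $B_{j+1}$, and correctly eliminating the offset giving $x_{i'} - x_i = 3$ using only that $B_i$ is a 2-block. Once those cases are laid out, the contradiction is immediate from the consecutive-2-block observation applied with $\ell = 2$ or $\ell = 3$.
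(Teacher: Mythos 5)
Your proof is correct, and since the paper states the observation without proof, there is nothing to compare against directly; your approach is the natural one and reaches the right contradiction. One simplification worth noting: when you enumerate "two possibilities" for which consecutive pair of a 3-block is hit, only one is actually viable. The integers $x_i+k+2$ and $x_i+k+3$ are never in $X$, while $x_j \in X$, so in fact $x_i + k + 2 = x_j + 1$ is \emph{forced}; likewise $x_{i'}+k+2 = x_{j+1}+1 = x_j + 4$. This gives $x_{i'}-x_i = 3$ outright, which your no-1-block argument ($x_i + 3 = x_{i+1}+1 \notin X$) kills immediately. The cases $x_{i'} - x_i \in \{2,4\}$ that you dispose of by invoking Observation~\ref{obs:kplus3varphi2} are actually vacuous --- they would require a non-$X$ integer such as $x_i+k+2$ to coincide with an $X$-element like $x_j$ or $x_{j+1}$ --- so the detour through the consecutive-2-block observation, while logically sound, is unnecessary. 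Either way, the conclusion stands.
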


\begin{observationtheorem}\label{obs:3con3blocks}
If $B_j, B_{j+1}$, and $B_{j+2}$ are three consecutive 3-blocks where only $B_{j+1}$ is heavy, then the 2-blocks in $\varphi_2^{-1}(B_j)$ and $\varphi_2^{-1}(B_{j+2})$ are separated by a 4-block, and there are exactly $k-7$ elements between that 4-block and $B_j$.% \textbf{TODO: Picture!}
\end{observationtheorem}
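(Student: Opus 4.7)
The plan is to trace through the block arithmetic forced by the hypotheses on the three consecutive 3-blocks $B_j, B_{j+1}, B_{j+2}$. First I would use the definition of $\varphi_2$ to pin down the unique 2-blocks in $\varphi_2^{-1}(B_j)$ and $\varphi_2^{-1}(B_{j+2})$. Let $B_i \in \varphi_2^{-1}(B_j)$. Since $B_j = \{x_j, x_j+1, x_j+2\}$ must contain the two consecutive forbidden positions $x_i+k+2$ and $x_i+k+3$, and since $x_i+k+2 = x_{i+1}+k$ cannot equal $x_j \in X$ (because $k \in S$ forces $x_{i+1}+k \notin X$), the only alignment is $x_i+k+2 = x_j+1$. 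This pins down $x_i = x_j - k - 1$ and $x_{i+1} = x_j - k + 1$. Applying the identical argument to $B_{j+2}$, and using $x_{j+2} = x_j + 6$ (since $B_j$ and $B_{j+1}$ are 3-blocks), yields a unique $B_{i'} \in \varphi_2^{-1}(B_{j+2})$ with $x_{i'} = x_j - k + 5$ and $x_{i'+1} = x_j - k + 7$.

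Next I would examine the blocks $B_{i+1}, \ldots, B_{i'-1}$ sandwiched between $B_i$ and $B_{i'}$. Their total length is $x_{i'} - x_{i+1} = 4$. Since $1 \in S$ there are no 1-blocks, so the only admissible decompositions of 4 into block sizes are either a single 4-block or two consecutive 2-blocks. I would rule out the $2+2$ configuration as follows: if $B_{i+1}$ were a 2-block, then $x_{i+2} = x_{i+1}+2 = x_j - k + 3$ would belong to $X$, but we also have $x_{j+1} = x_j + 3 \in X$, and $x_{j+1} - x_{i+2} = k \in S$, contradicting the independence of $X$. Hence $B_{i+1}$ is a single 4-block and $i' = i+2$, establishing the first assertion of the observation.

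Finally I would count. The 4-block $B_{i+1}$ occupies $\{x_j-k+1, \ldots, x_j-k+4\}$ and the 2-block $B_{i'} = B_{i+2}$ occupies $\{x_j-k+5,\, x_j-k+6\}$. The remaining blocks $B_{i'+1}, \ldots, B_{j-1}$ must then cover positions $x_{i'+1} = x_j - k + 7$ up through $x_j - 1$, which is exactly $k - 7$ consecutive positions. This yields the claimed count between the 4-block (together with its adjacent 2-block $B_{i'}$) and $B_j$.

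The main obstacle is the decomposition step that forbids the $2+2$ layout; everything else is direct position arithmetic in $\Z$. That exclusion is the only place where the hypothesis that the middle block $B_{j+1}$ is itself a 3-block is genuinely used, and it reduces to spotting the single $k$-distance conflict between $x_{i+2}$ and $x_{j+1}$. Once this case is eliminated the rest of the proof is routine and produces the exact block pattern $B_i,\ (\text{4-block}),\ B_{i'},\ \ldots,\ B_j$ with $k-7$ intervening positions.
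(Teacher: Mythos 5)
Your proof is correct, and the paper itself does not supply a proof of this observation, so there is no authorial argument to compare against; the position-chasing approach you use is almost certainly what the authors had in mind. All the key steps check out: since $x_i+k+2 \notin X$ while $x_j \in X$, the pair of forbidden positions inside $B_j$ must be $x_j+1,x_j+2$, pinning $x_i = x_j-k-1$, and similarly $x_{i'} = x_{j+2}-k-1 = x_j-k+5$; the $4$ intervening positions decompose only as $4$ or $2+2$; the $2+2$ case is killed because it would put $x_j-k+3$ and $x_{j+1}=x_j+3$ both in $X$ at distance $k$; and the count of $k-7$ positions from $x_{i'+1}=x_j-k+7$ through $x_j-1$ is correct.

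Two small remarks. First, the observation's phrasing ``between that 4-block and $B_j$'' literally describes the $k-5$ positions $x_j-k+5,\dots,x_j-1$, but your reading — counting from after the adjacent $2$-block $B_{i'}$ — is the one that actually produces $k-7$ and matches how the paper later uses the fact (in Case~2 of the claim where $k\equiv 2\pmod 5$, it needs $k-7=5(i-1)$ to be a multiple of $5$); your parenthetical flags exactly this, so no issue. Second, your closing comment slightly misattributes which hypothesis drives the $2{+}2$ exclusion: that step uses $x_{j+1}=x_j+3$, i.e.\ that $B_j$ is a $3$-block, while the hypothesis that $B_{j+1}$ is a $3$-block enters earlier, in computing $x_{j+2}=x_j+6$ to locate $B_{i'}$. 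You also never use that $B_{j+1}$ is heavy — and indeed, given that $B_j,B_{j+2}$ are not heavy, neither of the two admissible decompositions puts an element of $X$ at $x_j-k+2$, so heaviness of $B_{j+1}$ is a consequence of the other hypotheses rather than an independent assumption. None of this affects the validity of your argument.
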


Recall that all frames have nonnegative $\mu^*$-charge after applying (S1), so all frames have nonnegative $\nu^*$-charge.
Since $f_j$ is an injection each frame $F$ has the (S2) rule applied at most once to $F$.
The table below shows the values of $t$ and $c$ for the various cases based on the residue of $k$ modulo $5$.

\begin{table}[H]
\centering
\begin{tabular}[h]{r@{\,$=$\,}l|r@{\,$=$\,}l|r@{\,$=$\,}l|c}
     \multicolumn{2}{c|}{Case}  &   \multicolumn{2}{c|}{$t$ value}   &   \multicolumn{2}{c|}{$c$ value}    & Parameterized Density \\
     \hline \multicolumn{2}{c|}{}&\multicolumn{2}{c|}{}&\multicolumn{2}{c|}{}&\\[-2ex]
     $k$  & $5i$ & $t$ & $2i+1$ &\, $c$ & $3$ & $\dfrac{2i+1}{5i+4}$ \\[2ex]
     $k$  & $5i+1$ & $t$ & $1$ & $c$ & $0$ & $\dfrac{2}{5}$ \\[2ex]
     $k$  & $5i+2$ & $t$ & $2i+2$ & $c$ & $2$ & $\dfrac{2i+2}{5i+6}$ \\[2ex]
     $k$  & $5i+3$ & $t$ & $4i+3$ & $c$ & $3$ & $\dfrac{4i+3}{10i+9}$ \\[2ex]
     $k$  & $5i+4$ & $t$ & $4i+6$ & $c$ & $2$ & $\dfrac{2i+3}{5i+8}$ \\[2ex]
\end{tabular}
\caption{\label{tab:kplus3valstc}Values for $t$ and $c$ for $S = \{1, k, k+3\}$.}
\end{table}

Suppose $k \equiv 1 \pmod 5$.
Use Stage 1 discharging rule (S1) to distribute charge to the 2-blocks.
Observe all blocks have nonnegative $\mu^*$-charge, and hence all frames have nonnegative $\nu^*$-charge.
We do not discharge in Stage 2 and find $\delta(X)\leq \frac{2}{5}$.

For the other cases we need to use the stage two discharging rules.
First we prove several results that show frames with small charge can not have certain structure.  Let $F$ be any frame with charge $0\leq \nu^*(F)\leq 2$, which implies $F$ can only have blocks of size 2 or 3.

\begin{claim}\label{claim:no23}
If $k\not\equiv 1 \pmod 5$, then every frame $F$ with $\nu^*(F) < 3$ contains either two consecutive 2-blocks or a heavy 3-block.
\end{claim}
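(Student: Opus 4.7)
The plan is to prove the contrapositive: if a frame $F$ contains neither two consecutive 2-blocks nor a heavy 3-block, then $\nu^*(F) \geq 3$, assuming $k \not\equiv 1 \pmod 5$. Suppose for contradiction that $F$ has $\nu^*(F) < 3$ and contains neither object. Since every block has size at least 2, and since 4-blocks carry $\mu^*$-charge 3 while blocks of size at least 5 carry $\mu^*$-charge at least 5 (after the rule (S1)), the inequality $\nu^*(F) < 3$ forces every block of $F$ to be a 2-block or a 3-block. Observation~\ref{obs:2con3blocks} then rules out two consecutive 3-blocks (else one would be heavy), and combined with the absence of consecutive 2-blocks the blocks of $F$ must strictly alternate between size 2 and size 3. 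Hence, depending on the parity of $t$ and on whether the first block of $F$ is a 2-block or 3-block, there are at most four possible block structures for $F$.

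I will then do a case analysis on the residue of $k$ modulo 5, showing that in each case both alternating possibilities produce two elements of $X$ whose difference lies in $S = \{1,k,k+3\}$, contradicting independence. When $k = 5i$, the structure $(3\ 2)^i\ 3$ has $\sigma(F) = 5i+3 = k+3$, so $x_j$ and $x_{j+t}$ witness a forbidden distance, while the structure $(2\ 3)^i\ 2$ has $x_{j+2i} - x_j = 5i = k$ with both elements in $F$. When $k = 5i+2$, both alternating structures have $\sigma(F) = 5(i+1) = k+3$. When $k = 5i+3$, the structure $(2\ 3)^{2i+1}\ 2$ places $x_{j+1} = 2$ and $x_{j+2i+2} = 5i+5$ in $F$ at distance $k$, and symmetrically $(3\ 2)^{2i+1}\ 3$ places $x_j$ and $x_{j+2i+1}$ at distance $k$. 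When $k = 5i+4$, both structures have $\sigma(F) = 10i+15 \notin S$, but an interior pair with index difference $2i+3$ witnesses distance $k+3 = 5i+7$.

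The underlying arithmetic is that in any alternating pattern the $X$-elements of $F$ occupy positions in two residue classes modulo 5 (namely $\{0,2\}$ or $\{0,3\}$ relative to $x_j$), so all pairwise distances among them take the form $5m$, $5m\pm 2$, or $5m\pm 3$. The hypothesis $k\not\equiv 1\pmod 5$ is exactly what guarantees that $k$ or $k+3$ lies in one of these classes and that the witnessing index difference is small enough to fit inside a frame of $t$ blocks. The exceptional case $k\equiv 1\pmod 5$ corresponds to $k$ and $k+3$ being $\equiv 1$ and $4$ modulo $5$ respectively, neither of which is realized by differences of $X$-elements in an alternating pattern, matching the extremal periodic set $2\ 3$.

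The main obstacle is the bookkeeping: there are (up to) eight subcases, and in the interior-pair subcases one must verify that the chosen indices $r,r'$ really lie in $\{0,1,\dots,t-1\}$ and that the difference is not merely congruent but equal to $k$ or $k+3$. I expect that once the partial-sum formulas $x_{j+r} - x_{j+r'}$ are tabulated as piecewise linear functions of $r,r'$ and of the parities of the endpoints, each case reduces to a single divisibility check that succeeds precisely because $k\not\equiv 1\pmod 5$.
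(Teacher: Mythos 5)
Your proposal is correct and follows essentially the same route as the paper: you deduce (as the paper does) that a frame with $\nu^*(F)<3$, no consecutive $2$-blocks, and no heavy $3$-block must strictly alternate between $2$- and $3$-blocks, and then for each residue class of $k$ modulo $5$ you exhibit two $X$-elements within (or bounding) the frame at distance exactly $k$ or $k+3$ — which is the same contradiction the paper states by locating a consecutive sub-run of $F$ whose $\sigma$ equals $k$ or $k+3$. The specific witness pairs you name do check out (including the interior-pair adjustments for the $(3\,2)^*$ variants in the $k\equiv 3,4$ cases), so the remaining bookkeeping you flag is exactly the routine verification the paper also performs.
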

\begin{proof}
If $F$ has $\nu^*$-charge less than three, then $F$ contains no block of size at least four.
If it also contains no consecutive 2-blocks or heavy 3-blocks, then the frame has block structure $3^e\ (2\ 3)^p\ 2^f$, where $e,f\in \{0,1\}$, $p$ is a nonnegative integer, and $e+f+2p=t$.
\begin{mycases}
\case{$k\equiv 0 \pmod 5$.} In this case $(2\ 3)^i$ appears which has length $5i=k\in S$.
\case{$k\equiv 2 \pmod 5$.}  In this case $(2\ 3)^i\ 2$ appears which has length $5i+2=k\in S$.
\case{$k\equiv 3 \pmod 5$.}  In this case $3\ (2\ 3)^i$ appears which has length $5i+3=k\in S$.
\case{$k\equiv 4 \pmod 5$.}  In this case $(2\ 3)^{i+1}\ 2$ appears which has length $5i+7=k+3\in S$.
\end{mycases}
\vspace{-2em}
\end{proof}

\begin{claim}\label{claim:no22}
If $k\not\equiv 1 \pmod 5$, then there is no frame $F$ with $\nu^*(F) < 3$ with exactly one pair of consecutive 2-blocks and no heavy 3-block.
\end{claim}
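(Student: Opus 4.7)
The plan is to show that any such frame $F$ would force some consecutive sub-sequence of blocks of $X$ to have total length in $S=\{1,k,k+3\}$. Since the total length of a consecutive sub-sequence of blocks equals the distance between two elements of $X$ (namely $x_j$ and $x_{j+\ell}$ for the sub-sequence $B_j,\dots,B_{j+\ell-1}$), this contradicts the independence of $X$.

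First I would use Observation~\ref{obs:2con3blocks}: the absence of heavy 3-blocks in $F$ forbids any pair of consecutive 3-blocks. Combined with the hypothesis of exactly one consecutive pair of 2-blocks, $F$ must have the form $F = \alpha\ 2\ 2\ \beta$, where $\alpha$ and $\beta$ are (possibly empty) alternating 2-3 strings with $\alpha$ ending in a 3-block (or empty) and $\beta$ beginning with a 3-block (or empty). Writing $a=|\alpha|$ and $b=|\beta|$, we have $a+b=t-2$, and the total length of $\alpha$, $\beta$, or any prefix/suffix is determined by its length $\ell$ and parity via the formulas $5\ell/2$ (alternating of even length) or $(5\ell\pm1)/2$ (alternating of odd length, sign depending on whether it starts with 3 or with 2).

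Then I would split into cases according to $k \bmod 5$. For $k\equiv 2\pmod 5$ (so $t=2i+2$, $a+b=2i$) the parities of $a$ and $b$ must agree; if both are odd one computes $\sigma(F) = 5i+5 = k+3\in S$, contradiction; if both are even, the sub-sequence consisting of $\alpha$, the pair $2\ 2$, and the odd-length prefix of $\beta$ of size $2i-1-a$ has total length $k$. Analogous straddling sub-sequences handle $k\equiv 0\pmod 5$ and $k\equiv 4\pmod 5$: the parities of $a,b$ force a unique feasible prefix/suffix length and the edge cases $\alpha=\varnothing$ or $\beta=\varnothing$ reduce to one-sided analogues. For $k\equiv 3\pmod 5$ (so $t=4i+3$, $a+b=4i+1$), every sub-sequence crossing the pair $2\ 2$ has $\sigma$ in the wrong residue class mod $5$ to lie in $S$; instead one argues directly that since $a+b=4i+1$, it is impossible for both $a\le 2i-1$ (if $a$ is odd) or $a\le 2i$ (if $a$ is even) and the analogous bound on $b$ to hold, so one of $\alpha,\beta$ must contain an alternating sub-run of $2i+1$ consecutive blocks starting and ending with a 3-block, whose total length is $(5(2i+1)+1)/2 = 5i+3 = k$.

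The hard part will be the $k\equiv 3\pmod 5$ case, since the usual trick of slicing across the $2\ 2$ pair fails by parity and the contradiction has to be extracted from an internal alternating run. The crux is the pigeonhole-style check that no matter how $4i+1$ is split into $(a,b)$ with the given parity constraints, the larger side accommodates an alternating run of length $2i+1$ whose two endpoints are 3-blocks. The other residues reduce to routine verification of $\sigma$-values using the formulas above, organized by the parities of $a,b$ and the starting/ending positions of the chosen sub-sequence.
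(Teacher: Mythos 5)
Your structural reduction and case split match the paper's exactly: the absence of heavy $3$-blocks (via Observation~\ref{obs:2con3blocks}) together with the single-pair hypothesis forces $F=\alpha\ 2\ 2\ \beta$ with $\alpha,\beta$ alternating $2$--$3$ strings, which is the paper's $3^e\,(2\ 3)^p\ 2\ 2\ (3\ 2)^q\ 3^f$; both arguments then case on $k\bmod 5$ and exhibit a consecutive run of blocks whose total size lies in $S$. Where you differ is in \emph{which} run: for $k\equiv 0,2,4\pmod 5$ you systematically build a straddling run (suffix of $\alpha$, the $2\,2$, prefix of $\beta$) whose length is pinned down by the parities of $a,b$, whereas the paper finds its forbidden runs lying essentially on one side (inside $\alpha$ or $\beta$, or on one side together with one or both of the middle $2$'s). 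The straddling viewpoint is more uniform and arguably more robust: the paper's $k\equiv 0$ subcase asserts ``$p=q=i-1$'' when the constraint $e+f+2p+2q=2i-1$ only yields $p+q=i-1$, a slip that your parity bookkeeping sidesteps. For $k\equiv 3$ both arguments are the same in spirit --- extract a long alternating run $3\,(2\ 3)^i$ or $(3\ 2)^i\,3$ of size $5i+3=k$ from the larger of $\alpha,\beta$ --- though your parity-sensitive pigeonhole is more elaborate than needed, since $a+b=4i+1$ already forces one of $a,b\ge 2i+1$ directly. Two details to tighten before this is a complete proof: in the $k\equiv 2$ case with $a,b$ even, your prescribed prefix of $\beta$ of size $2i-1-a$ is negative when $a=2i$ and $\beta=\varnothing$; you acknowledge the empty-$\alpha$/empty-$\beta$ edge cases for $k\equiv 0,4$ but need the symmetric fix (a suffix of $\alpha$) here as well. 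And the $k\equiv 0,4$ cases are only asserted to be ``analogous'' --- the arithmetic does go through as you indicate, but the feasible prefix/suffix lengths and their bound checks need to be written out explicitly.
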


\begin{proof}
Suppose $F$  has exactly one pair of consecutive 2-blocks and no 3-blocks are heavy.
Thus, every 3-block in $F$ is preceded and followed by a 2-block.
The frame has block structure $3^e\ (2\ 3)^p\  2\ 2\ (3\ 2)^q\  3^f$, where $e,f\in \{0,1\}$ and $p$ and $q$ are nonnegative integers, where $e+f+2p+2q+2=t$.

\begin{mycases}
\case{$k\equiv 0 \pmod 5$.}
If $p\geq i$ or $q\geq i$, then $(2\ 3)^i$ or $(3\ 2)^i$ appears and has length $k$.
Otherwise $p=q=i-1$, and either $e$ or $f$ is 1.
If $e=1$, then $3\ (2\ 3)^{i-1}\ 2$ has length $5i$ and if $f=1$, then $2\ (3\ 2)^{i-1}\ 3$ has length $5i$.

\case{$k\equiv 2 \pmod 5$.}
Notice that in all cases $3^e\  (2\ 3)^p\ 2$ or $2\ (3\ 2)^q\ 3^f$ contains $3\ (2\ 3)^i\ 2$ or $2\ (3\ 2)^i\ 3$ which has length $k$.

\case{$k\equiv 3 \pmod 5$.}  Notice that $(3\ 2)^i\ 3$ and $3\ (2\ 3)^i$ have length $5i+3$ and at least one must appear in the frame.

\case{$k\equiv 4 \pmod 5$.}  Notice that $(2\ 3)^i\ 2\ 2$ and $2\ 2\ (3\ 2)^i$ have length $5i+4$ and at least one must appear in the frame.
\end{mycases}
\vspace{-2em}
\end{proof}

\begin{claim}\label{claim:no33}
Suppose $k\not\equiv 1 \pmod 5$ and $F$ is a frame with $\nu^*(F) < 3$, exactly one heavy 3-block, and no consecutive pair of 2-blocks.
Then $F$ has at least three consecutive 3-blocks.
\end{claim}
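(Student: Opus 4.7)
I plan to argue by contradiction: assume $F$ contains no three consecutive $3$-blocks. Since $\nu^*(F) < 3$ and every block of size at least four has $\mu^*$-charge at least three, the frame $F$ must consist solely of $2$- and $3$-blocks, and the unique heavy $3$-block gives $\nu^*(F) = 1$. Because every pair of consecutive $3$-blocks requires at least one heavy member by Observation~\ref{obs:2con3blocks}, $F$ contains at most one such pair. Together with the hypothesis of no consecutive $2$-blocks, the block structure of $F$ is forced either to be strictly alternating $3^e (2\ 3)^p 2^f$, or to take the form $3^e (2\ 3)^p\ 3\ (2\ 3)^q\ 2^f$ with $e, f \in \set{0, 1}$, $p, q \geq 0$, and $e + f + 2(p+q) + 1 = t$, where the unique $3\ 3$ pair is located between the two alternating segments.

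In the alternating subcase the argument of Claim~\ref{claim:no23} applies verbatim, since its contradiction is purely length-based and does not depend on the absence of a heavy $3$-block: the structure $3^e (2\ 3)^p 2^f$ already forces a contiguous subwalk whose length lies in $S$. For the remaining subcase I split on the residue class of $k$ modulo $5$, using the values of $t$ given in Table~\ref{tab:kplus3valstc}. When $k = 5i$, a direct computation gives $\sigma(F) = 5i + 3 = k + 3 \in S$ in both parity subcases, an immediate contradiction. When $k = 5i + 3$, regardless of the parity choice, the constraint $p + q \geq 2i + 1$ forces $\max\set{p, q} \geq i$, and one can locate a contiguous subwalk of length exactly $k$: take the first $2i + 1$ blocks of the form $3\ (2\ 3)^i$ when $p \geq i$, or the $2i+1$ blocks starting from the middle $3$ when $q \geq i$.

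For $k = 5i + 2$ and $k = 5i + 4$, the frame length $\sigma(F)$ is not in $S$, and no internal subwalk of $F$ has length equal to $k$ or $k+3$ in the most balanced configurations. For these residues I plan to use two ideas. When $\max\set{p, q}$ exceeds a certain threshold (e.g.\ $p \geq i + 2$ in the $k = 5i+4$, $e = 1, f = 0$ case), a subwalk of the form $(2\ 3)^{i+1}\ 2$ of length $k + 3$ fits into the long half of $F$, yielding the contradiction. For the balanced configuration (e.g.\ $p = q = i + 1$), I instead use a $\varphi_2$-preimage count: any $2$-block $B$ with $\varphi_2(B) = B_{j+2p}$ must satisfy $x_B \in \set{x_j - 1, x_j}$, but $x_j - 1$ cannot be a block start since block sizes are at least two, and $x_j$ is the start of the leading $3$-block $B_j$. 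Thus no such $2$-block exists and $B_{j + 2p}$ is automatically heavy; performing the same analysis at the opposite end of $F$ forces a second automatically-heavy $3$-block in $F$, contradicting ``exactly one heavy''.

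The main obstacle I anticipate is the $k = 5i + 2$ balanced case, where the candidate preimage positions $x_B$ land farther to the left of $F$ rather than at $x_j - 1$ or $x_j$, so the automatic-heavy argument does not apply as cleanly. Here I plan to combine constraints on $|B_{j-1}|$ and $|B_{j+t}|$ derived from subwalks that cross the boundary of $F$ with the counting inequality that $F$ has $i + 1$ non-heavy $3$-blocks but only $i$ internal $2$-blocks, so at least one non-heavy $3$-block must receive its $\varphi_2$-preimage from a $2$-block outside $F$. Tracking where that external $2$-block must sit relative to $F$, and cross-referencing the admissible values of $|B_{j-1}|$ derived from the residue $L(m) \pmod 5 \in \set{0,1,3}$ of internal suffix lengths, will produce a subwalk of length in $S$ and complete the contradiction.
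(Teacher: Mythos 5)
Your plan follows the same strategy as the paper: reduce to the ``alternating'' and ``single double-$3$'' structures, dispatch the alternating case via the length contradictions in Claim~\ref{claim:no23}, and for the double-$3$ structure split on $k\bmod 5$, using subwalk-length contradictions supplemented by a $\varphi_2$-preimage ``automatic heavy'' argument in the balanced subcases. Your preimage argument for the $k\equiv 4\pmod 5$ balanced configuration is sound (and in fact covers a configuration $p=q=i+1$ that the paper's own write-up, which asserts $\max\{p,q\}\ge i+2$, does not handle).

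There is, however, a real gap in your treatment of $k\equiv 2\pmod 5$. You correctly observe that the preimage of the \emph{first} block of the $3\ 3$ pair lands far to the left of $F$, but you then conclude the automatic-heavy argument ``does not apply as cleanly'' and substitute an undeveloped counting/cross-referencing sketch (in which the quantity $L(m)$ is never defined and the final contradiction is asserted rather than produced). In fact the automatic-heavy argument works here too — you simply aimed it at the wrong block. In the only surviving parity case (the other gives $\sigma(F)=5i+5=k+3\in S$ immediately), the frame is $3\ (2\ 3)^p\ 3\ (2\ 3)^q$ with $p+q=i$ and $\sigma(F)=5i+6$, so $x_{j+t-1}=x_j+5i+3$; a $2$-block $B$ with $\varphi_2(B)=B_{j+t-1}$ would need $x_B=x_{j+t-1}-(k+1)=x_j$, and $B_j$ is a $3$-block, so $B_{j+t-1}$ is heavy — exactly the argument you used for $k\equiv 4\pmod 5$. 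If $q\ge 1$ this block is disjoint from the pair and Observation~\ref{obs:2con3blocks} supplies a second heavy block; if $q=0$ then $B_{j+t-1}$ is the second block of the pair, and the same computation on the first block $B_{j+t-2}$ gives $x_B=x_j-3$, which is never the start of a $2$-block. (There are also minor arithmetic slips elsewhere — for $k\equiv 3\pmod 5$ one only gets $p+q\ge 2i$, not $2i+1$, and the preimage position in the $k\equiv 4$ case is $x_B=x_j$ exactly rather than $x_B\in\{x_j-1,x_j\}$ — but neither affects the conclusion.)
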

\begin{proof}
By Claim~\ref{claim:no22}, if $\nu^*(F) < 3$ and $F$ contains no consecutive pair of 2-blocks, then every 2-block is preceded and followed by a 3-block.
By the proof of Claim~\ref{claim:no22}, the heavy 3-block must be in a pair of consecutive 3-blocks; suppose this pair is not in a set of three consecutive 3-blocks.
Thus the frame has block structure $2^e\ (3\ 2)^p\  3\ 3\  (2\ 3)^q\  2^f$, where $e,f\in \{0,1\}$, $p$ and $q$ are nonnegative integers, and $e+f+2p+2q+2=t$.

\begin{mycases}
\case{$k\equiv 0 \pmod 5$.}
Since $t = 2i+1$ is odd, $e+f = 1$ and hence $p+q = i-1$.
The blocks with structure $(3\ 2)^p\ 3\ 3\ (2\ 3)^q$ cover exactly $5i+1$ elements.
Since $e+f = 1$, there is a 2-block either preceding or following these blocks, so $\sigma(F) = 5i+3 = k+3 \in S$, a contradiction.

\case{$k\equiv 2 \pmod 5$.}
Since $t = 2i+2$ is even, $e = f$.
If $e = f = 1$, then $p + q = i-1$ and $\sigma(F) = 5i+5 = k+3 \in S$, a contradiction.
Thus, $e = f = 0$, $p + q = i$, and $\sigma(F) = 5i+6$.
Let $B$ be the first 2-block in $F$.
Then there is a distance of $5i+4 = k+1$ from the start of $B$ to the first element to the right of $F$.
Thus, $\varphi_2(B)$ is the block immediately after $F$.
This means that the final 3-block in $F$ is heavy, so there are at least two heavy 3-blocks in $F$.

\case{$k\equiv 3 \pmod 5$.}
Since $t = 4i+3$ is odd, $e + f = 1$ and $p+q = 2i-1$.
This implies that the blocks with structure $(3\ 2)^p\ 3\ 3\ (2\ 3)^q$ contains a consecutive set of blocks covering exactly $5i+6$ blocks.
Since $5i+6 = k+3 \in S$, this is a contradiction.

\case{$k\equiv 4 \pmod 5$.}
Since $t = 4i+6$ is even, $e = f$ and $p + q \geq 2i+2$.
Thus, at least one of $p$ and $q$ is at least $i + 1$.
If $e = 1$ and $p \geq i+1$, then the first $2i+3$ blocks of $F$ have block structure $2\ (3\ 2)^{i+1}$ and cover $5i+7$ elements; $5i+7 = k+3 \in S$, a contradiction.
If $f = 1$ and $q \geq i+1$, then the last $2i+3$ blocks of $F$ have block structure $(2\ 3)^{i+1}\ 2$ and cover $5i+7$ elements; $5i+7 = k+3 \in S$, a contradiction.
If $e = f = 0$, then at least one of $p$ or $q$ is at least $i + 2$, and the block structure $(2\ 3)^{i+1}\ 2$ appears in $F$, a contradiction.
\end{mycases}
\vspace{-2em}
\end{proof}

\begin{claim}\label{claim:no333}
If $k\not\equiv 1 \pmod 5$, then any frame $F$ with $\nu^*(F) < 3$ and no consecutive pair of 2-blocks has at least two heavy 3-blocks.
\end{claim}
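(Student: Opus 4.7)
The plan is to derive a contradiction from the hypothesis that $F$ has $\nu^*(F)\le 1$ together with no consecutive pair of 2-blocks. By Claim~\ref{claim:no23}, since $F$ has no consecutive 2-blocks and $\nu^*(F)<3$, $F$ must contain at least one heavy 3-block, so in fact $\nu^*(F)=1$ and $F$ has exactly one heavy 3-block. By Claim~\ref{claim:no33}, $F$ also contains at least three consecutive 3-blocks. I would first show that $F$ contains exactly one maximal run of length at least $2$ of consecutive 3-blocks, and that this run has length exactly $3$: a run $A_1\,A_2\,A_3\,A_4$ of length four yields at least two heavy 3-blocks by applying Observation~\ref{obs:2con3blocks} to the disjoint pairs $(A_1,A_2)$ and $(A_3,A_4)$, and two disjoint runs of length at least $2$ likewise yield two heavy 3-blocks, contradicting $\nu^*(F)=1$. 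Calling the unique run $B_j, B_{j+1}, B_{j+2}$, applying Observation~\ref{obs:2con3blocks} to each of the pairs $(B_j,B_{j+1})$ and $(B_{j+1},B_{j+2})$ forces the unique heavy 3-block to be the middle $B_{j+1}$, so $B_j$ and $B_{j+2}$ are non-heavy and every remaining 3-block of $F$ is isolated and non-heavy.

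I would then carry out a case analysis on the residue of $k$ modulo $5$, in the style of Claims~\ref{claim:no22} and~\ref{claim:no33}. Parameterize $F$ by the number $s$ of isolated 3-blocks it contains, together with indicators $e_1, e_2 \in \{0,1\}$ for whether $F$ begins or ends with a 2-block; then the constraints $2s+3+e_1+e_2=t$ and $\sigma(F) = 5s + 9 + 2e_1 + 2e_2$ hold. The independence requirement $\sigma(F) \notin \{k,k+3\}$ leaves only a short list of parameterizations in each residue class, paralleling the enumeration performed for Claim~\ref{claim:no33}. For each surviving parameterization, I would invoke Observation~\ref{obs:3con3blocks} to place a $4$-block precisely $k-7$ elements to the left of $B_j$, and track the forced positions of the $2$-blocks in $\varphi_2^{-1}(B_j)$ and $\varphi_2^{-1}(B_{j+2})$ (each determined up to the two subcases for which pair of consecutive integers in the target 3-block the preimage maps to).

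The contradiction in each case will come from one of two sources: either the forced $4$-block lies inside the block-index range of $F$, which is impossible because $\nu^*(F)<3$ requires $F$ to contain only $2$- and $3$-blocks; or the forced $\varphi_2$-preimage of some other isolated non-heavy 3-block of $F$ lands at a position already occupied by an element of $X$ inside $F$, producing two elements of $X$ at distance $1$ and contradicting independence. The hard part will be the coordinate bookkeeping across the five residue classes and the two subcase choices for each $\varphi_2$-preimage; once those positions are pinned down, each case should reduce to a short arithmetic check against the rigid alternating $2\,3$ block pattern imposed on the rest of $F$.
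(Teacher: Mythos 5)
Your setup is correct and matches the paper: Claim~\ref{claim:no23} gives at least one heavy 3-block, Claim~\ref{claim:no33} gives a run of three consecutive 3-blocks, the run must be unique and of length exactly three (by Observation~\ref{obs:2con3blocks} applied to disjoint pairs), and the unique heavy 3-block is forced to be the middle one. Your parameterization $2s+3+e_1+e_2=t$, $\sigma(F)=5s+9+2(e_1+e_2)$ is equivalent to the paper's $2^{e}\,(3\;2)^p\,3\,3\,3\,(2\;3)^q\,2^{f}$. Where you diverge is in how the contradictions are extracted. The paper resolves the classes $k\equiv 0,3,4\pmod 5$ with a direct \emph{sub-frame} length argument: it picks out a suitable consecutive set of blocks inside $F$ (not necessarily all of $F$) of total length exactly $k$ or $k+3$, e.g.\ $(3\,2)^a\,3\,3\,3\,(2\,3)^b$ with $a+b=i-1$ for $k\equiv 4$, which violates independence outright. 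Only for $k\equiv 2\pmod 5$ does the paper deploy Observation~\ref{obs:3con3blocks}, and even there the contradiction comes from a parity/structure argument about the $k-7$ elements between the 4-block and $B_j$ (a break in alternation forces a large block, a pair of consecutive 3-blocks, or a pair of consecutive 2-blocks, each leading to a contradiction), not from the two sources you list.

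Concretely, your step 3 is under-specified in a way that matters. You filter only by $\sigma(F)\notin\{k,k+3\}$ (the whole frame), which does not eliminate the parameterizations the paper kills with sub-frame $\sigma$-arguments — e.g.\ for $k=5i+4$, $\sigma(F)=10i+16\notin S$, so your filter passes, while the paper finds an interior set of blocks of length exactly $k$. You would then be relying entirely on your sources (a) and (b), and (b) as stated is both misdirected and incomplete: for $k\equiv 0,2\pmod 5$ the preimage 2-blocks lie \emph{before} $F$, not inside it, and the actual conflict in the generic subcase is not a pair of $X$-elements at distance one but rather that the pinned-down positions force $B_{m+2}$ and the preimage of the first isolated 3-block to be \emph{consecutive} 2-blocks, which by Observation~\ref{obs:kplus3varphi2} would have a common $\varphi_2$-image of size at least $5$ (or equal images), contradicting the fact that they are assigned to distinct 3-blocks. (The "distance-1" branch you describe does arise, but only as one of the two subcases you mention.) Also, the preimages of $B_j$ and $B_{j+2}$ are pinned down uniquely by Observation~\ref{obs:3con3blocks}, so the two-subcase bookkeeping applies only to the \emph{other} isolated 3-blocks. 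A corrected version of your plan would need to (i) replace the whole-frame $\sigma(F)$ filter by sub-frame length constraints, and (ii) enlarge your contradiction sources to include the "forced consecutive 2-blocks with conflicting $\varphi_2$-images" and "forced block of wrong size" conflicts; with those repairs your more uniform $\varphi_2$-tracking strategy appears workable, but it is substantially heavier than the paper's route for three of the four residue classes.
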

\begin{proof}
By Claim~\ref{claim:no33}, such a frame with exactly one heavy 3-block must have three consecutive 3-blocks.
The frame has block structure $2^e\ (3\ 2)^p\  3\ 3\ 3\  (2\ 3)^q\ 2^f$, where $e,f\in \{0,1\}$, $p$ and $q$ are nonnegative integers, and $e+f+2p+2q+3=t$.
Note that we are in the case where Observation~\ref{obs:3con3blocks} applies.

\begin{mycases}
\case{$k\equiv 0 \pmod 5$.}
Since $t = 2i+1$ is odd, $e = f$.
If $e = f = 1$, then $F$ has block structure $2\ (3\ 2)^p\ 3\ 3\ 3\ (2\ 3)^q\ 2$ and $\sigma(F) = 5i+3 = k + 3 \in S$, a contradiction.
Thus, $e = f = 0$ and $F$ has block structure $(3\ 2)^q\ 3\ 3\ 3\ (2\ 3)^q$ and $\sigma(F) = 5i+5$.
However, if we ignore the first two blocks in $F$, the remaining $2i$ blocks cover exactly $5i = k$ elements, a contradiction.

\case{$k\equiv 2 \pmod 5$.}
Observe that $\sigma(F) \geq 5i+3$.
Let $B_j, B_{j+1},$ and $B_{j+2}$ be the three consecutive 3-blocks in $F$, where $B_j$ and $B_{j+2}$ are not heavy by assumption.
Let $B_i, B_{i+1}$, and $B_{i+2}$ be the 2-, 4-, and 2-blocks where $\varphi_2(B_i) = B_j$ and $\varphi_2(B_{i+2}) = B_{j+2}$.
By Observation~\ref{obs:3con3blocks}, there are exactly $k-7$ elements strictly between $B_{i+1}$ and $B_{j}$.
In particular, every 2-block $B$ in this region is either contained within $F$ or $\varphi_2(B)$ is contained in $F$.
Moreover, since $k-7 = 5i-5$ we have that this number is a multiple of 5.
Thus, if the blocks between $B_{i+1}$ and $B_j$ alternate between 2- and 3-blocks, they start with the block structure $(2\ 3)^*$ but end with the block structure $(3\ 2)^*$.
This implies that one of the following cases occurs between $B_{i+1}$ and $B_j$: either there exists a block of size at least four, a pair of consecutive 3-blocks, or a pair of consecutive 2-blocks.
If there is a block of size four, it appears before $F$, but then there exists a heavy 3-block in $F$ to the right of $B_{j+2}$, so $F$ contains at least two heavy 3-blocks.
If there is a consecutive pair of 3-blocks, at least one of them is heavy and hence appears to the left of $F$; however this implies there is a heavy 3-block in $F$ to the right of $B_{j+2}$ and $F$ contains at least two heavy 3-blocks.
Finally, if there is a pair of consecutive 2-blocks $B$ and $B'$, then $\varphi_2(B)$ is in $F$ and thus $F$ contains a large block, a contradiction.

\case{$k\equiv 3 \pmod 5$.}
Since $t = 4i+3$, $p + q \geq 2i-1$.
Thus $F$ contains a consecutive set of blocks with block structure $2\ (3\ 2)^a\ 3\ 3\ 3\ (2\ 3)^b$ where $a + b = i-1$ and these blocks cover $5i+6$ elements, and $5i+6 = k+3 \in S$, a contradiction.

\case{$k\equiv 4 \pmod 5$.}
Since $t = 4i+6$, $p+q \geq 2i$.
Thus $F$ contains a consecutive set of blocks with block structure $(3\ 2)^a\ 3\ 3\ 3\ (2\ 3)^b$, where $a + b = i-1$ and these blocks cover $5i+4$ elements, and $5i+4 = k \in S$, a contradiction.
\end{mycases}
\vspace{-2em}
\end{proof}

\begin{claim}\label{claim:no3333}
If $k\equiv 0 \pmod 5$ or $k\equiv 3 \pmod 5$, then any frame $F$ with no pair of consecutive 2-blocks and  at least two disjoint pairs of consecutive 3-blocks has $\nu^*(F) \geq 3$.
\end{claim}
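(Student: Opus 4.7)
The plan is to argue by contradiction: suppose $\nu^*(F)\le 2$. Claim~\ref{claim:no333} already gives $\nu^*(F)\ge 2$ under the hypothesis that $F$ has no pair of consecutive $2$-blocks, and any block of size at least four would contribute at least three to $\nu^*(F)$. Hence $\nu^*(F)=2$, every block of $F$ has size $2$ or $3$, and exactly two $3$-blocks of $F$ are heavy. Denoting the two disjoint pairs of consecutive $3$-blocks by $P_1$ and $P_2$, Observation~\ref{obs:2con3blocks} forces each pair to contain at least one heavy $3$-block; since there are only two heavies in total, each pair contains exactly one, and $P_1\cup P_2$ accounts for all heavy $3$-blocks of $F$.

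The key structural input is that a $3$-block of $F$ is non-heavy precisely when it equals $\varphi_2(B)$ for some $2$-block $B$, and by a short computation using the forbidden distances $k$ and $k+3$ the block $\varphi_2(B)$ begins exactly $k+1$ positions after $B$ whenever it is a $3$-block. I would use this to locate the $2$-blocks associated with the non-heavy $3$-blocks in $P_1$ and $P_2$, determine whether they lie inside $F$, and thereby pin down a large portion of the block structure of $F$ up to reflection. The remainder of the argument splits on the residue of $k$ modulo $5$: one writes $k=5i$, $t=2i+1$ in the first case and $k=5i+3$, $t=4i+3$ in the second, and in either case $\sigma(F)=3t-n_2$, where $n_2$ is the number of $2$-blocks in $F$.

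The bulk of the work, and the main obstacle, is the ensuing case analysis, carried out in the same spirit as in the proofs of Claims~\ref{claim:no33} and~\ref{claim:no333}. For each choice of which block of $P_1$ (respectively $P_2$) is heavy, the forced block structure can be extended outward using the constraints that every other $3$-block of $F$ is also a $\varphi_2$-image of a $2$-block and that no consecutive pair of $2$-blocks is allowed. Observation~\ref{obs:3con3blocks} plays the same role it did in Claim~\ref{claim:no333} whenever a heavy $3$-block sits as the middle of three consecutive $3$-blocks, forcing a $4$-block in the vicinity and thus a contradiction with the only-$2$-and-$3$-blocks hypothesis. In each remaining subcase the goal is to exhibit a consecutive sub-run of blocks whose total length equals $k$ or $k+3$, contradicting the independence of $X$ in $G(\{1,k,k+3\})$.
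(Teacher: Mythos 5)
Your initial reductions match the paper and are correct: Claim~\ref{claim:no333} gives $\nu^*(F)\ge 2$ under the no-consecutive-$2$-blocks hypothesis, so $\nu^*(F)=2$, all blocks of $F$ have size $2$ or $3$, and each of the two disjoint pairs contains exactly one heavy $3$-block; the computation that a non-heavy $3$-block $\varphi_2(B)$ begins exactly $k+1$ positions after the $2$-block $B$ is also right. The genuine gap is that the proposal stops there: everything after the setup is phrased as intent (``I would use this to locate\ldots'', ``the forced block structure can be extended outward\ldots'', ``the goal is to exhibit a consecutive sub-run\ldots'') without exhibiting a single forced block structure, enumerating the possible positions of the heavy blocks within $P_1$ and $P_2$, or producing a sub-run of blocks covering $k$ or $k+3$ elements for either residue class. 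You correctly identify the case analysis as the bulk of the work, but none of it is carried out, so there is no proof of the claim here.

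A second concrete issue is the assertion that Observation~\ref{obs:3con3blocks} forces a $4$-block ``in the vicinity'' and ``thus a contradiction with the only-$2$-and-$3$-blocks hypothesis.'' That observation places the $4$-block exactly $k-7$ elements before the leftmost of the three consecutive $3$-blocks, a distance comparable to $\sigma(F)$, so it is not automatic that the $4$-block lies inside $F$; the paper's own use of this observation (Claim~\ref{claim:no333}, case $k\equiv 2\pmod 5$) has to follow it with a further parity argument about the blocks between the forced $4$-block and the run. For comparison, the paper's proof of the present claim avoids $\varphi_2$-tracing entirely: it writes $F$ explicitly in one of the two forms $2^e\ (3\ 2)^p\ 3\ 3\ 2\ (3\ 2)^q\ 3\ 3\ (2\ 3)^r\ 2^f$ or $2^e\ (3\ 2)^p\ 3\ 3\ 3\ 3\ (2\ 3)^q\ 2^f$, and then uses the oddness of $t$ (so $e=f$) together with length constraints to locate a sub-run covering exactly $k$ or $k+3$ elements. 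Your plan would still need to arrive at some explicit enumeration of this kind before the arithmetic could be done, and that reduction is exactly what is missing.
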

\begin{proof}
Suppose $\nu^*(F) < 3$, so $F$ contains no blocks of size at least four.
If $F$ contains at least two disjoint pairs of consecutive 3-blocks, then there exists at least one 2-block between these pairs, or these pairs form a sequence of four consecutive 3-blocks.
Thus, $F$ has  block structure $2^e\ (3\ 2)^p\ 3\ 3\  2\ (3\ 2)^q\  3\ 3\  (2\ 3)^r\ 2^f$, where $e,f\in \{0,1\}$, $p$, $q$ and $r$ are nonnegative integers, and $e+f+2p+2q+2r+5=t$, or $F$ has block structure $2^e\ (3\ 2)^p\ 3\ 3\ 3\ 3\ (2\ 3)^q\ 2^f$ where $e+f+2p+2q+4=t$.

\begin{mycases}
\case{$k\equiv 0 \pmod 5$.}
Suppose  $F$ has  block structure $2^e\ (3\ 2)^p\ 3\ 3\  2\ (3\ 2)^q\  3\ 3\  (2\ 3)^r\ 2^f$.
Since $t = 2i+1$ is odd, $e = f$.
If $e = f = 1$, then $\sigma(F) = 5i+2$, but ignoring the first 2-block reveals a set of consecutive blocks covering exactly $k$ elements, a contradiction.
If $e = f = 0$, then $\sigma(F) = 5i+4$.
However, the final 3-block in $F$ appears $k+2$ positions to the right of the first 3-block, so this final 3-block is heavy.
Thus, there are at least three heavy 3-blocks in $F$ and $\nu^*(F) \geq 3$.

Now suppose $F$ has block structure $2^e\ (3\ 2)^p\ 3\ 3\ 3\ 3\ (2\ 3)^q\ 2^f$.
Since $t = 2i+1$ is odd, $e = f$.
If $e = f= 1$, then $\sigma(F) = 5i+3 = k +3 \in S$, a contradiction.
Thus $e = f = 0$ and $\sigma(F) = 5i+4$.
However, the final 3-block in $F$ appears $k+2$ positions to the right of the first 3-block, so this final 3-block is heavy.
Thus, there are at least three heavy 3-blocks in $F$ and $\nu^*(F) \geq 3$.

\case{$k\equiv 3 \pmod 5$.}
Suppose  $F$ has  block structure $2^e\ (3\ 2)^p\ 3\ 3\  2\ (3\ 2)^q\  3\ 3\  (2\ 3)^r\ 2^f$ where $e, f \in \{0,1\}$, $p, q, r \geq 0$, and $e + f + 2p + 2q + 2r + 5 = t = 4i+3$.
Since $4i+3$ is odd, $e=f$ and hence $p + q + r \geq 2i-2$.
If $p + q \geq i$, then there exists a set $F$ of $2i+2$ blocks surrounding the first pair of consecutive 3-blocks such that $\sigma(F) = 5i+6 = k+3\in S$, a contradiction and hence $p +q \leq i - 1$.
Similarly, $q + r \leq i-1$.
Thus $2i-2 \leq p+q+r \leq p+2q+r \leq 2i-2$, and equality holds.
Therefore $q = 0$ and $p + r = 2i-2$.
This implies that either $p \geq i-1$ or $r \geq i-1$.
Thus at least one of the block structures $(2\ 3)^{i-1}\ 3\ 3\ 2\ 3$ or $3\ 2\ 3\ 3\ (2\ 3)^{i-1}$ appears within $F_j$ and these structures cover exactly $5i+6$ elements, a contradiction.
\end{mycases}
\vspace{-2em}
\end{proof}

Now we show that after Stage 2 discharging each frame $F$ has charge at least $c$.
We break these cases by the residue class of $k$, then consider the $\nu^*$-charge on $F$.
We order our cases by the fraction $\frac{ci}{t}$ in increasing order.

For Cases~\ref{case:kp34mod5} and \ref{case:kp33mod5} we use the following second-stage discharging rule:

Suppose $B_j$, $B_{j+1}$, and $B_{j+2}$ are consecutive 2-blocks.
There is an injection $f_j$ from frames containing both $B_j$ and $B_{j+1}$ to frames containing $\varphi_2(B_i)$ where for $f_j(F)=F'$ the block $B_j$ appears in the same position of $F$ as $\varphi_2(B_j)$ appears in $F'$.
\begin{dischargingrule}
(S2a) Let $B_j, B_{j+1}, B_{j+2}$ be the first triple of consecutive 2-blocks in a frame $F$.
Then $F$ pulls charge $\max\{0, c-\nu^*(F)\}$ from $f_j(F)$.
\end{dischargingrule}

Suppose $B_j$ and $B_{j+1}$ are consecutive 2-blocks.
There is an injection $g_j$ from frames containing both $B_j$ and $B_{j+1}$ to frames containing $\varphi_2(B_i)$ where for $g_j(F)=F'$ the block $B_j$ appears in the same position of $F$ as $\varphi_2(B_j)$ appears in $F'$.

\begin{dischargingrule}
(S2b) Suppose that $F$ has no triple of consecutive 2-blocks.
Let $B_j, B_{j+1}$ and $B_i,B_{i+1}$ be the first two disjoint pairs of consecutive 2-blocks in $F$, where all blocks between $B_{j+1}$ and $B_i$ have size at most 3.
Then $F$ pulls charge $\max\{0, c-\nu^*(F)\}$ from $g_j(F)$.
\end{dischargingrule}

Observe that when (S2b) applies to a frame $F$, the requirement that all blocks between $B_{j+1}$ and $B_i$ have size at most 3 implies that $g_j(F)$ contains both $\varphi_2(B_{j})$ and $\varphi_2(B_i)$.

\begin{mycases}
\case{\label{case:kp34mod5}$k = 5i+4$, $t = 4i+6$, and $c = 2$.}
Consider a frame $F$.
If $F$ contains no pair of consecutive 2-blocks, then by Claim~\ref{claim:no333} we have $\nu^*(F) \geq 2$.
If $F$ contains a pair $B_j$, $B_{j+1}$ of consecutive 2-blocks, then $F$ contains one of $\varphi_2(B_j)$ or $\psi_2(B_j)$, and hence $\nu^*(F) \geq 3$.

Observe that if the rule (S2a) pulls charge from $F$, then $F$ loses at most 2 units of charge.
Moreover, the definition of (S2a) implies that there exists a block $B$ in $F$ of size at least 7, and thus $\nu^*(F) \geq 6$, so $\nu'(F) \geq 4$.

Observe that if the rule (S2b) pulls charge from $F$, then $F$ loses at most 2 units of charge.
Moreover, the definition of (S2b) implies that there exists blocks $B$ and $B'$ in $F$, each of size at least 6, and thus $\nu^*(F) \geq 6$, so $\nu'(F) \geq 4$.

Thus, suppose $\nu^*(F) < 3$, so all blocks in $F$ have size at most three.
If $F$ contains a triple of consecutive 2-blocks, then the rule (S2a) applies and $\nu'(F) = 3$.
If $F$ contains two pairs of consecutive 2-blocks, then the rule (S2b) applies and $\nu'(F) = 3$.
Thus, assume that there exists no triple of consecutive 2-blocks and exactly one pair $B_j, B_{j+1}$ of consecutive 2-blocks.
Then, other than $B_j$ and $B_{j+1}$, all pairs of consecutive blocks in $F$ cover at least five elements.
Since $t = 4i+6$, this implies that there are at least $2i+2$ blocks to the right of $B_{j+1}$ or to the left of $B_j$.
These $2i$ blocks cover at least $5i+5$ elements, which implies that either $\varphi_2(B_j)$ is in $F$ or $\psi_2(B_{j+1})$ is in $F$.
Thus $\nu^*(F) \geq 3$, contradicting our assumption that $\nu^*(F) < 3$.

\case{\label{case:kp33mod5}$k = 5i+3$, $t = 4i+3$, and $c = 3$.}
Consider a frame $F$.
If $F$ contains no pair of consecutive 2-blocks, then by Claim~\ref{claim:no3333} we have $\nu^*(F) \geq 3$.
Suppose $F$ contains a pair $B_j$, $B_{j+1}$ of consecutive 2-blocks.
If $F$ contains at least one of $\varphi_2(B_j)$ or $\psi_2(B_j)$, then $\nu^*(F) \geq 3$.

Observe that if the rule (S2a) pulls charge from $F$, then $F$ loses at most 3 units of charge.
Moreover, the definition of (S2a) implies that there exists a block $B$ in $F$ of size at least 7, and thus $\nu^*(F) \geq 6$, so $\nu'(F) \geq 3$.

Observe that if the rule (S2b) pulls charge from $F$, then $F$ loses at most 3 units of charge.
Moreover, the definition of (S2b) implies that there exists blocks $B$ and $B'$ in $F$, each of size at least 6, and thus $\nu^*(F) \geq 6$, so $\nu'(F) \geq 3$.

Thus, suppose $\nu^*(F) < 3$, so all blocks in $F$ have size at most three.
If $F$ contains a triple of consecutive 2-blocks, then the rule (S2a) applies and $\nu'(F) = 3$.
If $F$ contains two pairs of consecutive 2-blocks, then the rule (S2b) applies and $\nu'(F) = 3$.
Thus, assume that there exists no triple of consecutive 2-blocks and exactly one pair $B_j, B_{j+1}$ of consecutive 2-blocks.
Then, other than $B_j$ and $B_{j+1}$, all pairs of consecutive blocks in $F$ cover at least five elements.
Since $t = 4i+3$, this implies that there are at least $2i+1$ blocks to the right of $B_{j+1}$ or to the left of $B_j$.
These $2i$ blocks cover at least $5i+2$ elements, which implies that either $\varphi_2(B_j)$ is in $F$ or $\psi_2(B_{j+1})$ is in $F$.
Thus $\nu^*(F) \geq 3$, contradicting our assumption that $\nu^*(F) < 3$.

\begin{casecomment}
For the remaining cases, we do not use the rules (S2a) or (S2b), but use the following discharging rule among the frames.

\begin{dischargingrule}
(S2c) Let $B_jB_{j+1}$ be the first pair of consecutive 2-blocks in a frame $F$.
Then $F$ pulls charge $\max\{0, c-\nu^*(F)\}$ from $g_j(F)$.
\end{dischargingrule}
Before we consider the remaining cases for the residue class of $k$, we prove a small claim.

\begin{claim}\label{claim:consec2blocks}
If $c \leq 3$ and a frame $F$ with $\nu^*(F) < 6$ loses charge by (S2c), then $F$ does not contain a pair of consecutive 2-blocks.
\end{claim}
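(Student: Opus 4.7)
The plan is to prove the contrapositive: assume $F$ contains a pair of consecutive 2-blocks $B_i, B_{i+1}$ and loses charge via (S2c), and aim to deduce $\nu^*(F) \geq 6$, contradicting the hypothesis. The strategy is that consecutive 2-blocks force large blocks to appear via $\varphi_2$ or $\psi_2$, and each such large block carries at least $3$ units of $\mu^*$-charge after Stage 1.

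Unpacking ``$F$ loses charge by (S2c)'' yields a frame $\tilde F$ whose first pair of consecutive 2-blocks is $B_{j^*}, B_{j^*+1}$ with $F = g_{j^*}(\tilde F)$; by the definition of $g_{j^*}$, the block $\varphi_2(B_{j^*})$ lies inside $F$. Observation~\ref{obs:kplus3varphi2} gives $|\varphi_2(B_{j^*})| \geq 5$ and $|\varphi_2^{-1}(\varphi_2(B_{j^*}))| \geq 2$, so after (S1) one computes $\mu^*(\varphi_2(B_{j^*})) = 2|\varphi_2(B_{j^*})| - 5 - |\varphi_2^{-1}(\varphi_2(B_{j^*}))| \geq 3$. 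If the pre-image has three or more blocks, this block's size jumps to $\geq 7$ and its charge to $\geq 6$ already, finishing the proof; so we may normalize $|\varphi_2^{-1}(\varphi_2(B_{j^*}))|=2$ and $|\varphi_2(B_{j^*})|=5$.

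Using the assumed $B_i, B_{i+1}$, the same Observation gives that $\varphi_2(B_i)$ and $\psi_2(B_i)$ are each of size at least $5$ with $\mu^*$-charge at least $3$. The proof reduces to two sub-claims: (i) at least one of these two large blocks lies in $F$, and (ii) whichever it is, it is distinct from $\varphi_2(B_{j^*})$. With both in hand, the two $\mu^*\geq 3$ contributions sum to $\nu^*(F) \geq 6$. For (ii), if $\varphi_2(B_i) = \varphi_2(B_{j^*})$ and the pairs $\{B_i, B_{i+1}\}$ and $\{B_{j^*}, B_{j^*+1}\}$ are disjoint, then the common pre-image grows to size at least $4$, violating the normalization above; if instead $B_i \in \{B_{j^*}, B_{j^*+1}\}$, then $B_{j^*}$ itself lies in $F$ alongside $\varphi_2(B_{j^*})$, forcing $\sigma(F) \geq k+3$, which combined with the block-size constraint enforced by $\nu^*(F)<6$ is ruled out by a direct length count using $t$ in each of the relevant residue classes.

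The main obstacle is part (i): verifying that $F$ extends far enough in either the rightward or leftward direction from $B_i$ to encompass one of the large blocks $\varphi_2(B_i)$ or $\psi_2(B_i)$, which sit roughly $k$ positions away on either side. I plan to handle this via a positional argument that tracks where the consecutive 2-block pair can sit within $F$'s $t$-block span, using the cap on block sizes implied by $\nu^*(F)<6$ (which forces most blocks in $F$ to be 2- or 3-blocks) to ensure $\sigma(F)$ is sufficient in at least one direction; a symmetric computation with $\psi_2$ supplies the same $\mu^* \geq 3$ contribution when $\varphi_2(B_i) \notin F$.
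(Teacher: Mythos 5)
Your plan to deduce $\nu^*(F) \geq 6$ takes a genuinely different route from the paper's proof, and the load-bearing step --- your sub-claim (i), that at least one of $\varphi_2(B_i)$ or $\psi_2(B_i)$ falls inside $F$ --- does not follow from the hypotheses. Those two blocks sit roughly $k+2$ positions to the right and left of $B_i$, but $F$ need not span $k+2$ positions in either direction. For instance, with $k = 5i$ and $t = 2i+1$, the hypothesis $\nu^*(F) < 6$ caps $\sigma(F)$ at roughly $3t+3$, yet with mostly 2-blocks plus the single size-$\geq 5$ block $\varphi_2(B_{j^*})$, one can have $\sigma(F)$ as small as about $4i+5 < k+2$; in that regime, if $B_i$ sits anywhere interior to the frame, both $\varphi_2(B_i)$ and $\psi_2(B_i)$ lie outside $F$. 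Note also that the ``cap on block sizes'' you invoke actually works against you: a preponderance of 2-blocks makes $\sigma(F)$ smaller, not larger. Your sub-claim (ii) and the $\mu^* \geq 3$ computation are fine (though the normalization $|\varphi_2(B_{j^*})| = 5$ is unjustified --- $|B|=6$ with $|\varphi_2^{-1}(B)|=2$ gives $\mu^*=5$, still $<6$ --- but the needed bound $\mu^* \geq 3$ holds anyway).

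The paper sidesteps this obstacle by aiming the contradiction at the \emph{source} frame $F'$ (your $\tilde F$) rather than at $F$. The alignment built into $g_j$ --- that $\varphi_2(B_j)$ occupies the same position in $F$ as $B_j$ does in $F'$ --- lets one transfer a block count from $F$ into $F'$: after observing that the $\ell$ blocks in $F$ between $\varphi_2(B_j)$ and $B_i$ are all 2- or 3-blocks with no consecutive 2-blocks, it follows (by pushing any putative consecutive 2-blocks forward under $\varphi_2$ and contradicting the absence of large blocks in that stretch of $F$) that there are no consecutive 2-blocks, and hence at most $\ell$ blocks, between $B_j$ and $\psi_2(B_i)$. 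Since $B_j$ and $B_i$ sit at positions $r$ and $r+\ell$ with $r+\ell+1 \leq t$, this places $\psi_2(B_i)$ inside $F'$, whereupon $|\psi_2(B_i)| \geq 5$ forces $\mu^*(\psi_2(B_i)) \geq 3 \geq c$ and hence $\nu^*(F') \geq c$, contradicting that (S2c) pulled positive charge from $F$. To repair your proposal you should redirect the positional argument of part (i) toward $\tilde F$, not $F$, and exploit the $g_{j^*}$-alignment as the paper does.
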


\begin{proof}
If $F$ loses charge by (S2c), then let $F'$ be the frame with $g_j(F') = F$.
Since (S2c) applies to $F'$, it must be that $\nu^*(F') < 2$.
Thus there are no blocks of size at least four in $F'$.
Let $B_{j}$ and $B_{j+1}$ be the left-most pair of consecutive 2-blocks in $F'$, and $\varphi_2(B_j)$ appears in the same position in $F$ as $B_j$ appears in $F'$.
Moreover, since $\nu^*(F) < 6$, we have that $\varphi_2(B_j)$ is the only block in $F$ of size at least four.
Suppose that $B_i$ and $B_{i+1}$ are a pair of consecutive 2-blocks in $F$; select them to be closest to $\varphi_2(B_j)$.
Let $\ell$ be the number of blocks between $\varphi_2(B_j)$ and $B_i, B_{i+1}$.
These $\ell$ blocks between $\varphi_2(B_j)$ and $B_i, B_{i+1}$ are 2- and 3-blocks with no consecutive 2-blocks.
Thus, there are no consecutive 2-blocks between $B_j, B_{j+1}$ and $\psi_2(B_i)$, and hence there are at most $\ell$ blocks between $B_j, B_{j+1}$ and $\psi_2(B_i)$.
Thus, $\psi_2(B_i)$ is in $F'$ and hence $\nu^*(F') \geq 3$ and (S2c) pulls no charge from $F$ to $F'$.
\end{proof}
\end{casecomment}

\case{$k = 5i+2$, $t = 2i+2$, and $c = 2$.}
Consider a frame $F$.

\begin{subcases}
\subcase{$\nu^*(F) < 2$.}
If $F$ contains two consecutive 2-blocks, then $F$ pulls charge $3-\nu^*(F)$ by (S2c) and thus $\nu'(F) \geq 3$.
Claim~\ref{claim:no333} implies that if $F$ has no two consecutive 2-blocks, then $\nu^*(F) \geq 2$.

\subcase{$\nu^*(F) = 2$.}
The frame $F$ does not lose charge by (S2c), thus $\nu'(F) = 2$.

\subcase{$\nu^*(F) = 3$.}
If (S2) pulls at most one unit of charge from $F$, then $\nu'(F) = 2$, and $F$ retains enough charge.
Thus, if $F$ does lose charge by (S2c), then $F$ contains a 5-block $B$ with $|\varphi_2^{-1}(B)| = 2$, so $\mu^*(B) = 3$.
Thus all other blocks in $F$ are 2- and 3-blocks, and the 3-blocks are adjacent to 2-blocks on both sides; by Claim~\ref{claim:consec2blocks} there are no consecutive 2-blocks in $F$.
Thus, assume that (S2c) pulls two units of charge from $F$.
By the definition of $g_j$, the frame $F'$ such that $g_j(F') = F$ contains the first block of $\varphi_2^{-1}(B)$ at the same position in $F'$ as $B$ in $F$.
Since $\nu^*(F') = 0$ and $\nu^*(F) = 3$, the blocks following $\varphi_2^{-1}(B)$ in $F'$ and the blocks preceding $B$ in $F$ are 3- and 2-blocks, alternating.
Since $t = 2i+2$, the frames $F'$ and $F$ overlap in at least one block, thus the $i+1$ blocks to the left of $B$ have block structure $(2\ 3)^i\ 2$, which covers $5i+2$ elements, and $5i+2 =k \in S$, a contradiction.
Thus, it is impossible to have a frame $F$ with $\nu^*(F) = 3$ and $\nu'(F) = 1$ and hence $\nu'(F) \geq 2$.

\subcase{$\nu^*(F) = 4$.}
The rule (S2c) can only apply to $F$ at most once and pull at most $2$ units of charge from $F$.
Thus $\nu'(F) \geq 2$.
\end{subcases}

\case{$k = 5i$, $t = 2i+1$, and $c = 3$.}
Consider a frame $F$.

\begin{subcases}
\subcase{$\nu^*(F) < 3$.}
If $F$ contains two consecutive 2-blocks, then $F$ pulls charge $3-\nu^*(F)$ by (S2) and thus $\nu'(F) \geq 3$.

\subcase{$\nu^*(F) = 3$.}
If $F$ does not lose charge by rule (S2c), then $\nu'(F) = 3$.
If $F$ does lose charge by rule (S2c), then $F$ contains exactly one 5-block and none of the 3-blocks  in $F$ are heavy; by Claim~\ref{claim:consec2blocks} there are no consecutive 2-blocks in $F$.
Thus $F$ has block structure $2^e\ (3\ 2)^p\ 5\ (2\ 3)^q\ 2^f$, where $e, f\in \{0,1\}$ and $p,q\geq 0$ such that $e + f + 2p + 2q = t-1 = 2i$.
Thus $e = f$ and $p + q \geq i-1$.
If $e = f = 1$, then $p+q = i-1$ and the blocks with structure $(3\ 2)^p\ 5\ (2\ 3)^q$ cover exactly $5i$ elements, a contradiction.
If $e = f = 0$, then $p + q = i$ and $\sigma(F) = 5i+5$, but $F$ starts or ends with two blocks covering 5 elements, so ignoring these two blocks reveals consecutive blocks covering exactly $5i$ elements, a contradiction.

\subcase{$\nu^*(F) = 4$.}
If $F$ does not lose charge by rule (S2c), then $\nu'(F) = 4$.
Otherwise, $F$ contains exactly one 5-block and exactly one heavy 3-block; by Claim~\ref{claim:consec2blocks} there are no consecutive 2-blocks in $F$.
Thus $F$ has block structure $2^e\ (3\ 2)^p\ 5\ (2\ 3)^q 3\ (2\ 3)^r\ 2^f$ or $2^e\ (3\ 2)^p\ 3\ (3\ 2)^q 5\ (2\ 3)^r\ 2^f$, where $e, f\in \{0,1\}$ and $p,q,r \geq 0$ such that $e + f + 2p + 2q + 2r = t-2 = 2i-1$.
Thus $e+f=1$ and $p + q + r = i-1$.
This implies that $\sigma(F) = 5i+5$ and either the 5-block is the first or last block, or either the first two blocks or last two blocks cover five elements.
In any case, there exists a set of consecutive blocks covering exactly $5i$ elements, a contradiction.

\subcase{$\nu^*(F) = 5$.}
If $F$ does not lose charge by rule (S2c), then $\nu'(F) = 5$.
Otherwise, $F$ contains exactly one 5-block and exactly two heavy 3-blocks; by Claim~\ref{claim:consec2blocks} there are no consecutive 2-blocks in $F$.
Thus, there exist integers $e, f\in \{0,1\}$ and $p,q,r,s\geq 0$ such that $e + f + 2p + 2q + 2r + 2s = t-3 = 2i-2$ where $F$ has block structure among the following:
\[2^e\ (3\ 2)^p\ 3\ (3\ 2)^q\ 3\ (3\ 2)^r\ 5\ (2\ 3)^s\ 2^f,\]
\[2^e\ (3\ 2)^p\ 3\ (3\ 2)^q\ 5\ (3\ 2)^r\ 3\ (2\ 3)^s\ 2^f,\]
\[2^e\ (3\ 2)^p\ 5\ (3\ 2)^q\ 3\ (3\ 2)^r\ 3\ (2\ 3)^s\ 2^f.\]
Observe that $e = f$.
If $e = f = 1$, then $\sigma(F) = 5i+5$, and removing either the first two blocks or the last two blocks will result in a set of consecutive blocks spanning $5i$ elements, a contradiction.
Thus $e =f = 0$ and $\sigma(F) =  5i+6$, but one of the first or last blocks is a 3-block, so removing this block will result in a set of consecutive blocks spanning $5i+3$ elements; $5i+3 = k+3 \in S$, a contradiction.

\subcase{$\nu^*(F) \geq 6$.}
The rule (S2) can only apply to $F$ at most once and pull at most $3$ units of charge from $F$.
Thus $\nu'(F) \geq 3$.
\end{subcases}
\end{mycases}
In every case, we verified the hypothesis of the Local Discharging Lemma, and therefore the stated upper bounds hold.
\end{proof}

%\begin{casefig}
%			\scalebox{\casefigratio}{\begin{lpic}[]{"figures/Obs-Kp3-5Ip3-Consec2Big"(,25mm)}
%			\lbl[b]{36,23;\footnotesize $\varphi_2$}
%			\lbl[]{19,13;\scriptsize $B_j$}
%			\lbl[]{24,13;\scriptsize $B_{j+1}$}
%			\lbl[]{71,13;\scriptsize $\varphi_2(B_{j})$}
%			\lbl[t]{44.25,4;\footnotesize $\leq k-3$ elements}
%			\lbl[t]{19.5,3;\footnotesize $2i+1$ blocks}
%			\lbl[t]{89.75,3;\footnotesize $2i+1$ blocks}
%			\end{lpic}}
%\end{casefig}
%\begin{casefig}
%			\scalebox{\casefigratio}{\begin{lpic}[]{"figures/Obs-Kp3-5Ip3-BigConsec2"(,25mm)}
%			\lbl[b]{66,23;\footnotesize $\psi_2$}
%			\lbl[]{81,13;\scriptsize $B_j$}
%			\lbl[]{85.5,13;\scriptsize $B_{j+1}$}
%			\lbl[]{32,13;\scriptsize $\psi_2(B_{j})$}
%			\lbl[t]{59,4;\footnotesize $\leq k-3$ elements}
%			\lbl[t]{19.5,3;\footnotesize $2i+1$ blocks}
%			\lbl[t]{89.75,3;\footnotesize $2i+1$ blocks}
%			\end{lpic}}
%\end{casefig}

%\clearpage

Based on computed values of $\dalpha(S)$ for similar sets $S = \{1, k, k+2j+1\}$ as shown in Figures~\ref{fig:1-k-kp5} and \ref{fig:1-k-kp7}, we form the following conjectures.

\begin{center}
\begin{minipage}{3in}{
\begin{conjecture}
	Let $k \geq 6$ and $k \notin \{7, 12\}$. Then
	\[
		\dalpha(\{1,k,k+5\}) = \begin{cases}
\frac{3k+14}{7k+42} & k \equiv 0 \pmod{7}\\
\frac{3}{7} & k \equiv 1 \pmod{7}\\
\frac{3k+15}{7k+42} & k \equiv 2 \pmod{7}\\
\frac{6k+10}{14k+35} & k \equiv 3 \pmod{7}\\
\frac{3k+16}{7k+42} & k \equiv 4 \pmod{7}\\
\frac{3k+13}{7k+42} & k \equiv 5 \pmod{7}\\
\frac{3k+17}{7k+42} & k \equiv 6 \pmod{7}.
		\end{cases}\]
\end{conjecture}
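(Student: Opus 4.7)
The approach parallels the proof of $\dalpha(\{1,k,k+3\})$ earlier in this section. For each of the seven residue classes of $k$ modulo 7, I would exhibit a periodic independent set in $G(\{1,k,k+5\})$ of the claimed density and a matching upper bound via the Local Discharging Lemma. The baseline periodic set with block structure $(2\ 2\ 3)$ achieves density $3/7$ and is independent in $G(\{1,k,k+5\})$ precisely when both $k$ and $k+5$ avoid the set of distances appearing in it, which occupy residues $\{0, 2, 3, 4, 5\}$ modulo 7; this holds only when $k \equiv 1 \pmod 7$. For each of the remaining six classes, I would construct an extremal set by inserting a single short perturbation of the $(2\ 2\ 3)^*$ pattern---typically a $(3\ 3)$ pair, a $(2\ 3)^q\ 3$ fragment, or one 4- or 5-block---within a period of length $7k + 42$, or $14k + 35$ when $k \equiv 3 \pmod 7$. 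This perturbation destroys all occurrences of the distances $k$ and $k+5$ while sacrificing a single element per period, matching the desired density.

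For the upper bound I would apply Lemma~\ref{lem:localdischarging} with $a = 3$ and $b = 7$, so that the initial charge $\mu(B) = 3|B| - 7$ equals $-1$ on 2-blocks and is nonnegative on all larger blocks. Writing $k = 7i + j$, the equation $\frac{3t}{7t+c} = \dalpha(\{1,k,k+5\})$ forces the parameter choices $(t, c) = (3i+2, 4)$, $(1, 0)$, $(i+1, 1)$, $(6i+4, 5)$, $(3i+4, 2)$, $(3i+4, 5)$, and $(3i+5, 1)$ for $j = 0, 1, \ldots, 6$ respectively. The Stage~1 discharging rules follow the template of the $\{1,k,k+3\}$ proof: each 2-block $B_j$ forces the four positions $x_j + k$, $x_j + k + 2$, $x_j + k + 5$, $x_j + k + 7$ to lie outside $X$, which constrains the blocks meeting the ranges $[x_j + k, x_j + k + 2]$ and $[x_j + k + 5, x_j + k + 7]$ to have size at least 3, and the block immediately following $x_j + k + 1$ (when $x_j + k + 1 \in X$) to be likewise constrained. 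The 2-block pulls a fixed amount of charge from these forced-large blocks so that all $\mu^*$-charges become nonnegative.

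After Stage~1, attention reduces to \emph{tight frames} composed of 2- and 3-blocks with few \emph{heavy} (non-alternating) 3-blocks, and the Stage~2 rules redistribute charge from frames containing forced-large blocks into these tight frames via injections analogous to the maps $f_j$ and $g_j$ from the $\{1,k,k+3\}$ proof. For each residue class, the key step is to enumerate the possible block structures of a tight frame $F$ with $\nu^*(F) < c$ and show that each such structure forces a consecutive sub-run of blocks with total length exactly $k$ or $k+5$, contradicting independence of $X$. This is the direct analogue of Claims~\ref{claim:no23}--\ref{claim:no3333} in the $\{1,k,k+3\}$ proof, repeated for seven rather than five residue classes.

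The main obstacle is the residue $k \equiv 3 \pmod 7$: the density $\frac{6i+4}{14i+11}$ has denominator close to $2k$, so the extremal periodic set is forced into a doubled $(2\ 2\ 3)^*$-like structure (compare the extremal set $(2\ 3)^i\ 2\ (2\ 3)^i\ 2\ 5$ for $\dalpha(\{1,k,k+3\})$ when $k \equiv 3 \pmod 5$). The upper bound argument here requires Stage~2 rules that simultaneously process two disjoint triples or pairs of consecutive 2-blocks within each $(6i+4)$-frame and maintain consistency across both, which roughly doubles the case analysis relative to the other classes. The finitely many exceptional values $k \in \{7, 12\}$ excluded from the conjecture would require separate computational verification, and the constructions and discharging arguments must be checked to remain valid for all other $k \geq 6$.
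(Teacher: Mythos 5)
This statement appears in the paper as a \emph{conjecture}, not a theorem: the authors present it with a figure of computed values (Figure~\ref{fig:1-k-kp5}) but supply no proof. So there is no paper proof to compare against, and your submission should be read as a proposed attack on an open problem rather than a redo of a known argument.

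As such, your outline is a sensible plan. It follows the same template as the proved Theorems for $\dalpha(\set{1,k,k+1})$ and $\dalpha(\set{1,k,k+3})$: construct an extremal periodic set for each residue class, then use the Local Discharging Lemma with $\mu(B)=a\abs{B}-b$ and Stage~1/Stage~2 rules parameterized by the maps $\varphi_2$, $\psi_2$, $f_j$, $g_j$. Your arithmetic is right: with $a=3$, $b=7$ and $k=7i+j$, the parameter pairs $(t,c)=(3i+2,4)$, $(1,0)$, $(i+1,1)$, $(6i+4,5)$, $(3i+4,2)$, $(3i+4,5)$, $(3i+5,1)$ for $j=0,\ldots,6$ do produce exactly the conjectured fractions. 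Your observation that the baseline $(2\ 2\ 3)$ pattern is independent precisely when $k\equiv 1 \pmod 7$ is also correct (the realized residues mod $7$ of that pattern are $\set{0,2,3,4,5}$), and the analogy you draw between the $k\equiv 3\pmod 7$ case and the doubled-period extremal set $(2\ 3)^i\ 2\ (2\ 3)^i\ 2\ 5$ used for $\set{1,k,k+3}$ at $k\equiv 3\pmod 5$ is the right intuition, matching the distinctive denominator $14k+35$.

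However, this is a sketch, not a proof, and you acknowledge as much by phrasing it conditionally throughout. In the proved analogues, essentially all the work lives in the enumeration of tight-frame block structures and the verification that each forces a forbidden distance --- precisely the content of Claims~\ref{claim:no23}--\ref{claim:no3333} and the subsequent subcases. You gesture at this step but do not carry it out. Until those enumerations are done for all seven residue classes (and the extremal constructions are verified to be independent, which is itself a nontrivial check, especially for the doubled structure at $k\equiv 3\pmod 7$), this remains a plausible program rather than a resolution of the conjecture. Note also that Table~\ref{tab:1-6-k} gives the authors' conjectured extremal sets for the sibling family $\set{1,6,k}$ but no such table is provided for $\set{1,k,k+5}$, so you would need to derive the extremal block structures yourself, and the excluded values $k\in\set{7,12}$ (and the fact that the formula is only asserted for $k\ge 6$) signal that edge effects must be handled carefully.
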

}
\end{minipage}
\qquad
\begin{minipage}{3in}
\begin{figure}[H]\centering
\begin{lpic}[]{"figures/plot-1-k-kp5"(3in,)}
\lbl[r]{0,73;\footnotesize$\dalpha$}
\lbl[]{5,126;\scriptsize$\frac{3}{7}$}
\lbl[]{5,95;\scriptsize$\frac{4}{10}$}
\lbl[]{5,22;\scriptsize$\frac{1}{3}$}

\lbl[t]{95,0;\footnotesize $k$}
\lbl[t]{29,5;\scriptsize $10$}
\lbl[t]{48,5;\scriptsize $20$}
\lbl[t]{66,5;\scriptsize $30$}
\lbl[t]{85,5;\scriptsize $40$}
\lbl[t]{104,5;\scriptsize $50$}
\lbl[t]{122.5,5;\scriptsize $60$}
\lbl[t]{141,5;\scriptsize $70$}
\lbl[t]{159.5,5;\scriptsize $80$}
\lbl[t]{178,5;\scriptsize $90$}
\end{lpic}
\caption{\label{fig:1-k-kp5}Computed values of $\dalpha(\{1,k,k+5\})$.}
\end{figure}
\end{minipage}
\end{center}
%\[
%\dalpha(\{1,k,k+7\}) = \begin{cases}
%\frac{4k+27}{9k+72} & k \equiv 0 \pmod{9}\\\
%\frac{4}{9} & k \equiv 1 \pmod{9}\\
%\frac{4k+28}{9k+72} & k \equiv 2 \pmod{9}\\
%\frac{8k+21}{18k+63} & k \equiv 3 \pmod{9}\\
%\frac{4k+29}{9k+72} & k \equiv 4 \pmod{9}\\
%\frac{4k-2}{9k+9} & k \equiv 5 \pmod{9}\\
%\frac{4k+30}{9k+72} & k \equiv 6 \pmod{9}\\
%\frac{4k+26}{9k+72} & k \equiv 7 \pmod{9}\\
%\frac{4k+31}{9k+72} & k \equiv 8 \pmod{9}.
%\end{cases}
%\]
\begin{center}
\begin{minipage}{3in}{
\begin{conjecture}
Let $k \geq 8$ and\\ $k \notin\{ 9,11,16,18,25 \}$. Then
\[
\dalpha(\{1,k,k+7\}) = \begin{cases}
\frac{4k+27}{9k+72} & k \equiv 0 \pmod{9}\\\
\frac{4}{9} & k \equiv 1 \pmod{9}\\
\frac{4k+28}{9k+72} & k \equiv 2 \pmod{9}\\
\frac{8k+21}{18k+63} & k \equiv 3 \pmod{9}\\
\frac{4k+29}{9k+72} & k \equiv 4 \pmod{9}\\
\frac{4k-2}{9k+9} & k \equiv 5 \pmod{9}\\
\frac{4k+30}{9k+72} & k \equiv 6 \pmod{9}\\
\frac{4k+26}{9k+72} & k \equiv 7 \pmod{9}\\
\frac{4k+31}{9k+72} & k \equiv 8 \pmod{9}.
\end{cases}
\]
\end{conjecture}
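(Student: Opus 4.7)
The plan follows the template established in the preceding theorems for $\dalpha(\{1,k,k+1\})$ and $\dalpha(\{1,k,k+3\})$, applied to the family $\{1,k,k+2j+1\}$ with $j=3$, so the conjectured densities are governed by the residue of $k$ modulo $2j+3=9$ rather than $5$. For the lower bounds, one exhibits for each residue class a periodic independent set matching the conjectured density. The asymptote $4/9$ is realized by $(2\ 2\ 2\ 3)^{\infty}$, which is independent in $G(\{1,k,k+7\})$ exactly when $k\equiv 1\pmod 9$, and each other class admits a modification (for example a block structure of the shape $(2\ 2\ 2\ 3)^{i-1}$ followed by seven blocks of total length $17$ when $k=9i$) whose independence reduces to checking that no sum of consecutive block sizes equals $k$ or $k+7$, a routine modular computation.

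For the upper bounds I would apply Lemma~\ref{lem:localdischarging} with $a=4$ and $b=9$ throughout, matching the asymptote $a/b=4/9$, and choose $(t,c)$ per residue class so that $\tfrac{4t}{9t+c}$ equals the target; for instance $(t,c)=(4i+3,5)$ for $k=9i$, $(4i+2,6)$ for $k=9i+5$, and $(8i+5,7)$ for $k=9i+3$. With $\mu(B)=4|B|-9$, a $2$-block carries charge $-1$ and a $d$-block carries $4d-9\geq 3$ for $d\geq 3$. As in the proofs for $\{1,k,k+1\}$ and $\{1,k,k+3\}$, Stage~1 defines $\varphi_2(B_j)$ to be the block containing $x_j+k$ and transfers one unit of charge from $\varphi_2(B_j)$ to each $2$-block $B_j$; the presence of both $k$ and $k+7$ in $S$ forces $|\varphi_2(B_j)|\geq 2|\varphi_2^{-1}(B_j)|+1$, so all block charges remain nonnegative.

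Stage~2 would mirror the analysis for $\{1,k,k+3\}$: prove structural claims in the spirit of Claims~\ref{claim:no23}--\ref{claim:no3333}, showing that any frame $F$ with $\nu^*(F)<c$ consists of $2$- and $3$-blocks in one of a bounded catalogue of near-extremal configurations, and rule these out by exhibiting a subframe $F'$ with $\sigma(F')\in\{k,k+7\}$. Residual near-extremal frames that cannot be eliminated structurally instead pull their missing charge, via rules analogous to (S2a) and (S2b), from a neighboring frame guaranteed to contain a heavy large block through $\varphi_2$.

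The main obstacle is the combinatorial growth of the Stage~2 case analysis. The baseline pattern $(2\ 2\ 2\ 3)$ of length $9$ replacing $(2\ 3)$ of length $5$ admits many more internal variations---runs of three consecutive $2$-blocks, multiple disjoint pairs of $3$-blocks, patches mixing $(2\ 2\ 3)$ with $(2\ 3)$---and there are nine residue classes rather than five, so each class requires its own structural lemmas and its own Stage~2 rule. The five exceptional small values $k\in\{9,11,16,18,25\}$ lie outside the generic pattern and likely require bespoke extremal constructions verified directly; identifying the correct extremal set for each exception and proving its optimality is arguably the most delicate aspect of the conjecture, and the total case analysis should be expected to inherit and substantially enlarge the already lengthy proof of the $\{1,k,k+3\}$ case.
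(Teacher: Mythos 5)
This statement is left as a conjecture in the paper; there is no proof to compare against, only computational evidence and the analogy with the proved cases $\{1,k,k+1\}$ and $\{1,k,k+3\}$. Your plan is therefore judged on its own.

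The parameter bookkeeping $(a,b,t,c)$ you propose is arithmetically consistent with the conjectured densities, and the extremal pattern $(2\ 2\ 2\ 3)$ for $k\equiv 1\pmod 9$ is correct. However, your Stage~1 rule has a concrete flaw. For a $2$-block $B_j$ in $G(\{1,k,k+7\})$ the four forbidden positions near $x_j+k$ are $x_j+k$, $x_j+k+2$, $x_j+k+7$, $x_j+k+9$, and \emph{no two of these are consecutive}. This is qualitatively different from $\{1,k,k+1\}$ (where $x_j+k,\dots,x_j+k+3$ are all forbidden, forcing $|\varphi_2(B_j)|\geq 5$) and from $\{1,k,k+3\}$ (where $x_j+k+2,x_j+k+3$ are consecutive, forcing $|\varphi_2(B_j)|\geq 3$). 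Here the block containing $x_j+k$ may be a $2$-block, and the inequality $|B|\geq 2|\varphi_2^{-1}(B)|+1$ that you invoke fails. Indeed it fails on the conjectured extremal set itself: take $k=10$ and $X$ given by $(2\ 2\ 2\ 3)^\infty$, so $X=\{0,2,4,6,9,11,13,15,18,\dots\}$; then for the $2$-block $B_j=\{0,1\}$ the image $\varphi_2(B_j)=\{9,10\}$ is again a $2$-block with $|\varphi_2^{-1}(\{9,10\})|=1$, violating $|B|\geq 2|\varphi_2^{-1}(B)|+1$. Worse, in this set \emph{every} $2$-block maps to a $2$-block under your $\varphi_2$, so the rule just shuffles a $-1$ deficit among $2$-blocks and never reaches the $3$-block's surplus. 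A correct Stage~1 scheme must therefore be based on something other than the single position $x_j+k$ (for instance, transfers between nearby blocks keyed to local block structure, as in the paper's rule (S1) for $\{1,4,k\}$, or a map into whichever of the four forbidden positions lands in a large block, with a case analysis when none does). The lower bounds are also incomplete as written: specifying only ``seven blocks of total length~17'' does not pin down a construction, and the independence check against both $k$ and $k+7$ for each residue class is left undone.
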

}
\end{minipage}
\qquad
\begin{minipage}{3in}
\begin{figure}[H]
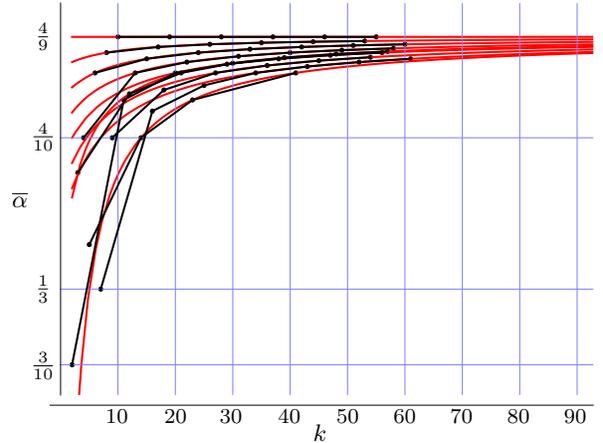
\centering
\begin{lpic}[]{"figures/plot-1-k-kp7"(3in,)}
\lbl[r]{0,73;\footnotesize$\dalpha$}
\lbl[]{5,126;\scriptsize$\frac{4}{9}$}
\lbl[]{5,93.5;\scriptsize$\frac{4}{10}$}
%\lbl[]{5,56.5;\scriptsize$\frac{35}{100}$}
\lbl[]{5,44;\scriptsize$\frac{1}{3}$}
\lbl[]{5,19;\scriptsize$\frac{3}{10}$}

\lbl[t]{95,0;\footnotesize $k$}
\lbl[t]{29,5;\scriptsize $10$}
\lbl[t]{48,5;\scriptsize $20$}
\lbl[t]{66,5;\scriptsize $30$}
\lbl[t]{85,5;\scriptsize $40$}
\lbl[t]{104,5;\scriptsize $50$}
\lbl[t]{122.5,5;\scriptsize $60$}
\lbl[t]{141,5;\scriptsize $70$}
\lbl[t]{159.5,5;\scriptsize $80$}
\lbl[t]{178,5;\scriptsize $90$}
\end{lpic}
\caption{\label{fig:1-k-kp7}Computed values of $\dalpha(\{1,k,k+7\})$.}
\end{figure}
\end{minipage}
\end{center}

%%%%%%%%%%%%%%%%%%%%%%%%%%%%%%%%%%%%%%%%%%%%%%%%%%%%%%%%%%%%%%%%%%%%
%%%%%%%%%%%%%%%%%%%%%%%%%%%%%%%%%%%%%%%%%%%%%%%%%%%%%%%%%%%%%%%%%%%%
%%%%%%%%%%%%%%%%%%%%%%%%%%%%%%%%%%%%%%%%%%%%%%%%%%%%%%%%%%%%%%%%%%%%
%%%%%%%%%%%%%%%%%%%%%%%%%%%%%%%%%%%%%%%%%%%%%%%%%%%%%%%%%%%%%%%%%%%%

\section{Computational Methods}
\label{sec:computation}

We obtained the values of $\dalpha(S)$ given in the theorems and conjectures of Section~\ref{sec:threegens} by computing the independence ratio and looking for patterns.  For a given family of sets of generators parameterized by $k$, we computed $\alpha(S)$ for enough fixed sets $S$ in the family until we had enough data to conjecture a formula for $\dalpha(S)$ in terms of $k$.  As $G(S)$ is an infinite graph and computing the independence number of a graph is in general difficult, we describe here our approach.

To compute $\dalpha(S)$ for a fixed set $S$, we recall that for integers $n$ and $m$, we have the inequalities
\[
	\frac{\alpha(G(n,S))}{n} \leq \dalpha(S) \leq \frac{\alpha(G(S)[m])}{m},
\]
where $G(S)[m]$ is the subgraph of $G(S)$ induced on the interval $[m]$.
Thus, we will find maximum independent sets in $G(n,S)$ and $G(S)[m]$ for $n$ and $m$ growing until the largest lower bound matches the smallest upper bound.

Finding independent sets in a graph $G$ is equivalent to finding cliques in the complement of $G$.
Ba{\v{s}}i{\'c} and Ili{\'c}~\cite{BICliqueCirculant,IB10} previously computed some clique numbers and chromatic numbers for certain classes of circulant graphs using a backtracking search.
In~\cite{IB10}, they used Niskanen and \"Osterg\r{a}rd's \textsl{cliquer}~\cite{cliquer} as part of their implementation, but gave no other details.
We use a slight modification of the \textsl{cliquer} algorithm to compute lower and upper bounds on $\dalpha(S)$.

The \textsl{cliquer} algorithm greatly depends on the ordering of the vertices of the input graph.
For $G(n,S)$ and for $G(S)[n]$, we will use the ordering $1, 2, \dots, n$ in order to exploit the vertex-transitivity of $G(n,S)$ and $G(S)$, respectively.
We will focus first on the distance subgraphs $G(S)[n]$; a similar algorithm can be applied to the circulant graphs $G(n,S)$.
Define $\alpha(n)$ to be the largest size of an independent set in $G(S)[n]$.
Observe that for each $i\in [n]$, the subgraph $G(S)[i]$ is a subgraph of $G(S)[n]$.
Thus, we will compute the values of $\alpha(i)$ in increasing order of $i$, and use previous values in our later computation.
Also observe that $\alpha(i) \leq \alpha(i+1)\leq \alpha(i) + 1$.
Thus, in order to compute $\alpha(i+1)$, we must only search for an independent set of size $\alpha(i) + 1$.
We can terminate the search once one is found.

We use a recursive, backtracking search where we attempt to construct a large independent set $A$ in $G(S)[n]$ in \emph{decreasing} order.
Initialize $A = \varnothing$ and $B = [n]$.
At every step, we are given sets $A$ and $B$, where $A$ is an independent set and $B$ consists of the vertices $b$ such that $b < \min A$ and $b$ is not adjacent to any vertex in $A$.
Thus, the vertices in $B$ are possible next choices for growing the independent set $A$.
If $|A| > \alpha(n-1)$, then we have determined $\alpha(n)$, we report the set $A$, and we terminate the algorithm.
If $|B| + |A| \leq \alpha(n-1)$, then there is no independent set $A' \supset A$ with size at least $\alpha(n-1)+1$, and we can backtrack.
If these simple termination conditions fail, we attempt to add a new element to $A$ from $B$, but use our previous calculations of $\alpha(i)$ to assist.
The standard use of $\alpha(i)$ given by the \textsl{cliquer} algorithm is to check if $\alpha(b) + |A|$ is at least the size of our goal independent set size.
We also use the structure of $G(S)$ to our advantage for an additional pruning mechanism.

Let $A$ be an independent set in $G(S)[n]$, and let $B$ be a set of vertices that are not adjacent to any vertices in $A$ with $\max B < \min A$.
Represent $B$ as disjoint intervals $[x_i,y_i]$ where $B = \union_{i=1}^t [x_i,y_i]$ and define $\beta(B) = \sum_{i=1}^t \alpha(y_i-x_i+1)$.
If there is an independent set $A'$ with $A \subseteq A' \subseteq A \cup B$, then $A' \cap [x_i,y_i]$ is also an independent set.
Further, $(A'\cap [x_i,y_i]) - x_i$ is an independent set in $G(S)[y_i-x_i+1]$ by the vertex transitivity of $G(S)$.
Thus, $|A'| \leq |A| + \sum_{i=1}^t \alpha(y_i-x_i+1) = |A| + \beta(B)$.
Therefore, if $|A| + \beta(B)$ is below our target size of an independent set, we can backtrack.

Algorithm~\ref{alg:cliquerinterval} defines the recursive algorithm FindIndependentSet($\alpha, n, S, A, B$) to find the largest size of an independent set $A'$ in $G(S)[n]$ with $A \subseteq A' \subseteq A \cup B \subseteq [n]$ and $|A'| > \alpha(n-1)$.
To compute $\alpha(n)$, call FindIndependentSet($\alpha, n, S, \varnothing, [n]$) to initialize the recursive algorithm.

\def\Null{\operatorname{Null}}
\begin{algorithm}[htp]
\caption{\label{alg:cliquerinterval} FindIndependentSet($\alpha, n, S, A, B$)}
\begin{algorithmic}
\IF{$|A| > \alpha(n-1)$}
	\STATE $\alpha(n) \leftarrow |A|$
	\RETURN $A$
\ELSIF{$|A|+\beta(B) \leq \alpha(n-1)$}
	\RETURN $\Null$
\ENDIF

\FOR{\textbf{all} $b \in B$ in decreasing order}
	\IF{$|A|+\alpha(b) \leq \alpha(n-1)$}
		\RETURN $\Null$
	\ENDIF
	\STATE $A' \leftarrow A \cup \{b\}$
	\STATE $B' \leftarrow (B \cap [b-1]) - N(b)$
	\STATE $A'' \leftarrow$ FindIndependentSet($\alpha, n, S, A', B'$)
	\IF{$A''\neq \Null$}
		\RETURN $A''$
	\ENDIF
\ENDFOR
\RETURN $\Null$
\end{algorithmic}
\end{algorithm}

Define $\alpha(n,i)$ to be the largest size of an independent set in the circulant graph $G(n,S)$ using only vertices in $\{1,\dots,i\}$.
We can define a similar algorithm, FindIndependentSet($\alpha, n, i, S, A, B$), that computes $\alpha(n,i)$ for $i \in [n]$.
In order to determine $\alpha(G(n,S))$, we compute all values $\alpha(n,i)$ for $i \in [n]$ in increasing order.

Note that for a fixed set $S$, it may be less work to compute $\alpha(n)$ than to compute $\alpha(n,n)$ as $n$ increases, since $\alpha(n')$ for $n'<n$ may be used in the computation of $\alpha(n)$, but $\alpha(n',i)$ is not helpful for computing $\alpha(n,i)$.
However, early computations suggested that the value of $n$ such that $\alpha(n,n)/n = \dalpha(S)$ is much smaller than the value $m$ such that $\dalpha(S) = \alpha(m)/m$.
Thus, we organized our computation as follows: for every $n \geq 1$, compute $\alpha(2n-1)$ and $\alpha(2n)$ and if $n > \max S$ then compute $\alpha(n,n)$.
We terminated our computation when the lower and upper bounds matched.

Our implementation and all computation data are available online\footnote{Code and data are available at \url{http://www.math.iastate.edu/dstolee/r/distance.htm} and \url{http://www.github.com/derrickstolee/DistanceGraphs}.}.
The computed values of $\dalpha(\{1,1+k, 1+k+i\})$ are given as a table in Appendix A.

%\clearpage

\clearpage
\appendix
\begin{landscape}
\section{Table of Computed Values $\dalpha(\{1,1+k,1+k+i\})$}
\scalebox{0.5}{
\begin{tabular}[h]{r|c@{\ }c@{\ }c@{\ }c@{\ }c@{\ }c@{\ }c@{\ }c@{\ }c@{\ }c@{\ }c@{\ }c@{\ }c@{\ }c@{\ }c@{\ }c@{\ }c@{\ }c@{\ }c@{\ }c@{\ }c@{\ }c@{\ }c@{\ }c@{\ }c@{\ }c@{\ }c@{\ }c@{\ }c@{\ }c@{\ }c@{\ }c@{\ }c@{\ }c@{\ }c@{\ }c@{\ }c@{\ }c@{\ }c@{\ }c@{\ }}
 $k \setminus i$ & $1$& $2$& $3$& $4$& $5$& $6$& $7$& $8$& $9$& $10$& $11$& $12$& $13$& $14$& $15$& $16$& $17$& $18$& $19$& $20$& $21$& $22$& $23$& $24$& $25$& $26$& $27$& $28$& $29$& $30$& $31$& $32$& $33$& $34$& $35$& $36$& $37$& $38$& $39$& $40$\\\hline
$1$ & { 1/ 4} & { 1/ 3} & { 1/ 3} & { 2/ 7} & { 1/ 3} & { 1/ 3} & { 3/10} & { 1/ 3} & { 1/ 3} & { 4/13} & { 1/ 3} & { 1/ 3} & { 5/16} & { 1/ 3} & { 1/ 3} & { 6/19} & { 1/ 3} & { 1/ 3} & { 7/22} & { 1/ 3} & { 1/ 3} & { 8/25} & { 1/ 3} & { 1/ 3} & { 9/28} & { 1/ 3} & { 1/ 3} & {10/31} & { 1/ 3} & { 1/ 3} & {11/34} & { 1/ 3} & { 1/ 3} & {12/37} & { 1/ 3} & { 1/ 3} & {13/40} & { 1/ 3} & { 1/ 3} & {14/43} \\\hline
$2$ & { 2/ 7} && { 1/ 3} && { 4/11} && { 5/13} && { 2/ 5} && { 7/17} && { 8/19} && { 3/ 7} && {10/23} && {11/25} && { 4/ 9} && {13/29} && {14/31} && { 5/11} && {16/35} && {17/37} && { 6/13} && {19/41} && {20/43} && { 7/15} &\\\hline
$3$ & { 1/ 3} & { 2/ 5} & { 3/ 8} & { 1/ 3} & { 2/ 5} & { 4/11} & { 2/ 5} & { 5/13} & { 5/14} & { 2/ 5} & { 3/ 8} & { 2/ 5} & { 7/18} & { 7/19} & { 2/ 5} & { 8/21} & { 2/ 5} & { 9/23} & { 3/ 8} & { 2/ 5} & { 5/13} & { 2/ 5} & {11/28} & {11/29} & { 2/ 5} & {12/31} & { 2/ 5} & {13/33} & {13/34} & { 2/ 5} & { 7/18} & { 2/ 5} & {15/38} & { 5/13} & { 2/ 5} & {16/41} & { 2/ 5} & {17/43} & {17/44} & { 2/ 5} \\\hline
$4$ & { 2/ 7} && { 1/ 3} && { 1/ 3} && { 6/17} && { 7/19} && { 8/21} && { 9/23} && { 2/ 5} && {11/27} && {12/29} && {13/31} && {14/33} && { 3/ 7} && {16/37} && {17/39} && {18/41} && {19/43} && { 4/ 9} && {21/47} && {22/49} &\\\hline
$5$ & { 4/13} & { 3/ 7} & { 2/ 5} & { 3/ 8} & { 5/12} & { 1/ 3} & { 3/ 7} & { 2/ 5} & { 3/ 7} & { 7/17} & { 9/23} & { 8/19} & { 2/ 5} & { 3/ 7} & { 9/22} & { 3/ 7} & { 5/12} & { 2/ 5} & {11/26} & {11/27} & { 3/ 7} & {12/29} & { 3/ 7} & {13/31} & {13/32} & {14/33} & { 7/17} & { 3/ 7} & { 5/12} & { 3/ 7} & { 8/19} & {16/39} & {17/40} & {17/41} & { 3/ 7} & {18/43} & { 3/ 7} & {19/45} & {\color{red} 19/46} & {20/47} \\\hline
$6$ & { 1/ 3} && { 4/11} && { 3/ 8} && { 1/ 3} && { 6/17} && { 7/19} && { 3/ 8} && {11/29} && {12/31} && {13/33} && { 2/ 5} && {15/37} && {16/39} && {17/41} && {18/43} && {19/45} && {20/47} && { 3/ 7} && {22/51} && {23/53} &\\\hline
$7$ & { 3/10} & { 4/ 9} & { 7/19} & { 2/ 5} & { 3/ 7} & { 4/11} & { 7/16} & { 1/ 3} & { 4/ 9} & { 5/13} & { 4/ 9} & { 3/ 7} & {12/29} & {10/23} & {12/31} & {11/25} & {13/33} & { 4/ 9} & { 3/ 7} & { 4/ 9} & {13/30} & { 8/19} & { 7/16} & { 2/ 5} & {15/34} & { 3/ 7} & { 4/ 9} & {16/37} & { 4/ 9} & {17/39} & {20/47} & {18/41} & { 3/ 7} & {19/43} & {19/44} & { 4/ 9} & {10/23} & { 4/ 9} & { 7/16} & { 3/ 7} \\\hline
$8$ & { 6/19} && { 5/13} && { 2/ 5} && { 2/ 5} && { 1/ 3} && { 8/21} && { 9/23} && { 2/ 5} && { 2/ 5} && {11/29} && {12/31} && {13/33} && { 2/ 5} && { 2/ 5} && {19/47} && {20/49} && { 7/17} && {22/53} && {23/55} && { 8/19} &\\\hline
$9$ & { 1/ 3} & { 5/11} & { 5/14} & { 5/12} & { 2/ 5} & { 5/13} & { 4/ 9} & { 4/11} & { 9/20} & { 1/ 3} & { 5/11} & { 3/ 8} & { 5/11} & { 7/17} & { 3/ 7} & { 4/ 9} & {15/37} & {13/29} & { 5/13} & {14/31} & {16/41} & { 5/11} & {18/43} & { 5/11} & { 4/ 9} & {10/23} & {17/38} & { 5/12} & { 9/20} & { 2/ 5} & {19/42} & {11/26} & { 5/11} & { 4/ 9} & { 5/11} & {21/47} & {25/57} & {22/49} & {25/59} & {23/51} \\\hline
$10$ & { 4/13} && { 2/ 5} && { 7/17} && { 5/12} && {12/31} && { 1/ 3} && { 2/ 5} && {11/27} && {12/29} && { 5/12} && {17/43} && { 2/ 5} && {15/37} && {16/39} && {17/41} && { 5/12} && { 2/ 5} && {\color{red} 19/47} && {\color{red} 20/49} && {\color{red}  7/17} &\\\hline
$11$ & { 8/25} & { 6/13} & { 3/ 8} & { 3/ 7} & { 5/13} & { 2/ 5} & {13/31} & { 8/21} & { 5/11} & { 5/13} & {11/24} & { 1/ 3} & { 6/13} & {10/27} & { 6/13} & { 2/ 5} & {18/41} & { 3/ 7} & {18/43} & { 5/11} & { 2/ 5} & {16/35} & { 5/13} & {17/37} & {19/49} & { 6/13} & { 7/17} & { 6/13} & {23/53} & { 4/ 9} & { 5/11} & { 3/ 7} & {21/46} & {12/29} & {11/24} & { 2/ 5} & {23/50} & {13/31} & { 6/13} & { 7/16} \\\hline
$12$ & { 1/ 3} && {11/29} && { 8/19} && { 3/ 7} && { 3/ 7} && {14/37} && { 1/ 3} && {16/41} && {13/31} && {14/33} && { 3/ 7} && { 3/ 7} && {20/51} && {21/53} && {18/43} && {19/45} && {20/47} && { 3/ 7} && { 3/ 7} && { 2/ 5} &\\\hline
$13$ & { 5/16} & { 7/15} & { 7/18} & { 7/16} & { 2/ 5} & { 7/17} & { 2/ 5} & { 9/23} & {16/37} & { 2/ 5} & { 6/13} & { 2/ 5} & {13/28} & { 1/ 3} & { 7/15} & {12/31} & { 7/15} & {13/33} & {21/47} & { 5/12} & { 3/ 7} & {11/25} & { 7/17} & { 6/13} & { 2/ 5} & {19/41} & { 2/ 5} & {20/43} & {\color{red} 17/44} & { 7/15} & {24/59} & { 7/15} & {26/61} & {14/31} & { 4/ 9} & { 7/16} & { 6/13} & {14/33} & {25/54} & { 7/17} \\\hline
$14$ & {10/31} && { 7/19} && { 3/ 7} && {10/23} && { 7/16} && {17/41} && { 3/ 8} && { 1/ 3} && {18/47} && { 3/ 7} && {16/37} && {17/39} && { 7/16} && { 8/19} && {23/59} && {24/61} && { 3/ 7} && {22/51} && {23/53} && {24/55} &\\\hline
$15$ & { 1/ 3} & { 8/17} & { 2/ 5} & { 4/ 9} & { 9/22} & { 8/19} & { 7/17} & { 2/ 5} & {17/41} & {11/27} & {19/43} & { 7/17} & { 7/15} & { 9/23} & {15/32} & { 1/ 3} & { 8/17} & { 2/ 5} & { 8/17} & {15/37} & {24/53} & {\color{red} 16/39} & {24/55} & { 3/ 7} & { 8/19} & {13/29} & { 9/22} & { 7/15} & { 7/17} & {22/47} & {25/63} & {23/49} & { 2/ 5} & { 8/17} & {\color{red} 21/52} & { 8/17} & {29/69} & {16/35} & {31/71} & { 4/ 9} \\\hline
$16$ & { 6/19} && { 8/21} && {16/39} && {11/25} && { 4/ 9} && { 4/ 9} && {19/47} && { 7/18} && { 1/ 3} && {\color{red} 14/37} && {23/55} && {18/41} && {19/43} && { 4/ 9} && { 4/ 9} && {27/65} && {\color{red}  7/18} && { 9/23} && {30/71} && {25/57} &\\\hline
$17$ & {12/37} & { 9/19} & { 5/13} & { 9/20} & { 5/12} & { 3/ 7} & { 8/19} & { 9/22} & { 2/ 5} & {12/29} & {20/47} & {13/31} & {22/49} & { 8/19} & { 8/17} & { 5/13} & {17/36} & { 1/ 3} & { 9/19} & {11/28} & { 9/19} & {17/41} & {27/59} & {18/43} & {27/61} & { 8/19} & { 3/ 7} & { 7/16} & {\color{red}  5/12} & { 5/11} & {21/50} & { 8/17} & { 8/19} & {25/53} & {28/71} & {26/55} & {29/73} & { 9/19} & {12/29} & { 9/19} \\\hline
$18$ & { 1/ 3} && { 9/23} && { 2/ 5} && { 4/ 9} && {13/29} && { 9/20} && {22/51} && { 2/ 5} && { 2/ 5} && { 1/ 3} && {16/41} && {25/61} && { 4/ 9} && {21/47} && {22/49} && { 9/20} && {31/71} && {30/73} && { 2/ 5} && {\color{red} 23/59} &\\\hline
$19$ & {\color{red}  7/22} & {10/21} & { 3/ 8} & { 5/11} & {11/26} & {10/23} & { 3/ 7} & { 5/12} & { 3/ 7} & { 2/ 5} & { 7/17} & {14/33} & {23/53} & { 3/ 7} & { 5/11} & { 3/ 7} & { 9/19} & { 8/21} & {19/40} & { 1/ 3} & {10/21} & {12/31} & {10/21} & {19/45} & { 6/13} & {20/47} & {30/67} & { 3/ 7} & {10/23} & { 3/ 7} & {\color{red} 11/26} & { 4/ 9} & {23/54} & {17/37} & { 3/ 7} & { 9/19} & { 3/ 7} & {28/59} & {31/79} & {29/61} \\\hline
$20$ & {14/43} && { 2/ 5} && {11/27} && { 3/ 7} && {14/31} && { 5/11} && { 5/11} && { 8/19} && { 9/22} && {24/61} && { 1/ 3} && { 2/ 5} && {\color{red} 19/47} && {10/23} && {23/51} && {24/53} && { 5/11} && { 5/11} && {34/79} && { 9/22} &\\\hline
$21$ & { 1/ 3} & {11/23} & { 5/13} & {11/24} & { 3/ 7} & {11/25} & {13/30} & {11/26} & {10/23} & {11/27} & { 2/ 5} & { 3/ 7} & { 8/19} & {16/37} & {26/59} & {10/23} & {28/61} & {13/31} & {10/21} & { 9/23} & {21/44} & { 1/ 3} & {11/23} & {\color{red} 18/47} & {11/23} & { 3/ 7} & {33/71} & {22/51} & {33/73} & {23/53} & {11/25} & {10/23} & { 3/ 7} & {17/39} & {25/58} & { 9/20} & {13/30} & {19/41} & {10/23} & {10/21} \\\hline
$22$ & {\color{red}  8/25} && {19/49} && {12/29} && { 5/12} && { 5/11} && {16/35} && {11/24} && {27/61} && {\color{red} 17/41} && { 5/12} && {26/67} && { 1/ 3} && {28/71} && { 7/17} && {32/75} && { 5/11} && {26/57} && {27/59} && {11/24} && {38/85} &\\\hline
$23$ & {16/49} & {12/25} & {11/28} & { 6/13} & {22/53} & { 4/ 9} & { 7/16} & { 3/ 7} & {11/25} & {12/29} & {25/59} & { 2/ 5} & {25/61} & {17/39} & { 3/ 7} & {18/41} & {29/65} & {11/25} & {31/67} & { 7/17} & {11/23} & { 2/ 5} & {23/48} & { 1/ 3} & {12/25} & {20/51} & {12/25} & { 8/19} & {36/77} & {24/55} & {36/79} & {25/57} & { 4/ 9} & {11/25} & {36/83} & { 3/ 7} & {27/62} & {19/43} & { 7/16} & { 5/11} \\\hline
$24$ & { 1/ 3} && {\color{red} 11/29} && {13/31} && {11/26} && {26/59} && {17/37} && { 6/13} && { 6/13} && {29/67} && {19/45} && {11/26} && {\color{red}  5/13} && { 1/ 3} && {30/77} && {23/55} && {\color{red}  8/19} && {37/83} && {28/61} && {29/63} && { 6/13} &\\\hline
$25$ & {\color{red}  9/28} & {13/27} & { 2/ 5} & {13/28} & {13/32} & {13/29} & {15/34} & {13/30} & { 4/ 9} & {13/31} & { 4/ 9} & {\color{red} 11/27} & { 2/ 5} & {14/33} & {28/67} & {19/43} & {10/23} & { 4/ 9} & {32/71} & { 4/ 9} & {34/73} & {\color{red} 11/27} & {12/25} & {15/38} & {25/52} & { 1/ 3} & {13/27} & { 2/ 5} & {13/27} & {17/41} & {39/83} & {26/59} & {39/85} & {27/61} & {13/29} & { 4/ 9} & {39/89} & { 4/ 9} & { 3/ 7} & {10/23} \\\hline
$26$ & {18/55} && {12/31} && {14/33} && { 3/ 7} && { 3/ 7} && { 6/13} && {19/41} && {13/28} && {32/71} && {\color{red} 20/47} && { 3/ 7} && { 3/ 7} && {11/28} && {\color{red} 22/67} && {\color{red} 22/57} && {25/59} && {26/61} && {39/89} && { 6/13} && {31/67} &\\\hline
$27$ & { 1/ 3} & {14/29} & {23/59} & { 7/15} & { 7/17} & {14/31} & { 4/ 9} & { 7/16} & {17/38} & {14/33} & {13/29} & {12/29} & {29/69} & { 2/ 5} & {\color{red}  9/22} & { 4/ 9} & {31/73} & {21/47} & {11/25} & {13/29} & { 5/11} & {17/39} & {37/79} & {12/29} & {13/27} & {16/41} & {27/56} & { 1/ 3} & {14/29} & {17/43} & {14/29} & {\color{red} 25/61} & {42/89} & { 4/ 9} & { 6/13} & {29/65} & {14/31} & {30/67} & {42/95} & {13/29} \\\hline
$28$ & {\color{red} 10/31} && {13/33} && { 3/ 7} && {16/37} && {13/30} && {31/69} && {20/43} && { 7/15} && { 7/15} && {34/77} && {22/51} && {13/30} && {35/83} && { 2/ 5} && { 1/ 3} && {24/61} && { 3/ 7} && {28/65} && {\color{red} 29/67} && {44/97} &\\\hline
$29$ & {20/61} & {15/31} & {\color{red} 13/34} & {15/32} & { 5/12} & { 5/11} & {29/67} & {15/34} & { 9/20} & { 3/ 7} & {14/31} & {\color{red} 18/43} & {32/73} & {13/31} & { 2/ 5} & { 8/19} & {\color{red}  5/12} & {22/49} & {34/79} & {23/51} & { 4/ 9} & {14/31} & {38/83} & { 3/ 7} & { 8/17} & {13/31} & {14/29} & {\color{red} 12/31} & {29/60} & {\color{red}  1/99} & {15/31} & { 9/23} & {15/31} & {27/65} & { 9/19} & { 7/16} & {45/97} & {31/69} & { 5/11} & {32/71} \\\hline
$30$ & { 1/ 3} && { 2/ 5} && {28/67} && {17/39} && { 7/16} && {32/73} && { 7/15} && {22/47} && {15/32} && {37/81} && {\color{red} 23/53} && {24/55} && { 7/16} && {37/89} && {36/91} && {\color{red}  1/99} && { 2/ 5} && {41/97} && {10/23} && {\color{red} 31/71} &\\\hline
$31$ & {\color{red} 11/34} & {16/33} & { 7/18} & { 8/17} & { 8/19} & {16/35} & {\color{red} 14/33} & { 4/ 9} & {19/42} & {16/37} & { 5/11} & {\color{red} 19/45} & { 5/11} & {14/33} & {33/79} & { 2/ 5} & {\color{red} 21/50} & {18/41} & {\color{red} 11/26} & {24/53} & {37/85} & { 5/11} & {13/29} & { 5/11} & {41/89} & {\color{red} 25/59} & {43/91} & {14/33} & {15/31} & {13/33} & {31/64} & {\color{red}  1/99} & {16/33} & {\color{red} 26/67} & {16/33} & {29/69} & {29/61} & {22/51} & {41/88} & {33/73} \\\hline
$32$ & {22/67} && { 9/23} && {\color{red} 16/39} && {18/41} && {15/34} && { 3/ 7} && {36/79} && {23/49} && { 8/17} && { 8/17} && {13/29} && {25/57} && {26/59} && {15/34} && {\color{red}  7/17} && {38/97} && {\color{red}  1/99} && {\color{red}  9/23} && {38/91} && {32/73} &\\\hline
$33$ & { 1/ 3} & {17/35} & {15/38} & {17/36} & {17/40} & {17/37} & { 3/ 7} & {17/38} & { 5/11} & {17/39} & {21/46} & {\color{red} 20/47} & {16/35} & { 3/ 7} & {36/83} & { 3/ 7} & { 2/ 5} & {18/43} & {\color{red} 23/54} & { 5/11} & { 3/ 7} & {26/57} & {40/91} & {16/35} & {14/31} & {21/47} & {44/95} & { 3/ 7} & {46/97} & { 3/ 7} & {16/33} & { 2/ 5} & {33/68} & {\color{red}  1/99} & {17/35} & {\color{red} 28/71} & {17/35} & {31/73} & {41/86} & {\color{red} 32/75} \\\hline
$34$ & {\color{red} 12/37} && {\color{red}  5/13} && {17/41} && {19/43} && { 4/ 9} && { 4/ 9} && {37/83} && { 8/17} && {25/53} && {17/36} && { 6/13} && {41/93} && {27/61} && { 4/ 9} && { 4/ 9} && { 5/12} && {\color{red}  7/18} && {\color{red}  1/99} && {\color{red}  7/18} && {\color{red} 31/75} &\\\hline
$35$ & {24/73} & {18/37} & { 2/ 5} & { 9/19} & { 3/ 7} & { 6/13} & {19/44} & { 9/20} & { 4/ 9} & {18/41} & {11/24} & { 3/ 7} & {17/37} & {22/51} & {13/29} & {16/37} & {37/89} & { 2/ 5} & { 3/ 7} & {10/23} & {\color{red} 25/58} & {27/59} & {\color{red} 16/37} & {28/61} & {43/97} & {17/37} & { 5/11} & {11/25} & {27/58} & {16/37} & {39/82} & {\color{red} 40/97} & {17/35} & {\color{red} 22/57} & {35/72} & {\color{red}  1/99} & {18/37} & { 2/ 5} & {18/37} & { 3/ 7} \\\hline
$36$ & { 1/ 3} && {16/41} && {18/43} && { 4/ 9} && {21/47} && {17/38} && {38/87} && {41/89} && {26/55} && { 9/19} && { 9/19} && {44/97} && { 4/ 9} && {29/65} && {17/38} && {39/89} && { 8/19} && {15/38} && {\color{red}  1/99} && {\color{red} 30/77} &\\\hline
$37$ & {\color{red} 13/40} & {19/39} & {31/79} & {19/40} & {34/81} & {19/41} & {10/23} & {19/42} & {\color{red} 17/39} & {19/43} & {23/50} & {19/44} & { 6/13} & {23/53} & { 6/13} & {17/39} & {40/93} & {20/47} & { 2/ 5} & { 5/12} & {13/30} & {22/49} & {\color{red} 27/62} & {29/63} & {\color{red} 17/39} & { 6/13} & {25/56} & { 6/13} & {16/35} & {\color{red} 10/23} & {43/92} & {17/39} & {31/65} & {\color{red} 16/39} & {18/37} & {\color{red}  5/13} & {37/76} & {\color{red}  1/99} & {19/39} & {\color{red}  1/99} \\\hline
$38$ & {26/79} && {17/43} && {19/45} && {37/85} && {22/49} && { 9/20} && { 3/ 7} && {14/31} && { 9/19} && {28/59} && {19/40} && {27/58} && {25/56} && {30/67} && {31/69} && { 9/20} && {\color{red}  3/ 7} && {17/40} && {\color{red} 25/77} && {\color{red}  1/99} &\\\hline
$39$ & { 1/ 3} & {20/41} & {\color{red} 17/44} & {10/21} & {\color{red} 19/46} & {20/43} & { 7/16} & { 5/11} & {18/41} & { 4/ 9} & { 6/13} & {10/23} & {25/54} & {24/55} & {19/41} & {25/57} & {43/97} & {18/41} & {\color{red} 17/41} & { 2/ 5} & {23/54} & {22/51} & { 7/16} & { 6/13} & {18/41} & {31/67} & {29/66} & {19/41} & {40/89} & { 5/11} & {17/37} & {32/73} & {38/81} & {18/41} & {11/23} & {\color{red} 17/41} & {19/39} & {\color{red}  1/99} & {39/80} & {\color{red}  1/99} \\\hline
$40$ & {\color{red} 14/43} && { 2/ 5} && {20/47} && { 3/ 7} && {23/51} && {19/42} && {42/95} && {43/97} && {46/99} && {29/61} && {10/21} && {10/21} && {38/83} && {37/84} && {32/71} && {33/73} && {19/42} && { 3/ 7} && { 3/ 7} && {\color{red}  1/99} &\\\hline
$41$ & {28/85} & {21/43} & { 9/23} & {21/44} & { 5/12} & { 7/15} & {11/25} & {21/46} & {19/43} & {21/47} & {43/95} & { 7/16} & {13/28} & { 3/ 7} & {20/43} & {26/59} & {41/90} & {19/43} & {41/96} & {11/26} & { 2/ 5} & {\color{red} 27/65} & {29/66} & { 4/ 9} & {15/34} & {32/69} & {19/43} & {33/71} & {27/61} & {20/43} & {33/73} & {\color{red} 11/25} & { 6/13} & {34/77} & {\color{red} 37/80} & {19/43} & {35/73} & {18/43} & {20/41} & {\color{red}  1/99} \\\hline
$42$ & { 1/ 3} && {35/89} && { 3/ 7} && {22/51} && {24/53} && { 5/11} && { 5/11} && {27/62} && {26/57} && {10/21} && {31/65} && {21/44} && {37/79} && {37/82} && {43/97} && {34/75} && { 5/11} && { 5/11} && {19/44} && {\color{red} 27/65} &\\\hline
$43$ & {\color{red} 15/46} & {22/45} & {19/48} & {11/23} & {21/50} & {22/47} & {23/52} & {11/24} & { 4/ 9} & {22/49} & { 4/ 9} & {11/25} & {27/58} & {22/51} & { 7/15} & {27/61} & { 7/15} & { 4/ 9} & {29/66} & { 4/ 9} & {19/45} & { 2/ 5} & {36/85} & { 3/ 7} & {31/70} & {26/57} & { 4/ 9} & {34/73} & { 4/ 9} & { 7/15} & {41/92} & { 7/15} & {\color{red}  9/20} & {\color{red} 35/79} & {19/41} & { 4/ 9} & {\color{red} 13/28} & { 4/ 9} & {\color{red} 10/21} & {19/45} \\\hline
$44$ & {30/91} && {\color{red} 19/49} && { 8/19} && {23/53} && { 5/11} && {26/57} && {21/46} && { 3/ 7} && {35/78} && {29/62} && {32/67} && {11/23} && {11/23} && {\color{red} 43/94} && {\color{red}  7/16} && { 5/11} && {36/79} && {21/46} && {\color{red} 10/23} && {10/23} &\\\hline
$45$ & { 1/ 3} & {23/47} & { 2/ 5} & {23/48} & {11/26} & {23/49} & { 4/ 9} & {23/50} & {25/56} & {23/51} & {21/47} & {23/52} & { 7/15} & {23/53} & {29/62} & { 4/ 9} & {22/47} & {29/65} & {41/91} & {21/47} & {\color{red} 20/47} & { 8/19} & { 2/ 5} & {\color{red} 30/71} & { 4/ 9} & {26/59} & {33/74} & { 7/15} & {21/47} & {36/77} & {18/41} & {22/47} & {43/96} & {\color{red} 21/47} & {\color{red} 19/42} & {37/83} & {20/43} & {21/47} & {\color{red} 41/88} & {\color{red} 28/65} \\\hline
$46$ & {\color{red} 16/49} && {20/51} && {\color{red} 22/53} && {24/55} && {25/56} && {27/59} && {11/24} && {37/84} && {34/77} && {29/63} && {11/23} && {34/71} && {23/48} && {41/87} && {\color{red} 22/49} && {11/25} && {37/81} && {38/83} && {11/24} && {\color{red} 37/84} &\\\hline
$47$ & {\color{red}  8/25} & {24/49} & {13/33} & {12/25} & {23/54} & { 8/17} & {38/87} & { 6/13} & {13/29} & {24/53} & {22/49} & { 4/ 9} & {39/85} & {24/55} & {15/32} & { 3/ 7} & {23/49} & {30/67} & {41/89} & {22/49} & {17/39} & {26/59} & { 3/ 7} & {\color{red}  1/99} & {\color{red} 23/56} & {\color{red} 32/75} & {17/38} & {\color{red} 34/77} & {35/78} & {37/79} & {22/49} & {38/81} & {19/43} & {23/49} & {\color{red} 39/88} & {\color{red} 38/85} & {\color{red}  5/11} & {13/29} & { 7/15} & {22/49} \\\hline
$48$ & { 1/ 3} && {21/53} && {23/55} && {25/57} && {\color{red} 11/25} && {28/61} && {23/50} && {37/82} && {10/23} && {29/64} && { 8/17} && {35/73} && {12/25} && {12/25} && {\color{red} 41/89} && {\color{red} 23/52} && {\color{red} 29/66} && {39/85} && {40/87} && {23/50} &\\\hline
$49$ & {\color{red} 17/52} & {25/51} & {\color{red}  7/18} & {25/52} & { 3/ 7} & {25/53} & {\color{red} 25/58} & {25/54} & { 9/20} & { 5/11} & {23/51} & {25/56} & {37/82} & {25/57} & {31/66} & {\color{red} 22/51} & { 8/17} & {31/69} & { 8/17} & {32/71} & {29/65} & {23/51} & {\color{red} 22/51} & {13/31} & {\color{red}  1/99} & { 3/ 7} & {41/93} & { 7/16} & { 9/20} & {\color{red} 37/81} & {23/51} & {39/83} & {43/97} & { 8/17} & { 4/ 9} & { 8/17} & {\color{red} 41/92} & {\color{red} 40/89} & {\color{red} 21/46} & {41/91} \\\hline
$50$ & {\color{red} 17/53} && { 2/ 5} && { 8/19} && {26/59} && {23/52} && {29/63} && { 6/13} && { 6/13} && {\color{red} 30/71} && {29/65} && {\color{red} 45/98} && {12/25} && {37/77} && {25/52} && {\color{red} 43/93} && {\color{red} 24/53} && { 4/ 9} && {\color{red} 13/29} && {41/89} && { 6/13} &\\\hline
\end{tabular}
}
\begin{center}
\small
\textbf{Table.} Bounds on $\dalpha(S)$ for $S = \{ 1, 1+k, 1+k+i\}$.
Black values are exact values, while red values are lower bounds only.
\end{center}
\end{landscape}

\end{document}